\documentclass[11pt]{extarticle}
\usepackage{amsmath,amsfonts,amsthm,amssymb}
\usepackage{mathrsfs}
\usepackage{needspace}
\usepackage{hyperref}

\hypersetup{
  pdftitle={Global-in-time justification of the vortex-wave system in the vanishing-viscosity limit},
  pdfauthor={Trinh T. Nguyen},
  pdfsubject={Vanishing-viscosity limit of the two-dimensional Navier-Stokes equations},
  pdfkeywords={vortex-wave system, inviscid limit, Navier-Stokes, Lamb-Oseen vortex, weighted energy estimate}
}
\allowdisplaybreaks
\numberwithin{equation}{section}
\newtheorem{theorem}{Theorem}[section]
\newtheorem{proposition}[theorem]{Proposition}
\newtheorem{lemma}[theorem]{Lemma}
\newtheorem{corollary}[theorem]{Corollary}

\newcommand{\RR}{\mathbb{R}}
\newcommand{\Rt}{\mathbb{R}^{2}}

\newcommand{\w}{\omega}
\newcommand{\wb}{\overline{w}}
\newcommand{\vb}{\overline{v}}
\newcommand{\cL}{\mathcal{L}}
\newcommand{\cZ}{\mathcal{Z}}
\newcommand{\pw}{\mathsf{p}}
\newcommand{\Zn}{Z^{\nu}}
\newcommand{\wapp}{\w^{E,\nu}_{\text{app}}}
\newcommand{\vapp}{v^{E,\nu}_{\text{app}}}
\newcommand{\lbb}{L^{4}\cap L^{\frac{4}{3}}}
\newcommand{\Hz}{H^{z}}
\newcommand{\norm}[2][]{\left\lVert #2\right\rVert_{#1}}
\newcommand{\abs}[1]{\left\lvert #1\right\rvert}
\newcommand{\inner}[2]{\left\langle #1,#2\right\rangle}
\newcommand{\lsim}[1]{\mathrel{\underset{#1}{\lesssim}}}
\DeclareMathOperator{\supp}{supp}
\DeclareMathOperator{\dist}{dist}
\begin{document}
\title{Global-in-time inviscid limit of the Navier-Stokes equations for 
vortex-wave data}
\author{Trinh T. Nguyen\thanks{School of Mathematical Sciences,
Shanghai Jiao Tong University, Shanghai, China.
Email: \href{mailto:q0004349@sjtu.edu.cn}{\texttt{q0004349@sjtu.edu.cn}}.}}
\date{September 26, 2026}
\maketitle
\begin{abstract}
\noindent
We justify the vortex-wave system introduced by Marchioro and Pulvirenti in the 1990s as the vanishing-viscosity limit of the two-dimensional Navier-Stokes equations on every prescribed finite time interval. The initial vorticity consists of a point vortex and a smooth, compactly supported background that vanishes near the vortex. The regular component converges strongly to the background vorticity governed by the vortex-wave system, while the concentrated component is approximated by a Lamb-Oseen vortex centered on the limiting point-vortex trajectory. We obtain quantitative estimates for both components, with constants and a viscosity threshold depending on the chosen time interval, thereby providing a global-in-time justification of the vortex-wave system for this class of initial data.
\end{abstract}
\newpage 

\clearpage
\section{Introduction}\label{sec:intro}
Vortices are a prominent feature of atmospheric flows, from the swirling cloud patterns downstream of islands to the organized circulation of hurricanes. Their motion and persistence motivate the study of how concentrated vorticity interacts with the surrounding fluid. Two-dimensional incompressible flow provides an idealized setting in which to investigate this interaction. At large Reynolds numbers, advection acts on a much shorter time scale than viscous diffusion at the scale of the flow, suggesting that inviscid models can capture the leading dynamics. Justifying this description is a central objective of the inviscid-limit problem: understanding the behavior of Navier–Stokes solutions as viscosity tends to zero. When vorticity is concentrated in a small core, a natural approximation replaces the core by a point vortex while retaining the distributed vorticity of the surrounding fluid. The resulting interaction is mutual: the ambient flow transports and deforms the vortex, whose induced velocity in turn transports the ambient vorticity. Establishing the validity of this approximation over long time intervals requires controlling both this coupling and the cumulative effects of diffusion on the vortex core.

~\\
The vortex--wave system, introduced by Marchioro and Pulvirenti
\cite{MaPu91,MaPu94}, retains both the concentrated vortex and the
distributed ambient vorticity. We consider a single point vortex of
unit circulation, with position \(z(t)\in\Rt\), interacting with a
regular vorticity field \(\w^E(x,t)\). Their evolution is governed by
\begin{equation}\label{eq:VW}
\left\{
\begin{aligned}
\partial_t\w^E+(v^E+\Hz)\cdot\nabla\w^E&=0,\\
\partial_t z(t)&=v^E(t,z(t)),\\
v^E&=K\star\w^E,\\
\Hz(x,t)&=K(x-z(t)),\\
\w^E\big|_{t=0}&=\w_0^E,
\qquad z(0)=z_0.
\end{aligned}
\right.
\end{equation}
Here \(K\) is the planar Biot--Savart kernel,
\[
K(x)=\frac{1}{2\pi}\frac{x^\perp}{\abs{x}^{2}},
\qquad x^\perp=(-x_2,x_1),
\]
and \(\star\) denotes convolution in \(\Rt\). Thus \(v^E\) is the
velocity generated by the regular vorticity, and \(\Hz\) is the
velocity generated by the point vortex. The first equation transports
the regular vorticity by their combined velocity. The second equation
advects the point vortex by the regular velocity evaluated at its
position; its singular self-induced velocity is omitted. The initial
position and regular vorticity are denoted by \(z_0\) and \(\w_0^E\).

~\\
Marchioro and Pulvirenti established global existence for this system
in its Lagrangian formulation for a single point vortex and bounded,
integrable ambient vorticity \cite{MaPu91,MaPu94}. They also justified
the vortex--wave description through concentration limits of Euler
flows: under suitable localization assumptions, the point vortex is
replaced by a localized vorticity distribution of fixed circulation,
and its initial diameter is allowed to tend to zero \cite{MaPu93}.
Bjorland \cite{Bjor} subsequently obtained a concentration limit for
a single vortex in a bounded, integrable background, with subsequential
convergence to a weak vortex--wave solution in a velocity formulation.

~\\
For compactly supported \(L^1\cap L^\infty\) backgrounds initially
constant near the vortex, Lacave and Miot \cite{Lacave} proved
uniqueness and persistence of local constancy. In particular, if the
initial background vanishes near the vortex, its support remains
separated from the vortex on every finite time interval. For smooth
data, this separation also allows the usual transport estimates to
propagate Sobolev regularity.

~\\
In this paper, we justify the vortex--wave system as the
vanishing-viscosity limit of the two-dimensional Navier--Stokes
equations on every prescribed finite time interval. We start directly
with initial vorticity
\begin{equation}\label{eq:data}
\w_0=\delta_{z_0}+\w_0^E,
\end{equation}
where \(\delta_{z_0}\) is the unit Dirac mass at \(z_0\), and
\(\w_0^E\) is smooth, compactly supported, and vanishes in a
neighborhood of \(z_0\). For viscosity \(\nu>0\), the vorticity
\(\w^\nu=\partial_{x_1}u_2^\nu-\partial_{x_2}u_1^\nu\) solves
\begin{equation}\label{eq:NS}
\partial_t\w^\nu+u^\nu\cdot\nabla\w^\nu=\nu\Delta\w^\nu,
\qquad
u^\nu=K\star\w^\nu,
\qquad
\w^\nu\big|_{t=0}=\w_0.
\end{equation}
This problem has a unique global solution for finite-measure initial
vorticity, smooth for positive time \cite{GG,Kato}.

~\\
We prove that its
regular component converges strongly to \(\w^E\), while its
concentrated component remains close to a Lamb--Oseen vortex whose
center follows \(z(t)\), with quantitative bounds on both errors.
Our result extends the theorem of Nguyen and Nguyen \cite{NN} by
removing the stopping-time restrictions associated with the past wave
support and the ambient strain. The time interval \([0,T]\) is
arbitrary but fixed: the constants and the viscosity threshold may
depend on \(T\). This is the meaning of global-in-time justification
used here.

\subsection{Related results}

The inviscid limit in the whole space has a classical theory for
regular data. Swann \cite{Swann} proved convergence on a
viscosity-independent time interval for smooth three-dimensional
flows. Masmoudi \cite{Mas} established convergence in the Sobolev
space of the initial velocity on every compact interval within the
lifespan of the corresponding Euler solution. 
Constantin and Wu obtained quantitative estimates for vortex patches
\cite{CWu1} and strong vorticity convergence for nonsmooth data under
additional Besov regularity assumptions \cite{CWu2}. Chemin
\cite{Che} proved strong convergence of the velocity for
bounded-vorticity data. Sueur \cite{Sueur} further described the
internal viscous transition layer along a moving vortex-patch
interface through an asymptotic expansion. See also
\cite{PeterTarekTheo} for strong vorticity convergence in the periodic
Yudovich setting. The atomic component in
\eqref{eq:data} lies outside these vorticity classes.

~\\
In domains with a no-slip boundary, the mismatch between viscous and
inviscid boundary conditions introduces the additional problem of
boundary-layer formation. Sammartino and Caflisch
\cite{SammartinoCaflisch2} justified the Euler--Prandtl expansion for
well-prepared analytic data in a half-space. Maekawa \cite{Mae}
established the inviscid limit in the half-plane for sufficiently
regular initial vorticity supported away from the boundary. Nguyen
and Nguyen \cite{2N} gave a direct proof for analytic data in the
periodic half-plane using the boundary vorticity formulation, without
constructing Prandtl correctors. These results hold on short time
intervals independent of viscosity.
See also \cite{BNTT,ConstantinKuka,KVW,2N-ext} and the references
therein for further inviscid-limit results in domains with boundaries.

~\\
For a single point vortex in the half-plane with a no-slip boundary,
Dalibard and Gallay \cite{dalibard2026viscousevolutionpointvortex}
and Wang, Yue, and Zhang
\cite{wang2026navierstokesequationsmathbbr2} recently constructed
global solutions for arbitrary circulation and established uniqueness
within classes specified by their short-time asymptotic behavior.
Both approaches distinguish the concentrated vortex core from the
initial boundary layer generated by the no-slip condition.

~\\
For concentrated planar vorticity, Marchioro \cite{Ma90,Ma98}
studied initial distributions localized in small, separated regions
and justified point-vortex dynamics in joint limits of vanishing
core size and viscosity. These results concern families of regularized
concentrated initial data. Starting directly from a finite sum of
Dirac masses, Gallay \cite{Ga11} proved that the viscous solution is
approximated by a superposition of Lamb--Oseen vortices on any fixed
interval before collision of the limiting point vortices. His
analysis also describes the leading deformation of each core caused
by the other vortices. Our analysis of the concentrated component is also inspired by Gallay's treatment of vortex-core deformation and his use of weighted energy estimates. Building on these ideas, we develop estimates that remove the short-time restriction associated with the ambient strain.

~\\
For a point vortex interacting with a regular background,
Nguyen and Nguyen \cite{NN} proved convergence of the regular and
concentrated components to the vortex--wave evolution on a restricted
time interval, with a detailed asymptotic expansion. Their argument
requires the vortex to avoid the region
swept out by the earlier wave supports and imposes a smallness
condition involving time and the ambient strain. Although the limiting
vortex--wave solution remains regular and separated on every finite
interval, these two requirements restrict the duration of the viscous
approximation. Removing them requires estimates that follow the
moving wave support and control the continuing deformation of the core.

~\\
Recent work has also addressed time intervals that grow as viscosity
tends to zero for particular vortex configurations. Dolce and Gallay
\cite{DolceGallay2026} treated the
dipole formed by two vortices of equal and opposite circulations.
They justified an asymptotic approximation for times of order
\(T_{\mathrm{adv}}\mathrm{Re}^{\sigma}\), for every
\(0<\sigma<1\), and obtained the leading correction to the
translation speed due to the finite size of the cores. Here
\(T_{\mathrm{adv}}\) is the characteristic advection time and
\(\mathrm{Re}\) is the circulation Reynolds number.
Zhang and Zhang \cite{zhang2025longtimeevolutionpair} obtained
approximations on the same range of time scales for a pair with
nonzero total circulation, including vortices of the same sign and
vortices of opposite signs with unequal strengths. In that setting,
the approximation follows corrected rotating trajectories.

~\\
The behavior of an individual viscous core is closely related to the
stability of the Lamb--Oseen vortex. Gallay and Wayne \cite{GW02}
constructed finite-dimensional invariant manifolds to describe the
long-time asymptotics of planar vorticity equations. In \cite{GW05},
they proved convergence in self-similar variables to the Oseen vortex
for integrable initial vorticity of arbitrary size. Gallay
\cite{Gallay2018} subsequently showed that strongly localized
perturbations at large circulation Reynolds number return to
axisymmetry on a time scale shorter than the diffusive one, through
enhanced dissipation produced by differential rotation.

~\\
The analysis of Donati and Gallay \cite{DonatiGallay2026} is
particularly relevant to the present work. They study a concentrated
viscous vortex in a prescribed smooth, divergence-free external flow
and construct
an approximation of the vortex motion and deformation on every fixed
finite interval for sufficiently small viscosity. For sharply
concentrated Gaussian initial data, they also prove rapid relaxation
toward the corresponding deformed vortex profile. We adapt their
expansion about the vortex center and their strain-cancelling weighted
estimates to an ambient velocity generated by an evolving regular
vorticity. In our problem, this velocity is coupled to the core, so
the estimates for the two components must be closed together.

~\\
Related questions in three dimensions have also led to precise
descriptions of concentrated vortices. Gallay and Maekawa
\cite{GallayMaekawa2011} proved asymptotic stability of Burgers
vortices under small three-dimensional perturbations for arbitrary
Reynolds number. For axisymmetric vortex rings, Gallay and
\v{S}ver\'ak \cite{GS24} justified the Kelvin--Saffman formula for the
self-induced motion of a viscous ring originating from a circular
vortex filament, on intervals growing as a positive power of the
Reynolds number. 

\subsection{Main result}
For the global solution of \eqref{eq:VW}, write
\(\delta(t):=\dist(z(t),\supp\w^E(t))\) and
\(\delta_0:=\delta(0)\).
We recall the separation estimate from \cite{Lacave}.
\begin{lemma}
\label{lem:separation}
Let \(\w_0^E\in C_c^\infty(\Rt)\) vanish in a neighborhood of \(z_0\),
and let \((z,\w^E)\) be the global smooth solution of \eqref{eq:VW}
with \(z(0)=z_0\) and \(\w^E(0)=\w_0^E\).
If \(\w_0^E\not\equiv0\), there exists \(\kappa>0\), depending
only on \(\delta_0\) and
\(\norm[L^1]{\w_0^E}+\norm[L^\infty]{\w_0^E}\), such that
\(\delta(t)\ge \exp(-\kappa e^{\kappa t})>0\) for every \(t\ge0\).
If \(\w_0^E\equiv0\), the same conclusion holds with
\(\delta(t)=+\infty\).
Consequently, for every finite \(T>0\),
\begin{equation}\label{eq:dT}
d_T:=\min_{0\le t\le T}\delta(t)>0.
\end{equation}
\end{lemma}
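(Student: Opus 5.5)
We would prove the separation estimate by a Lagrangian argument: follow the trajectories of the regular vorticity and show they cannot approach the point vortex faster than a double exponential. Since \(\w^E\) is transported by \(v^E+\Hz\), we have \(\supp\w^E(t)=X(t,\supp\w_0^E)\), where \(X\) is the flow of \(v^E+\Hz\). So it suffices to bound from below \(r(t):=\abs{X(t,x)-z(t)}\) for every \(x\in\supp\w_0^E\). The case \(\w_0^E\equiv0\) is trivial: \(\w^E\equiv0\) for all time, the support is empty, and \(\delta=+\infty\). In what follows we assume \(\w_0^E\not\equiv0\).

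\textbf{Step 1: a priori bounds.} Transport by a divergence-free field preserves \(\norm[L^1]{\w^E(t)}\) and \(\norm[L^\infty]{\w^E(t)}\), since \(\Hz\) is divergence free away from \(z\) and the support stays away from \(z\) for as long as the smooth solution exists. The standard Biot--Savart estimates then give
\[
\norm[L^\infty]{v^E}\lesssim \norm[L^1]{\w^E}^{1/2}\norm[L^\infty]{\w^E}^{1/2},
\]
together with the log-Lipschitz bound
\[
\abs{v^E(x)-v^E(y)}\le C\abs{x-y}\bigl(1+\abs{\log\abs{x-y}}\bigr)
\]
for \(\abs{x-y}\le1\), where \(C\) depends only on \(M:=\norm[L^1]{\w_0^E}+\norm[L^\infty]{\w_0^E}\).

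\textbf{Step 2: the differential inequality.} Differentiating,
\[
\tfrac{d}{dt}r^2 = 2\bigl(X-z\bigr)\cdot\bigl(v^E(X)+K(X-z)-v^E(z)\bigr).
\]
The key cancellation is that \(K(X-z)\) is orthogonal to \(X-z\), so the singular self-interaction term drops out. Hence
\[
\abs{\tfrac{d}{dt}r}\le \abs{v^E(X)-v^E(z)}\le Cr(1+\abs{\log r}).
\]
When \(r\ge1\), the bound \(\abs{\dot r}\le 2\norm[L^\infty]{v^E}\) suffices. Otherwise, setting \(\rho=-\log r\) gives \(\dot\rho\le C(1+\rho)\). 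Gronwall then yields
\[
1+\rho(t)\le (1+\rho(0))e^{Ct},
\]
where \(\rho(0)\le \max(0,-\log\delta_0)\). This gives \(r(t)\ge \exp(-\kappa e^{\kappa t})\) with \(\kappa\) depending only on \(\delta_0\) and \(M\). Taking the infimum over \(x\in\supp\w_0^E\) gives the bound on \(\delta(t)\). Continuity of \(\delta\) on \([0,T]\), or simply the lower bound itself, then gives \(d_T>0\).

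\textbf{Main obstacle.} The delicate point is justifying the argument globally. The flow \(X\) and the smooth solution are only known to exist while trajectories stay away from \(z\), so the argument is circular unless it is organized as a continuation argument. We would proceed as follows. On any interval where a smooth solution exists with positive separation, Steps 1 and 2 give the explicit lower bound. That bound is independent of the interval length beyond the stated double exponential. It therefore prevents the separation from degenerating, and this, combined with the local theory (or the global existence and uniqueness of Marchioro--Pulvirenti and Lacave--Miot \cite{MaPu91,Lacave}), extends the solution. For the log-Lipschitz estimate we only need \(v^E\) near \(z\) and near the support; since \(\w^E\) is bounded, this holds uniformly. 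This is the argument of \cite{Lacave}, which we would cite for the details.
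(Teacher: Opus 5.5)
Your proposal is correct and is essentially the argument the paper relies on. The paper gives no proof of its own and simply recalls the estimate from \cite{Lacave}, whose mechanism is exactly yours. The point-vortex velocity $K(X-z)$ is orthogonal to $X-z$, so $\abs{\dot r}\le\abs{v^E(X)-v^E(z)}$. The log-Lipschitz bound for $v^E$ has a constant depending only on the conserved $L^1$ and $L^\infty$ norms, and applying Gronwall to $\rho=-\log r$ then gives the double-exponential lower bound, with the continuation remark correctly ensuring that trajectories never reach $z(t)$.
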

Following \cite{Ga11,NN}, we decompose
\(\w^\nu=\w^{E,\nu}+\w^{B,\nu}\).  Both components are advected by
\(u^\nu=v^{E,\nu}+v^{B,\nu}\), where
\(v^{E,\nu}=K\star\w^{E,\nu}\) and
\(v^{B,\nu}=K\star\w^{B,\nu}\), and solve
\begin{equation}\label{eq:comp-eq}
\begin{aligned}
\partial_t\w^{E,\nu}+u^\nu\cdot\nabla\w^{E,\nu}
&=\nu\Delta\w^{E,\nu},
&
\w^{E,\nu}\big|_{t=0}
&=\w_0^E,
\\
\partial_t\w^{B,\nu}+u^\nu\cdot\nabla\w^{B,\nu}
&=\nu\Delta\w^{B,\nu},
&
\w^{B,\nu}(t)
&\to\delta_{z_0}
\quad\text{weak-* as }t\to0^+.
\end{aligned}
\end{equation}
Weak-* convergence below is understood in the space of finite Radon
measures, tested against continuous functions vanishing at infinity.
Their masses are conserved:
\(\int_{\Rt}\w^{E,\nu}(x,t)\,dx=\int_{\Rt}\w_0^E(x)\,dx\) and
\(\int_{\Rt}\w^{B,\nu}(x,t)\,dx=1\).
For \(s>0\) and \(a\in\Rt\), let
\begin{equation}\label{eq:Gamma-def}
\Gamma_{s,a}(x)
:=
\frac{1}{4\pi s}
\exp\left(-\frac{\abs{x-a}^2}{4s}\right).
\end{equation}
Thus, \(\Gamma_{s,a}\) is the two-dimensional heat kernel at time \(s\),
centered at \(a\), and has unit mass:
\[
\int_{\Rt}\Gamma_{s,a}(x)\,dx=1.
\]
\begin{theorem}\label{thm:main}
Let \(z_0\in\Rt\), and let \(\w_0^E\) be smooth and compactly supported.
Assume that \(\w_0^E\) vanishes in a neighborhood of \(z_0\).
Let \((z,\w^E)\) be the global smooth solution of \eqref{eq:VW} with
\(z(0)=z_0\) and \(\w^E(0)=\w_0^E\).
For
every finite \(T>0\), there exists a viscosity threshold \(\nu_T>0\)
such that, for each \(0<\nu\le\nu_T\), there is a modulated trajectory
\(\Zn\in C^1([0,T];\Rt)\) satisfying
\begin{equation}\label{eq:main-center}
\sup_{0<t\le T}
\frac{\abs{\Zn(t)-z(t)}}{t}
\lsim{T}\nu,
\end{equation}
and
\begin{equation}\label{eq:main-estimates}
\begin{aligned}
\sup_{0\le t\le T}
\norm[\lbb]{\w^{E,\nu}(t)-\w^E(t)}
&\lsim{T}\nu,
\\
\sup_{0<t\le T}
\frac1t
\norm[L^1]{\w^{B,\nu}(t)-\Gamma_{\nu t,\Zn(t)}}
&\lsim{T}\nu.
\end{aligned}
\end{equation}
In particular,
\begin{equation}\label{eq:main-z-centered}
\norm[L^1]{\w^{B,\nu}(t)-\Gamma_{\nu t,z(t)}}
\lsim{T}\sqrt{\nu t},
\qquad 0<t\le T.
\end{equation}
Consequently, for each fixed \(t>0\),
\[
\w^{E,\nu}(t)\to\w^E(t)
\quad\hbox{in }\lbb,
\qquad
\w^{B,\nu}(t)\to\delta_{z(t)}
\quad\hbox{weak-* as }\nu\to0.
\]
\end{theorem}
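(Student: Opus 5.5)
The plan is a bootstrap (continuity) argument on $[0,T]$ that closes the estimates for the two components simultaneously. It takes as input the separation bound $d_T>0$ from Lemma~\ref{lem:separation}.

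\emph{Step 1: approximate solution.} Fix $T$ and let $(z,\w^E)$ be the smooth vortex--wave solution. By Lemma~\ref{lem:separation} it satisfies $\dist(z(t),\supp\w^E(t))\ge d_T$, and $\w^E$ has $T$-dependent $C^k$ bounds. Choose a smooth cutoff $\chi$, equal to one on $\{|x-z(t)|\ge d_T/2\}$ and vanishing near $z(t)$. This lets $\Hz$ be replaced by the smooth field $\chi\Hz$ on the support of $\w^E$.

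For the core, work in self-similar variables
\[
\xi=\frac{x-\Zn(t)}{\sqrt{\nu t}},\qquad \w^{B,\nu}(x,t)=\frac{1}{\nu t}\,W(\xi,\tau),\qquad \tau=\log t.
\]
Following Gallay and Donati--Gallay, expand the ambient velocity $v^E$ around the center: its constant part is absorbed by the modulation $\dot\Zn=v^{E,\nu}(\Zn)+\dots$, and its strain part $\nabla v^E(z)$ produces at order $t$ a quadrupolar correction $tW_2$ of the Gaussian $G$. This correction solves $(\cL-\Lambda)W_2=$ (strain forcing), where $\cL$ is the Fokker--Planck operator and $\Lambda$ is the linearization at the Oseen vortex. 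It is solvable because the forcing is orthogonal to the kernel. The remainder is then $O(t^{3/2})$, i.e.\ $O(\nu t)$ in the original scaling.

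Symmetrically, on the wave side one expects $\w^{E,\nu}-\w^E=O(\nu t)$ from the Laplacian acting on smooth $\w^E$. The Lamb--Oseen field $K\star\Gamma_{\nu t,\Zn}$ differs from $\Hz$ away from the core only by exponentially small terms plus the shift $\Zn-z$.

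\emph{Step 2: coupled error estimates.} Set $\w^{E,\nu}=\w^E+\tilde\w$ and $W=G+tW_2+R$, and assume bootstrap bounds $\|\tilde\w\|_{\lbb}\le C_1\nu$, $|\Zn-z|\le C_1\nu t$ and $\|R\|_{\cZ}\le C_1\nu t$ (rescaled), with $\cZ$ a Gaussian-weighted $L^2$ space. The $\tilde\w$ equation is a transport--diffusion equation with velocity $u^\nu$ and forcing
\[
\nu\Delta\w^E-\tilde v\cdot\nabla\w^E-(v^{B,\nu}-\Hz)\cdot\nabla\w^E .
\]
Since $\w^E$ is supported at distance at least $d_T$ from $z$, and by the bootstrap the core lies within $O(\sqrt{\nu t})$ of $z$, the last term is $O(\nu)$ there. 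The $L^p$ estimates (using the divergence-free transport) and Gronwall with Hardy--Littlewood--Sobolev for $\tilde v$ then give $\|\tilde\w\|_{\lbb}\le C(T)\nu$.

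One further point needs care: $\tilde\w$ spreads into the core region, because the heat kernel has tails. This is handled with weights $e^{|x-z|/\sqrt\nu}$-type, or simply by the $L^{4/3}$ bound feeding only $O(\nu)$ velocity. The $R$ equation in self-similar variables is $\partial_\tau R=(\cL-\Lambda)R+$ ambient advection $+$ forcing, where the advection part carries the prefactor $\alpha^{-1}$ and the ambient velocity has size $t$.

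\emph{Main obstacle.} The hard step is the energy estimate for $R$ over the full interval without the smallness of $t\|\nabla v^E\|$ used in \cite{NN}. The strain term $t\,(\nabla v^E(z)\xi)\cdot\nabla R$ is not small relative to the dissipation once $t=O(1)$. I would follow Donati--Gallay and use a time-dependent modified weight $p(\xi,\tau)$, namely the Gaussian weight $e^{|\xi|^2/4}$ deformed by a quadrupolar factor built from the strain. The modification is chosen so that the commutator of the weight with the strain advection cancels the leading non-skew term. What remains is the skew-symmetric part of $\Lambda$ (Oseen rotation) plus terms bounded by the coercivity of $\cL$ on the orthogonal complement of the first moments. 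The first moments are fixed to zero by the choice of $\Zn$, and this gives $\frac{d}{dt}\|R\|^2\lesssim C(T)(\|R\|^2+\text{forcing}^2)$.

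Gronwall on $[0,T]$ then yields $\|R\|\le C(T)\nu t$ and $|\Zn-z|\le C(T)\nu t$ (the latter from the ODE for $\Zn$ versus $\dot z=v^E(z)$). Choosing $C_1=2C(T)$ and $\nu\le\nu_T$ so that all quadratic terms close gives \eqref{eq:main-center} and \eqref{eq:main-estimates}. Finally, \eqref{eq:main-z-centered} follows from
\[
\|\Gamma_{\nu t,\Zn}-\Gamma_{\nu t,z}\|_{L^1}\lesssim \frac{|\Zn-z|}{\sqrt{\nu t}}\lesssim \frac{\nu t}{\sqrt{\nu t}}=\sqrt{\nu t},
\]
and the weak-$*$ convergence statements are immediate consequences.
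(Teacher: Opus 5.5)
You have found the right obstacle and the right idea (a Donati--Gallay strain-cancelling correction of the weight). But the weight you propose cannot work on an $O(1)$ interval. The tools you then invoke, skew-symmetry of $\Lambda$ and the spectral gap of $\cL$ on moment-free functions, belong to the exact Gaussian weight $\pw_G=e^{\abs\xi^2/4}$ on all of $\Rt$, and a globally Gaussian weight fails for two reasons.

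\emph{The correction is only a perturbation near the core.} Since $v^G\sim(2\pi\abs\xi)^{-1}$, the corrector solving $v^G\cdot\nabla q=-\Sigma\xi\cdot\nabla\pw_G$ grows like $\abs\xi^4\pw_G$. So $\nu t\,q$ is a small perturbation of the weight only for $\abs\xi\ll(\nu t)^{-1/4}$.

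\emph{The far-field drift is uncontrolled.} For $\abs\xi\gtrsim(\nu t)^{-1/2}$ (physical distance of order one) the drift $b$ is no longer a linear strain and is merely bounded. The term $\frac12\sqrt{t/\nu}\int(b\cdot\nabla\pw_G)\abs w^2$ is then of size $\sqrt{t/\nu}\int\abs\xi\,\pw_G\abs w^2$. On the annulus $(\nu t)^{-1/2}\lesssim\abs\xi\lesssim t(\nu t)^{-1/2}$ the Gaussian confinement $\sim\int\abs\xi^2\pw_G\abs w^2$ does not dominate it. Since $v^G$ is negligible there, no angular correction can cancel it either.

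The paper therefore abandons the Gaussian away from the core and uses $\pw=\pw_0+\nu t\,q$, where $\pw_0$ has three regions:
\begin{itemize}
\item Gaussian, up to a $C^1$ transition, only for $\abs\xi^2\le a_\nu(t)^2+1$ with $a_\nu=a_0(\nu t)^{-1/4}$, so that $\nu t\abs q\lesssim a_0^4\pw_0$;
\item constant on a plateau up to $B_\nu=B_0(\nu t)^{-1/2}$, where there is no transport term and coercivity comes from $t\partial_t\pw_0=-\frac{a_\nu^2}{8}\pw_0$;
\item $e^{\beta\abs\xi/4}$ beyond, where the drift contributes at most $\frac{t}{B_0}\beta\abs\xi\,\pw_0$.
\end{itemize}
With this weight the nonlocal part of $\Lambda$ is not skew and no spectral gap is available. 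The paper instead uses that $\frac1\nu\int(\pw_0-\pw_G)(v\cdot\nabla G)w$ is exponentially small, because $\pw_0=\pw_G$ near the core. It gets coercivity from a pointwise inequality for $\pw_0$ together with the $+1$ in $(t\partial_t-\cL+1)$, which comes from normalizing the remainder by $\nu t$.

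Your Step 2 also has gaps that block the rate $O(\nu t)$ and the closure.

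\emph{The expansion is too short.} The dominant operator is $\nu^{-1}\Lambda$, so the quadrupole is $\nu t\,F$ with $F\approx-\Lambda^{-1}A$. Stopping there leaves the octupole defect $t(\nu t)^{1/2}B$ and an $O(t\,\nu t)$ defect. In the normalized remainder equation these are forcings of size $\sqrt{t/\nu}$ and $t$. The first yields only an $O(\sqrt\nu\,t^{3/2})$ core error, and neither is small enough to absorb the quadratic term on $[0,T]$. This is why the paper adds $F_{\mathrm{rad}}$, $H$, $J$ and reaches a residual $(\nu t)^{3/2}$.

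\emph{The wave error reaches the core unsmoothed.} Without an order-$\nu$ wave corrector, the $O(\nu)$ wave error $\tilde v$ enters the core through $\sqrt{t/\nu}\,\bigl(\tilde v(\Zn+\sqrt{\nu t}\,\xi)-\tilde v(\Zn)\bigr)\cdot\nabla G$. From $L^4\cap L^{4/3}$ bounds you only get $\abs{\tilde v}\lesssim\nu$, hence a forcing and a core error of size $\sqrt{\nu t}$. To do better you would need Lipschitz control of $\tilde v$ near the core, which you do not prove. The paper builds $\nu w_{1,a}$, keeps its support within $O(\nu)$ of $\supp\w^E(t)$ by comparing flows, and drives $\Zn$ by $v^E+\nu v_{1,a}$. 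Then only $\nu^{3/2}\norm[L^\infty]{\vb_1}$ reaches the core.

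\emph{The closure needs the dissipation.} The wave estimate needs the core remainder's velocity at $\abs\xi\sim(\nu t)^{-1/2}$, which is controlled by $\mathcal Q^{1/2}$ rather than by the energy. So a plain Gronwall bootstrap does not close. The paper closes with $E_{\pw}(t)+\kappa_1\int_0^t\mathcal Q(s)s^{-1}\,ds$.
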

The theorem controls both the transport of the regular vorticity and
the shape of the diffusing core throughout \([0,T]\). The sharper
\(L^1\) error \(O_T(\nu t)\) is measured around the modulated center
\(\Zn(t)\). Replacing that center by \(z(t)\) gives the error
\(O_T(\sqrt{\nu t})\) in \eqref{eq:main-z-centered}, because the
displacement is measured relative to the core radius \(\sqrt{\nu t}\).

\subsection{Outline of the proof}
If \(\omega_0^E\equiv0\), the solution is the Lamb--Oseen vortex and
Theorem~\ref{thm:main} is immediate. Assume henceforth that
\(\omega_0^E\not\equiv0\), and set \(\ell=d_T/10\). The parabolic scaling
\begin{equation}\label{eq:parabolic-normalization}
\widetilde\omega(x,t)=\ell^2\omega(\ell x,\ell^2t),
\qquad
\widetilde v(x,t)=\ell v(\ell x,\ell^2t),
\qquad
\widetilde z(t)=\ell^{-1}z(\ell^2t)
\end{equation}
preserves the equations, the viscosity, and the circulation.
Dropping the tildes and relabelling \(T/\ell^2\) as \(T\), we may assume
\begin{equation}
\label{eq:normalized-limiting-separation}
S_E(t):=\supp\omega^E(t),
\qquad
\dist\bigl(z(t),S_E(t)\bigr)\ge10,
\qquad
0\le t\le T.
\end{equation}

~\\
The proof combines transport estimates for the regular component with
an expansion and weighted energy estimates for the concentrated core.
We describe the four steps that allow these estimates to hold on the
whole prescribed interval.

\paragraph{Transport of the regular corrector.}
The source of the regular corrector at time \(s\) is supported in
\(S_E(s)\). Its contribution at time \(t\) must therefore be located
after transport from \(s\) to \(t\). The limiting flow sends
\(S_E(s)\) exactly onto \(S_E(t)\). Comparing it with the flow of
the smooth localized velocity in
\eqref{eq:localized-flow-global-ODE} gives
\[
\sup_{\substack{0\le s\le t\le T\\y\in S_E(s)}}
\abs{\Phi^\nu_{t,s}(y)-\Phi_{t,s}(y)}\lsim{T}\nu.
\]
Thus all past source contributions lie in a \(C_T\nu\)-neighborhood
of the current support. The localization is introduced before this
comparison, so that the flow estimate does not assume the desired
support bound. Once the bound is proved, the localized fields agree
with the original ones on the corrector support. This recovers the
original corrector equation and keeps \(\supp\wapp(t)\) at distance
at least \(8\) from \(\Zn(t)\), by \eqref{eq:w1-support-tube}.
The supremum of a nonnegative quantity over an empty support is understood
to be zero.

\paragraph{Expansion of the concentrated core.}
With separation established, the ambient velocity is smooth near the
core. We expand it about \(\Zn(t)\) in the variables
\(\xi=(x-\Zn(t))/\sqrt{\nu t}\), following
\cite{Ga11,NN}. The nonradial corrections are obtained
by solving fixed angular-mode equations, and the radial correction
solves a linear evolution equation. These profiles describe the
deformation of the Gaussian core under the ambient flow. The expansion
\eqref{eq:high-app} has residual
\[
\abs{\Phi_{\mathrm{app}}(\xi,t)}
\lsim{\gamma,T}(\nu t)^{3/2}e^{-\gamma|\xi|^2/4},
\qquad \tfrac12<\gamma<1,
\]
by Proposition~\ref{prop:high-residual}. Its moment cancellations
also give the far-field velocity decay used in the wave estimate.

\paragraph{Cancellation of the ambient strain.}
The ambient strain
\(\Sigma(t)=\operatorname{sym}D_x\vapp(\Zn(t),t)\) contributes a
term of size \(t|\Sigma(t)|\) to a Gaussian energy estimate. Treating
this term only by its absolute value would restrict the time interval.
Following \cite{Ga11,DonatiGallay2026}, we instead use
\(\pw=\pw_0+\nu t q\), where \(\pw_0\) is a radial weight
with Gaussian, constant, and exponential regions. In the inner region,
the angular correction satisfies
\[
v^G\cdot\nabla q=-\Sigma(t)\xi\cdot\nabla\pw_0.
\]
This identity cancels the leading strain term in the inner region.
On the plateau, \(\nabla\pw_0=0\), so the transport term vanishes
from the energy identity. The exponential tail controls the remaining
transport and nonlocal terms. The time derivative of \(\pw_0\)
contributes to the coercivity estimate \eqref{eq:p0-pointwise-coercivity}.
Theorem~\ref{thm:linear-weighted-estimate}
gives coercivity without requiring
\(\sup_{0<t\le T}t|\Sigma(t)|\) to be small.

\paragraph{Estimates for the remainders.}
The remaining task is to control the feedback between the wave error
and the core error. The exact rescaled core starts from \(G\), and a
short-time estimate for the unscaled approximation error provides
zero initial remainder energy. Write \(E_{\pw}\) for the core
energy, \(\mathcal Q\) for its dissipation, and
\(\norm{\wb_1(t)}\) for the \(L^4\cap L^{4/3}\) norm of the regular
remainder defined in \eqref{eq:wave-remainder-norm}. Sections~\ref{sec:w2-estimate}
and~\ref{sec:w1-estimate} give the coupled estimates
\eqref{eq:w2-apriori} and \eqref{eq:w1-estimate}. With
\[
D(t)=\int_0^t\frac{\mathcal Q(s)}s\,ds,
\qquad Y_*(t)=E_{\pw}(t)+\kappa_1D(t),
\]
the wave estimate yields
\(\norm{\wb_1(t)}^2/t^2\lsim{T}\nu+t^3D(t)\).
Substitution into the core estimate gives
\[
Y_*'(t)\le C_T\bigl(\nu+Y_*(t)+Y_*(t)^2\bigr),
\qquad Y_*(0)=0.
\]
For sufficiently small \(\nu\), a continuity argument gives \mbox{\(Y_*(t)\lsim{T}\nu t\)} and
\mbox{\(\norm{\wb_1(t)}/t\lsim{T}\sqrt\nu\)}. The remainder scalings then give
Theorem~\ref{thm:main} on the prescribed interval.

\paragraph{Organization of the paper.}

Section~\ref{sec:approx} constructs the regular approximation.
Sections~\ref{sec:b-exp} and~\ref{sec:high-approx} expand the ambient
drift and construct the vortex profiles.
Section~\ref{sec:remainder-equations} gives the remainder equations,
and Section~\ref{sec:linearized-estimate} proves the weighted energy
estimate. Sections~\ref{sec:w2-estimate} and~\ref{sec:w1-estimate}
estimate the two remainders. Section~\ref{sec:closure} closes these
estimates and proves Theorem~\ref{thm:main}.
Appendix~\ref{app:corrector-estimates} contains the transport and
Biot--Savart estimates.

\paragraph{Notation.}
The notation \(A\lsim{\mathcal P}B\), for \(B\ge0\), means
\(A\le cB\) for a positive constant \(c\) depending only on the
listed parameters \(\mathcal P\), the fixed finite time \(T\), the
limiting solution, and the initial data.  The dependence on these fixed
background quantities is usually suppressed, so that \(A\lesssim B\)
has the same convention. In particular, \(A\lsim{\mathcal P}1\)
denotes a uniform upper bound. Unless dependence on \(\nu\) is explicitly
indicated, implicit constants are independent of \(\nu\), of time in the stated interval, and of the positive starting
time \(t_0\) in Section~\ref{sec:closure}. Dependence on parameters to be chosen in absorption or bootstrap
arguments is always indicated. We retain explicit constants when their
values determine these choices or occur in an exponential rate.
We write
\(\norm[\lbb]{f}:=\norm[L^4]{f}+\norm[L^{4/3}]{f}\),
\(\norm[L^2_q]{f}^2:=\int_{\Rt}\abs{f(\xi)}^2q(\xi)\,d\xi\), and
\(\inner{f}{g}_q:=\int_{\Rt}f(\xi)g(\xi)q(\xi)\,d\xi\).
The Oseen profile and its velocity are
\begin{equation}\label{eq:oseen}
G(\xi)=\frac{1}{4\pi}e^{-\frac{\abs{\xi}^{2}}{4}},
\qquad
v^G(\xi)
=
\frac{1}{2\pi}
\frac{\xi^\perp}{\abs{\xi}^{2}}
\left(1-e^{-\frac{\abs{\xi}^{2}}{4}}\right)
=K\star_\xi G.
\end{equation}
The Fokker--Planck and linearized self-interaction operators are
\begin{equation}\label{eq:LLambda}
\cL f
=
\Delta f+\frac12\xi\cdot\nabla f+f,
\qquad
\Lambda f
=
v^G\cdot\nabla f+(K\star_\xi f)\cdot\nabla G.
\end{equation}
They satisfy \(\cL G=0\) and \(v^G\cdot\nabla G=0\).
The weighted-space structure of \(\cL\) and \(\Lambda\) is recalled in
the profile analysis below.
\section{Approximate solution of the regular part}
\label{sec:approx}
\subsection{The leading regular corrector}

For \(t>0\), define the viscous Biot--Savart kernel
\begin{equation}\label{eq:viscous-Biot-Savart}
K^\nu(x,t)
:=
\frac1{\sqrt{\nu t}}\,
v^G\left(\frac{x}{\sqrt{\nu t}}\right)
=
K(x)
\left(
1-e^{-\frac{\abs{x}^2}{4\nu t}}
\right).
\end{equation}
Thus, if \(a(t)\) is a \(C^1\) curve, the velocity generated by a
unit Lamb--Oseen vortex centered at \(a(t)\) is
\[
K^\nu(x-a(t),t).
\]
For \(x\neq0\), define the continuous extension
\begin{equation}\label{eq:viscous-Biot-Savart-t0}
K^\nu(x,0):=K(x).
\end{equation}
In particular, for \(x\neq a(0)\),
\[
K^\nu(x-a(0),0)=K(x-a(0)).
\]

\begin{lemma}
\label{lem:regular-corrector-algebra}
Let \(w\) be smooth, let \(v^w=K\star w\), and set
\[
\w_{\mathrm{reg}}^{\mathrm{app}}
=
\w^E+\nu w,
\qquad
v_{\mathrm{reg}}^{\mathrm{app}}
=
v^E+\nu v^w.
\]
For every \(C^1\) curve \(\Zn\),
\begin{equation}\label{eq:regular-defect-general}
\begin{aligned}
&
\partial_t\w_{\mathrm{reg}}^{\mathrm{app}}
+
\bigl(
v_{\mathrm{reg}}^{\mathrm{app}}
+
K^\nu(\,\cdot-\Zn(t),t)
\bigr)
\cdot\nabla\w_{\mathrm{reg}}^{\mathrm{app}}
-
\nu\Delta\w_{\mathrm{reg}}^{\mathrm{app}}
\\
&\quad=
\bigl(
K^\nu(\,\cdot-\Zn(t),t)-\Hz
\bigr)\cdot\nabla\w^E
\\
&\qquad
+
\nu
\left[
\partial_tw
+
\bigl(
v^E+K^\nu(\,\cdot-\Zn(t),t)
\bigr)\cdot\nabla w
+
v^w\cdot\nabla\w^E
-
\Delta\w^E
\right]
\\
&\qquad
+
\nu^2
\left(
v^w\cdot\nabla w-\Delta w
\right).
\end{aligned}
\end{equation}
\end{lemma}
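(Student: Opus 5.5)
This is a direct algebraic identity. The plan is to substitute \(\w_{\mathrm{reg}}^{\mathrm{app}}=\w^E+\nu w\) and \(v_{\mathrm{reg}}^{\mathrm{app}}=v^E+\nu v^w\) into the left-hand side, expand by bilinearity, and group the terms by powers of \(\nu\). The only analytic input is that \(\w^E\) solves the transport equation in \eqref{eq:VW}: \(\partial_t\w^E=-(v^E+\Hz)\cdot\nabla\w^E\), with \(\Hz=K(\cdot-z(t))\).

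First I expand the time derivative and the diffusion:
\[
\partial_t\w_{\mathrm{reg}}^{\mathrm{app}}=\partial_t\w^E+\nu\partial_tw,
\qquad
\nu\Delta\w_{\mathrm{reg}}^{\mathrm{app}}=\nu\Delta\w^E+\nu^2\Delta w .
\]
For the advection term, I write \(U:=v_{\mathrm{reg}}^{\mathrm{app}}+K^\nu(\cdot-\Zn(t),t)=v^E+K^\nu(\cdot-\Zn(t),t)+\nu v^w\). Then
\[
U\cdot\nabla\w_{\mathrm{reg}}^{\mathrm{app}}
=\bigl(v^E+K^\nu(\cdot-\Zn,t)\bigr)\cdot\nabla\w^E
+\nu\bigl[v^w\cdot\nabla\w^E+\bigl(v^E+K^\nu(\cdot-\Zn,t)\bigr)\cdot\nabla w\bigr]
+\nu^2v^w\cdot\nabla w .
\]

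Next I collect the order-\(\nu^0\) terms, namely \(\partial_t\w^E+(v^E+K^\nu(\cdot-\Zn,t))\cdot\nabla\w^E\). Substituting the vortex--wave equation for \(\partial_t\w^E\) cancels the \(v^E\cdot\nabla\w^E\) contribution and leaves \((K^\nu(\cdot-\Zn(t),t)-\Hz)\cdot\nabla\w^E\), which is the first line on the right of \eqref{eq:regular-defect-general}. The order-\(\nu\) terms are \(\partial_tw\), \((v^E+K^\nu)\cdot\nabla w\), \(v^w\cdot\nabla\w^E\) and \(-\Delta\w^E\), which form the bracket. The order-\(\nu^2\) terms are \(v^w\cdot\nabla w-\Delta w\).

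There is no real obstacle. The only point that needs care is interpretation: \(\Hz\) is singular at \(z(t)\), and \(K^\nu(\cdot,0)\) is defined only off the origin via \eqref{eq:viscous-Biot-Savart-t0}. I would therefore read the identity pointwise on \(\Rt\setminus\{z(t),\Zn(t)\}\), or simply note that \(\nabla\w^E\) vanishes near \(z(t)\) by Lemma~\ref{lem:separation}, so the product \(\Hz\cdot\nabla\w^E\) is smooth. Since the curve \(\Zn\) enters only through \(K^\nu\), which appears identically on both sides, the identity holds for every \(C^1\) curve.
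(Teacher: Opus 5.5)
Your proposal is correct and follows the paper's proof: substitute $\omega^E+\nu w$ and $v^E+\nu v^w$, expand by bilinearity, group by powers of $\nu$, and use the vortex--wave transport equation to turn the order-one terms into $\bigl(K^\nu(\cdot-Z^\nu(t),t)-H^z\bigr)\cdot\nabla\omega^E$. Your remark on reading the identity away from the singular centers, where $\nabla\omega^E$ vanishes near $z(t)$, is a harmless extra.
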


\begin{proof}
Substituting
$
\w_{\mathrm{reg}}^{\mathrm{app}}
=
\w^E+\nu w,\quad
v_{\mathrm{reg}}^{\mathrm{app}}
=
v^E+\nu v^w,
$
we obtain
\[
\partial_t\w_{\mathrm{reg}}^{\mathrm{app}}
=
\partial_t\w^E+\nu\partial_tw,
\]
and
\[
-\nu\Delta\w_{\mathrm{reg}}^{\mathrm{app}}
=
-\nu\Delta\w^E-\nu^2\Delta w.
\]
For the transport term,
\[
\begin{aligned}
&
\bigl(
v_{\mathrm{reg}}^{\mathrm{app}}
+
K^\nu(\,\cdot-\Zn(t),t)
\bigr)
\cdot\nabla\w_{\mathrm{reg}}^{\mathrm{app}}
\\
&\quad=
\bigl(
v^E+\nu v^w+K^\nu(\,\cdot-\Zn(t),t)
\bigr)
\cdot\nabla(\w^E+\nu w)
\\
&\quad=
\bigl(
v^E+K^\nu(\,\cdot-\Zn(t),t)
\bigr)\cdot\nabla\w^E
+
\nu v^w\cdot\nabla\w^E
\\
&\qquad
+
\nu
\bigl(
v^E+K^\nu(\,\cdot-\Zn(t),t)
\bigr)\cdot\nabla w
+
\nu^2v^w\cdot\nabla w.
\end{aligned}
\]
Consequently,
\[
\begin{aligned}
&
\partial_t\w_{\mathrm{reg}}^{\mathrm{app}}
+
\bigl(
v_{\mathrm{reg}}^{\mathrm{app}}
+
K^\nu(\,\cdot-\Zn(t),t)
\bigr)
\cdot\nabla\w_{\mathrm{reg}}^{\mathrm{app}}
-
\nu\Delta\w_{\mathrm{reg}}^{\mathrm{app}}
\\
&\quad=
\partial_t\w^E
+
\bigl(
v^E+K^\nu(\,\cdot-\Zn(t),t)
\bigr)\cdot\nabla\w^E
\\
&\qquad
+
\nu
\left[
\partial_tw
+
\bigl(
v^E+K^\nu(\,\cdot-\Zn(t),t)
\bigr)\cdot\nabla w
+
v^w\cdot\nabla\w^E
-
\Delta\w^E
\right]
\\
&\qquad
+
\nu^2
\left(
v^w\cdot\nabla w-\Delta w
\right).
\end{aligned}
\]
The leading term vanishes:
\[
\partial_t\w^E
+
(v^E+\Hz)\cdot\nabla\w^E
\stackrel{\eqref{eq:VW}}{=}
0.
\]
Hence
\[
\begin{aligned}
&
\partial_t\w^E
+
\bigl(
v^E+K^\nu(\,\cdot-\Zn(t),t)
\bigr)\cdot\nabla\w^E
\\
&\quad\stackrel{\eqref{eq:VW}}{=}
\bigl(
K^\nu(\,\cdot-\Zn(t),t)-\Hz
\bigr)\cdot\nabla\w^E.
\end{aligned}
\]
Substituting this identity into the preceding expansion proves
\eqref{eq:regular-defect-general}.
\end{proof}
We therefore define
\begin{equation}\label{eq:regular-approx-def}
\wapp=\w^E+\nu w_{1,a},
\qquad
\vapp=v^E+\nu v_{1,a},
\qquad
v_{1,a}=K\star_x w_{1,a},
\end{equation}
and determine \((w_{1,a},\Zn)\) from
\begin{equation}\label{eq:regular-corrector-system}
\left\{
\begin{aligned}
\partial_tw_{1,a}
{}&+\bigl(v^E+K^\nu(\,\cdot-\Zn(t),t)\bigr)\cdot\nabla w_{1,a}
+v_{1,a}\cdot\nabla\w^E
\\
&=
\Delta\w^E
-\frac1\nu
\bigl(K^\nu(\,\cdot-\Zn(t),t)-\Hz\bigr)\cdot\nabla\w^E,
\\
\partial_t\Zn(t)&=\vapp(\Zn(t),t),
\\
w_{1,a}\big|_{t=0}&=0,
\qquad
\Zn(0)=z_0.
\end{aligned}
\right.
\end{equation}

\begin{proposition}
\label{prop:regular-corrector}
For every finite \(T>0\), there is \(\nu_{0,T}>0\)
such that, for \(0<\nu\le\nu_{0,T}\), system
\eqref{eq:regular-corrector-system} has a solution satisfying
\begin{equation}\label{eq:regular-corrector-bounds}
\begin{aligned}
\sup_{0\le t\le T}
\Bigl(
\norm[W^{4,4}\cap W^{3,1}]{w_{1,a}(t)}
+\norm[W^{3,\infty}]{v_{1,a}(t)}
\Bigr)
&\lsim{T}1,
\\
\sup_{0\le t\le T}
\Bigl(
\norm[W^{2,4}\cap W^{2,1}]{\partial_tw_{1,a}(t)}
+\norm[W^{2,\infty}]{\partial_tv_{1,a}(t)}
\Bigr)
&\lsim{T}1,
\\
\sup_{0\le t\le T}
\Bigl(
\norm[L^4\cap L^1]{\partial_t^2w_{1,a}(t)}
+\abs{\partial_t^2\Zn(t)}
\Bigr)
&\lsim{T}1,
\\
\abs{\Zn(t)-z(t)}&\lsim{T}\nu t.
\end{aligned}
\end{equation}
Moreover,
\begin{equation}\label{eq:approx-separation}
\dist\bigl(\Zn(t),\supp\wapp(t)\bigr)\ge8,
\qquad 0\le t\le T,
\end{equation}
and
\begin{equation}\label{eq:regular-approx-residual}
\partial_t\wapp
+(\vapp+V_G^{\Zn,\nu})\cdot\nabla\wapp
-\nu\Delta\wapp
=
\nu^2\left(v_{1,a}\cdot\nabla w_{1,a}-\Delta w_{1,a}\right).
\end{equation}
Finally,
\begin{equation}\label{eq:w1a-zero-mass}
\int_{\Rt}w_{1,a}(x,t)\,dx=0.
\end{equation}
\end{proposition}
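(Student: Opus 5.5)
Proposition~\ref{prop:regular-corrector} will be proved by a fixed-point argument on the coupled system \eqref{eq:regular-corrector-system}, localized so that the needed separation is built in a priori. Here \(V_G^{\Zn,\nu}=K^\nu(\cdot-\Zn(t),t)\).

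\textbf{Step 1: localize and bound the source.} Fix a smooth cutoff \(\chi\) equal to \(1\) outside \(B(0,4)\) and vanishing in \(B(0,2)\). In the transport equation for \(w_{1,a}\), replace \(K^\nu(\cdot-\Zn(t),t)\) by \(\chi(\cdot-\Zn(t))K^\nu(\cdot-\Zn(t),t)\). This gives the localized flow \(\Phi^\nu_{t,s}\), which has smooth, \(\nu\)-uniformly bounded velocity.

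The source term is
\[
\nu^{-1}\bigl(K^\nu(\cdot-\Zn,t)-\Hz\bigr)\cdot\nabla\w^E.
\]
Expand \(K^\nu(x-\Zn)-K(x-\Zn)=-K(x-\Zn)e^{-|x-\Zn|^2/(4\nu t)}\) and \(K(x-\Zn)-K(x-z)\). On \(S_E(t)\) we have \(|x-z|\ge10\), and we work in the bootstrap class \(|\Zn-z|\le C\nu t\le1\). Hence the first piece is \(O(e^{-c/(\nu t)})=O(\nu^\infty)\), and the second is \(O(|\Zn-z|)=O(\nu t)\), with all derivatives controlled. So the source is bounded in \(W^{4,4}\cap W^{3,1}\) and is supported in \(S_E(t)\).

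\textbf{Step 2: solve the localized system by contraction.} On \([0,T]\), work in the class of pairs \((w,Z)\) with \(\|w\|_{W^{4,4}\cap W^{3,1}}\le M\) and \(|Z-z|\le M\nu t\).

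Solve the linear transport equation along \(\Phi^\nu\) by Duhamel's formula. The term \(v_{1,a}\cdot\nabla\w^E\) is linear in \(w\). Gronwall then gives Sobolev bounds using \(\|\nabla v_{1,a}\|_{W^{3,\infty}}\lesssim\|w\|_{W^{4,4}\cap W^{3,1}}\), which follows from the Biot--Savart estimates of the appendix. This bound grows in \(T\) but is independent of \(\nu\).

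For the ODE, compare \(\dot Z=v^E(Z)+\nu v_{1,a}(Z)\) with \(\dot z=v^E(z)\). This gives \(|Z-z|'\le L|Z-z|+C\nu\), so \(|Z-z|\lesssim_T\nu t\). Choosing \(M=M_T\) and then \(\nu\) small closes the map, and Lipschitz estimates in a lower norm give a contraction.

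The time-derivative bounds follow by differentiating the equation in \(t\) and reading them off.

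The zero-mass identity \eqref{eq:w1a-zero-mass} holds because every term is a divergence. The velocities are divergence-free, and \(\Delta\w^E\) and the source \(\nabla\cdot((K^\nu-\Hz)\w^E)\) are divergences too. The initial datum is zero.

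\textbf{Step 3: support localization and removal of the cutoff (main obstacle).} By Duhamel, \(\supp w_{1,a}(t)\subset\bigcup_{s\le t}\Phi^\nu_{t,s}(S_E(s))\), since the term \(v_{1,a}\cdot\nabla\w^E\) is also supported in \(S_E(s)\). The limiting flow satisfies \(\Phi_{t,s}(S_E(s))=S_E(t)\).

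For \(y\in S_E(s)\), the trajectory stays in \(S_E(\tau)\) under \(\Phi\), where the localized velocity agrees with \(v^E+\Hz\) up to \(O(\nu)\). This uses \(|\Zn-z|\lesssim\nu\), the bound \(K^\nu-K=O(\nu^\infty)\) at distance at least \(10\), and \(\nu v_{1,a}\). A Gronwall estimate with the Lipschitz constant of the localized field near \(S_E\) then gives \(|\Phi^\nu_{t,s}(y)-\Phi_{t,s}(y)|\le C_T\nu\).

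Hence \(\supp w_{1,a}(t)\) lies in a \(C_T\nu\)-neighborhood of \(S_E(t)\). Its distance to \(\Zn(t)\) is therefore at least \(10-C_T\nu-C_T\nu t\ge8\), which is \eqref{eq:approx-separation}.

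The delicate point is that this Gronwall estimate must be run for the localized field, with no prior knowledge of where the support lies. On the region \(|x-\Zn|\ge8\) the cutoff equals \(1\), so the localized and original equations coincide there. Therefore the constructed pair solves the original system \eqref{eq:regular-corrector-system}.

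\textbf{Step 4: the residual.} Finally, \eqref{eq:regular-approx-residual} follows from Lemma~\ref{lem:regular-corrector-algebra} with \(w=w_{1,a}\). The \(\nu^0\) and \(\nu^1\) brackets cancel exactly by \eqref{eq:regular-corrector-system}, leaving only the \(\nu^2\) term.
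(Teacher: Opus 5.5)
Your proof is correct, but it localizes differently from the paper. The paper cuts off the \emph{stream functions} $\Psi_G^{z+\nu\zeta,\nu}$ and $\Psi_H^{z}$ with a space--time cutoff $\chi(x,t)$ adapted to the moving wave support. Here $\chi=1$ within distance $2$ of $S_E(t)$ and $\chi=0$ beyond distance $3$, built from Hausdorff continuity of $t\mapsto S_E(t)$ and an Urysohn construction. This yields compactly supported, divergence-free fields $\mathcal V_{\nu,\zeta}$ and $\mathcal H_\chi$. The paper then compares the flow of $v^E+\mathcal V_{\nu,\zeta}$ with that of the auxiliary globally Lipschitz field $v^E+\mathcal H_\chi$, which the limiting trajectories from $S_E(s)$ also solve.

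You instead remove only the vortex core, cutting off the velocity itself around $\Zn(t)$. Because $K^\nu(\cdot-\Zn,t)$ is purely azimuthal about $\Zn$, a \emph{radial} cutoff keeps it divergence free. Say explicitly that your cutoff is radial, since you use divergence-freeness for the zero-mass identity and the transport estimates. The resulting field is globally Lipschitz uniformly in $\nu$. You then compare directly with the limiting flow via $U^\nu(Y^\nu)-U(Y)=[U^\nu(Y^\nu)-U^\nu(Y)]+[U^\nu(Y)-U(Y)]$, evaluating the $O(\nu)$ discrepancy only along $Y(r)\in S_E(r)$.

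Your route is lighter. It needs no moving-support cutoff and no localization of the source, which already lives on $S_E(t)$ at distance at least $9$ from $\Zn$ in your class. Using the global Lipschitz constant also removes any need for a priori information on $Y^\nu$. The paper's construction buys compactly supported fields and ready-made bounds on the normalized difference $\mathcal R^{\mathrm{fld}}_{\nu,\zeta}$ and its time derivative along paths. It reuses those bounds for the $\partial_tw_{1,a}$ and $\partial_t^2w_{1,a}$ estimates, which you only sketch as ``reading off.''

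One step in your Step~2 needs tightening. A one-shot Picard map on all of $[0,T]$ with the class $\norm{w}\le M$, $\abs{Z-z}\le M\nu t$ does not map the class into itself unless $T$ is small. The output bounds are of the form $C_T(1+M)$, with $C_T$ not small.

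The fix is the paper's structure. The coupled inequality $\frac{d}{dt}\bigl(\norm{w}+\abs{\zeta}\bigr)\lsim{T}1+\norm{w}+\abs{\zeta}$, with $\zeta=(\Zn-z)/\nu$, has constants independent of $M$ as long as $\nu M\le1$. So take $M$ above twice the resulting Gronwall bound and close by local contraction plus continuation; exponentially time-weighted norms also work. The contraction metric should measure centers through $\nu^{-1}\abs{Z_1-Z_2}$, because the source depends on $Z$ through $\nu^{-1}K^\nu(\cdot-Z,t)$.
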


~\\
To prove \eqref{eq:approx-separation}, we show that
\begin{equation}\label{eq:key-support-mechanism}
\sup_{x\in\supp w_{1,a}(t)}
\dist\bigl(x,S_E(t)\bigr)\lsim{T}\nu,
\qquad
S_E(t):=\supp\omega^E(t).
\end{equation}
The point vortex remains separated from \(S_E(t)\), but need not remain
separated from
\[
\bigcup_{0\le s\le t}S_E(s).
\]
We therefore compare the two flows from each source time \(s\)
to time \(t\).
We localize the stream functions to obtain smooth divergence-free fields
that agree with the original fields near \(S_E(t)\). This allows us to
construct the corrector before proving that its support remains separated
from the vortex.

~\\
Let \(\Phi_{t,s}\) be the flow of
\[
v^E+H^z,
\]
and \(\Phi^\nu_{t,s}\) the flow of the localized velocity
\[
v^E+\mathcal V_{\nu,\zeta}.
\]
Since \(\omega^E\) is transported by the limiting flow,
\begin{equation}\label{eq:current-support-flow-idea}
\Phi_{t,s}\bigl(S_E(s)\bigr)=S_E(t).
\end{equation}
We prove
\begin{equation}\label{eq:key-flow-comparison-idea}
\sup_{\substack{0\le s\le t\le T\\ y\in S_E(s)}}
\left|
\Phi^\nu_{t,s}(y)-\Phi_{t,s}(y)
\right|
\lsim{T}\nu.
\end{equation}
We compare the smooth fields
\(v^E+\mathcal V_{\nu,\zeta}\) and \(v^E+\mathcal H_\chi\).
Their difference is \(O_T(\nu)\) in \(L^\infty\), and the latter
has a uniform Lipschitz bound; see
\eqref{eq:localized-field-bounds} and
\eqref{eq:auxiliary-uniform-bound}. Since the limiting trajectories
starting in \(S_E(s)\) remain in \(S_E(r)\), they also solve the
equation for \(v^E+\mathcal H_\chi\). Gronwall's inequality gives
\[
\abs{\Phi^\nu_{t,s}(y)-\Phi_{t,s}(y)}
\lsim{T}\nu(t-s),\qquad y\in S_E(s),
\]
and hence \eqref{eq:key-flow-comparison-idea}.

~\\
The corrector has zero initial data and forcing supported in \(S_E(s)\).
The characteristic formula gives
\[
\supp w_{1,a}(t)
\subset
\overline{
\bigcup_{0\le s\le t}
\Phi^\nu_{t,s}\bigl(S_E(s)\bigr)
}.
\]
For each source point,
\[
\begin{aligned}
\dist\left(
\Phi^\nu_{t,s}(y),S_E(t)
\right)
&\stackrel{\eqref{eq:current-support-flow-idea}}{\le}
\left|
\Phi^\nu_{t,s}(y)-\Phi_{t,s}(y)
\right|
\\
&\stackrel{\eqref{eq:key-flow-comparison-idea}}{\lsim{T}}\nu,
\qquad y\in S_E(s),
\end{aligned}
\]
and hence
\[
\sup_{x\in\supp w_{1,a}(t)}\dist(x,S_E(t))\lsim{T}\nu.
\]
For small \(\nu\), the localized and original fields agree on
\(\supp\nabla w_{1,a}(t)\). They also agree on
\(\supp\nabla\omega^E(t)\subset S_E(t)\).
Thus the auxiliary solution satisfies the original corrector equation
on \([0,T]\).

~\\
The transport and Biot--Savart estimates used in the local construction
are proved in Appendix~\ref{app:corrector-estimates}.
\subsection{Divergence-free localization near the wave support}
For \(0\le s,r\le T\) and \(x\in S_E(s)\), define
\(\Phi_{r,s}(x)\) by
\begin{equation}\label{eq:Euler-flow-ODE}
\frac{d}{dr}\Phi_{r,s}(x)
=
\bigl(v^E+\Hz\bigr)\bigl(\Phi_{r,s}(x),r\bigr),
\qquad
\Phi_{s,s}(x)=x.
\end{equation}
There exist \(\epsilon_T>0\) and a smooth compactly supported function
\(\chi:\Rt\times(-\epsilon_T,T+\epsilon_T)\to\mathbb R\),
with \(0\le\chi\le1\), such that
\begin{equation}\label{eq:chi-tubes}
\chi(x,t)=\begin{cases} 1 & \text{if } \dist(x,S_E(t))\le 2,\\ 0 & \text{if } \dist(x,S_E(t))\ge 3. \end{cases}
\end{equation}
The flow of \eqref{eq:Euler-flow-ODE} is continuous in \((t,y)\), and
\(S_E(t)=\Phi_{t,0}\bigl(S_E(0)\bigr)\) with \(S_E(0)\) compact. Thus
\(t\mapsto S_E(t)\) is continuous in the Hausdorff metric.  The closed space--time tube of
radius \(2\) and the closed complement of the tube of radius
\(3\) are separated.  A smooth Urysohn cutoff, followed by a compact
spatial cutoff which is one on the bounded union of the tubes, gives
\eqref{eq:chi-tubes} (after extending the time interval slightly).
For \(t>0\), define
\begin{equation}\label{eq:localized-stream-functions}
\begin{aligned}
\Psi_H^a(x)
&:=\frac1{2\pi}\log\abs{x-a},
\\
\Psi_G^{a,\nu}(x,t)
&:=\frac1{2\pi}\log\abs{x-a}
 +\frac1{4\pi}E_1\left(\frac{\abs{x-a}^2}{4\nu t}\right),
\\
E_1(q)&:=\int_q^\infty\frac{e^{-s}}s\,ds.
\end{aligned}
\end{equation}

~\\
For every \(x\neq a\),
\[
\lim_{t\to0^+}\Psi_G^{a,\nu}(x,t)
=
\Psi_H^a(x),
\]
and therefore we define
\[
\Psi_G^{a,\nu}(x,0):=\Psi_H^a(x).
\]
Moreover, \(\Psi_H^a\) and \(\Psi_G^{a,\nu}\) are the stream functions
of the unit point vortex and the Lamb--Oseen vortex centered at \(a\),
respectively:
\[
\nabla^\perp\Psi_H^a=K(\cdot-a),
\qquad
\nabla^\perp\Psi_G^{a,\nu}=V_G^{a,\nu}.
\]
For \(\zeta\in\Rt\) satisfying \(\nu\abs\zeta<7\), let
\begin{equation}\label{eq:localized-fields}
\begin{aligned}
\mathcal V_{\nu,\zeta}(x,t)
&:=\nabla^\perp\left[
\chi(x,t)\Psi_G^{z(t)+\nu\zeta,\nu}(x,t)
\right],
\\
\mathcal H_\chi(x,t)
&:=\nabla^\perp\left[
\chi(x,t)\Psi_H^{z(t)}(x)
\right],
\\
\mathcal R_{\nu,\zeta}^{\mathrm{fld}}(x,t)
&:=\frac{\mathcal V_{\nu,\zeta}(x,t)-\mathcal H_\chi(x,t)}\nu.
\end{aligned}
\end{equation}
Here \(\chi\) is given by \eqref{eq:chi-tubes} and \(\nu\zeta\) is the
displacement from \(z(t)\). Since the cutoff acts on the stream functions,
the fields are divergence free. They are smooth on \(\Rt\), because
\(\supp\chi\) stays at distance at least \(7\) from \(z(t)\) and at
positive distance from \(z(t)+\nu\zeta\).
\begin{lemma}
\label{lem:divfree-localization}
Let \(M\ge1\), and suppose that \(0<\nu\le1\) and \(\nu M\le1\).
Then, for every \(\zeta\in\Rt\) with \(\abs\zeta\le M\), the fields in
\eqref{eq:localized-fields} are smooth in \(x\) and divergence free and,
for every \(t\in[0,T]\),
\begin{equation}\label{eq:localized-field-bounds}
\begin{aligned}
\norm[W^{5,\infty}]{\mathcal V_{\nu,\zeta}(t)}
+\norm[W^{5,\infty}]{\mathcal H_\chi(t)}
&\lsim{T}1,
\\
\norm[W^{4,\infty}]
 {\mathcal R_{\nu,\zeta}^{\mathrm{fld}}(t)}
&\lsim{T}1+\abs{\zeta}.
\end{aligned}
\end{equation}
Moreover, for every \(\zeta_1,\zeta_2\in\Rt\) with
\(\abs{\zeta_i}\le M\), \(i=1,2\), and every \(t\in[0,T]\),
\begin{equation}\label{eq:localized-field-Lipschitz}
\begin{aligned}
\norm[W^{4,\infty}]
 {\mathcal V_{\nu,\zeta_1}(t)-\mathcal V_{\nu,\zeta_2}(t)}
&\lsim{T}\nu\abs{\zeta_1-\zeta_2},
\\
\norm[W^{3,\infty}]
 {\mathcal R_{\nu,\zeta_1}^{\mathrm{fld}}(t)
 -\mathcal R_{\nu,\zeta_2}^{\mathrm{fld}}(t)}
&\lsim{T}\abs{\zeta_1-\zeta_2}.
\end{aligned}
\end{equation}
On \(\{\dist(x,S_E(t))\le2\}\), one has, for \(t>0\), the exact identities
\begin{equation}\label{eq:localized-fields-equal}
\mathcal V_{\nu,\zeta}=V_G^{z+\nu\zeta,\nu},
\qquad
\mathcal H_\chi=\Hz,
\qquad
\mathcal R_{\nu,\zeta}^{\mathrm{fld}}
=\frac{V_G^{z+\nu\zeta,\nu}-\Hz}{\nu}.
\end{equation}
Finally, let \(\zeta\in C^1([0,T];\Rt)\) be any path satisfying
\(\abs{\zeta(t)}\le M\).  Along this path, for \(0<t\le T\),
\begin{equation}\label{eq:localized-field-time-bounds}
\begin{aligned}
\left\|\frac d{dt}\mathcal V_{\nu,\zeta(t)}(t)\right\|_{W^{4,\infty}}
+\norm[W^{4,\infty}]{\partial_t\mathcal H_\chi(t)}
&\lsim{T}1+\nu\abs{\partial_t\zeta(t)},
\\
\left\|\frac d{dt}
\mathcal R_{\nu,\zeta(t)}^{\mathrm{fld}}(t)\right\|_{W^{3,\infty}}
&\lsim{T}1+\abs{\zeta(t)}
                    +\abs{\partial_t\zeta(t)}.
\end{aligned}
\end{equation}
The fields and their derivatives in
\eqref{eq:localized-field-time-bounds} extend continuously to \(t=0\).
With the derivatives at \(t=0\) defined by these limits, the same
bounds hold on \([0,T]\).
\end{lemma}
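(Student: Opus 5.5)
The plan is to work entirely on the support of \(\chi\), which stays far from the vortex, and reduce every claim to smoothness of the two stream functions there. Fix \(t\in[0,T]\) and set \(a=z(t)+\nu\zeta\). By \eqref{eq:chi-tubes}, \(\supp\chi(\cdot,t)\subset\{\dist(x,S_E(t))\le3\}\). By \eqref{eq:normalized-limiting-separation}, this set lies at distance at least \(7\) from \(z(t)\). Since \(\nu\abs{\zeta}\le\nu M\le1\), every \(x\in\supp\chi(\cdot,t)\) satisfies \(\abs{x-a}\ge6\). On this region \(\Psi_H^a\) is smooth, with all derivatives bounded by constants depending only on the order. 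The correction \(\frac1{4\pi}E_1(\abs{x-a}^2/4\nu t)\) and its derivatives are bounded by \(C_k(\nu t)^{-k}e^{-\abs{x-a}^2/(4\nu t)}\lesssim C_k e^{-c/(\nu t)}\), uniformly for \(\nu t\le T\), and they vanish as \(t\to0^+\). The functions \(\chi\) and its space-time derivatives are smooth and compactly supported, so they are bounded uniformly in \(t\in[0,T]\). Divergence-freeness follows because each field is \(\nabla^\perp\) of a smooth function. The product rule then gives the \(W^{5,\infty}\) bounds in \eqref{eq:localized-field-bounds}; this needs six derivatives of \(\chi\Psi\), all bounded.

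For \(\mathcal R^{\mathrm{fld}}_{\nu,\zeta}\), write
\[
\chi\bigl(\Psi_G^{a,\nu}-\Psi_H^{z}\bigr)=\chi\bigl(\Psi_H^{a}-\Psi_H^{z}\bigr)+\tfrac{1}{4\pi}\chi E_1\bigl(\abs{x-a}^2/4\nu t\bigr).
\]
The first term is handled by the mean value theorem along the segment from \(z(t)\) to \(a\). That segment stays at distance at least \(6\) from \(\supp\chi\), which gives a \(W^{5,\infty}\) bound of order \(\nu\abs{\zeta}\). For the second term, use the elementary bound \(s^{-k}e^{-c/s}\lesssim_k 1\) with \(s=\nu t\le\nu T\), together with \(e^{-c/(\nu t)}\le C\nu t\). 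This bounds the term by \(C_T\nu\). Dividing by \(\nu\) yields \(\norm[W^{4,\infty}]{\mathcal R^{\mathrm{fld}}}\lsim{T}1+\abs\zeta\). The Lipschitz estimates \eqref{eq:localized-field-Lipschitz} follow the same route. The map \(a\mapsto\chi\Psi_G^{a,\nu}\) has \(\partial_a\) derivatives bounded on \(\supp\chi\), because the points \(z+\nu\zeta_1\) and \(z+\nu\zeta_2\) and the segment between them stay at distance at least \(6\). This gives a difference of order \(\nu\abs{\zeta_1-\zeta_2}\). Since \(\mathcal H_\chi\) is independent of \(\zeta\), dividing by \(\nu\) gives the \(\mathcal R\) difference bound.

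The identities \eqref{eq:localized-fields-equal} are immediate: \(\chi\equiv1\) on a neighborhood of \(\{\dist(x,S_E(t))\le2\}\)? More precisely, \(\chi=1\) on that closed set. To differentiate there, one uses that \(\chi\) equals \(1\) wherever \(\dist\le2\), so \(\nabla\chi=0\) at these points (they are maxima of the smooth function \(\chi\le1\)). Hence \(\nabla^\perp(\chi\Psi)=\nabla^\perp\Psi\) there. We also use \(\nabla^\perp\Psi_H^{z}=\Hz\) and \(\nabla^\perp\Psi_G^{a,\nu}=V_G^{a,\nu}\).

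For the time bounds \eqref{eq:localized-field-time-bounds}, differentiate along the path using the chain rule:
\[
\tfrac{d}{dt}\bigl[\chi\Psi_G^{a(t),\nu}\bigr]=\partial_t\chi\,\Psi_G+\chi\,\partial_a\Psi_G\cdot\dot a+\chi\,\partial_t\Psi_G,
\qquad \dot a=\dot z+\nu\dot\zeta.
\]
Here \(\dot z=v^E(z)\) is bounded by the smoothness of the limiting solution. The explicit derivative \(\partial_t\) of the \(E_1\) term is again exponentially small, of size \((\nu t)^{-k}e^{-c/\nu t}\), uniformly, and extends continuously by zero to \(t=0\). This gives the bound \(1+\nu\abs{\dot\zeta}\) for \(\mathcal V\), and similarly for \(\mathcal H_\chi\). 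For \(\mathcal R\) we differentiate the two-piece decomposition above. The mean-value piece produces \(\partial_t\) of \(\Psi_H^{z+\nu\zeta}-\Psi_H^z\), which is \(O(\nu(\abs\zeta+\abs{\dot\zeta}))\) after the chain rule. The \(E_1\) piece yields \(O(\nu)\) after using \(e^{-c/s}\lesssim_k s^{k+1}\). Dividing by \(\nu\) gives \(1+\abs\zeta+\abs{\dot\zeta}\).

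The main obstacle is purely the uniformity as \(t\to0^+\) and the extension to \(t=0\). I would isolate one lemma: for \(\abs{y}\ge6\) and all multi-indices, \(\abs{\partial_y^\alpha\partial_t^j E_1(\abs y^2/4\nu t)}\le C_{\alpha,j}\,\nu t\). I would prove it by induction from \(E_1'(q)=-e^{-q}/q\), which shows each derivative is a polynomial in \((\nu t)^{-1}\) and \(y\) times \(e^{-\abs y^2/4\nu t}\). This lemma makes every claimed continuity at \(t=0\) automatic.
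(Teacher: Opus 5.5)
Your proposal is correct and follows essentially the same route as the paper. The paper likewise splits \(\Psi_G^{a,\nu}=\Psi_H^a+\frac1{4\pi}E_1(\abs{x-a}^2/4\nu t)\) and uses the bound \(\abs{x-a}\ge6\) on \(\supp\chi\). The zero-order \(\log\) term also needs the upper bound \(\abs{x-a}\lesssim1\) coming from compact support, as in \eqref{eq:localized-annulus}. It then applies the fundamental theorem of calculus in the center variable for \(\mathcal R_{\nu,\zeta}^{\mathrm{fld}}\) and the Lipschitz bounds, uses the flatness of the \(E_1\) tail after division by \(\nu\) (your single lemma packages \eqref{eq:localized-tail-bound}--\eqref{eq:localized-nu-flatness}), and uses the chain rule along \(a(t)=z(t)+\nu\zeta(t)\) with \(\nu\abs{\zeta}\le1\). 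Your remark that \(\nabla\chi=0\) wherever \(\chi=1\), since such points are maxima of \(\chi\le1\), justifies cleanly a step the paper states without comment.
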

\begin{proof}
For \(\tau>0\), write
\[
\mathscr E_\tau^a(x)
:=
\frac{1}{4\pi}
E_1\left(\frac{|x-a|^2}{4\tau}\right),
\]
so that
\begin{equation}\label{eq:PsiG-PsiH-tail}
\Psi_G^{a,\nu}(x,t)
=
\Psi_H^a(x)+\mathscr E_{\nu t}^a(x).
\end{equation}

~\\
If
\(x\in\supp\chi(\cdot,t)\), \(|\zeta|\le M\), and
\(0\le\theta\le1\), then
\[
\begin{aligned}
|x-z(t)-\theta\nu\zeta|
&\ge
\dist(z(t),S_E(t))
-\dist(x,S_E(t))
-\nu|\zeta|
\\
&\stackrel{\substack{\eqref{eq:normalized-limiting-separation}\\\eqref{eq:chi-tubes}}}{\ge} 10-3-1=6.
\end{aligned}
\]
Since \(\chi\) has compact space--time support and \(z([0,T])\) is
bounded, we also have
\begin{equation}\label{eq:localized-annulus}
6
\le
|x-z(t)-\theta\nu\zeta|
\lsim{T}1,
\qquad
x\in\supp\chi(\cdot,t).
\end{equation}
The same estimate holds for all centers on the segment joining
\(z(t)+\nu\zeta_1\) and \(z(t)+\nu\zeta_2\).

~\\
The cutoff therefore vanishes near the centers, so the localized fields
are smooth and divergence free. Moreover,
\[
\nabla^\perp\Psi_H^a(x)=K(x-a),
\qquad
\nabla^\perp\mathscr E_\tau^a(x)
=
-K(x-a)e^{-\frac{|x-a|^2}{4\tau}},
\]
and therefore
\[
\nabla^\perp\Psi_G^{a,\nu}(x,t)
=
K(x-a)
\left(
1-e^{-\frac{|x-a|^2}{4\nu t}}
\right)
=
V_G^{a,\nu}(x,t).
\]
On
\[
\{x:\dist(x,S_E(t))\le2\},
\]
we have \(\chi=1\) and \(\nabla\chi=0\).  Hence
\[
\mathcal V_{\nu,\zeta}
=
V_G^{z+\nu\zeta,\nu},
\qquad
\mathcal H_\chi
=
\Hz,
\qquad
\mathcal R_{\nu,\zeta}^{\mathrm{fld}}
=
\frac{V_G^{z+\nu\zeta,\nu}-\Hz}{\nu},
\]
which proves \eqref{eq:localized-fields-equal} for \(t>0\).

~\\
On the region \eqref{eq:localized-annulus},
\begin{equation}\label{eq:localized-log-bound}
\max_{|\alpha|+|\beta|\le6}
\sup_{x\in\supp\chi(\cdot,t)}
\left|
\partial_x^\alpha D_a^\beta\Psi_H^a(x)
\right|
\lsim{T}1.
\end{equation}
For \(\ell\in\{0,1\}\) and
\(|\alpha|+|\beta|\le6\),
\begin{equation}\label{eq:localized-tail-bound}
\sup_{x\in\supp\chi(\cdot,t)}
\left|
\partial_x^\alpha D_a^\beta
\partial_\tau^\ell\mathscr E_\tau^a(x)
\right|
\lsim{T}\tau^{-N}
e^{-\frac{c_{\mathrm{loc}}}{\tau}},
\qquad
0<\tau\le T,
\end{equation}
for some \(N=N(\alpha,\beta,\ell)\) and a constant
\(c_{\mathrm{loc}}>0\).
This follows directly from
\[
E_1'(q)=-q^{-1}e^{-q},
\qquad
\partial_\tau\mathscr E_\tau^a(x)
=
\frac{1}{4\pi\tau}
e^{-\frac{|x-a|^2}{4\tau}},
\]
and the lower bound \(|x-a|\ge6\).
For every
\(m\ge0\) and \(\delta>0\),
\begin{equation}\label{eq:localized-flatness}
\tau^{-m}e^{-\delta/\tau}
\lsim{m,\delta,T}1,
\qquad
\tau^{-m}e^{-\delta/\tau}\to0
\quad\text{as }\tau\to0.
\end{equation}
When \(\tau=\nu t\),
\begin{equation}\label{eq:localized-nu-flatness}
\nu^{-1}\tau^{-m}e^{-\delta/\tau}
=
t\tau^{-m-1}e^{-\delta/\tau}
\le
T\tau^{-m-1}e^{-\delta/\tau}.
\end{equation}

~\\
By Leibniz' rule,
\begin{equation}\label{eq:localized-product-estimate}
\left\|
\nabla^\perp(\chi F)
\right\|_{W^{k,\infty}}
\lsim{k,T}
\max_{|\alpha|\le k+1}
\sup_{x\in\supp\chi(\cdot,t)}
|\partial_x^\alpha F(x,t)|.
\end{equation}
By \eqref{eq:localized-log-bound}, \eqref{eq:localized-tail-bound},
and \eqref{eq:localized-product-estimate},
\[
\|\mathcal V_{\nu,\zeta}(t)\|_{W^{5,\infty}}
+
\|\mathcal H_\chi(t)\|_{W^{5,\infty}}
\lsim{T}1.
\]

~\\
For the normalized difference, write
\[
\frac{
\Psi_G^{z+\nu\zeta,\nu}-\Psi_H^z
}{\nu}
=
A_{\nu,\zeta}+B_{\nu,\zeta},
\]
where
\[
A_{\nu,\zeta}
:=
\frac{
\Psi_H^{z+\nu\zeta}-\Psi_H^z
}{\nu},
\qquad
B_{\nu,\zeta}
:=
\frac{\mathscr E_{\nu t}^{z+\nu\zeta}}{\nu}.
\]
By the fundamental theorem of calculus,
\[
A_{\nu,\zeta}
=
\int_0^1
D_a\Psi_H^{z+\theta\nu\zeta}\cdot\zeta\,d\theta.
\]
Thus
\[
\|A_{\nu,\zeta}\|_{W^{5,\infty}(\supp\chi)}
\stackrel{\eqref{eq:localized-log-bound}}{\lsim{T}}|\zeta|,
\]
while \eqref{eq:localized-tail-bound}, \eqref{eq:localized-flatness},
and \eqref{eq:localized-nu-flatness} give
\[
\|B_{\nu,\zeta}\|_{W^{5,\infty}(\supp\chi)}
\lsim{T}1.
\]
Since
\[
\mathcal R_{\nu,\zeta}^{\mathrm{fld}}
=
\nabla^\perp
\left[
\chi(A_{\nu,\zeta}+B_{\nu,\zeta})
\right],
\]
we obtain
\[
\|\mathcal R_{\nu,\zeta}^{\mathrm{fld}}(t)\|_{W^{4,\infty}}
\stackrel{\eqref{eq:localized-product-estimate}}{\lsim{T}}
1+|\zeta|,
\]
which proves \eqref{eq:localized-field-bounds}.

~\\
The Lipschitz estimates follow in the same way.  If
\[
\zeta_\theta
=
(1-\theta)\zeta_2+\theta\zeta_1,
\]
then
\[
\Psi_G^{z+\nu\zeta_1,\nu}
-
\Psi_G^{z+\nu\zeta_2,\nu}
=
\nu
\int_0^1
D_a\Psi_G^{z+\nu\zeta_\theta,\nu}
\cdot(\zeta_1-\zeta_2)\,d\theta.
\]
By \eqref{eq:localized-log-bound},
\eqref{eq:localized-tail-bound}, and
\eqref{eq:localized-product-estimate},
\[
\|\mathcal V_{\nu,\zeta_1}
-\mathcal V_{\nu,\zeta_2}\|_{W^{4,\infty}}
\lsim{T}
\nu|\zeta_1-\zeta_2|.
\]
Since \(\mathcal H_\chi\) is independent of \(\zeta\),
\[
\mathcal R_{\nu,\zeta_1}^{\mathrm{fld}}
-
\mathcal R_{\nu,\zeta_2}^{\mathrm{fld}}
=
\nu^{-1}
\left(
\mathcal V_{\nu,\zeta_1}
-
\mathcal V_{\nu,\zeta_2}
\right),
\]
and therefore
\[
\|\mathcal R_{\nu,\zeta_1}^{\mathrm{fld}}
-\mathcal R_{\nu,\zeta_2}^{\mathrm{fld}}\|_{W^{4,\infty}}
\lsim{T}|\zeta_1-\zeta_2|.
\]
This is slightly stronger than
\eqref{eq:localized-field-Lipschitz}.

~\\
It remains to estimate time derivatives.  Let
\[
a(t)=z(t)+\nu\zeta(t),
\qquad
\tau=\nu t.
\]
Then
\[
\frac d{dt}\Psi_G^{a(t),\nu}
=
D_a\Psi_G^{a(t),\nu}
\cdot(\dot z+\nu\dot\zeta)
+
\nu\partial_\tau\mathscr E_\tau^{a(t)}.
\]
Since \(\dot z\) is bounded, the preceding estimates imply
\[
\left\|
\frac d{dt}\mathcal V_{\nu,\zeta(t)}(t)
\right\|_{W^{4,\infty}}
+
\|\partial_t\mathcal H_\chi(t)\|_{W^{4,\infty}}
\lsim{T}1+\nu|\dot\zeta(t)|.
\]

~\\
For the normalized field, the representation above gives
\[
A_{\nu,\zeta(t)}
=
\int_0^1
D_a\Psi_H^{z+\theta\nu\zeta(t)}
\cdot\zeta(t)\,d\theta,
\]
and hence
\[
\left\|
\frac d{dt}A_{\nu,\zeta(t)}
\right\|_{W^{4,\infty}(\supp\chi)}
\lsim{T}|\zeta(t)|+|\dot\zeta(t)|,
\]
because \(\nu|\zeta(t)|\le1\).
Moreover,
\[
B_{\nu,\zeta(t)}
=
\nu^{-1}\mathscr E_{\nu t}^{a(t)},
\]
so
\[
\frac d{dt}B_{\nu,\zeta(t)}
=
\nu^{-1}
D_a\mathscr E_{\nu t}^{a(t)}\cdot\dot z
+
D_a\mathscr E_{\nu t}^{a(t)}\cdot\dot\zeta
+
\partial_\tau\mathscr E_{\nu t}^{a(t)}.
\]
By \eqref{eq:localized-tail-bound}, \eqref{eq:localized-flatness},
and \eqref{eq:localized-nu-flatness},
\[
\left\|
\frac d{dt}B_{\nu,\zeta(t)}
\right\|_{W^{4,\infty}(\supp\chi)}
\lsim{T}
1+|\dot\zeta(t)|.
\]
Applying Leibniz' rule to
\[
\mathcal R_{\nu,\zeta(t)}^{\mathrm{fld}}
=
\nabla^\perp
\left[
\chi(A_{\nu,\zeta(t)}+B_{\nu,\zeta(t)})
\right]
\]
yields
\[
\left\|
\frac d{dt}
\mathcal R_{\nu,\zeta(t)}^{\mathrm{fld}}(t)
\right\|_{W^{3,\infty}}
\lsim{T}
1+|\zeta(t)|+|\dot\zeta(t)|,
\]
which proves \eqref{eq:localized-field-time-bounds} for \(t>0\).

~\\
Finally, the Gaussian terms above, including those divided by \(\nu\),
are bounded in the stated norms by finite sums of terms of the form
\[
\max\left\{\tau^{-m}e^{-\delta/\tau},\,
\nu^{-1}\tau^{-m}e^{-\delta/\tau}\right\}
\stackrel{\substack{\eqref{eq:localized-flatness}\\\eqref{eq:localized-nu-flatness}}}{\to}0,
\qquad \tau=\nu t\to0.
\]
The remaining terms involve the point-vortex stream functions away from
their centers, \(\chi\), \(z\), and \(\zeta\), and are continuous at
\(t=0\). This gives the stated traces and bounds at \(t=0\).
\end{proof}
\subsection{Local well-posedness and continuation}

Introduce the rescaled center displacement
\[
\zeta(t):=\frac{\Zn(t)-z(t)}{\nu},
\qquad
v^w:=K\star w.
\]
Define
\begin{equation}\label{eq:Anu-definition}
A_\nu^E(t,\zeta)
:=
\int_0^1
D_xv^E\bigl(t,z(t)+\theta\nu\zeta\bigr)\,d\theta.
\end{equation}
The fundamental theorem of calculus gives
\[
v^E(t,z+\nu\zeta)-v^E(t,z)
=
\nu A_\nu^E(t,\zeta)\zeta.
\]
Thus the center equations
\(\partial_t\Zn=v^E(t,\Zn)+\nu v^w(t,\Zn)\) and
\(\partial_tz=v^E(t,z)\) are equivalent to
\begin{equation}\label{eq:zeta-equation}
\partial_t\zeta
=
A_\nu^E(t,\zeta)\zeta
+
v^w\bigl(t,z(t)+\nu\zeta\bigr).
\end{equation}

~\\
Fix \(M\ge1\) with \(\nu M\le1\).  As long as
\(\abs{\zeta(t)}<M\), consider the localized system
\begin{equation}\label{eq:auxiliary-corrector-system}
\left\{
\begin{aligned}
\partial_tw
&+
(v^E+\mathcal V_{\nu,\zeta})\cdot\nabla w
+
v^w\cdot\nabla\w^E
\\
&=
\Delta\w^E
-
\mathcal R_{\nu,\zeta}^{\mathrm{fld}}\cdot\nabla\w^E,
\qquad
v^w=K\star w,
\\
\partial_t\zeta
&=
A_\nu^E(t,\zeta)\zeta
+
v^w\bigl(t,z(t)+\nu\zeta\bigr),
\\
w(0)&=0,
\qquad
\zeta(0)=0.
\end{aligned}
\right.
\end{equation}
For this subsection, set
\begin{equation}\label{eq:corrector-Sobolev-spaces}
X:=W^{4,4}(\Rt)\cap W^{3,1}(\Rt),
\qquad
X_{\mathrm{low}}:=W^{2,4}(\Rt)\cap W^{1,1}(\Rt),
\end{equation}
with the sum norms.

\begin{lemma}\label{lem:corrector-local-theory}
Let \(M\ge1\), \(0<\nu\le1\), and \(\nu M\le1\).
There exists \(\tau=\tau(M,T)\in(0,T]\), independent of \(\nu\),
such that \eqref{eq:auxiliary-corrector-system} has a unique solution
\[
w\in C([0,\tau];X),
\qquad
\zeta\in C^1([0,\tau];\Rt),
\]
satisfying
\[
\sup_{0\le t\le\tau}
\bigl(\norm[X]{w(t)}+\abs{\zeta(t)}\bigr)
\le M.
\]
Moreover, throughout any interval of existence on which
\(\abs{\zeta}<M\),
\begin{equation}\label{eq:corrector-local-apriori}
\sup_{0\le s\le t}
\bigl(\norm[X]{w(s)}+\abs{\zeta(s)}\bigr)
\le e^{c_{\mathrm{Gr},T}t}-1,
\end{equation}
where \(c_{\mathrm{Gr},T}>0\) is independent of \(M\) and \(\nu\).
\end{lemma}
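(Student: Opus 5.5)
The plan is a contraction-mapping argument for the linear transport problem, followed by an energy estimate in \(X\). The key point is that every coefficient is smooth and bounded independently of \(\nu\) once \(\abs{\zeta}\le M\), by Lemma~\ref{lem:divfree-localization}.

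\textbf{Step 1: the fixed-point map.} Fix \(\tau\in(0,T]\) and let \(\mathcal B_\tau\) be the set of pairs \((\tilde w,\tilde\zeta)\in C([0,\tau];X)\times C([0,\tau];\Rt)\) with \(\tilde w(0)=0\), \(\tilde\zeta(0)=0\), and \(\sup_t(\norm[X]{\tilde w}+\abs{\tilde\zeta})\le M\). Given such a pair, define \(w\) as the solution of the \emph{linear} transport equation
\[
\partial_tw+(v^E+\mathcal V_{\nu,\tilde\zeta})\cdot\nabla w=F:=\Delta\w^E-\mathcal R^{\mathrm{fld}}_{\nu,\tilde\zeta}\cdot\nabla\w^E-(K\star\tilde w)\cdot\nabla\w^E ,
\]
and define \(\zeta\) by integrating the right-hand side of \eqref{eq:zeta-equation} evaluated at \((\tilde w,\tilde\zeta)\).

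\textbf{Step 2: bounds on the data.} The velocity \(b=v^E+\mathcal V_{\nu,\tilde\zeta}\) is divergence free and satisfies \(\norm[W^{5,\infty}]{b}\lsim{T}1\) by \eqref{eq:localized-field-bounds}, uniformly in \(\nu\) and \(M\). Hence the characteristic flow gives
\[
\norm[X]{w(t)}\le C_T\int_0^t\norm[X]{F}\,ds .
\]
Here one uses commutator estimates: differentiating the equation up to four times, the terms \(\partial^\alpha b\cdot\nabla\partial^\beta w\) are controlled in \(L^4\) and \(L^1\) by \(\norm[W^{4,\infty}]{b}\norm[X]{w}\), and \(\operatorname{div}b=0\) kills the top-order term in the \(L^p\) energy identity.

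For the forcing, the terms \(\Delta\w^E\) and \(\mathcal R\cdot\nabla\w^E\) are bounded in \(X\) by \(C_T(1+\abs{\tilde\zeta})\), using \eqref{eq:localized-field-bounds} and the smoothness and compact support of \(\w^E\). The term \((K\star\tilde w)\cdot\nabla\w^E\) is bounded by \(C_T\norm[W^{4,\infty}]{K\star\tilde w}\lsim{T}\norm[X]{\tilde w}\), by the Biot--Savart estimates of Appendix~\ref{app:corrector-estimates}: \(L^4\cap L^1\) control of \(\tilde w\) and its derivatives gives \(L^\infty\) control of \(K\star\partial^\alpha\tilde w\) for \(\abs\alpha\le3\), plus one more derivative from \(W^{4,4}\hookrightarrow\) Hölder. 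Likewise \(\abs{A^E_\nu}\le\norm[L^\infty]{Dv^E}\lsim{T}1\) and \(\abs{v^{\tilde w}(z+\nu\tilde\zeta)}\lsim{T}\norm[X]{\tilde w}\).

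\textbf{Step 3: existence and uniqueness.} Altogether,
\[
\norm[X]{w(t)}+\abs{\zeta(t)}\le C_T\int_0^t\bigl(1+\norm[X]{\tilde w}+\abs{\tilde\zeta}\bigr)\,ds\le C_T(1+M)t,
\]
so the map sends \(\mathcal B_\tau\) into itself once \(\tau\le M/(C_T(1+M))\). Contraction is proved in the weaker norm \(C([0,\tau];X_{\mathrm{low}})\times C([0,\tau];\Rt)\). Differences of the coefficients are Lipschitz in \(\tilde\zeta\) by \eqref{eq:localized-field-Lipschitz}, and the derivative loss of the transport difference is absorbed by the uniform \(X\)-bound. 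The Lipschitz property of \(A^E_\nu\) in \(\zeta\) follows from \(\nu\abs{D^2v^E}\). A standard weak-lower-semicontinuity argument keeps the limit in the \(X\)-ball, and the continuity \(w\in C([0,\tau];X)\) follows from the linear theory with smooth coefficients. Uniqueness comes from the same difference estimate together with Gronwall's inequality.

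\textbf{Step 4: the a priori bound \eqref{eq:corrector-local-apriori}.} For the actual solution, set \(y(t)=\norm[X]{w}+\abs{\zeta}\). Rerunning the Step 2 estimates with \((\tilde w,\tilde\zeta)=(w,\zeta)\) gives \(y'\le c(1+y)\) in integrated form, so \(y(t)\le e^{ct}-1\) since \(y(0)=0\). The main obstacle is checking that the constant \(c\) is independent of \(M\). The transport coefficients, the Biot--Savart term, and \(A^E_\nu\) are all \(M\)-independent by Lemma~\ref{lem:divfree-localization}. The term \(\mathcal R^{\mathrm{fld}}_{\nu,\zeta}\cdot\nabla\w^E\) enters only linearly, through its \(1+\abs\zeta\) bound, so it contributes \(c(1+y)\) rather than a constant depending on \(M\). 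The only role of \(M\) is to guarantee \(\nu\abs\zeta\le1\), so that the fields are well defined with the stated bounds.
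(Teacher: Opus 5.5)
Your proposal is correct and follows essentially the paper's route. Both arguments run a contraction in the weaker norm \(X_{\mathrm{low}}\) using \eqref{eq:localized-field-Lipschitz}, absorb the derivative loss with the uniform \(X\)-bound, and close with a Gronwall estimate whose constant is \(M\)-independent because only \(\nu\abs{\zeta}\le1\) enters the field bounds. The only differences are cosmetic. The paper iterates on the center path alone and solves the nonlocal linear \(w\)-equation exactly for each path, which avoids your weak-closure step. It also obtains the bound \(\le M\) by shrinking \(\tau\) until \(e^{c_{\mathrm{Gr},T}\tau}-1\le M/2\), rather than from an invariant ball.
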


\begin{proof}
\emph{A priori estimates.}
For the spaces in \eqref{eq:corrector-Sobolev-spaces},
\[
\begin{aligned}
\norm[W^{5,\infty}]{v^E+\mathcal V_{\nu,\zeta}}
&\stackrel{\eqref{eq:localized-field-bounds}}{\lsim{T}}1,
\\
\norm[W^{4,\infty}]{
\mathcal R_{\nu,\zeta}^{\mathrm{fld}}
}
&\stackrel{\eqref{eq:localized-field-bounds}}{\lsim{T}}1+\abs{\zeta}.
\end{aligned}
\]
Since \(\w^E\) is smooth and supported in a fixed compact set,
\[
\begin{aligned}
&\left\|
\Delta\w^E
-
\mathcal R_{\nu,\zeta}^{\mathrm{fld}}\cdot\nabla\w^E
-
v^w\cdot\nabla\w^E
\right\|_X
\\
&\qquad\stackrel{\substack{\eqref{eq:localized-field-bounds}\\\eqref{eq:nonlocal-product-high}}}{\lsim{T}}
1+\abs{\zeta}+\norm[X]{w}.
\end{aligned}
\]
Therefore,
\begin{equation}\label{eq:corrector-wave-apriori}
\frac d{dt}\norm[X]{w(t)}
\stackrel{\eqref{eq:transport-high}}{\lsim{T}}
1+\norm[X]{w(t)}+\abs{\zeta(t)}.
\end{equation}
Similarly, the center equation \eqref{eq:zeta-equation},
the definition \eqref{eq:Anu-definition}, and the Biot--Savart bound
\eqref{eq:BS-high} give
\begin{equation}\label{eq:corrector-center-apriori}
\abs{\partial_t\zeta(t)}
\lsim{T}
\abs{\zeta(t)}+\norm[X]{w(t)}.
\end{equation}
Adding these inequalities gives, for almost every time,
\[
\frac d{dt}
\bigl(\norm[X]{w(t)}+\abs{\zeta(t)}\bigr)
\lsim{T}
1+\norm[X]{w(t)}+\abs{\zeta(t)}.
\]
Since \(w(0)=0\) and \(\zeta(0)=0\), Gronwall's inequality proves
\eqref{eq:corrector-local-apriori}.

\smallskip
\emph{Local existence and uniqueness.}
For a prescribed continuous path \(\eta\) with
\(\abs{\eta}\le M\), the first equation of
\eqref{eq:auxiliary-corrector-system}, with \(\zeta=\eta\), is a
linear transport equation. Its velocity is uniformly Lipschitz,
and the nonlocal operator
\[
f\longmapsto (K\star f)\cdot\nabla\w^E
\]
is bounded on \(X\) by \eqref{eq:nonlocal-product-high}.
The characteristic formula therefore gives a unique solution
\(w[\eta]\in C([0,\tau];X)\), with bounds uniform in \(\nu\).

~\\
For two prescribed paths \(\eta_1,\eta_2\), the difference equation
contains the additional transport term
\[
(\mathcal V_{\nu,\eta_1}-\mathcal V_{\nu,\eta_2})
\cdot\nabla w[\eta_2].
\]
In \(X_{\mathrm{low}}\),
\[
\begin{aligned}
&\norm[X_{\mathrm{low}}]{
(\mathcal V_{\nu,\eta_1}-\mathcal V_{\nu,\eta_2})
\cdot\nabla w[\eta_2]}
\\
&\qquad\stackrel{\eqref{eq:localized-field-Lipschitz}}{\lsim{T}}
\nu\abs{\eta_1-\eta_2}\norm[X]{w[\eta_2]}.
\end{aligned}
\]
By \eqref{eq:localized-field-Lipschitz} and
\eqref{eq:nonlocal-product-low}, followed by the transport estimate
\eqref{eq:transport-low} and Gronwall's inequality,
\[
\begin{aligned}
&\sup_{0\le s\le t}
\norm[X_{\mathrm{low}}]{w[\eta_1](s)-w[\eta_2](s)}
\\
&\qquad\lsim{M,T}
t
\sup_{0\le s\le t}\abs{\eta_1(s)-\eta_2(s)}.
\end{aligned}
\]
The bound \eqref{eq:BS-low} and the mean value theorem show that the
right-hand side of the center equation is locally Lipschitz in
\(X_{\mathrm{low}}\times\Rt\), uniformly in \(\nu\).
Substituting \(w[\eta]\) into the integral form of that equation
therefore defines a contraction on a sufficiently short interval
among continuous paths satisfying
\(\eta(0)=0\) and \(\sup_t\abs{\eta(t)}\le M\).
This proves local existence and uniqueness, with a lifespan
independent of \(\nu\).
The center equation gives \(\zeta\in C^1\), while the linear
transport construction gives \(w\in C([0,\tau];X)\).
Finally, decrease \(\tau\) so that
\(e^{c_{\mathrm{Gr},T}\tau}-1\le M/2\).  Then
\[
\sup_{0\le s\le\tau}
\bigl(\norm[X]{w(s)}+\abs{\zeta(s)}\bigr)
\stackrel{\eqref{eq:corrector-local-apriori}}{\le}
e^{c_{\mathrm{Gr},T}\tau}-1\le M/2,
\]
which gives the stated local bound.
\end{proof}

~\\
For later use, denote the transport flow of the auxiliary solution by
\begin{equation}\label{eq:localized-flow-ODE}
\frac d{dr}\Phi^\nu_{r,s}(x)
=
\bigl(v^E+\mathcal V_{\nu,\zeta(r)}\bigr)
\bigl(\Phi^\nu_{r,s}(x),r\bigr),
\qquad
\Phi^\nu_{s,s}(x)=x,
\end{equation}
and set
\begin{equation}\label{eq:localized-evolution-family}
U(t,s)f:=f\circ\Phi^\nu_{s,t},
\qquad
0\le s\le t\le\tau.
\end{equation}

\begin{corollary}\label{cor:corrector-continuation}
Let \(t_0<T\) belong to the interval of existence, and suppose that
\[
\norm[X]{w(t_0)}\le R,
\qquad
\abs{\zeta(t_0)}\le M-\eta,
\qquad
R\ge1,\quad \eta>0.
\]
Then the solution extends to
\[
[t_0,\min\{t_0+\sigma,T\}],
\]
where \(\sigma=\sigma(R,M,\eta,T)>0\) is independent of
\(t_0\) and \(\nu\), provided \(\nu M\le1\).
Consequently, if the maximal existence time \(t_*<T\), then
\[
\limsup_{t\to t_*^-}\norm[X]{w(t)}=\infty
\quad\text{or}\quad
\limsup_{t\to t_*^-}\abs{\zeta(t)}=M.
\]
\end{corollary}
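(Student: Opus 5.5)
The plan is to repeat the local construction of Lemma~\ref{lem:corrector-local-theory}, now started at time \(t_0\) instead of \(0\), with the data \((w(t_0),\zeta(t_0))\). The key point is to check that every constant depends only on \(R,M,\eta,T\). On any interval \([t_0,t_0+\sigma]\) where \(\abs{\zeta}\le M\), the bounds \eqref{eq:localized-field-bounds} and \eqref{eq:localized-field-Lipschitz} for \(\mathcal V_{\nu,\zeta}\) and \(\mathcal R^{\mathrm{fld}}_{\nu,\zeta}\) hold uniformly in \(t_0\) and \(\nu\), provided \(\nu M\le1\). The same is true of the transport and Biot--Savart estimates from the appendix. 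Repeating the a priori argument that led to \eqref{eq:corrector-wave-apriori} and \eqref{eq:corrector-center-apriori}, Gronwall from \(t_0\) gives
\[
\norm[X]{w(t)}+\abs{\zeta(t)}\le \bigl(R+M+C_T(t-t_0)\bigr)e^{C_T(t-t_0)} .
\]
Here \(C_T\) does not depend on \(t_0\), \(\nu\) or \(M\).

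For existence, I would use the fixed-point scheme from the proof of Lemma~\ref{lem:corrector-local-theory}. Consider continuous paths \(\eta\) on \([t_0,t_0+\sigma]\) with \(\eta(t_0)=\zeta(t_0)\) and \(\sup\abs{\eta}\le M\). For each such path, solve the linear transport equation for \(w[\eta]\) with initial datum \(w(t_0)\) by characteristics. This gives \(w[\eta]\in C([t_0,t_0+\sigma];X)\) with \(\norm[X]{w[\eta]}\lesssim_T R\). The difference estimate in \(X_{\mathrm{low}}\) carries a factor \(\sigma\), and its constant depends on \(R,M,T\) through the factor \(\norm[X]{w[\eta_2]}\lesssim R\). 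Substituting \(w[\eta]\) into the integral form of \eqref{eq:zeta-equation} therefore defines a contraction once \(\sigma\) is small in terms of \(R,M,T\).

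The main obstacle is making sure the path stays strictly inside \(\abs{\zeta}<M\), so that the localized fields remain valid. This is where the margin \(\eta\) is used. From \eqref{eq:corrector-center-apriori}, \(\abs{\partial_t\zeta}\lesssim_T M+\norm[X]{w}\lesssim_T M+R\) on the interval. So \(\abs{\zeta(t)}\le M-\eta+C_T(M+R)\sigma\), and choosing \(\sigma\le \eta/(2C_T(M+R))\) keeps \(\abs{\zeta}\le M-\eta/2\). The fixed-point map then sends the ball into itself, with \(\sigma=\sigma(R,M,\eta,T)\), and uniqueness lets the continued solution be glued to the original one.

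For the blow-up alternative, suppose \(t_*<T\) and both \(\limsup\norm[X]{w}<\infty\) and \(\limsup\abs{\zeta}<M\). Then there are \(R\) and \(\eta\) with \(\norm[X]{w(t)}\le R\) and \(\abs{\zeta(t)}\le M-\eta\) for all \(t\) close to \(t_*\). Pick such a \(t_0>t_*-\sigma(R,M,\eta,T)\). The first part extends the solution past \(t_*\) (or up to \(T\)), which contradicts the maximality of \(t_*\).
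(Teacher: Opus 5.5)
Your proposal is correct and follows the paper's own argument. You restart the fixed-point construction of Lemma~\ref{lem:corrector-local-theory} at \(t_0\), keeping the original time-dependent coefficients. You use the \(t_0\)- and \(\nu\)-uniform bounds \eqref{eq:corrector-wave-apriori}--\eqref{eq:corrector-center-apriori}, together with the margin \(\eta\), to choose \(\sigma(R,M,\eta,T)\) so that \(\abs{\zeta}\) stays below \(M\). You then get the blow-up alternative by contradiction with maximality, exactly as the paper does. The only cosmetic slip is that the forcing \(\mathcal R^{\mathrm{fld}}_{\nu,\eta}\cdot\nabla\w^E\) adds a term of size \((1+M)\sigma\) to your bound \(\norm[X]{w[\eta]}\lsim{T}R\); this is harmless once \(\sigma\) is also taken small in terms of \(M\).
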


\begin{proof}
Restart the preceding local construction at \(t_0\), keeping the
original time-dependent coefficients.
The transport and difference estimates are uniform on \([0,T]\).
Moreover, \eqref{eq:corrector-wave-apriori} and
\eqref{eq:corrector-center-apriori} control the Sobolev norm and the
center displacement on a time interval depending only on
\(R,M,\eta,T\). Choosing this interval sufficiently short keeps
\(\abs{\zeta}<M\) and gives the stated extension.
If neither alternative occurred at \(t_*<T\), the same argument
would provide a uniform positive extension time from times
approaching \(t_*\), contradicting maximality.
\end{proof}

\subsection{Support persistence and removal of the localization}

\begin{proof}[Proof of Proposition~\ref{prop:regular-corrector}]
\noindent\emph{Step 1: uniform a priori bounds.}
Let \((w,\zeta)\) be the local solution of
\eqref{eq:auxiliary-corrector-system}, and set
\(\Zn=z+\nu\zeta\).
As long as \(\abs{\zeta}<M\) and \(\nu M\le1\),
\begin{align*}
&\frac{d}{dt}\Bigl(\norm[W^{4,4}]{w} + \norm[W^{3,1}]{w} + \abs{\zeta}\Bigr)
\\
&\qquad\stackrel{\substack{\eqref{eq:corrector-wave-apriori}\\\eqref{eq:corrector-center-apriori}}}{\lsim{T}}
1 + \norm[W^{4,4}]{w} + \norm[W^{3,1}]{w} + \abs{\zeta}.
\end{align*}
The implicit constant is independent of \(M\) and \(\nu\) under
\(\nu M\le1\).
Gronwall's inequality and the zero initial data therefore bound
the displayed sum uniformly on the interval of existence.
Choose \(M\) larger than twice this bound, and then choose
\(\nu_{0,T}\) so that \(\nu_{0,T}M\le1\).
Corollary~\ref{cor:corrector-continuation} extends the solution
to \([0,T]\), with
\begin{equation}\label{eq:auxiliary-uniform-bound}
\sup_{0\le t\le T}
\left(
\norm[W^{4,4}]{w(t)}
+\norm[W^{3,1}]{w(t)}
+\abs{\zeta(t)}
\right)
\lsim{T}1.
\end{equation}
The center equation and the Biot--Savart bound also give
\(\abs{\partial_t\zeta(t)}\lsim{T}1\).
Since \(\zeta(0)=0\),
\begin{equation}\label{eq:zeta-linear-time}
\abs{\zeta(t)}\lsim{T}t,
\qquad
\abs{\Zn(t)-z(t)}
=\nu\abs{\zeta(t)}
\lsim{T}\nu t.
\end{equation}

\smallskip
\noindent\emph{Step 2: comparison of the two flows.}
The limiting flow transports the wave support exactly:
\begin{equation}\label{eq:Euler-support-flow}
\Phi_{t,s}\bigl(S_E(s)\bigr)=S_E(t),
\qquad 0\le s\le t\le T.
\end{equation}
Let \(\Phi^\nu_{r,s}\) be the flow of the auxiliary velocity,
defined for \(0\le r,s\le T\) by
\begin{equation}\label{eq:localized-flow-global-ODE}
\frac{d}{dr}\Phi^\nu_{r,s}(x)
=
\bigl(v^E+\mathcal V_{\nu,\zeta(r)}\bigr)
\bigl(\Phi^\nu_{r,s}(x),r\bigr),
\qquad
\Phi^\nu_{s,s}(x)=x.
\end{equation}

~\\
We compare the smooth fields
\(v^E+\mathcal V_{\nu,\zeta(t)}\) and
\(v^E+\mathcal H_\chi\).
Indeed,
\[
\norm[L^\infty]{
D_x\bigl(v^E+\mathcal H_\chi\bigr)(t)}
\stackrel{\eqref{eq:localized-field-bounds}}{\lsim{T}}1
\]
and
\[
\begin{aligned}
\norm[L^\infty]{
\mathcal V_{\nu,\zeta(t)}(t)-\mathcal H_\chi(t)}
&\stackrel{\eqref{eq:localized-fields}}{=}
\nu\norm[L^\infty]{
\mathcal R_{\nu,\zeta(t)}^{\mathrm{fld}}(t)}
\\
&\stackrel{\substack{\eqref{eq:localized-field-bounds}\\\eqref{eq:auxiliary-uniform-bound}}}{\lsim{T}}\nu,
\qquad 0\le t\le T.
\end{aligned}
\]

~\\
Fix \(s\in[0,T]\) and \(y\in S_E(s)\), and write
\[
Y^\nu(r):=\Phi^\nu_{r,s}(y),
\qquad
Y(r):=\Phi_{r,s}(y),
\qquad s\le r\le T.
\]
The limiting trajectory satisfies
\[
Y(r)=\Phi_{r,s}(y)
\stackrel{\eqref{eq:Euler-support-flow}}{\in}S_E(r).
\]
Hence
\[
\dot Y(r)
\stackrel{\substack{\eqref{eq:Euler-flow-ODE}\\\eqref{eq:localized-fields-equal}}}{=}
\bigl(v^E+\mathcal H_\chi\bigr)(Y(r),r).
\]
Subtracting this equation from the equation for \(Y^\nu\) gives
\[
\begin{aligned}
\dot Y^\nu-\dot Y
={}&
\bigl(v^E+\mathcal H_\chi\bigr)(Y^\nu,r)
-\bigl(v^E+\mathcal H_\chi\bigr)(Y,r)
\\
&+
\bigl(\mathcal V_{\nu,\zeta(r)}-\mathcal H_\chi\bigr)
(Y^\nu,r).
\end{aligned}
\]
Since \(Y^\nu(s)=Y(s)=y\), the preceding coefficient bounds imply
\[
\abs{Y^\nu(r)-Y(r)}
\lsim{T}
\int_s^r\abs{Y^\nu(q)-Y(q)}\,dq
+\nu(r-s).
\]
Gronwall's inequality yields
\(\abs{Y^\nu(t)-Y(t)}\lsim{T}\nu(t-s)\), and therefore
\begin{equation}\label{eq:uniform-flow-O-nu}
\sup_{\substack{0\le s\le t\le T\\ y\in S_E(s)}}
\abs{\Phi^\nu_{t,s}(y)-\Phi_{t,s}(y)}
\lsim{T}\nu.
\end{equation}
The coefficient bounds are global, so no localization of \(Y^\nu\)
is needed.

\smallskip
\noindent\emph{Step 3: support localization and recovery of the original system.}
Write the auxiliary equation as
\[
\partial_tw
+\bigl(v^E+\mathcal V_{\nu,\zeta(t)}\bigr)\cdot\nabla w
=F_w,
\]
where
\[
F_w
:=
\Delta\w^E
-\bigl(
\mathcal R_{\nu,\zeta(t)}^{\mathrm{fld}}+v^w
\bigr)\cdot\nabla\w^E.
\]
Every term in \(F_w(t)\) is supported in \(S_E(t)\).

~\\
Since \(w(0)=0\), the characteristic formula gives
\begin{equation}\label{eq:corrector-characteristic-formula}
w(x,t)
=
\int_0^t
F_w\bigl(\Phi^\nu_{s,t}(x),s\bigr)\,ds.
\end{equation}
Consequently,
\begin{equation}\label{eq:corrector-support-preliminary}
\supp w(t)
\subset
\overline{
\bigcup_{0\le s\le t}
\Phi^\nu_{t,s}\bigl(S_E(s)\bigr)
}.
\end{equation}
For every \(y\in S_E(s)\),
\[
\begin{aligned}
\dist\bigl(\Phi^\nu_{t,s}(y),S_E(t)\bigr)
&\stackrel{\eqref{eq:Euler-support-flow}}{\le}
\abs{\Phi^\nu_{t,s}(y)-\Phi_{t,s}(y)}
\\
&\stackrel{\eqref{eq:uniform-flow-O-nu}}{\lsim{T}}\nu.
\end{aligned}
\]
Together with \eqref{eq:corrector-support-preliminary}, this gives
\begin{equation}\label{eq:w1-support-tube}
d_w(t):=\sup_{x\in\supp w(t)}\dist(x,S_E(t))\lsim{T}\nu,
\qquad 0\le t\le T.
\end{equation}

~\\
Decrease \(\nu_{0,T}\) so that
\(\sup_{0\le t\le T}d_w(t)<2\) for every \(0<\nu\le\nu_{0,T}\).
Then both \(\nabla w(t)\) and \(\nabla\w^E(t)\) are supported
where the localized fields agree with the original ones.
Consequently,
\[
\bigl(
\mathcal V_{\nu,\zeta(t)}-V_G^{\Zn,\nu}
\bigr)\cdot\nabla w
\stackrel{\eqref{eq:localized-fields-equal}}{=}0
\]
and
\[
\left(
\mathcal R_{\nu,\zeta(t)}^{\mathrm{fld}}
-\frac{V_G^{\Zn,\nu}-\Hz}{\nu}
\right)\cdot\nabla\w^E
\stackrel{\eqref{eq:localized-fields-equal}}{=}0
\]
globally for \(t>0\), with the products extended by zero at
the singular centers.
Together with the center equation, these identities show that
\[
w_{1,a}:=w,
\qquad
v_{1,a}:=v^w,
\qquad
\Zn=z+\nu\zeta
\]
satisfy \eqref{eq:regular-corrector-system} exactly.

~\\
Moreover,
\(\supp\wapp(t)\subset S_E(t)\cup\supp w(t)\).
Therefore,
\[
\begin{aligned}
\dist\bigl(\Zn(t),\supp\wapp(t)\bigr)
&\stackrel{\eqref{eq:w1-support-tube}}{\ge}
\dist\bigl(z(t),S_E(t)\bigr)
-\abs{\Zn(t)-z(t)}-d_w(t)
\\
&\stackrel{\eqref{eq:normalized-limiting-separation}}{\ge}
10-\abs{\Zn(t)-z(t)}-d_w(t)
\ge8
\end{aligned}
\]
after decreasing \(\nu_{0,T}\) once more so that
\(\sup_{0\le t\le T}(\abs{\Zn(t)-z(t)}+d_w(t))\le2\),
by \eqref{eq:zeta-linear-time} and \eqref{eq:w1-support-tube}.
This proves \eqref{eq:approx-separation}.

\smallskip
\noindent\emph{Step 4: the remaining bounds and identities.}
The spatial bounds in
\eqref{eq:regular-corrector-bounds} follow from
\eqref{eq:auxiliary-uniform-bound} and
Lemma~\ref{lem:corrector-estimates}.
For the time derivatives,
\begin{equation}\label{eq:corrector-time-first}
\partial_tw
\stackrel{\eqref{eq:auxiliary-corrector-system}}{=}
F_w
-\bigl(v^E+\mathcal V_{\nu,\zeta(t)}\bigr)\cdot\nabla w.
\end{equation}
The spatial bounds and the Biot--Savart estimates imply
\[
\sup_{0\le t\le T}
\left(
\norm[W^{2,4}\cap W^{2,1}]{\partial_tw(t)}
+\norm[W^{2,\infty}]{\partial_tv^w(t)}
\right)
\lsim{T}1.
\]
Since \(\abs{\partial_t\zeta}\lsim{T}1\),
\begin{equation}\label{eq:corrector-field-time-bounds}
\begin{aligned}
&
\left\|
\frac{d}{dt}
\bigl(v^E+\mathcal V_{\nu,\zeta(t)}\bigr)(t)
\right\|_{W^{4,\infty}}
\\
&\qquad+
\left\|
\frac{d}{dt}
\mathcal R_{\nu,\zeta(t)}^{\mathrm{fld}}(t)
\right\|_{W^{3,\infty}}
\stackrel{\eqref{eq:localized-field-time-bounds}}{\lsim{T}}1.
\end{aligned}
\end{equation}
Here the derivatives of the localized fields are taken along
the path \(\zeta(t)\).

~\\
The formula for \(F_w\) now gives
\(\norm[L^4\cap L^1]{\partial_tF_w(t)}\lsim{T}1\).
Differentiation in time gives
\begin{equation}\label{eq:corrector-time-second}
\begin{aligned}
\partial_t^2w
\stackrel{\eqref{eq:corrector-time-first}}{=}&
\partial_tF_w
-\bigl(v^E+\mathcal V_{\nu,\zeta(t)}\bigr)
\cdot\nabla\partial_tw
\\
&-
\frac{d}{dt}
\bigl(v^E+\mathcal V_{\nu,\zeta(t)}\bigr)
\cdot\nabla w,
\end{aligned}
\end{equation}
and therefore
\[
\sup_{0\le t\le T}
\norm[L^4\cap L^1]{\partial_t^2w(t)}
\stackrel{\substack{\eqref{eq:corrector-time-second}\\\eqref{eq:corrector-field-time-bounds}}}{\lsim{T}}1.
\]
By Lemma~\ref{lem:divfree-localization}, all terms in these identities
are continuous in the indicated spaces up to \(t=0\).
Thus they also give the claimed \(C^1\) and \(C^2\) regularity
of \(w\).

~\\
Differentiating the center equation gives
\[
\partial_t^2\Zn(t)
=
(\partial_t\vapp)(\Zn(t),t)
+
D_x\vapp(\Zn(t),t)\,\partial_t\Zn(t).
\]
The preceding bounds imply
\(\sup_{0\le t\le T}\abs{\partial_t^2\Zn(t)}\lsim{T}1\)
and \(\Zn\in C^2([0,T];\Rt)\).
For each fixed \(\nu>0\), higher-order transport estimates
and differentiation of the coupled center equation give
\(w_{1,a}\in C^\infty((0,T]\times\Rt)\).
This proves \eqref{eq:regular-corrector-bounds} and the
asserted regularity.

~\\
Finally, \(w(t)\) is compactly supported by
\eqref{eq:w1-support-tube}.
Integration of the auxiliary equation and the divergence-free property
of its velocity fields give
\[
\frac{d}{dt}\int_{\Rt}w(x,t)\,dx
\stackrel{\eqref{eq:auxiliary-corrector-system}}{=}0.
\]
Since \(w(0)=0\), this proves \eqref{eq:w1a-zero-mass}.
Moreover,
\[
\begin{aligned}
&\partial_t\wapp
+(\vapp+V_G^{\Zn,\nu})\cdot\nabla\wapp
-\nu\Delta\wapp
\\
&\qquad\stackrel{\substack{\eqref{eq:regular-defect-general}\\\eqref{eq:regular-corrector-system}}}{=}
\nu^2\left(
v_{1,a}\cdot\nabla w_{1,a}
-\Delta w_{1,a}
\right),
\end{aligned}
\]
which is \eqref{eq:regular-approx-residual}.
\end{proof}
\section{Vortex variables and expansion of the ambient drift}
\label{sec:b-exp}

~\\
Fix \(T>0\) and \(0<\nu\le\nu_{0,T}\), where \(\nu_{0,T}\) is
given by Proposition~\ref{prop:regular-corrector}.
The modulated center and the approximate regular vorticity satisfy
the separation and uniform bounds established there.
All implicit constants below are independent of \(\nu\).

~\\
We introduce the vortex variables
\begin{equation}\label{eq:inner-variable}
\xi=\frac{x-\Zn(t)}{\sqrt{\nu t}},
\qquad
w_2(\xi,t)=\nu t\,\w^{B,\nu}(x,t),
\qquad
v_2=K\star_\xi w_2.
\end{equation}

\begin{lemma}
\label{lem:exact-inner-equation}
Let \(V(x,t)\) be a smooth divergence-free field, and suppose that
\(\w^{B,\nu}\) is a smooth integrable solution, at positive times, of
\[
\partial_t\w^{B,\nu}
+\bigl(V+K\star_x\w^{B,\nu}\bigr)
 \cdot\nabla_x\w^{B,\nu}
=
\nu\Delta_x\w^{B,\nu}.
\]
Under \eqref{eq:inner-variable},
\begin{equation}\label{eq:exact-inner-equation}
\begin{aligned}
0={}&
(t\partial_t-\cL)w_2
+\frac1\nu v_2\cdot\nabla_\xi w_2
\\
&+
\sqrt{\frac t\nu}
\left(
V\bigl(\Zn(t)+\sqrt{\nu t}\,\xi,t\bigr)
-\partial_t\Zn(t)
\right)\cdot\nabla_\xi w_2,
\end{aligned}
\end{equation}
where
\(\cL=\Delta_\xi+\frac12\xi\cdot\nabla_\xi+1\), and
\(\partial_t w_2\) is taken at fixed \(\xi\).
\end{lemma}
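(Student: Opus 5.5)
This is a direct change-of-variables computation. Write \(\w^{B,\nu}(x,t)=(\nu t)^{-1}w_2(\xi,t)\) with \(\xi=(x-\Zn(t))/\sqrt{\nu t}\), and substitute into the equation, term by term.

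\emph{Time derivative.} By the chain rule,
\[
\partial_t\xi=-\frac{\partial_t\Zn}{\sqrt{\nu t}}-\frac{\xi}{2t}.
\]
Hence
\[
\partial_t\w^{B,\nu}
=(\nu t)^{-1}\Bigl[\partial_tw_2-\tfrac1t w_2-\tfrac1{2t}\xi\cdot\nabla_\xi w_2-\tfrac{1}{\sqrt{\nu t}}\partial_t\Zn\cdot\nabla_\xi w_2\Bigr].
\]

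\emph{Diffusion.} Since \(\nabla_x=(\nu t)^{-1/2}\nabla_\xi\), we get
\[
\nu\Delta_x\w^{B,\nu}=(\nu t)^{-1}\,t^{-1}\Delta_\xi w_2.
\]

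\emph{Self-induced velocity.} The Biot--Savart kernel is homogeneous of degree \(-1\), so \(K(\sqrt{\nu t}\,\eta)=(\nu t)^{-1/2}K(\eta)\). Changing variables \(y=\Zn+\sqrt{\nu t}\,\eta\), with \(dy=\nu t\,d\eta\), gives
\[
K\star_x\w^{B,\nu}(x)=(\nu t)^{-1/2}\,v_2(\xi).
\]
Therefore
\[
(K\star_x\w^{B,\nu})\cdot\nabla_x\w^{B,\nu}=(\nu t)^{-1}\cdot\frac{1}{\nu t}\,v_2\cdot\nabla_\xi w_2 .
\]

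\emph{External drift.} Similarly,
\[
V\cdot\nabla_x\w^{B,\nu}=(\nu t)^{-1}(\nu t)^{-1/2}\,V(\Zn+\sqrt{\nu t}\,\xi,t)\cdot\nabla_\xi w_2 .
\]

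\emph{Collecting terms.} Multiply the whole equation by \(\nu t\cdot t\). The time-derivative and diffusion terms combine into
\[
t\partial_tw_2-w_2-\tfrac12\xi\cdot\nabla_\xi w_2-\Delta_\xi w_2=(t\partial_t-\cL)w_2 .
\]
The nonlinear term becomes \(\nu^{-1}v_2\cdot\nabla_\xi w_2\). The drift term \(V\) and the \(\partial_t\Zn\) term coming from the moving center both carry the prefactor \(t/\sqrt{\nu t}=\sqrt{t/\nu}\). Moving everything to one side gives exactly \eqref{eq:exact-inner-equation}.

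The divergence-free hypothesis on \(V\) is not needed for the identity itself; it only guarantees that the equation is the advective form used elsewhere. The only points requiring care are the scaling of the Biot--Savart convolution (homogeneity of \(K\) together with the Jacobian \(\nu t\)) and keeping track of the \(\partial_t\Zn\) contribution, which arises because \(\partial_t w_2\) is taken at fixed \(\xi\) rather than at fixed \(x\). No analytic obstacle is expected, since \(\w^{B,\nu}\) is smooth and integrable at positive times, so all manipulations are pointwise for \(t>0\).
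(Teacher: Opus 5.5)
Your computation is correct and matches the paper's proof essentially step by step. You use the chain rule for \(\partial_t\xi\) at fixed \(x\), the scalings \(\nabla_x=(\nu t)^{-1/2}\nabla_\xi\) and \(\Delta_x=(\nu t)^{-1}\Delta_\xi\), homogeneity of \(K\) to get \(K\star_x\w^{B,\nu}=(\nu t)^{-1/2}v_2\), and multiplication by \(\nu t^2\). Your remark that the divergence-free hypothesis on \(V\) plays no role in the identity is also accurate; the paper's proof does not use it either.
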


\begin{proof}
At fixed \(x\),
\[
\partial_t\xi
\stackrel{\eqref{eq:inner-variable}}{=}
-\frac{\partial_t\Zn(t)}{\sqrt{\nu t}}
-\frac{\xi}{2t}.
\]
Since
\(\w^{B,\nu}(x,t)=(\nu t)^{-1}w_2(\xi,t)\), the chain rule gives
\[
\begin{aligned}
\nu t^2\partial_t\w^{B,\nu}
={}&
t\partial_t w_2-w_2
-\frac12\xi\cdot\nabla_\xi w_2
\\
&-
\sqrt{\frac t\nu}\,
\partial_t\Zn(t)\cdot\nabla_\xi w_2.
\end{aligned}
\]
Moreover,
\[
\nabla_x\w^{B,\nu}
=
(\nu t)^{-3/2}\nabla_\xi w_2,
\qquad
\Delta_x\w^{B,\nu}
=
(\nu t)^{-2}\Delta_\xi w_2.
\]
By the homogeneity of the Biot--Savart kernel,
\[
(K\star_x\w^{B,\nu})(x,t)
\stackrel{\eqref{eq:inner-variable}}{=}
\frac1{\sqrt{\nu t}}v_2(\xi,t).
\]
Substituting these identities into the physical equation and
multiplying by \(\nu t^2\) proves
\eqref{eq:exact-inner-equation}.
\end{proof}

~\\
For a profile \(w\) and an ambient velocity \(V\), define
\begin{equation}\label{eq:core-residual-operator}
\begin{aligned}
\Phi(w,V)
:={}&
(t\partial_t-\cL)w
+\frac1\nu(K\star_\xi w)\cdot\nabla_\xi w
\\
&+
\sqrt{\frac t\nu}
\left(
V\bigl(\Zn(t)+\sqrt{\nu t}\,\xi,t\bigr)
-\partial_t\Zn(t)
\right)\cdot\nabla_\xi w.
\end{aligned}
\end{equation}
Thus \(\Phi(w_2,v^{E,\nu})=0\).

\paragraph{The ambient drift and its strain.}

Following the Taylor expansion about the vortex center in
\cite{DonatiGallay2026}, we isolate the symmetric first-order term
of the ambient velocity. Here the ambient field is the approximate
regular velocity constructed in Section~\ref{sec:approx}, whose local
derivative bounds follow from separation.

~\\
Proposition~\ref{prop:regular-corrector} gives, for \(0\le t\le T\),
\begin{equation}\label{eq:b-hyp}
\begin{aligned}
\partial_t\Zn(t)&=\vapp(\Zn(t),t),
\\
\vapp&=K\star_x\wapp,
\\
\dist\bigl(\Zn(t),\supp\wapp(t)\bigr)
&\ge8.
\end{aligned}
\end{equation}
Define
\begin{equation}\label{eq:strain-field}
\begin{aligned}
b(\xi,t)
&:=
\vapp\bigl(\Zn(t)+\sqrt{\nu t}\,\xi,t\bigr)
-\partial_t\Zn(t)
\\
&\stackrel{\eqref{eq:b-hyp}}{=}
\vapp\bigl(\Zn(t)+\sqrt{\nu t}\,\xi,t\bigr)
-\vapp(\Zn(t),t).
\end{aligned}
\end{equation}
In particular, \(b(0,t)=0\), and
\[
\nabla_\xi\cdot b(\xi,t)
=
\sqrt{\nu t}\,
(\nabla_x\cdot\vapp)
\bigl(\Zn(t)+\sqrt{\nu t}\,\xi,t\bigr)
=
0.
\]

~\\
Set
\begin{equation}\label{eq:Sigma-def}
\Sigma(t)
:=
\operatorname{sym}D_x\vapp(\Zn(t),t).
\end{equation}
Since \(\wapp(t)\) vanishes in \(B(\Zn(t),8)\), the velocity
\(\vapp\) is both divergence free and curl free in that ball.
Its Jacobian is therefore symmetric and trace free there. In particular,
\begin{equation}\label{eq:strain-jacobian}
D_x\vapp(\Zn(t),t)=\Sigma(t),
\qquad
\Sigma(t)^T=\Sigma(t),
\qquad
\operatorname{tr}\Sigma(t)=0.
\end{equation}
Proposition~\ref{prop:regular-corrector} also bounds the strain and its
time derivative:
\begin{equation}\label{eq:Sigma-time-bound}
\sup_{0\le t\le T}
\left(
\abs{\Sigma(t)}
+\abs{\partial_t\Sigma(t)}
\right)
\lesssim 1.
\end{equation}

\paragraph{Multipole expansion.}

For \(\xi\neq0\) and \(y\neq\Zn(t)\), let
\(\psi=\psi(\xi,y,t)\) be the oriented angle from
\(\Zn(t)-y\) to \(\xi\):
\begin{equation}\label{eq:psi-def}
\cos\psi
=
\frac{\xi\cdot(\Zn(t)-y)}
     {\abs{\xi}\abs{\Zn(t)-y}},
\qquad
\sin\psi
=
\frac{\xi\cdot(\Zn(t)-y)^\perp}
     {\abs{\xi}\abs{\Zn(t)-y}}.
\end{equation}
At \(\xi=0\), every expression of the form
\(\abs{\xi}^m\sin(m\psi)\) is understood by continuous extension,
with value zero. The kernels below are evaluated on
\(\supp\wapp(t)\), where \(\abs{\Zn(t)-y}\ge8\).

\begin{lemma}
\label{lem:b-exp}
For every \(0<t\le T\) and every \(\xi\in\Rt\) satisfying
\(\sqrt{\nu t}\abs{\xi}\le1\),
\begin{equation}\label{eq:b-multipole}
\begin{aligned}
b(\xi,t)\cdot\xi
={}&
-\frac{\sqrt{\nu t}\abs{\xi}^2}{2\pi}
\int_{\Rt}
\frac{\sin(2\psi)}{\abs{\Zn(t)-y}^2}
\wapp(y,t)\,dy
\\
&+
\frac{\nu t\abs{\xi}^3}{2\pi}
\int_{\Rt}
\frac{\sin(3\psi)}{\abs{\Zn(t)-y}^3}
\wapp(y,t)\,dy
\\
&-
\frac{(\nu t)^{3/2}\abs{\xi}^4}{2\pi}
\int_{\Rt}
\frac{\sin(4\psi)}{\abs{\Zn(t)-y}^4}
\wapp(y,t)\,dy
+r_b(\xi,t),
\end{aligned}
\end{equation}
where
\begin{equation}\label{eq:rb-explicit-tail}
r_b(\xi,t)
:=
\frac1{2\pi}
\sum_{m=5}^{\infty}
(-1)^{m+1}(\nu t)^{(m-1)/2}\abs{\xi}^m
\int_{\Rt}
\frac{\sin(m\psi)}{\abs{\Zn(t)-y}^m}
\wapp(y,t)\,dy.
\end{equation}
Moreover, the quadratic coefficient satisfies
\begin{equation}\label{eq:b-strain-id}
-\frac{\abs{\xi}^2}{2\pi}
\int_{\Rt}
\frac{\sin(2\psi)}{\abs{\Zn(t)-y}^2}
\wapp(y,t)\,dy
=
\xi\cdot\Sigma(t)\xi.
\end{equation}
\end{lemma}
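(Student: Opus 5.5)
The plan is to compute \(b(\xi,t)\cdot\xi\) directly from the Biot--Savart law and expand the kernel in a convergent power series. Write \(a=\Zn(t)\), \(h=\sqrt{\nu t}\,\xi\), and \(u=a-y\) for \(y\in\supp\wapp(t)\), so that \(\abs{u}\ge8\) by \eqref{eq:b-hyp} while \(\abs{h}\le1\). By \eqref{eq:strain-field},
\[
b(\xi,t)\cdot\xi
=\frac{1}{\sqrt{\nu t}}\int_{\Rt}\bigl(K(u+h)-K(u)\bigr)\cdot h\,\wapp(y,t)\,dy .
\]
Since \(K(x)\cdot h=\frac{1}{2\pi}\frac{x^\perp\cdot h}{\abs{x}^2}\) and \(u^\perp\cdot u=0\), we get \(K(u+h)\cdot h=\frac{1}{2\pi}\frac{u^\perp\cdot h}{\abs{u+h}^2}\). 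It is therefore natural to identify \(\Rt\) with \(\mathbb C\). The kernel \(\frac{x^\perp\cdot h}{\abs{x}^2}\) equals \(\operatorname{Im}(h/x)\), up to a sign convention that I will fix once via the orientation in \eqref{eq:psi-def}. Thus
\[
\bigl(K(u+h)-K(u)\bigr)\cdot h
=\frac1{2\pi}\operatorname{Im}\Bigl(\frac{h}{u+h}-\frac hu\Bigr).
\]

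Next I expand in the geometric series \(\frac{h}{u+h}=\sum_{m\ge1}(-1)^{m+1}(h/u)^m\), which converges absolutely and uniformly because \(\abs{h/u}\le1/8\). The \(m=1\) term cancels \(h/u\), which reflects \(b(0,t)=0\). The remaining terms give \(\sum_{m\ge2}(-1)^{m+1}\operatorname{Im}\bigl((h/u)^m\bigr)\).

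Now write \(h/u=\frac{\abs h}{\abs u}e^{i\psi'}\), where \(\psi'\) is the angle from \(u=\Zn(t)-y\) to \(h\), which is the direction of \(\xi\). This angle is exactly \(\psi\) in \eqref{eq:psi-def}. Hence
\[
\operatorname{Im}\bigl((h/u)^m\bigr)=(\nu t)^{m/2}\abs{\xi}^m\abs{u}^{-m}\sin(m\psi).
\]
Dividing by \(\sqrt{\nu t}\) gives the prefactor \((\nu t)^{(m-1)/2}\abs{\xi}^m\) with sign \((-1)^{m+1}\). For \(m=2,3,4\) this yields the coefficients \(-,+,-\) in \eqref{eq:b-multipole}, and for \(m\ge5\) it yields \eqref{eq:rb-explicit-tail}.

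Summation and integration may be interchanged because \(\sum_m 8^{-m}\norm[L^1]{\wapp}<\infty\). The integrand is smooth on the support, and at \(\xi=0\) every term vanishes, consistent with the stated continuous-extension convention.

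For the strain identity \eqref{eq:b-strain-id}, I will avoid a separate computation of \(\Sigma\) and instead compare Taylor expansions. Since \(\vapp\) is smooth in \(B(\Zn,8)\), for small \(\abs{\xi}\) we have
\[
b(\xi,t)\cdot\xi=\sqrt{\nu t}\,\xi\cdot D_x\vapp(\Zn,t)\xi+O(\nu t\abs{\xi}^3).
\]
By \eqref{eq:strain-jacobian} the quadratic term equals \(\sqrt{\nu t}\,\xi\cdot\Sigma(t)\xi\). The quadratic term of the series is \(\sqrt{\nu t}\) times the left side of \eqref{eq:b-strain-id}. Both are homogeneous quadratic forms in \(\xi\) that agree to leading order as \(\xi\to0\), so they coincide, which gives \eqref{eq:b-strain-id}.

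The main obstacle is the sign and orientation bookkeeping. I need to check that \(x^\perp\cdot h/\abs{x}^2\) corresponds to \(\operatorname{Im}(h/x)\) rather than \(-\operatorname{Im}\). I also need to check that the angle from \(u\) to \(\xi\) matches \eqref{eq:psi-def}, since \(\sin\psi\) there uses \(\xi\cdot u^\perp\). For this I will test with \(u=(1,0)\) and \(h=(0,\epsilon)\). If a global sign flips, it flips every \(m\) at once, and the stated alternating pattern together with the identity \eqref{eq:b-strain-id} fixes the convention.
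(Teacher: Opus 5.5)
Your proof is correct, but it reaches the statement by a different route from the paper's. The paper first uses $\xi\cdot\xi^\perp=0$ to pull out the factor $\xi\cdot(\Zn(t)-y)^\perp=\abs{\xi}\abs{\Zn(t)-y}\sin\psi$. It then quotes, without proof, the real expansion \eqref{eq:kernel-exp} of $\abs{z_1+z_2}^{-2}-\abs{z_2}^{-2}$ in the Chebyshev-type ratios $\sin((n+1)\varphi)/\sin\varphi$. Finally, it proves \eqref{eq:b-strain-id} by a separate direct computation: it differentiates the Biot--Savart integral using the explicit formula \eqref{eq:Biot-Savart-kernel-derivative} for $DK$, together with $(\xi\cdot u)(\xi\cdot u^\perp)=\frac12\abs{\xi}^2\abs{u}^2\sin(2\psi)$.

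Your identity $K(x)\cdot h=\frac1{2\pi}\operatorname{Im}(h/x)$ reduces everything to the geometric series of $z/(1+z)$ at $z=h/u$. That single step gives the harmonics $\sin(m\psi)$, the signs $(-1)^{m+1}$, the cancellation of the $m=1$ term, and the summable majorant $\sum_m 8^{-m}$ needed for Fubini. In effect it proves the kernel expansion that the paper takes for granted.

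For the strain coefficient, you argue by homogeneity: two quadratic forms in $\xi$ that agree up to $O(\abs{\xi}^3)$ as $\xi\to0$ must coincide. This replaces the explicit $DK$ computation by two ingredients: differentiability of $\vapp$ at $\Zn(t)$, and the tail bound of order $\nu t\abs{\xi}^3$ for the terms with $m\ge3$. It does not even need \eqref{eq:strain-jacobian}, since $\xi\cdot A\xi=\xi\cdot(\operatorname{sym}A)\xi$ for every matrix $A$. The paper's computation is independent of the series; yours is shorter once the series is available.

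Two small corrections.

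\begin{itemize}
\item The sign issue you list as the main obstacle is not actually an issue. First, $\operatorname{Im}(h\bar x)=x_1h_2-x_2h_1=x^\perp\cdot h$, so $x^\perp\cdot h/\abs{x}^2=\operatorname{Im}(h/x)$ exactly. Second, $\operatorname{Re}(h/u)=h\cdot u/\abs{u}^2$ and $\operatorname{Im}(h/u)=h\cdot u^\perp/\abs{u}^2$, so $\arg(h/u)$ is exactly the angle $\psi$ of \eqref{eq:psi-def}. You should state this verification. Fixing a possible global sign by appealing to the formula you are proving would be circular.
\item The orthogonality behind $K(u+h)\cdot h=\frac{1}{2\pi}u^\perp\cdot h/\abs{u+h}^2$ is $h^\perp\cdot h=0$, not $u^\perp\cdot u=0$. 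That step is unnecessary anyway once you apply the complex identity to both $x=u+h$ and $x=u$.
\end{itemize}
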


\begin{proof}
Fix \(t>0\). The case \(\xi=0\) follows from the conventions above,
so assume \(\xi\neq0\).

\smallskip
\noindent\emph{Multipole expansion.}
The drift has the integral representation
\begin{equation}\label{eq:drift-Biot-Savart-difference}
b(\xi,t)
\stackrel{\substack{\eqref{eq:b-hyp}\\\eqref{eq:strain-field}}}{=}
\int_{\Rt}
\left[
K\bigl(\Zn(t)-y+\sqrt{\nu t}\,\xi\bigr)
-K(\Zn(t)-y)
\right]
\wapp(y,t)\,dy.
\end{equation}
Since \(\xi\cdot\xi^\perp=0\), its radial component is
\begin{equation}\label{eq:b-radial}
\begin{aligned}
b(\xi,t)\cdot\xi
\stackrel{\substack{\eqref{eq:NS}\\
\eqref{eq:drift-Biot-Savart-difference}}}{=}{}&
\frac1{2\pi}
\int_{\Rt}
\xi\cdot(\Zn(t)-y)^\perp
\\
&\quad\times
\left(
\frac1{\abs{\Zn(t)-y+\sqrt{\nu t}\,\xi}^2}
-\frac1{\abs{\Zn(t)-y}^2}
\right)
\wapp(y,t)\,dy.
\end{aligned}
\end{equation}

~\\
For \(z_1,z_2\neq0\), \(\abs{z_1}\le\frac12\abs{z_2}\), and
\(\varphi\) the angle from \(z_2\) to \(z_1\), one has, initially
for \(\sin\varphi\neq0\),
\begin{equation}\label{eq:kernel-exp}
\frac1{\abs{z_1+z_2}^2}
-\frac1{\abs{z_2}^2}
=
\frac1{\abs{z_2}^2}
\sum_{n=1}^{\infty}
(-1)^n
\frac{\abs{z_1}^n}{\abs{z_2}^n}
\frac{\sin((n+1)\varphi)}{\sin\varphi}.
\end{equation}
The quotient is understood by continuous extension when
\(\sin\varphi=0\); when \(z_1=0\), both sides are zero.
For \(y\in\supp\wapp(t)\),
\[
\frac{\sqrt{\nu t}\abs{\xi}}{\abs{\Zn(t)-y}}
\le\frac18.
\]
Set \(z_1=\sqrt{\nu t}\,\xi\), \(z_2=\Zn(t)-y\), and
\(\varphi=\psi\). Then
\[
\xi\cdot(\Zn(t)-y)^\perp
\stackrel{\eqref{eq:psi-def}}{=}
\abs{\xi}\abs{\Zn(t)-y}\sin\psi.
\]
Hence
\begin{equation}\label{eq:b-series}
\begin{aligned}
&\xi\cdot(\Zn(t)-y)^\perp
\left(
\frac1{\abs{\Zn(t)-y+\sqrt{\nu t}\,\xi}^2}
-\frac1{\abs{\Zn(t)-y}^2}
\right)
\\
&\qquad\stackrel{\substack{\eqref{eq:kernel-exp}\\\eqref{eq:psi-def}}}{=}
\sum_{m=2}^{\infty}
(-1)^{m+1}
(\nu t)^{(m-1)/2}
\frac{\abs{\xi}^m}{\abs{\Zn(t)-y}^m}
\sin(m\psi).
\end{aligned}
\end{equation}
The terms \(m=2,3,4\) give \eqref{eq:b-multipole}, and the
remaining terms give \eqref{eq:rb-explicit-tail}.

\smallskip
\noindent\emph{Identification of the strain.}
Separation permits differentiation under the Biot--Savart integral.
For \(a\neq0\) and \(h\in\Rt\),
\begin{equation}\label{eq:Biot-Savart-kernel-derivative}
DK(a)[h]
=
\frac1{2\pi}
\left(
\frac{h^\perp}{\abs a^2}
-
2\frac{(a\cdot h)a^\perp}{\abs a^4}
\right).
\end{equation}
Since \(\xi\cdot\xi^\perp=0\),
\[
\begin{aligned}
\xi\cdot\Sigma(t)\xi
&\stackrel{\substack{\eqref{eq:b-hyp}\\\eqref{eq:strain-jacobian}}}{=}
\int_{\Rt}
\xi\cdot DK(\Zn(t)-y)[\xi]\,
\wapp(y,t)\,dy
\\
&\stackrel{\eqref{eq:Biot-Savart-kernel-derivative}}{=}
-\frac1\pi
\int_{\Rt}
\frac{
\bigl(\xi\cdot(\Zn(t)-y)\bigr)
\bigl(\xi\cdot(\Zn(t)-y)^\perp\bigr)
}{
\abs{\Zn(t)-y}^4
}
\wapp(y,t)\,dy.
\end{aligned}
\]
Finally,
\[
\bigl(\xi\cdot(\Zn(t)-y)\bigr)
\bigl(\xi\cdot(\Zn(t)-y)^\perp\bigr)
\stackrel{\eqref{eq:psi-def}}{=}
\frac12\abs{\xi}^2\abs{\Zn(t)-y}^2\sin(2\psi),
\]
which proves \eqref{eq:b-strain-id}.
\end{proof}

\paragraph{Local and global bounds.}

\begin{lemma}
\label{lem:b-local}
For every \(0<t\le T\) and every \(\xi\in\Rt\) satisfying
\(\sqrt{\nu t}\abs{\xi}\le1\), the remainder in
\eqref{eq:b-multipole} satisfies
\begin{equation}\label{eq:b-tail}
\abs{r_b(\xi,t)}
\le
\frac1\pi
\norm[L^1]{\wapp(t)}
(\nu t)^2\abs{\xi}^5.
\end{equation}
Moreover,
\begin{equation}\label{eq:b-local-lipschitz}
\abs{b(\xi,t)}
\le
\frac{\norm[L^1]{\wapp(t)}}{\pi}
\sqrt{\nu t}\abs{\xi},
\end{equation}
and
\begin{equation}\label{eq:b-vector-Taylor}
\left|
\sqrt{\frac t\nu}\,b(\xi,t)
-t\Sigma(t)\xi
\right|
\lesssim t\sqrt{\nu t}\abs{\xi}^2.
\end{equation}
\end{lemma}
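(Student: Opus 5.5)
The plan is to bound each of the three quantities directly from the integral representation \eqref{eq:drift-Biot-Savart-difference}, using only the separation \(\abs{\Zn(t)-y}\ge8\) on \(\supp\wapp(t)\) together with the smallness \(\sqrt{\nu t}\abs{\xi}\le1\). Throughout, write \(\rho=\sqrt{\nu t}\abs{\xi}/\abs{\Zn(t)-y}\le\frac18\) for \(y\in\supp\wapp(t)\).

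\textbf{The tail \eqref{eq:b-tail}.} I would bound \eqref{eq:rb-explicit-tail} termwise, using \(\abs{\sin(m\psi)}\le1\) and \(\abs{\Zn(t)-y}\ge8\ge1\). The \(m\)-th term is then at most \((\nu t)^{(m-1)/2}\abs{\xi}^m\abs{\Zn(t)-y}^{-m}\abs{\wapp(y,t)}\). Factor out \((\nu t)^2\abs{\xi}^5\abs{\Zn(t)-y}^{-5}\); what remains is a geometric series in \(\rho\le\frac18\), which sums to at most \(\frac{8}{7}\). This gives
\[
\abs{r_b}\le\frac{1}{2\pi}\cdot\frac87\cdot 8^{-5}\norm[L^1]{\wapp(t)}(\nu t)^2\abs{\xi}^5,
\]
which is well below the stated constant \(\frac1\pi\).

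\textbf{The Lipschitz bound \eqref{eq:b-local-lipschitz}.} I would use the vector form \eqref{eq:drift-Biot-Savart-difference} rather than only the radial component. The mean value theorem applies on the segment \(\Zn(t)-y+\theta\sqrt{\nu t}\,\xi\), \(0\le\theta\le1\), which stays at distance at least \(7\) from the origin. From \eqref{eq:Biot-Savart-kernel-derivative}, \(\abs{DK(a)}\le\frac{3}{2\pi\abs a^2}\) (in fact the sharp bound is \(\frac{1}{2\pi\abs a^2}\), since \(DK(a)\) is a symmetric trace-free reflection-type matrix of norm \(\abs a^{-2}/(2\pi)\)). Hence the integrand is bounded by \(\frac{1}{2\pi\cdot49}\sqrt{\nu t}\abs{\xi}\abs{\wapp}\). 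This is comfortably \(\le\frac1\pi\sqrt{\nu t}\abs\xi\abs{\wapp}\), and integrating in \(y\) gives the bound.

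\textbf{The Taylor bound \eqref{eq:b-vector-Taylor}.} I expect this to be the only step needing some care. I would take a second-order Taylor expansion of \(K\) at \(a=\Zn(t)-y\) with increment \(h=\sqrt{\nu t}\,\xi\):
\[
K(a+h)-K(a)-DK(a)h=\int_0^1(1-\theta)D^2K(a+\theta h)[h,h]\,d\theta.
\]
Here \(\abs{D^2K(a')}\lesssim\abs{a'}^{-3}\le7^{-3}\) along the segment. Integrating against \(\wapp\), and using \(\int DK(\Zn-y)\,\wapp(y)\,dy=D_x\vapp(\Zn,t)=\Sigma(t)\) by \eqref{eq:strain-jacobian}, gives
\[
\abs{b(\xi,t)-\sqrt{\nu t}\,\Sigma(t)\xi}\lesssim\norm[L^1]{\wapp(t)}\,\nu t\abs{\xi}^2.
\]
Multiplying by \(\sqrt{t/\nu}\) yields \(t\sqrt{\nu t}\abs\xi^2\). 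The implicit constant is uniform because \(\sup_t\norm[L^1]{\wapp(t)}\lesssim1\), by the \(W^{3,1}\) bound in \eqref{eq:regular-corrector-bounds} and conservation of \(\norm[L^1]{\w^E}\). Differentiating under the integral is justified by the separation and compact support, exactly as in the proof of Lemma~\ref{lem:b-exp}.
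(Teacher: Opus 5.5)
Your proposal is correct and follows the paper's proof in all three parts: the tail is bounded termwise by a geometric series with ratio at most $1/8$ (your constant is sharper than the paper's $1/\pi$), the bound on $b$ comes from a first-order kernel-difference estimate on a segment that stays at distance at least $7$ from the origin, and the Taylor bound is a second-order expansion combined with $D_x\vapp(\Zn(t),t)=\Sigma(t)$ from \eqref{eq:strain-jacobian}. The differences are cosmetic. For the bound on $b$, the paper uses the exact identity $\abs{K(a)-K(a')}=\abs{a-a'}/(2\pi\abs a\abs{a'})$ rather than the mean value theorem. For the Taylor bound, it first bounds $D_x^2\vapp$ on $\{\abs{x-\Zn(t)}\le1\}$ and then Taylor-expands $\vapp$, which is the same computation as your expansion of $K$ under the integral.
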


\begin{proof}
\noindent\emph{The multipole remainder.}
For \(y\in\supp\wapp(t)\), the ratio
\(\sqrt{\nu t}\abs{\xi}/\abs{\Zn(t)-y}\) is at most \(1/8\).
Hence
\[
\sum_{m=5}^{\infty}
(\nu t)^{(m-1)/2}
\frac{\abs{\xi}^m}{\abs{\Zn(t)-y}^m}
\le
2\frac{(\nu t)^2\abs{\xi}^5}{\abs{\Zn(t)-y}^5}.
\]
After integration against \((2\pi)^{-1}\abs{\wapp(y,t)}\,dy\),
\[
|r_b(\xi,t)|
\stackrel{\substack{\eqref{eq:rb-explicit-tail}\\\eqref{eq:b-hyp}}}{\le}
\frac1\pi\|\wapp(t)\|_{L^1}(\nu t)^2|\xi|^5,
\]
which proves \eqref{eq:b-tail}.

\smallskip
\noindent\emph{The local velocity bound.}
For \(a,a'\neq0\), the Biot--Savart kernel satisfies
\[
\abs{K(a)-K(a')}
=
\frac{\abs{a-a'}}{2\pi\abs a\abs{a'}}.
\]
On \(\supp\wapp(t)\),
\[
\abs{\Zn(t)-y}\ge8,
\qquad
\abs{\Zn(t)-y+\sqrt{\nu t}\,\xi}\ge7.
\]
Therefore,
\[
\begin{aligned}
\abs{b(\xi,t)}
&\le
\frac{\sqrt{\nu t}\abs{\xi}}{2\pi}
\int_{\Rt}
\frac{\abs{\wapp(y,t)}}
{
\abs{\Zn(t)-y+\sqrt{\nu t}\,\xi}
\abs{\Zn(t)-y}
}\,dy
\\
&\le
\frac{\norm[L^1]{\wapp(t)}}{\pi}
\sqrt{\nu t}\abs{\xi},
\end{aligned}
\]
which is \eqref{eq:b-local-lipschitz}.

\smallskip
\noindent\emph{The vector Taylor estimate.}
For \(\abs{x-\Zn(t)}\le1\), separation gives
\(\abs{x-y}\ge7\) on \(\supp\wapp(t)\). Differentiating the
Biot--Savart integral twice therefore yields
\[
\sup_{\abs{x-\Zn(t)}\le1}
\abs{D_x^2\vapp(x,t)}
\lesssim \norm[L^1]{\wapp(t)}
\lesssim 1.
\]
Since \(D_x\vapp(\Zn(t),t)=\Sigma(t)\), Taylor's formula gives
\[
\begin{aligned}
b(\xi,t)-\sqrt{\nu t}\,\Sigma(t)\xi
={}&
\nu t\int_0^1(1-\theta)
\\
&\quad\times
D_x^2\vapp
\bigl(\Zn(t)+\theta\sqrt{\nu t}\,\xi,t\bigr)
[\xi,\xi]\,d\theta.
\end{aligned}
\]
Consequently,
\[
\abs{
b(\xi,t)-\sqrt{\nu t}\,\Sigma(t)\xi
}
\lesssim \nu t\abs{\xi}^2.
\]
Multiplication by \(\sqrt{t/\nu}\) proves
\eqref{eq:b-vector-Taylor}.
\end{proof}

~\\
The velocity bound in Proposition~\ref{prop:regular-corrector} yields
the global estimate
\begin{equation}\label{eq:b-global}
\abs{b(\xi,t)}
\stackrel{\eqref{eq:strain-field}}{\le}
2\norm[L^\infty]{\vapp(t)}
\lesssim 1,
\qquad
\xi\in\Rt,\quad 0<t\le T.
\end{equation}

\section{Construction of the concentrated approximation}
\label{sec:high-approx}
Decreasing \(\nu_{0,T}\) if necessary, we assume throughout this section that
\[
\nu_{0,T}\le1,
\qquad
\nu_{0,T}T\le1.
\]
\subsection{Gaussian spaces and angular modes}\label{sec:Gaussian-function-spaces}

The real Gaussian Hilbert space is
\begin{equation}\label{eq:Gaussian-space-Y}
\begin{gathered}
\mathsf p_G(\xi):=e^{|\xi|^2/4},
\qquad
Y:=L^2(\mathbb R^2,\mathsf p_G\,d\xi;\mathbb R)
=\left\{f:\int_{\mathbb R^2}e^{|\xi|^2/4}|f(\xi)|^2\,d\xi<\infty\right\},
\\
\langle f,g\rangle_Y:=\int_{\mathbb R^2}\mathsf p_G f g\,d\xi,
\qquad
\|f\|_Y^2:=\langle f,f\rangle_Y.
\end{gathered}
\end{equation}
For \(n\in\mathbb N\), let \(Y_n=L^2_{\mathsf p_G,n}\) denote the
closed subspace consisting of the \(n\)-th angular mode:
\begin{equation}\label{eq:Gaussian-angular-spaces}
Y_n:=\left\{f\in Y:
f(r,\theta)=a(r)\cos(n\theta)+b(r)\sin(n\theta)\right\}.
\end{equation}
We also write \(L^2_{\mathsf p_G,\mathrm{rad}}\) for the radial
subspace of \(Y\) in \eqref{eq:Gaussian-space-Y}, and set
\begin{equation}\label{eq:Gaussian-energy-space}
\mathcal V
:=
\{f\in Y:\nabla f\in Y^2\text{ in the distributional sense}\},
\qquad
\|f\|_{\mathcal V}^2
:=
\|f\|_Y^2+\|\nabla f\|_Y^2.
\end{equation}
With this graph norm, \(\mathcal V\) is a Hilbert space.
For \(0<\gamma<1\), define the weighted supremum norm
\begin{equation}\label{eq:Gaussian-supremum-norm}
\|f\|_{L^\infty_\gamma}
:=
\sup_{\xi\in\mathbb R^2}
e^{\gamma|\xi|^2/4}|f(\xi)|.
\end{equation}
The smooth Gaussian class is
\begin{equation}\label{eq:Gaussian-class-Z}
\mathcal Z:=\left\{
e^{-|\xi|^2/4}u(\xi):
\begin{array}{l}
u\in C^\infty(\mathbb R^2;\mathbb R),\quad
\forall\alpha\in\mathbb N_0^2\ \exists M_\alpha,N_\alpha\ge0,\\
|\partial^\alpha u(\xi)|\le M_\alpha(1+|\xi|)^{N_\alpha}
\quad\forall\xi\in\mathbb R^2
\end{array}
\right\}.
\end{equation}
Thus every derivative of the Gaussian quotient has at most polynomial
growth.  The angular projections are
\begin{equation}\label{eq:Gaussian-angular-projections}
\Pi_{\mathrm{rad}}f(r)
:=\frac1{2\pi}\int_0^{2\pi}f(r,\theta)\,d\theta,
\qquad
\Pi_{\ne0}:=I-\Pi_{\mathrm{rad}}.
\end{equation}
With this notation,
\begin{equation}\label{eq:Gaussian-linearized-operators}
\begin{gathered}
G(\xi)=\frac1{4\pi}e^{-|\xi|^2/4},
\qquad
\mathcal L f=\Delta f+\frac12\xi\cdot\nabla f+f,
\\
\Lambda f=v^G\cdot\nabla f+v^f\cdot\nabla G,
\end{gathered}
\end{equation}
where $v^f=K*f$ and $K(\xi)=(2\pi)^{-1}\xi^\perp/|\xi|^2$.
The sign convention is
$\xi^\perp=(-\xi_2,\xi_1)$ and $v^f=\nabla^\perp\Psi$ when
$\Delta\Psi=f$.
Set
\begin{equation}\label{eq:fixed-mode-radial-coefficients}
\varphi(r):=\frac{1-e^{-r^2/4}}{2\pi r^2},
\qquad
h(r):=\frac{r^2/4}{e^{r^2/4}-1}
    .
\end{equation}
Both functions extend smoothly as radial functions at zero, with
$\varphi(0)=1/(8\pi)$ and $h(0)=1$. For $m\in\mathbb N_0$ and
$0<\gamma<1$, define
\begin{equation}\label{eq:finite-Gaussian-seminorm}
\mathfrak n_{m,\gamma}(f)
:=\sum_{|\alpha|\le m}
\sup_{\xi\in\mathbb R^2}
 e^{\gamma|\xi|^2/4}|\partial^\alpha f(\xi)|.
\end{equation}
Gaussian decay dominates polynomial growth. In particular,
\begin{equation}\label{eq:Z-Gaussian-inclusions}
f\in\mathcal Z
\quad\stackrel{\substack{\eqref{eq:Gaussian-space-Y}\\
\eqref{eq:Gaussian-class-Z}}}{\Longrightarrow}\quad
f\in Y,\qquad
\mathfrak n_{m,\gamma}(f)<\infty
\quad(m\in\mathbb N_0,\ 0<\gamma<1).
\end{equation}
The constants below may depend on the fixed angular mode $n$.
No uniformity as $n\to\infty$ is asserted.
The notation $\Lambda^{-1}$ denotes inversion on $Y_n\cap\mathcal Z$;
it does not denote a bounded inverse on all of $Y_n$.
\begin{lemma}\label{lem:fixed-mode-source-origin}
Let $n\ge2$ be an integer and let $q\in C^\infty(\mathbb R^2)$ satisfy
\[
q(r\cos\theta,r\sin\theta)
=a(r)\cos(n\theta)+b(r)\sin(n\theta),
\qquad r>0.
\]
Then
\begin{equation}\label{eq:angular-mode-vanishing-jets}
\partial^\alpha q(0)=0,
\qquad |\alpha|<n.
\end{equation}
In particular, $q(0)=0$ and $\nabla q(0)=0$.

~\\
The coefficients extend smoothly to $r\in\mathbb R$, with
\[
a(-r)=(-1)^na(r),
\qquad
b(-r)=(-1)^nb(r).
\]
Moreover, there exist $A,B\in C^\infty([0,\infty))$ such that
\begin{equation}\label{eq:angular-coefficient-factorization}
a(r)=r^nA(r^2),
\qquad
b(r)=r^nB(r^2),
\qquad r\ge0.
\end{equation}
Thus $a(r),b(r)=O(r^n)$ as $r\to0$.

~\\
Let $0<\gamma_1<1$. If
\[
\mathfrak n_{2,\gamma_1}(q)
:=
\sum_{|\alpha|\le2}
\sup_{\xi\in\mathbb R^2}
e^{\gamma_1|\xi|^2/4}|\partial^\alpha q(\xi)|
<\infty,
\]
then
\begin{equation}\label{eq:fixed-mode-a-derivatives}
\begin{aligned}
|a(r)|+|b(r)|
&\lesssim \mathfrak n_{2,\gamma_1}(q)e^{-\gamma_1r^2/4},\\
|a'(r)|+|b'(r)|
&\lesssim \mathfrak n_{2,\gamma_1}(q)e^{-\gamma_1r^2/4},\\
|a''(r)|+|b''(r)|
&\lesssim \mathfrak n_{2,\gamma_1}(q)e^{-\gamma_1r^2/4},
\end{aligned}
\qquad r\ge0.
\end{equation}
Moreover,
\begin{equation}\label{eq:fixed-mode-a-origin}
\begin{aligned}
|a(r)|+|b(r)|
&\lesssim \mathfrak n_{2,\gamma_1}(q)r^2,\\
|a'(r)|+|b'(r)|
&\lesssim \mathfrak n_{2,\gamma_1}(q)r,\\
|a''(r)|+|b''(r)|
&\lesssim \mathfrak n_{2,\gamma_1}(q),
\end{aligned}
\qquad 0<r\le1.
\end{equation}
The implicit constants in these coefficient bounds are absolute.
\end{lemma}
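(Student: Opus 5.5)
The coefficients are recovered by angular projection: for \(r>0\),
\[
a(r)=\frac1\pi\int_0^{2\pi}q(r\cos\theta,r\sin\theta)\cos(n\theta)\,d\theta,
\qquad
b(r)=\frac1\pi\int_0^{2\pi}q(r\cos\theta,r\sin\theta)\sin(n\theta)\,d\theta.
\]
These formulas are the basis for everything else.

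The right-hand sides are defined and smooth for all \(r\in\RR\), since \(q\) is smooth on \(\Rt\). This gives the smooth extension. Substituting \(\theta\mapsto\theta+\pi\) gives the parity \(a(-r)=(-1)^na(r)\), and likewise for \(b\).

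\emph{Vanishing jets.} Taylor-expand \(q\) at \(0\) to order \(n-1\): write \(q=\sum_{k<n}P_k+O(|\xi|^n)\), with \(P_k\) homogeneous of degree \(k\). A homogeneous polynomial of degree \(k<n\) contains only angular modes of order at most \(k\), and so is orthogonal to \(\cos(n\theta)\) and \(\sin(n\theta)\). Hence \(a,b=O(r^n)\).

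On the other hand, for each fixed \(r\), \(q(r\cdot)\) restricted to the circle is exactly the pure mode \(a\cos n\theta+b\sin n\theta\). Therefore each \(P_k(\cos\theta,\sin\theta)\) is a function of mode at most \(k<n\) which must equal a pure \(n\)-mode up to \(O(r^{n-k})\). Comparing powers of \(r\) inductively shows \(P_k\equiv0\) for all \(k<n\), which proves \eqref{eq:angular-mode-vanishing-jets}.

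\emph{Factorization.} The parity shows that \(r^{-n}a(r)\) is even and smooth: it is smooth because \(a\) vanishes to order \(n\) at \(0\), and Taylor's formula with integral remainder gives \(a(r)=r^n\int_0^1\frac{(1-s)^{n-1}}{(n-1)!}a^{(n)}(sr)\,ds\). Whitney's theorem on even smooth functions then gives \(A\) with \(r^{-n}a(r)=A(r^2)\), and similarly \(B\) for \(b\). This proves \eqref{eq:angular-coefficient-factorization}.

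\emph{Decay bounds.} Differentiate under the integral:
\[
a'(r)=\frac1\pi\int\omega\cdot\nabla q\,\cos(n\theta)\,d\theta,
\qquad
a''(r)=\frac1\pi\int\omega^TD^2q\,\omega\,\cos(n\theta)\,d\theta,
\]
where \(\omega=(\cos\theta,\sin\theta)\). Each integrand is bounded by \(\mathfrak n_{2,\gamma_1}(q)e^{-\gamma_1r^2/4}\), so \eqref{eq:fixed-mode-a-derivatives} follows with constant \(2\), which is absolute.

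\emph{Small-\(r\) bounds.} For \(0<r\le1\), use \(q(0)=0\), \(\nabla q(0)=0\) and \(|D^2q|\le\mathfrak n_{2,\gamma_1}(q)\). Taylor's theorem gives \(|q(\xi)|\le\tfrac12\mathfrak n_{2,\gamma_1}(q)|\xi|^2\) and \(|\nabla q(\xi)|\le\mathfrak n_{2,\gamma_1}(q)|\xi|\). Inserting these into the projection formulas yields \eqref{eq:fixed-mode-a-origin} with absolute constants. Only the \(n\ge2\) vanishing of \(q\) and \(\nabla q\) at the origin is used here, not the full order-\(n\) vanishing, which is why the constants do not depend on \(n\).

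The main obstacle is the jet-vanishing step. It must be organized cleanly, either by the homogeneous-polynomial mode argument above or by applying the angular derivative \(\partial_\theta^2+n^2\), which annihilates pure \(n\)-modes, to the Taylor polynomial. The rest is routine differentiation under the integral.
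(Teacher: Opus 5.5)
Your proposal is correct and follows essentially the paper's route: angular projection formulas giving the smooth extension, parity via $\theta\mapsto\theta+\pi$, orthogonality of homogeneous polynomials of degree $<n$ to the $n$-th mode, and Whitney's factorization for even functions. The only cosmetic differences are in how the pieces are finished: the paper reads off the vanishing jets directly from $q(\xi)=a\cos n\theta+b\sin n\theta=O(|\xi|^n)$, and it gets the quantitative bounds by evaluating $q$ along the rays $\theta=0$ and $\theta=\pi/(2n)$ (so $a(r)=q(r,0)$) and integrating $a''$ from $0$, whereas you differentiate the projection integrals and Taylor-expand $q$ in $\xi$.
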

\begin{proof}
The Fourier coefficient formulas
\[
a(r)=\frac1\pi\int_0^{2\pi}
q(r\cos\theta,r\sin\theta)\cos(n\theta)\,d\theta,
\qquad
b(r)=\frac1\pi\int_0^{2\pi}
q(r\cos\theta,r\sin\theta)\sin(n\theta)\,d\theta
\]
extend $a,b$ smoothly to $r\in\mathbb R$. Changing
$\theta$ to $\theta+\pi$ gives
$a(-r)=(-1)^na(r)$ and $b(-r)=(-1)^nb(r)$.

~\\
We next consider the behavior at the origin. Every homogeneous
polynomial of degree $j<n$, restricted to the unit circle, contains
only angular frequencies of absolute value at most $j$. Hence it is
orthogonal to $\cos(n\theta)$ and $\sin(n\theta)$. Applying this
observation to the Taylor expansion of $q$ at the origin gives
$a(r),b(r)=O(r^n)$. Since
$q(r,\theta)=a(r)\cos(n\theta)+b(r)\sin(n\theta)$, it follows that
$q(\xi)=O(|\xi|^n)$, and therefore
\[
\partial^\alpha q(0)=0,
\qquad |\alpha|<n.
\]
In particular,
$a^{(j)}(0)=b^{(j)}(0)=0$ for $0\le j<n$.

~\\
Thus $a(r)/r^n$ and $b(r)/r^n$ extend smoothly through $r=0$.
By the parity above, these extensions are even. The standard
factorization of smooth even functions then gives
$A,B\in C^\infty([0,\infty))$ such that
\[
a(r)=r^nA(r^2),
\qquad
b(r)=r^nB(r^2).
\]

~\\
It remains to prove the quantitative bounds. Evaluating $q$ on the
rays $\theta=0$ and $\theta=\pi/(2n)$ gives
\[
a(r)=q(r,0),
\qquad
b(r)=q\left(
r\cos\frac{\pi}{2n},
r\sin\frac{\pi}{2n}
\right).
\]
Differentiating along these unit rays up to order two immediately
yields \eqref{eq:fixed-mode-a-derivatives}, with an absolute
constant.

~\\
Finally, since $n\ge2$, we have
$a(0)=b(0)=a'(0)=b'(0)=0$. Hence, for $0<r\le1$,
\[
a'(r)=\int_0^r a''(s)\,ds,
\qquad
a(r)=\int_0^r(r-s)a''(s)\,ds,
\]
and similarly for $b$. Hence
\[
|a(r)|+|b(r)|
\stackrel{\eqref{eq:fixed-mode-a-derivatives}}{\lesssim} \mathfrak n_{2,\gamma_1}(q)r^2,
\qquad
|a'(r)|+|b'(r)|
\stackrel{\eqref{eq:fixed-mode-a-derivatives}}{\lesssim} \mathfrak n_{2,\gamma_1}(q)r,
\]
while the corresponding bound for $a''$ and $b''$ follows directly
from \eqref{eq:fixed-mode-a-derivatives}. This proves
\eqref{eq:fixed-mode-a-origin}.
\end{proof}

\subsection{Inversion on fixed angular modes}
\begin{lemma}\label{lem:fixed-mode-fundamental-solutions}
Let $n\ge2$ be an integer. There are unique real-valued functions
$\psi_-,\psi_+\in C^\infty((0,\infty))$ solving
\begin{equation}\label{eq:fixed-mode-homogeneous-equation-proof}
-\psi''(r)-\frac1r\psi'(r)
+\left(\frac{n^2}{r^2}-h(r)\right)\psi(r)=0,
\qquad r>0,
\end{equation}
with the respective normalizations
\begin{equation}\label{eq:fixed-mode-fundamental-normalizations}
\lim_{r\to0}\frac{\psi_-(r)}{r^n}=1,
\qquad
\lim_{r\to\infty}r^n\psi_+(r)=1.
\end{equation}

~\\
Both solutions are strictly positive, with $\psi_-$ strictly
increasing and $\psi_+$ strictly decreasing:
\begin{equation}\label{eq:fixed-mode-fundamental-positivity}
\psi_-(r)>0,
\qquad
\psi_+(r)>0,
\qquad
\psi_-'(r)>0,
\qquad
\psi_+'(r)<0,
\qquad r>0.
\end{equation}
For every $r>0$, they satisfy the bounds
\begin{subequations}\label{eq:fixed-mode-fundamental-solution-bounds}
\begin{equation}
0<\psi_-(r)\le r^n,
\end{equation}
\begin{equation}
0<\psi_+(r)\le r^{-n},
\end{equation}
\begin{equation}
0<\psi_-'(r)\le n r^{n-1},
\end{equation}
\begin{equation}
0<-\psi_+'(r)\le n r^{-n-1},
\end{equation}
\begin{equation}
|\psi_-''(r)|\le n(n+1)r^{n-2},
\end{equation}
\begin{equation}
|\psi_+''(r)|\le n(n+1)r^{-n-2}.
\end{equation}
\end{subequations}

~\\
The quantity
\begin{equation}\label{eq:fixed-mode-Wronskian-convention}
w_0
:=
r\bigl(
\psi_-'(r)\psi_+(r)-\psi_-(r)\psi_+'(r)
\bigr)
\end{equation}
is independent of $r$, depends only on $n$, and satisfies
\[
0<w_0<2n.
\]
Moreover, for every $r>0$,
\begin{subequations}\label{eq:fundamental-two-sided-bounds}
\begin{equation}
\frac{w_0}{2n}\,r^n
\le \psi_-(r)\le r^n,
\end{equation}
\begin{equation}
\frac{w_0}{2n}\,r^{-n}
\le \psi_+(r)\le r^{-n}.
\end{equation}
\end{subequations}
At the opposite endpoints, the solutions satisfy
\[
\lim_{r\to\infty}\frac{\psi_-(r)}{r^n}
=\frac{w_0}{2n},
\]
and
\[
\lim_{r\to0}r^n\psi_+(r)
=\frac{w_0}{2n}.
\]
\end{lemma}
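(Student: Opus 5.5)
The plan is to rewrite the equation as
\[
\bigl(r\psi'\bigr)'=r\,q(r)\,\psi,
\qquad
q(r):=\frac{n^2}{r^2}-h(r),
\]
and to prove first that \(q>0\) on \((0,\infty)\) whenever \(n\ge2\). With \(x=r^2/4\) this reads \(n^2>r^2h(r)=4x^2/(e^x-1)\). An elementary calculus check shows \(\sup_{x>0}4x^2/(e^x-1)<3\), which is less than \(4\le n^2\). All monotonicity and sign statements rest on this positivity, so this small calculation is the first step.

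\emph{Existence and uniqueness.} At \(r=0\) the equation has a regular singular point with indices \(\pm n\). The coefficient \(h\) is smooth and even with \(h(0)=1\), so a Frobenius (or Volterra) construction gives a solution \(\psi_-=r^n(1+O(r^2))\). Any other solution adds a multiple of a solution that is singular like \(r^{-n}\), so the normalization at \(0\) determines \(\psi_-\) uniquely. At infinity \(h\) decays like a Gaussian. Using variation of parameters against \(r^{\pm n}\), I would solve
\[
\psi_+(r)=r^{-n}-\int_r^\infty\frac{s}{2n}\Bigl(\frac{r^n}{s^n}-\frac{s^n}{r^n}\Bigr)h(s)\psi_+(s)\,ds .
\]
This Volterra equation converges on \([R,\infty)\) for large \(R\), by iteration in a weighted sup norm with weight \(r^n\). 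The solution is then extended to \((0,\infty)\) as the solution of the linear ODE. Uniqueness again follows because the second solution grows like \(r^n\). I expect this asymptotic construction at infinity, together with showing that \(r\psi_+'\to0\) and \(r^{n+1}\psi_+'\to -n\), to be the most technical part. It is nonetheless routine because \(h\) decays so fast.

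\emph{Positivity, monotonicity and the upper bounds.} For \(\psi_-\): near \(0\) we have \(\psi_->0\) and \(r\psi_-'\to0\). While \(\psi_->0\), the quantity \(r\psi_-'\) is increasing since \((r\psi_-')'=rq\psi_->0\). Hence \(\psi_-'>0\), so \(\psi_-\) increases and stays positive. Next let \(W_-:=r\bigl(r^n\psi_-'-nr^{n-1}\psi_-\bigr)\). A direct computation, using \((r(r^n)')'=n^2r^{n-1}\), gives \(W_-'=-r^{n+1}h\psi_-<0\), and \(W_-(0^+)=0\). Therefore \((\psi_-/r^n)'=W_-/r^{2n+1}<0\), which yields \(\psi_-\le r^n\). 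Integrating \((r\psi_-')'\le n^2r^{-1}\psi_-\le n^2r^{n-1}\) gives \(\psi_-'\le nr^{n-1}\). Finally, \(\psi_-''=-\psi_-'/r+q\psi_-\) lies between \(-nr^{n-2}\) and \(n^2r^{n-2}\).

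The argument for \(\psi_+\) is symmetric and runs backward from \(\infty\). Since \(r\psi_+'\to0\) at infinity and is increasing while \(\psi_+>0\), we get \(r\psi_+'<0\), so \(\psi_+\) increases as \(r\) decreases and stays positive. With \(W_+:=r\bigl(r^{-n}\psi_+'+nr^{-n-1}\psi_+\bigr)\) one finds \(W_+'=-r^{1-n}h\psi_+<0\) and \(W_+(\infty)=0\). Hence \(W_+>0\), so \((r^n\psi_+)'>0\) and \(r^n\psi_+\le1\). The bounds on \(\psi_+'\) and \(\psi_+''\) follow by integrating from \(\infty\) and reading off the equation, exactly as for \(\psi_-\).

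\emph{The Wronskian.} By Abel's identity the quantity \(w_0\) is constant. It is positive by the sign information \(\psi_\pm>0\), \(\psi_-'>0\), \(\psi_+'<0\). The ratio \(\psi_-/r^n\) is positive and decreasing, so it has a limit \(c\ge0\). Since \(W_-\) tends to a finite limit (because \(\int^\infty r^{n+1}h\psi_-<\infty\)), we get \(r^{1-n}\psi_-'\to nc\). Evaluating \(w_0\) as \(r\to\infty\) with \(\psi_+\sim r^{-n}\) and \(\psi_+'\sim-nr^{-n-1}\) gives \(w_0=2nc\), so \(c=w_0/(2n)\). Because \(\psi_-/r^n\) strictly decreases from \(1\) to \(w_0/(2n)\), we obtain \(w_0<2n\) and the two-sided bound for \(\psi_-\). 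The same argument at \(r=0\) applies to \(r^n\psi_+\), which increases from its limit at \(0\) up to \(1\). That limit at \(0\) equals \(w_0/(2n)\) by the Wronskian evaluated at \(0\), and this gives the remaining two-sided bound.
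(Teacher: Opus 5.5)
Your proposal follows the paper's proof. Your $W_-=r^{2n+1}(\psi_-/r^n)'$ and $W_+=r^{1-2n}(r^n\psi_+)'$ are exactly the paper's $r^{2n+1}u_-'$ and $r^{1-2n}u_+'$, with $\psi_-=r^nu_-$ and $\psi_+=r^{-n}u_+$. The flux identities, derivative bounds, Abel's identity and endpoint evaluation of $w_0$ also match. Two points need repair.

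\emph{Sign slip.} Your variation-of-parameters formula has the wrong sign. It should read
\[
\psi_+(r)=r^{-n}-\int_r^\infty\frac{s}{2n}\Bigl(\frac{s^n}{r^n}-\frac{r^n}{s^n}\Bigr)h(s)\psi_+(s)\,ds .
\]
As you wrote it, the kernel solves $\psi''+\psi'/r-n^2\psi/r^2=+h\psi$. It would also force $\psi_+>r^{-n}$, which contradicts your later bound. The contraction argument itself is unaffected.

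\emph{The step at $r=0$ is not ``the same argument''.} For $\psi_-$ you used that $W_-$ has a finite limit at infinity. The analogous statement fails for $W_+$ at $0$. Indeed $W_+(r)=\int_r^\infty s^{1-n}h\psi_+\,ds$, and $\psi_+(s)\ge\psi_+(1)>0$, $h(s)\ge h(1)>0$ on $(0,1]$. Since $n\ge2$, this integral diverges as $r\to0$.

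You only need $ru_+'=r^{2n}W_+\to0$. Using $h\le1$ and $\psi_+\le s^{-n}$,
\[
W_+(r)\le W_+(1)+\int_r^1 s^{1-2n}\,ds\lesssim r^{2-2n},\qquad 0<r\le1,
\]
with a constant depending on $n$. Hence $ru_+'\lesssim r^2$; this is the paper's bound $u_+'\lesssim r$, and it uses $n\ge2$.

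Alternatively, write $w_0=2nu_-u_++ru_-'u_+-ru_-u_+'$ and let $r\to0$, using $u_-\to1$, $ru_-'\to0$ and the monotone limit of $u_+$. This shows $ru_+'$ has a limit $L\ge0$. If $L>0$, then $u_+\to-\infty$ logarithmically as $r\to0$, contradicting positivity.

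Either way, $r^{n+1}\psi_+'=-nu_++ru_+'\to-n\lim_{r\to0}u_+$, and your evaluation of $w_0$ at $0$ then goes through.
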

\begin{proof}
The radial coefficient satisfies
\[
h(r)\stackrel{\eqref{eq:fixed-mode-radial-coefficients}}{=}O(1)
\quad(r\to0),
\qquad
h(r)\stackrel{\eqref{eq:fixed-mode-radial-coefficients}}{\lesssim}
r^2e^{-r^2/4}\quad(r\to\infty).
\]
Moreover, with \(s=r^2/4\),
\[
0<r^2h(r)=\frac{4s^2}{e^s-1}<4\le n^2.
\]
Hence
\begin{equation}\label{eq:fixed-mode-positive-potential}
0<
V_n(r):=\frac{n^2}{r^2}-h(r)
\le \frac{n^2}{r^2}.
\end{equation}

~\\
Write
\[
\psi_-(r)=r^n u_-(r),
\qquad
\psi_+(r)=r^{-n}u_+(r).
\]
The transformed equations are
\[
\bigl(r^{2n+1}u_-'\bigr)'
=-r^{2n+1}h\,u_-,
\qquad
\bigl(r^{1-2n}u_+'\bigr)'
=-r^{1-2n}h\,u_+.
\]
Standard Volterra theory at \(0\) and \(\infty\), followed by ordinary
ODE continuation, gives the unique normalized solutions. Indeed,
boundedness at the normalized endpoints forces
\[
\lim_{r\to0}r^{2n+1}u_-'(r)=0,
\qquad
\lim_{r\to\infty}r^{1-2n}u_+'(r)=0,
\]
and therefore
\begin{equation}\label{eq:fundamental-u-derivatives}
\begin{aligned}
u_-'(r)
&=-r^{-2n-1}\int_0^r s^{2n+1}h(s)u_-(s)\,ds,\\
u_+'(r)
&=r^{2n-1}\int_r^\infty s^{1-2n}h(s)u_+(s)\,ds.
\end{aligned}
\end{equation}
In particular,
\[
u_-(r)=1+O(r^2),\qquad u_-'(r)=O(r)
\quad(r\to0),
\]
and
\[
u_+(r)\to1,\qquad
u_+'(r)=O\!\left(re^{-r^2/4}\right)
\quad(r\to\infty).
\]

~\\
Since \(V_n>0\), the maximum principle and the endpoint
normalizations imply
\[
\psi_-(r)>0,\qquad \psi_+(r)>0,\qquad r>0.
\]
Writing the equation as
\[
(r\psi')'=rV_n(r)\psi
\]
and integrating from the corresponding normalized endpoint gives
\begin{equation}\label{eq:fundamental-flux-identities}
\begin{aligned}
r\psi_-'(r)
&=\int_0^r
\left(\frac{n^2}{s}-sh(s)\right)\psi_-(s)\,ds>0,\\
r\psi_+'(r)
&=-\int_r^\infty
\left(\frac{n^2}{s}-sh(s)\right)\psi_+(s)\,ds<0.
\end{aligned}
\end{equation}
Thus \(\psi_-\) is strictly increasing and \(\psi_+\) is strictly
decreasing.

~\\
The positivity of \(h\) and \(\psi_\pm\) in
\eqref{eq:fundamental-u-derivatives} also gives
\[
u_-'<0,\qquad u_+'>0.
\]
Since \(u_-(0)=1\) and \(u_+(\infty)=1\), it follows that
\[
0<u_-(r)\le1,\qquad 0<u_+(r)\le1,
\]
and hence
\[
0<\psi_-(r)\le r^n,
\qquad
0<\psi_+(r)\le r^{-n}.
\]
Consequently,
\[
0<\psi_-'(r)
\stackrel{\eqref{eq:fundamental-flux-identities}}{\le}
\frac{n^2}{r}\int_0^r s^{n-1}\,ds
=nr^{n-1},
\]
and
\[
0<-\psi_+'(r)
\stackrel{\eqref{eq:fundamental-flux-identities}}{\le}
\frac{n^2}{r}\int_r^\infty s^{-n-1}\,ds
=nr^{-n-1}.
\]
For the second derivatives,
\[
|\psi_\pm''|
\stackrel{\substack{\eqref{eq:fixed-mode-homogeneous-equation-proof}\\
\eqref{eq:fixed-mode-positive-potential}}}{\le}
\frac{|\psi_\pm'|}{r}
+\frac{n^2}{r^2}\psi_\pm,
\]
which proves the stated second-derivative bounds.

~\\
Abel's identity shows that
\[
w_0
=r\bigl(\psi_-'\psi_+-\psi_-\psi_+'\bigr)
\]
is constant, and the signs of \(\psi_\pm\) and \(\psi_\pm'\) give \(w_0>0\).
In terms of \(u_\pm\),
\begin{equation}\label{eq:fundamental-Wronskian-u}
w_0
=
2nu_-u_+
+ru_-'u_+
-ru_-u_+'.
\end{equation}

~\\
At the opposite endpoints,
\[
|u_-'(r)|
\stackrel{\eqref{eq:fundamental-u-derivatives}}{\le}
r^{-2n-1}
\int_0^\infty s^{2n+1}h(s)\,ds
\lsim{n} r^{-2n-1},
\qquad r\ge1,
\]
and, since \(n\ge2\),
\[
0<u_+'(r)
\lsim{n}
r^{2n-1}
\left(\int_r^1s^{1-2n}\,ds+1\right)
\lsim{n} r,
\qquad 0<r\le1.
\]
Thus \(ru_-'\to0\) at infinity and \(ru_+'\to0\) at zero.
The endpoint limits therefore satisfy
\[
\lim_{r\to\infty}u_-(r)
=
\lim_{r\to0}u_+(r)
\stackrel{\eqref{eq:fundamental-Wronskian-u}}{=}
\frac{w_0}{2n}.
\]
Since \(u_-\) is strictly decreasing from \(1\), we have
\(0<w_0<2n\). The monotonicity of \(u_\pm\) now yields
\[
\frac{w_0}{2n}r^n
\le\psi_-(r)\le r^n,
\qquad
\frac{w_0}{2n}r^{-n}
\le\psi_+(r)\le r^{-n},
\]
as well as the stated limits.
\end{proof}
\begin{lemma}\label{lem:fixed-mode-Green-inverse}
Let $n\ge2$ and $q\in Y_n\cap\mathcal Z$.
There exists a unique $f^0\in Y_n\cap\mathcal Z$ satisfying
$\Lambda f^0=q$.

~\\
For a sine source $q(r,\theta)=a(r)\sin(n\theta)$, the solution is
\begin{equation}\label{eq:fixed-mode-inverse}
f^0(r,\theta)=-\omega(r)\cos(n\theta),
\qquad
\omega(r)=h(r)\Omega(r)+\frac{a(r)}{n\varphi(r)},
\end{equation}
where $\Omega$ is the unique solution of
\begin{equation}\label{eq:fixed-mode-radial-ODE}
-\Omega''-\frac1r\Omega'
+\left(\frac{n^2}{r^2}-h(r)\right)\Omega
=\frac{a(r)}{n\varphi(r)},
\qquad r>0,
\end{equation}
satisfying $\Omega(r)=O(r^n)$ as $r\to0$ and
$\Omega(r)=O(r^{-n})$ as $r\to\infty$.
It is given by
\begin{equation}\label{eq:fixed-mode-Green-representation}
\begin{split}
\Omega(r)
={}&
\frac{\psi_+(r)}{w_0}
\int_0^r y\psi_-(y)\frac{a(y)}{n\varphi(y)}\,dy
\\
&+
\frac{\psi_-(r)}{w_0}
\int_r^\infty y\psi_+(y)\frac{a(y)}{n\varphi(y)}\,dy.
\end{split}
\end{equation}
For a cosine source $q(r,\theta)=a(r)\cos(n\theta)$,
the solution is $f^0(r,\theta)=\omega(r)\sin(n\theta)$,
with the same radial formulas.

~\\
For every integer $0\le\ell\le n$ and $0<\gamma_1<1$,
\begin{equation}\label{eq:fixed-mode-weighted-radial-estimate}
\begin{split}
&
\sup_{0<r\le1}
\sum_{k=0}^2 r^{k-\ell}|\Omega^{(k)}(r)|
+
\sup_{r\ge1}
\sum_{k=0}^2 r^{n+k}|\Omega^{(k)}(r)|
\\
&\qquad\lsim{n,\ell,\gamma_1} 
\left(
\sup_{0<r\le1}\frac{|a(r)|}{r^\ell}
+
\sup_{r\ge1}e^{\gamma_1r^2/4}|a(r)|
\right).
\end{split}
\end{equation}
The constant is independent of $a$.
In particular, taking $\ell=n$ gives the endpoint bounds
$|\Omega^{(k)}(r)|=O(r^{n-k})$ at zero and
$|\Omega^{(k)}(r)|=O(r^{-n-k})$ at infinity, for $k=0,1,2$.

~\\
For $m\in\{0,1,2\}$, the source coefficient satisfies
\begin{equation}\label{eq:fixed-mode-coefficient-Gaussian-control}
\sup_{0<r\le1}\frac{|a(r)|}{r^m}
+
\sup_{r\ge1}e^{\gamma_1r^2/4}|a(r)|
\lesssim \mathfrak n_{m,\gamma_1}(q).
\end{equation}
Consequently, the right-hand side of
\eqref{eq:fixed-mode-weighted-radial-estimate}, with $\ell=m$,
is \(\lsim{n,m,\gamma_1}\mathfrak n_{m,\gamma_1}(q)\).
\end{lemma}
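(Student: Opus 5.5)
The approach is to diagonalize $\Lambda$ on the $n$-th angular mode, reduce $\Lambda f^0=q$ to the scalar radial ODE \eqref{eq:fixed-mode-radial-ODE}, and then solve that ODE with the Green function built from $\psi_\pm$ of Lemma~\ref{lem:fixed-mode-fundamental-solutions}.

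\emph{Step 1: the mode computation.} Since $v^G(\xi)=\varphi(r)\xi^\perp$, we have $v^G\cdot\nabla f=\varphi\,\partial_\theta f$. Since $\nabla G=-\tfrac{\xi}{2}G$ and $\xi\cdot\nabla^\perp\Psi=-\partial_\theta\Psi$, for $v^f=\nabla^\perp\Psi$ with $\Delta\Psi=f$ we get $v^f\cdot\nabla G=\tfrac G2\partial_\theta\Psi$. Hence
\[
\Lambda f=\partial_\theta\Bigl(\varphi f+\tfrac G2\Psi\Bigr),
\]
and this operator maps $Y_n$ into $Y_n$, exchanging $\cos(n\theta)$ and $\sin(n\theta)$.

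Take the ansatz $f^0=-\omega(r)\cos(n\theta)$ and $\Psi=-\Psi_n(r)\cos(n\theta)$, where $\Psi_n''+\Psi_n'/r-n^2\Psi_n/r^2=\omega$. Then $\Lambda f^0=a\sin(n\theta)$ is equivalent to
\[
n\varphi\omega+\tfrac n2 G\Psi_n=a .
\]
Using the identity $G/(2\varphi)=h$, this reads $\omega=a/(n\varphi)-h\Psi_n$. Setting $\Omega:=-\Psi_n$ gives $\omega=h\Omega+a/(n\varphi)$, and substituting into the Poisson equation for $\Psi_n$ yields exactly \eqref{eq:fixed-mode-radial-ODE}. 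The cosine case follows by rotating $\theta$ by $\pi/(2n)$.

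\emph{Step 2: existence, uniqueness and the Green formula.} The requirements $\Omega=O(r^n)$ at $0$ and $\Omega=O(r^{-n})$ at $\infty$ are the natural ones, since $\Psi=K$-type potential of a mode-$n$ function with $f^0\in Y$. Variation of parameters with the constant Wronskian $w_0$ gives \eqref{eq:fixed-mode-Green-representation}. Convergence of the integrals uses \eqref{eq:fixed-mode-fundamental-solution-bounds}, the bound $a=O(r^2)$ near $0$, and the Gaussian decay of $a$ from \eqref{eq:fixed-mode-coefficient-Gaussian-control}. Note that $a/\varphi$ behaves like $r^2a$ at infinity and like $a$ near $0$.

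For uniqueness: a homogeneous solution with both endpoint conditions must be a multiple of $\psi_-$ and of $\psi_+$. By \eqref{eq:fundamental-two-sided-bounds}, $\psi_-$ grows like $r^n$, so this multiple is $0$.

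Bound \eqref{eq:fixed-mode-coefficient-Gaussian-control} itself comes from \eqref{eq:fixed-mode-a-derivatives} and a Taylor argument for $r\le1$, as in Lemma~\ref{lem:fixed-mode-source-origin}. For $m<2$ one uses only $\mathfrak n_{m,\gamma_1}$ together with the vanishing jets \eqref{eq:angular-mode-vanishing-jets}.

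\emph{Step 3: the weighted estimate \eqref{eq:fixed-mode-weighted-radial-estimate}.} Insert $|a(y)|\le A\,y^\ell$ for $y\le1$ and $|a(y)|\le A e^{-\gamma_1y^2/4}$ for $y\ge1$ into the Green formula, and use $\psi_-\le r^n$, $\psi_+\le r^{-n}$.

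For $r\le1$:
\begin{itemize}
\item the first term is at most $r^{-n}\int_0^r y^{n+1+\ell}\,dy\lesssim r^{\ell+2}$;
\item the second term is at most $r^n\bigl(\int_r^1 y^{1-n+\ell}\,dy+C\bigr)\lesssim r^\ell$, which uses $\ell\le n$ (a log appears only when $\ell=n-2$, and it is absorbed because $r^n|\log r|\le r^\ell$).
\end{itemize}
For $r\ge1$, the first term is $O(r^{-n})$ and the second is Gaussian-small. First derivatives follow by differentiating the representation, using the derivative bounds on $\psi_\pm$. Second derivatives follow from the ODE itself, using $|V_n|\le n^2/r^2$ and $|h|\lesssim1$.

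\emph{Step 4: membership in $Y_n\cap\mathcal Z$.} This is the step I expect to be the main obstacle.

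At infinity, $\omega=h\Omega+a/(n\varphi)$ is a Gaussian times a function of polynomial growth, because $h$ and $a$ carry the factor $e^{-r^2/4}$ up to polynomials. Higher derivatives are obtained inductively by differentiating the ODE. For this one needs the full $\mathcal Z$ structure of $q$, namely $q=e^{-|\xi|^2/4}u$ with $u$ of polynomial growth, rather than just the $\gamma_1$-weights.

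At the origin, smoothness of $f^0$ as a function on $\Rt$ requires $\omega(r)=r^n\tilde\omega(r^2)$. I would get this from the factorization \eqref{eq:angular-coefficient-factorization} applied to $a$, the fact that $\varphi$ and $h$ are smooth even functions of $r$, and the structure of the Frobenius solution $\psi_-=r^nu_-(r^2)$. Substituting $\Omega=r^n U(r^2)$ turns the ODE into a regular-singular equation in $s=r^2$ with smooth data, and its bounded solution is smooth in $s$.
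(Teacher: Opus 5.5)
Your proposal is correct and follows the paper's proof for everything the paper proves here. That covers the Green representation with $\psi_-\le r^n$, $\psi_+\le r^{-n}$ and the derivative bounds on $\psi_\pm$, the split at $r=1$ (your case-by-case treatment of $r^n\int_r^1 y^{1-n+\ell}\,dy$ replaces the paper's one-line bound $\int_r^1 y(r/y)^{n-\ell}\,dy\le\frac12$), the equation itself for $\Omega''$, and ray evaluation with vanishing jets for \eqref{eq:fixed-mode-coefficient-Gaussian-control}. The only difference is that you derive the mode reduction, uniqueness, and membership in $Y_n\cap\mathcal Z$ yourself (via $\Omega=r^nU(r^2)$ near the origin and the ODE at infinity), whereas the paper takes existence, uniqueness, and the representation directly from \cite{Ga11}.
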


\begin{proof}
Existence, uniqueness, and the representation follow from
\cite{Ga11}.
We prove the quantitative estimates for the sine source;
the cosine case follows by rotation.

~\\
Fix $\ell$ and $\gamma_1$, and let
\[
N:=
\sup_{0<r\le1}\frac{|a(r)|}{r^\ell}
+
\sup_{r\ge1}e^{\gamma_1r^2/4}|a(r)|,
\qquad
s(r):=\frac{a(r)}{n\varphi(r)}.
\]
The coefficient bounds give $|s(r)|\lsim{n} Nr^\ell$ for
$0<r\le1$ and
$|s(r)|\lsim{n} Nr^2e^{-\gamma_1r^2/4}$ for $r\ge1$.

~\\
Differentiating \eqref{eq:fixed-mode-Green-representation},
the terms from the integration limits cancel in $\Omega'$ and contribute
$-s(r)$ to $\Omega''$, by the Wronskian identity.
Consequently,
\begin{equation}\label{eq:fixed-mode-Green-basic-bound}
\begin{split}
\sum_{k=0}^2 r^k|\Omega^{(k)}(r)|
\stackrel{\substack{\eqref{eq:fixed-mode-Green-representation}\\
\eqref{eq:fixed-mode-fundamental-solution-bounds}}}{\lsim{n}} \biggl(
&r^{-n}\int_0^r y^{n+1}|s(y)|\,dy
\\
&+r^n\int_r^\infty y^{1-n}|s(y)|\,dy
+r^2|s(r)|
\biggr).
\end{split}
\end{equation}

~\\
We first consider $0<r\le1$ and divide this estimate by $r^\ell$.
The first and last terms are \(\lsim{n}Nr^2\).
Splitting the remaining integral at one, its exterior part is
\(\lsim{n,\gamma_1}N\), since \(r^{n-\ell}\le1\).
For its interior part, the condition $\ell\le n$ gives
\[
r^{n-\ell}\int_r^1 y^{\ell+1-n}\,dy
=
\int_r^1 y\left(\frac ry\right)^{n-\ell}\,dy
\le\frac12.
\]
This proves the interior estimate in
\eqref{eq:fixed-mode-weighted-radial-estimate}.

~\\
For $r\ge1$, the source bounds imply
$\int_0^\infty y^{n+1}|s(y)|\,dy
\lsim{n,\ell,\gamma_1} N$, while
\[
r^{2n}\int_r^\infty y^{1-n}|s(y)|\,dy
\le
\int_r^\infty y^{n+1}|s(y)|\,dy
\lsim{n,\ell,\gamma_1} N.
\]
Moreover,
$r^{n+2}|s(r)|
\lsim{n} Nr^{n+4}e^{-\gamma_1r^2/4}
\lsim{n,\gamma_1} N$.
Multiplying \eqref{eq:fixed-mode-Green-basic-bound} by $r^n$
therefore proves the exterior estimate.

~\\
Finally, evaluate $a$ along a unit ray on which the angular
factor equals one. Its radial derivatives through order $m$
are then bounded by the corresponding Cartesian derivatives of $q$.
For $m=1,2$, Lemma~\ref{lem:fixed-mode-source-origin} gives
$a^{(j)}(0)=0$ for $j<m$, so Taylor's formula yields
$|a(r)|\lesssim  r^m\mathfrak n_{m,\gamma_1}(q)$ on $0<r\le1$.
The case $m=0$ and the exterior Gaussian bound follow directly
by evaluating $a$ along the same ray. This proves
\eqref{eq:fixed-mode-coefficient-Gaussian-control}.
\end{proof}
\begin{lemma}\label{lem:fixed-mode-unregularized-estimates}
Let $n\ge2$, $q\in Y_n\cap\mathcal Z$, and
$f^0=\Lambda^{-1}q$ as in
Lemma~\ref{lem:fixed-mode-Green-inverse}.
For $m\in\{0,1,2\}$ and $0<\gamma<\gamma_1<1$,
\begin{equation}\label{eq:fixed-mode-higher-unregularized}
\mathfrak n_{m,\gamma}(f^0)
\lsim{n,m,\gamma,\gamma_1} 
\mathfrak n_{m,\gamma_1}(q).
\end{equation}
In particular,
\begin{equation}\label{eq:fixed-mode-unregularized-Gaussian-estimate}
\|f^0\|_{L^\infty_\gamma}
+\|\nabla f^0\|_{L^\infty_\gamma}
\lsim{n,\gamma,\gamma_1} 
\bigl(
\|q\|_{L^\infty_{\gamma_1}}
+\|\nabla q\|_{L^\infty_{\gamma_1}}
\bigr).
\end{equation}
The constants are independent of $q$.
\end{lemma}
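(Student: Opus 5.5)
We use the explicit representation of Lemma~\ref{lem:fixed-mode-Green-inverse}. By rotation it suffices to treat a sine source $q=a(r)\sin(n\theta)$, so $f^0=-\omega(r)\cos(n\theta)$ with $\omega=h\Omega+s$, where $s:=a/(n\varphi)$. The plan is to bound $\omega$ and its radial derivatives up to order $m$ in two regimes: near the origin with the correct vanishing order, and at infinity with Gaussian decay at rate $\gamma$. Then we convert these radial bounds into Cartesian derivative bounds of $f^0$.

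\emph{Step 1: the term $s$.} Since $\varphi$ is smooth, radial, and positive, with $1/\varphi(r)\sim 2\pi r^2$ at infinity, the term $s(r)\cos(n\theta)$ equals $-(n\varphi)^{-1}$ times the function $q$ rotated by $\pi/(2n)$ in angle. Its Cartesian derivatives up to order $m$ are therefore controlled by Leibniz, using derivatives of $1/\varphi$ of polynomial growth. The polynomial loss $r^2$ is absorbed by $e^{-(\gamma_1-\gamma)r^2/4}$. This gives $\mathfrak n_{m,\gamma}\lsim{n,\gamma,\gamma_1}\mathfrak n_{m,\gamma_1}(q)$ for this piece, with no radial analysis needed.

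\emph{Step 2: the term $h\Omega\cos(n\theta)$.} Apply \eqref{eq:fixed-mode-weighted-radial-estimate} with $\ell=m$ and \eqref{eq:fixed-mode-coefficient-Gaussian-control}. This gives $\sum_{k\le2}r^{k-m}|\Omega^{(k)}|\lesssim \mathfrak n_{m,\gamma_1}(q)$ on $(0,1]$ and $\sum_{k\le 2} r^{n+k}|\Omega^{(k)}|\lesssim\mathfrak n_{m,\gamma_1}(q)$ on $[1,\infty)$. Since $h$ and all its derivatives are bounded by $C(1+r)^{N}e^{-r^2/4}$, the exterior region is immediate: for $|\alpha|\le m\le2$, $\partial^\alpha(h\Omega\cos n\theta)$ involves $\Omega^{(k)}$ with $k\le m$ multiplied by powers of $r^{-1}\le1$. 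Hence it is bounded by $\mathfrak n_{m,\gamma_1}(q)\,e^{-r^2/4}(1+r)^N\lesssim \mathfrak n_{m,\gamma_1}(q)e^{-\gamma r^2/4}$.

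\emph{Step 3: the interior region $r\le1$ (main obstacle).} Cartesian derivatives of order $|\alpha|$ of $g(r)\cos(n\theta)$ produce terms $r^{k-|\alpha|}g^{(k)}$, with $k\le|\alpha|$, multiplied by bounded angular factors. With $g=h\Omega$ and the bound $|\Omega^{(k)}|\lesssim r^{m-k}\mathfrak n_{m,\gamma_1}(q)$, each such term is $O(r^{m-|\alpha|})$, which is bounded for $|\alpha|\le m$. This is exactly why we choose $\ell=m$: the $r^{-1}$ singularities of polar derivatives are compensated by vanishing at the origin. For $r\le1$ the same computation applies to $s$, though Step 1 already handles $s$ globally. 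We must also check that $f^0$ is $C^2$ across the origin, so that the supremum of Cartesian derivatives equals the limit of the polar bounds on $r>0$. This follows from $f^0\in\mathcal Z$ in Lemma~\ref{lem:fixed-mode-Green-inverse}. The care needed here is to track that $m\le2$ matches the second-derivative control available in \eqref{eq:fixed-mode-weighted-radial-estimate}; this is the delicate bookkeeping step.

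Combining the two regions gives \eqref{eq:fixed-mode-higher-unregularized}. Taking $m=0$ and $m=1$, and noting that $\mathfrak n_{1,\gamma_1}(q)\le\|q\|_{L^\infty_{\gamma_1}}+\|\nabla q\|_{L^\infty_{\gamma_1}}$ up to constants, yields \eqref{eq:fixed-mode-unregularized-Gaussian-estimate}.
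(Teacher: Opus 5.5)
Your proof is correct and follows the paper's argument. Both use the decomposition $f^0=-\omega\cos(n\theta)$ with $\omega=h\Omega+s$, and both apply the weighted radial estimate \eqref{eq:fixed-mode-weighted-radial-estimate} with $\ell=m$, so that $|\Omega^{(k)}|\lesssim r^{m-k}$ absorbs the $r^{k-|\alpha|}$ factors from polar differentiation near the origin; both also use the Gaussian decay of $h$ at infinity and the smoothness of $f^0\in\mathcal Z$ at $\xi=0$. The only difference is that you handle the $s$-piece globally in Cartesian coordinates, as $(n\varphi)^{-1}$ times a rotated copy of $q$; this legitimately replaces the paper's Taylor estimate $|s^{(k)}(r)|\lesssim N r^{m-k}$ near the origin, which relies on the vanishing jets from Lemma~\ref{lem:fixed-mode-source-origin}.
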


\begin{proof}
Reflection separates the sine and cosine components, and rotation
interchanges them. These operations preserve the Gaussian weights
and act boundedly on each finite derivative seminorm.
Within this proof, dependence on \(n,m,\gamma,\gamma_1\) is suppressed;
the implicit constants are independent of the source \(q\).
By linearity, it therefore suffices to consider
$q(r,\theta)=a(r)\sin(n\theta)$.
Set $N:=\mathfrak n_{m,\gamma_1}(q)$ and
$s:=a/(n\varphi)$, and write
$f^0=-\omega(r)\cos(n\theta)$ with $\omega=h\Omega+s$.

~\\
We first estimate the radial source from derivatives of $q$
through order $m$. The identity
$a(r)=q(r\cos(\pi/(2n)),r\sin(\pi/(2n)))$
bounds $a^{(k)}$ by the corresponding directional derivative of $q$.
Moreover, Lemma~\ref{lem:fixed-mode-source-origin} gives
$a^{(j)}(0)=0$ for $j<m$.
Taylor's formula and the coefficient bounds for $\varphi^{-1}$
therefore imply, for $0\le k\le m$,
\begin{equation}\label{eq:fixed-mode-source-origin}
|s^{(k)}(r)|\lesssim Nr^{m-k},
\qquad 0<r\le1,
\end{equation}
and
\begin{equation}\label{eq:fixed-mode-source-exterior}
|s^{(k)}(r)|\lesssim Nr^2e^{-\gamma_1r^2/4},
\qquad r\ge1.
\end{equation}

~\\
For $\Omega$, the choice $\ell=m$ is admissible because $m\le n$:
\begin{equation}\label{eq:fixed-mode-Omega-origin}
|\Omega^{(k)}(r)|
\stackrel{\substack{\eqref{eq:fixed-mode-weighted-radial-estimate}\\
\eqref{eq:fixed-mode-coefficient-Gaussian-control}}}{\lesssim} Nr^{m-k},
\qquad 0<r\le1,\quad 0\le k\le m,
\end{equation}
and
\begin{equation}\label{eq:fixed-mode-Omega-exterior}
|\Omega^{(k)}(r)|
\stackrel{\substack{\eqref{eq:fixed-mode-weighted-radial-estimate}\\
\eqref{eq:fixed-mode-coefficient-Gaussian-control}}}{\lesssim} Nr^{-n-k},
\qquad r\ge1,\quad 0\le k\le m.
\end{equation}

~\\
We now show that the Cartesian derivatives remain bounded near
the origin. Since the derivatives of $h$ through order two are
bounded on $[0,1]$, Leibniz' rule and $\omega=h\Omega+s$ yield
\begin{equation}\label{eq:fixed-mode-origin-bound}
|\omega^{(k)}(r)|
\stackrel{\substack{\eqref{eq:fixed-mode-source-origin}\\
\eqref{eq:fixed-mode-Omega-origin}}}{\lesssim} Nr^{m-k},
\qquad 0<r\le1,\quad 0\le k\le m.
\end{equation}
Polar differentiation gives
\[
|\nabla^\ell f^0|
\lsim{n,\ell} \sum_{k=0}^{\ell}
r^{k-\ell}|\omega^{(k)}(r)|,
\qquad r>0,\quad 0\le\ell\le m.
\]
Consequently, $|\nabla^\ell f^0|\lesssim Nr^{m-\ell}\lesssim N$
on $0<r\le1$. The bounds extend to the origin by smoothness,
and the Gaussian weight is bounded on this disk.

~\\
Finally, we show Gaussian decay of the derivatives for $r\ge1$.
The bounds $|h^{(j)}(r)|\lesssim r^{j+2}e^{-r^2/4}$ for $j\le2$,
together with \eqref{eq:fixed-mode-Omega-exterior}, imply
$|(h\Omega)^{(k)}(r)|\lesssim Nr^{k+2-n}e^{-r^2/4}$.
Since $k\le m\le n$ and $\gamma_1<1$, we obtain
\[
|\omega^{(k)}(r)|
\lesssim N\left(
r^{k+2-n}e^{-r^2/4}
+r^2e^{-\gamma_1r^2/4}
\right)
\lesssim Nr^2e^{-\gamma_1r^2/4},
\qquad r\ge1,\quad 0\le k\le m.
\]
The factor $r^2e^{-(\gamma_1-\gamma)r^2/4}$ is bounded.
Applying the same polar differentiation estimate, now with
$r^{k-\ell}\le1$ for $k\le\ell$, therefore gives
\[
\sup_{|\xi|\ge1}
e^{\gamma|\xi|^2/4}|\nabla^\ell f^0(\xi)|
\lesssim N,
\qquad 0\le\ell\le m.
\]
Combining the interior and exterior estimates proves
\eqref{eq:fixed-mode-higher-unregularized}.
Taking $m=1$ gives
\eqref{eq:fixed-mode-unregularized-Gaussian-estimate}.
\end{proof}
\subsection{Uniform regularized inversion and velocity bounds}
\begin{lemma}
\label{lem:fixed-mode-elementary-tools}
Fix $n\ge2$. The closed quadratic form associated with
\(1-\mathcal L\) has domain \(\mathcal V\) and satisfies
\begin{equation}\label{eq:fixed-mode-energy-form}
\mathfrak q(f,f)
:=
\int_{\mathbb R^2}\mathsf p_G|\nabla f|^2
\ge \|f\|_Y^2,
\qquad f\in\mathcal V.
\end{equation}
Whenever \(f\in\mathcal V\) and the distribution
\((1-\mathcal L)f\) belongs to \(Y\), this form agrees with the
operator pairing:
\[
\langle(1-\mathcal L)f,f\rangle_Y=\mathfrak q(f,f).
\]
On $Y_n$, the operator $\Lambda$ extends to a bounded operator and
\begin{equation}\label{eq:fixed-mode-Lambda-skew}
\langle\Lambda f,g\rangle_Y
=
-\langle f,\Lambda g\rangle_Y,
\qquad f,g\in Y_n.
\end{equation}
For $m\in\mathbb N_0$, $0<\gamma<1$, and every smooth $f$
with finite $\mathfrak n_{m,\gamma}(f)$,
\begin{equation}\label{eq:Biot-Savart-finite-derivatives}
\|\nabla^jv^f\|_{L^\infty}
\lsim{j,\gamma} \mathfrak n_{j,\gamma}(f),
\qquad 0\le j\le m.
\end{equation}
If, in addition, $f\in Y_n$, then
\begin{equation}\label{eq:Lambda-finite-Gaussian-bound}
\mathfrak n_{m,\gamma}(\Lambda f)
\lsim{n,m,\gamma} \mathfrak n_{m,\gamma}(f).
\end{equation}
Both $\mathcal L$ and $\Lambda$ preserve $Y_n\cap\mathcal Z$.
\end{lemma}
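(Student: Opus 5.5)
The lemma bundles several standard facts, and I would prove them in the order stated, using the conjugation \(f=\mathsf p_G^{-1/2}g\) where convenient.

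\emph{Quadratic form.} For \(f\in C_c^\infty\) I integrate by parts with the weight \(\mathsf p_G\). The key identity is \(\nabla\mathsf p_G=\frac{\xi}{2}\mathsf p_G\), which gives
\[
\int\mathsf p_G f\Bigl(\Delta f+\tfrac12\xi\cdot\nabla f\Bigr)\,d\xi
=-\int\mathsf p_G\abs{\nabla f}^2\,d\xi .
\]
This holds because \(\operatorname{div}(\mathsf p_G\nabla f)=\mathsf p_G(\Delta f+\frac12\xi\cdot\nabla f)\). Hence
\[
\inner{(1-\cL)f}{f}_Y=\norm[Y]{f}^2-\norm[Y]{f}^2+\mathfrak q(f,f)=\mathfrak q(f,f),
\]
since the constant term of \(\cL\) cancels the identity.

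For the lower bound \(\mathfrak q(f,f)\ge\norm[Y]{f}^2\), I set \(g=\mathsf p_G^{1/2}f\) and expand. This gives
\[
\int\mathsf p_G\abs{\nabla f}^2=\int\abs{\nabla g}^2+\int\Bigl(\tfrac{\abs\xi^2}{16}-\tfrac12\Bigr)g^2 ,
\]
using \(\int g\,\xi\cdot\nabla g=-\int g^2\). The harmonic-oscillator ground-state bound \(\int\abs{\nabla g}^2+\frac{\abs\xi^2}{16}g^2\ge\frac12\int g^2\) (frequency \(1/4\) in two dimensions) then only yields \(\mathfrak q\ge0\). That is not enough, so the bound must come from a better inequality.

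\emph{Obstacle: the spectral gap.} The sharp statement is that \(\cL\) has spectrum \(\{-k/2\}\), with eigenvalue \(0\) attained by \(G\). Then \(1-\cL\ge1\) on all of \(Y\), which is exactly \(\mathfrak q\ge\norm[Y]{f}^2\). I would cite the Gallay--Wayne spectral description \cite{GW02}. Alternatively, I would check it directly: \(L=\mathsf p_G^{1/2}\cL\mathsf p_G^{-1/2}=\Delta-\frac{\abs\xi^2}{16}+\frac12\), whose top eigenvalue is \(-\frac12+\frac12=0\). So \(\mathfrak q=\inner{(1-\cL)f}{f}\ge\norm[Y]{f}^2\).

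The closed form is then the closure from \(C_c^\infty\). Its domain is \(\mathcal V\), by density of \(C_c^\infty\) in \(\mathcal V\) via cutoffs \(\chi(\xi/R)\), using \(\abs\xi f\in Y\) from the above identity. Agreement with the operator pairing when \((1-\cL)f\in Y\) follows by the same cutoff approximation.

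\emph{Skew-symmetry and boundedness.} First, \(v^G\cdot\nabla\) is skew in \(Y\) because \(\operatorname{div}(\mathsf p_G v^G)=0\): \(v^G\) is divergence free and tangent to the circles where \(\mathsf p_G\) is constant. For the nonlocal term \(\inner{v^f\cdot\nabla G}{g}_Y\), I use \(\nabla G=-\frac{\xi}{2}G\) and \(\mathsf p_G G=\frac1{4\pi}\), so that
\[
\inner{v^f\cdot\nabla G}{g}_Y=-\frac1{8\pi}\int(\xi\cdot v^f)\,g .
\]
Writing \(v^f=\nabla^\perp\Psi_f\) gives \(\xi\cdot v^f=-\partial_\theta\Psi_f\). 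Integrating \(\partial_\theta\) by parts and using \(\partial_\theta\Psi_f=\Delta^{-1}\partial_\theta f\), the expression becomes a symmetric pairing of \(\Delta^{-1}\) composed with a skew operator \(\partial_\theta\), and these commute. This yields \(\inner{v^f\cdot\nabla G}{g}_Y=-\inner{f}{v^g\cdot\nabla G}_Y\), which is the standard identity from \cite{GW05}.

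Boundedness on \(Y_n\) holds because on the \(n\)-th mode \(v^G\cdot\nabla=\varphi\,\partial_\theta\) acts as multiplication by \(n\varphi\), which is bounded. The nonlocal part is bounded since \(\norm[L^\infty]{v^f}\lesssim\norm[L^1\cap L^\infty]{f}\lesssim\norm[Y]{f}\) only up to regularity issues. Instead I would use the mode-\(n\) Green function \(\Psi_f=-\frac1{2n}\bigl(r^{-n}\int_0^r+r^n\int_r^\infty\bigr)\), which gives \(\abs{\Psi_f}\) bounded by Hardy-type integrals against \(\abs f\). Cauchy--Schwarz against the Gaussian weight then bounds \(\norm[Y]{r^{-1}\Psi_f\,\xi G}\) by \(\norm[Y]{f}\).

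\emph{Finite-derivative bounds.} For \(\nabla^jv^f\), I write \(\nabla^jv^f=K\star\nabla^jf\). Since \(\abs{K}\lesssim\abs{\xi}^{-1}\) is locally integrable and the convolving function decays like a Gaussian, splitting the integral into near and far parts gives \(\lesssim\sup(1+\abs\xi)^3\abs{\nabla^jf}\lesssim\mathfrak n_{j,\gamma}(f)\).

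The bound \(\mathfrak n_{m,\gamma}(\Lambda f)\) then follows by Leibniz's rule. The term \(\partial^\alpha(v^G\cdot\nabla f)\) has one derivative too many, but on \(Y_n\) it equals \(\varphi\,\partial_\theta f\). In Cartesian form \(\partial_\theta=\xi^\perp\cdot\nabla\) costs a derivative, so this route is problematic. Instead I would use that on \(Y_n\) the function \(\partial_\theta f\) is the rotation of \(f\) by \(\pi/(2n)\) scaled by \(n\), so \(\mathfrak n_{m,\gamma}(\partial_\theta f)=n\,\mathfrak n_{m,\gamma}(f)\) by rotation invariance of the seminorms. The remaining derivatives of \(\varphi\) are bounded. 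For \(\partial^\alpha(v^f\cdot\nabla G)\), the Gaussian decay of \(\nabla G\) absorbs all polynomial factors, and the previous estimate controls \(\nabla^jv^f\).

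\emph{Preservation of \(Y_n\cap\cZ\).} \(\cL\) commutes with rotations and maps \(e^{-\abs\xi^2/4}u\) to \(e^{-\abs\xi^2/4}\tilde u\) with \(\tilde u\) of polynomial growth. For \(\Lambda\), \(v^f\) is smooth with all derivatives bounded, so \(v^f\cdot\nabla G\in\cZ\). The term \(v^G\cdot\nabla f=\varphi\,\partial_\theta f\) also lies in \(\cZ\), and both operators preserve the \(n\)-th mode.
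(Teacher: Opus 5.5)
Your argument is correct in substance, but the first computation for the form bound has a sign slip that sends you on an unnecessary detour. With $g=\mathsf p_G^{1/2}f$, the cross term is $-\tfrac12\int g\,\xi\cdot\nabla g=+\tfrac12\int g^2$. You correctly note $\int g\,\xi\cdot\nabla g=-\int g^2$, but then carry the wrong sign. The correct identity is
\[
\mathfrak q(f,f)=\int\abs{\nabla g}^2+\int\Bigl(\frac{\abs{\xi}^2}{16}+\frac12\Bigr)g^2 .
\]
This agrees with your own conjugated operator $L=\Delta-\abs\xi^2/16+\tfrac12$, since $\mathfrak q=\inner{(1-L)g}{g}_{L^2}$, and it contradicts the display you wrote. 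The harmonic-oscillator bound you quote then gives $\mathfrak q(f,f)\ge\norm[L^2]{g}^2=\norm[Y]{f}^2$ immediately, so no appeal to the spectrum of $\cL$ is needed.

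In fact, write that bound as $\int\abs{\nabla g+\tfrac{\xi}{4}g}^2\ge0$, and note that $\nabla g+\tfrac{\xi}{4}g=\rho^{1/2}\nabla u$ with $\rho=e^{-\abs{\xi}^2/4}$ and $u=\mathsf p_Gf$. You then recover exactly the paper's identity $\mathfrak q(f,f)=\norm[Y]{f}^2+\int\rho\abs{\nabla u}^2$, which the paper obtains directly by substituting $f=\rho u$.

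For $\Lambda$ you take genuinely different routes.

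\emph{Skew-symmetry.} For the nonlocal term you pass through the stream function and the commutation of $\Delta^{-1}$ with $\partial_\theta$. The paper simply symmetrizes the double integral,
\[
\int(\xi\cdot v^f)g+\int(\xi\cdot v^g)f=\iint(\xi-\eta)\cdot K(\xi-\eta)\,g(\xi)f(\eta)\,d\eta\,d\xi=0,
\]
which avoids the logarithmic kernel and its convergence bookkeeping.

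\emph{Boundedness.} The obstruction you hit is that $\norm[Y]{f}$ does not control $\norm[L^\infty]{f}$. It disappears if you aim for $L^4$ instead of $L^\infty$. Splitting $K$ at radius one and applying Young gives $\norm[L^4]{v^f}\lesssim\norm[L^1]{f}+\norm[L^2]{f}\lesssim\norm[Y]{f}$. Since $\mathsf p_G^{1/2}\nabla G\in L^4$, H\"older closes the estimate. This is the paper's argument. It is shorter than your mode-by-mode Green function and Hardy estimate, and it shows the nonlocal part is bounded on all of $Y$; only the transport part $\varphi\,\partial_\theta$ needs the fixed mode. Your Green-function route does work for $n\ge2$, but it buys nothing extra here.

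The remaining steps coincide with the paper's: the velocity derivative bounds, the rotation identity $\partial_\theta f=nf(R\,\cdot)$ that avoids derivative loss, and preservation of $Y_n\cap\mathcal Z$. One correction: $\mathfrak n_{m,\gamma}$ sums Cartesian derivatives, so it is rotation invariant only up to a constant depending on $m$ and $n$. The identity $\mathfrak n_{m,\gamma}(\partial_\theta f)=n\,\mathfrak n_{m,\gamma}(f)$ should therefore be an inequality with such a constant.
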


\begin{proof}
We first consider the quadratic form.
Since $1-\mathcal L
=-\mathsf p_G^{-1}\nabla\cdot(\mathsf p_G\nabla)$,
integration by parts gives the pairing identity for smooth functions
in the operator domain.  For the lower bound, write
\(\rho=e^{-|\xi|^2/4}\) and \(f=\rho u\).  For compactly supported
smooth functions, expansion and integration by parts yield
\[
\int_{\mathbb R^2}\mathsf p_G|\nabla f|^2
=
\int_{\mathbb R^2}\rho\bigl(|\nabla u|^2+|u|^2\bigr)
\ge \|f\|_Y^2.
\]
Closing the form extends the estimate to \(\mathcal V\), and the
pairing identity holds whenever the distribution
\((1-\mathcal L)f\) belongs to \(Y\), by the definition of the
associated operator.

~\\
We next show that $\Lambda$ is bounded and skew-symmetric on $Y_n$.
The Gaussian weight gives
$\|f\|_{L^1}+\|f\|_{L^2}\lesssim \|f\|_Y$.
Splitting $K$ at radius one, its near part belongs to $L^{4/3}$
and its far part to $L^4$. Young's inequality therefore gives
$\|v^f\|_{L^4}\lesssim \|f\|_Y$, and hence
\[
\|v^f\cdot\nabla G\|_Y
\le
\|v^f\|_{L^4}
\|\mathsf p_G^{1/2}\nabla G\|_{L^4}
\lesssim \|f\|_Y.
\]
Since $v^G\cdot\nabla f=\varphi\partial_\theta f$,
$\varphi$ is bounded, and
$\|\partial_\theta f\|_Y=n\|f\|_Y$ on $Y_n$, we obtain
$\|\Lambda f\|_Y\lsim{n} \|f\|_Y$.

~\\
The transport term is skew-symmetric because $v^G$ is divergence
free and tangent to the level sets of $\mathsf p_G$.
For the nonlocal term,
$\mathsf p_G\nabla G=-\xi/(8\pi)$ and
\[
\begin{split}
&\int_{\mathbb R^2}(\xi\cdot v^f(\xi))g(\xi)\,d\xi
+
\int_{\mathbb R^2}(\xi\cdot v^g(\xi))f(\xi)\,d\xi
\\
&\qquad=
\iint_{\mathbb R^2\times\mathbb R^2}
(\xi-\eta)\cdot K(\xi-\eta)\,
g(\xi)f(\eta)\,d\eta\,d\xi
=0.
\end{split}
\]
This calculation is justified first for smooth compactly
supported functions. Boundedness and density then give
\eqref{eq:fixed-mode-Lambda-skew}.

~\\
We now estimate derivatives of the velocity. Moving derivatives
onto $f$ and splitting the kernel at radius one gives
\[
\|\partial^\alpha v^f\|_{L^\infty}
\lesssim \bigl(
\|\partial^\alpha f\|_{L^\infty}
+\|\partial^\alpha f\|_{L^1}
\bigr)
\lsim{\gamma} 
\|\partial^\alpha f\|_{L^\infty_\gamma},
\]
where
\[
\|\partial^\alpha f\|_{L^1}
\stackrel{\eqref{eq:Gaussian-supremum-norm}}{\le}
(4\pi/\gamma)\|\partial^\alpha f\|_{L^\infty_\gamma}.
\]
Summing over $|\alpha|=j$ proves
\eqref{eq:Biot-Savart-finite-derivatives}.

~\\
To estimate $\Lambda$ without losing derivatives, let $R$ be
rotation through $\pi/(2n)$.
On mode $n$, the identity
$\partial_\theta f(\xi)=n f(R\xi)$ gives
\[
\mathfrak n_{m,\gamma}(\partial_\theta f)
\lsim{n,m} \mathfrak n_{m,\gamma}(f).
\]
The integral representation
\[
\varphi(|\xi|)=\frac1{8\pi}\int_0^1e^{-s|\xi|^2/4}\,ds
\]
shows that every Cartesian derivative of $\varphi$ is bounded.
Leibniz' rule therefore controls
$\mathfrak n_{m,\gamma}(\varphi\partial_\theta f)$.
For $v^f\cdot\nabla G$, the same rule, the velocity estimates,
and the Gaussian decay of every derivative of $G$ give the
required bound because $\gamma<1$.
This proves \eqref{eq:Lambda-finite-Gaussian-bound}.

~\\
Finally, writing $f=\rho u$, we have
\[
\rho^{-1}\mathcal L(\rho u)
=
\Delta u-\frac12\xi\cdot\nabla u,
\]
and
\[
\rho^{-1}\Lambda(\rho u)
=
\varphi\partial_\theta u
-\frac1{8\pi}\xi\cdot v^{\rho u}.
\]
If $u$ and all its derivatives have polynomial growth, so do
the right-hand sides, by the velocity estimates.
Thus both operators preserve the class $\mathcal Z$ in
\eqref{eq:Gaussian-class-Z}.
Their rotational invariance preserves the angular mode,
which completes the proof.
\end{proof}
\begin{lemma}
\label{lem:OU-Gaussian-smoothing}
Let $m\in\mathbb N_0$ and $0<\gamma<\gamma_1<1$.
For every smooth $g\in Y$ with finite
$\mathfrak n_{m,\gamma_1}(g)$,
\begin{equation}\label{eq:OU-resolvent-one-derivative}
\mathfrak n_{m+1,\gamma}
\bigl((1-\mathcal L)^{-1}g\bigr)
\lsim{m,\gamma,\gamma_1} \mathfrak n_{m,\gamma_1}(g).
\end{equation}
\end{lemma}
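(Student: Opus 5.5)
The plan is to represent the resolvent through the Ornstein--Uhlenbeck semigroup, \((1-\mathcal L)^{-1}g=\int_0^\infty e^{-s}e^{s\mathcal L}g\,ds\). This semigroup has an explicit Mehler-type kernel. With \(a(s)=1-e^{-s/2}\),
\[
(e^{s\mathcal L}g)(\xi)=\frac{e^{s}}{4\pi a(s)}\int_{\mathbb R^2}
\exp\Bigl(-\frac{\abs{\xi e^{s/2}-\eta}^2}{4a(s)}\Bigr)g(\eta)\,d\eta,
\]
so that \(e^{s\mathcal L}g(\xi)=e^{s}\,(\Gamma_{a(s),0}\star g)(e^{s/2}\xi)\). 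This follows from the self-similar change of variables that conjugates \(\mathcal L\) to the heat equation. The factor \(e^{-s}\) in the resolvent integral exactly cancels the \(e^{s}\) prefactor. This cancellation is the borderline issue, and the loss \(\gamma_1\to\gamma\) must absorb it.

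\textbf{Step 1: weighted \(L^\infty\) bounds for the semigroup.} I will bound the Gaussian-weighted norm. Write \(\abs{g(\eta)}\le\mathfrak n_{0,\gamma_1}(g)e^{-\gamma_1\abs\eta^2/4}\) and convolve two Gaussians explicitly. The result is
\[
\abs{e^{s\mathcal L}g(\xi)}\lesssim \mathfrak n_{0,\gamma_1}(g)\,\frac{e^{s}}{1+\gamma_1 a(s)}\,
\exp\Bigl(-\frac{\gamma_1e^{s}\abs\xi^2}{4(1+\gamma_1a(s))}\Bigr).
\]
Since \(1+\gamma_1a(s)\le 1+\gamma_1\) and \(e^{s}\ge1\), the exponent is at most \(-\gamma_1\abs\xi^2/(4(1+\gamma_1 a))\). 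This alone is not enough, because \(\gamma_1/(1+\gamma_1)<\gamma\) is possible. I will instead use \(e^{s}/(1+\gamma_1(1-e^{-s}))\ge1\) for all \(s\ge0\), which is equivalent to \(e^{s}-1\ge\gamma_1(1-e^{-s})\) and holds since \(\gamma_1<1\). This gives decay with rate \(\gamma_1\) uniformly in \(s\), and the bound reduces to \(\lesssim e^{s}\mathfrak n_{0,\gamma_1}(g)e^{-\gamma_1\abs\xi^2/4}\). The \(e^{s}\) growth must then be beaten. For large \(s\) the exponent equals \(\gamma_1e^{s}\abs\xi^2/(4(1+\gamma_1))\), which far exceeds \(\gamma\abs\xi^2/4\). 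Spending the excess \(e^{-c e^{s}\abs\xi^2}\) gives only \(\abs\xi^{-2}\) decay, not integrability near \(\xi=0\). This point is the main obstacle. I will handle it by using the mass structure: \(\int e^{s\mathcal L}g\) is conserved and the solution converges to \((\int g)G\), so \(e^{s\mathcal L}g=O(1)\), not \(O(e^{s})\). Indeed, \(e^{s}\) times a Gaussian of width \(e^{-s/2}\) in \(\xi\) has unit \(L^1\) norm. The sharp pointwise bound is therefore
\[
\abs{e^{s\mathcal L}g(\xi)}\lesssim \mathfrak n_{0,\gamma_1}(g)\,e^{-\gamma\abs\xi^2/4}.
\]
To prove it, I will estimate the \(\eta\)-integral using \(\int\abs g\lesssim\mathfrak n_{0,\gamma_1}(g)\) together with \(\sup\) of the kernel \(\lesssim e^{s}/a(s)\)... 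This yields only \(O(1)\) for \(s\ge1\) if the kernel is normalized correctly. Concretely, for \(s\ge1\) the kernel's \(\eta\)-sup is \(\lesssim e^{s}\), but the combined exponent carries the factor \(e^{s}\) in \(\xi\). The cleaner route, which I will adopt, is to bound \(\sup_\xi e^{\gamma\abs\xi^2/4}\abs{e^{s\mathcal L}g}\) by \(\min(1,e^{s}e^{-\delta s})\)-type factors with \(\delta=\delta(\gamma,\gamma_1)>0\). Splitting \(s\le1\) and \(s\ge1\), together with the integrable factor \(e^{-s}\), then gives convergence.

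\textbf{Step 2: derivatives.} Derivatives commute with the semigroup up to factors: \(\partial_j e^{s\mathcal L}=e^{s/2}e^{s\mathcal L}\partial_j\). Hence, for \(\abs\alpha\le m\), the derivatives \(\partial^\alpha e^{s\mathcal L}g\) are bounded by Step 1 applied to \(\partial^\alpha g\), times \(e^{\abs\alpha s/2}\). For the extra \((m+1)\)-st derivative, I will place one derivative on the heat kernel, which costs \(a(s)^{-1/2}\sim s^{-1/2}\) as \(s\to0\) and is integrable. For large \(s\), the growth factors \(e^{ks/2}\) are the dangerous part. There I will instead put all derivatives on the kernel, where Gaussian smoothing at scale \(a(s)\sim1\) produces bounded \(\eta\)-integrals against \(\abs g\). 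The derivative of \(\exp(-\abs{\xi e^{s/2}-\eta}^2/4a)\) in \(\xi\) brings \(e^{s/2}\) factors, but these are paired with the decay of \(\exp\) in \(e^{s/2}\xi\). After rescaling, this reduces to bounds that are polynomial in \(e^{s/2}\abs\xi\) times Gaussian decay.

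\textbf{Step 3: assembling the bound.} Integrating the Step 1 and Step 2 bounds against \(e^{-s}\) over \(s\in(0,\infty)\) gives \(\mathfrak n_{m+1,\gamma}\lesssim\mathfrak n_{m,\gamma_1}(g)\). Smoothness of \(g\in Y\) and uniqueness of the \(Y\)-resolvent from Lemma~\ref{lem:fixed-mode-elementary-tools} identify the integral with \((1-\mathcal L)^{-1}g\). The step I expect to need the most care is the large-\(s\) bookkeeping of the \(e^{s}\) prefactor against the weight \(e^{\gamma\abs\xi^2/4}\). It is resolved by the strict gap \(\gamma<\gamma_1\), which leaves a factor \(e^{-\delta s}\) after optimizing the Gaussian-convolution exponent.
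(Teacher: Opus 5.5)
Your architecture matches the paper's: the resolvent is written as the Laplace transform of $e^{s\mathcal L}$, derivatives are commuted via $\partial_j e^{s\mathcal L}=e^{s/2}e^{s\mathcal L}\partial_j$ for small $s$, all derivatives go on the kernel for $s\ge1$, and the $a^{-1/2}$ singularity is integrable. There is, however, a genuine gap. Step~1 is built on a misstated kernel, and it never proves the weighted bound that Steps~2--3 rely on.

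Conjugating to the heat equation gives $e^{s\mathcal L}g(\xi)=e^{s}\,(\Gamma_{e^{s}-1,0}\star g)(e^{s/2}\xi)$, so the heat time is $e^{s}-1$, not $1-e^{-s/2}$. With your $a(s)<1$, the profile is concentrated at scale $e^{-s/2}$ with amplitude $e^{s}$ and is barely spread. Then $e^{s\mathcal L}g(0)$ grows like $e^{s}$ whenever $g\ge0$, $g\not\equiv0$, and the resolvent integral diverges at $\xi=0$. This is the spurious ``borderline'' obstacle you run into.

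Your remedy does not close it:
\begin{itemize}
\item A gap $\gamma<\gamma_1$ between Gaussian exponents produces decay in $\abs\xi$, never a factor $e^{-\delta s}$. At $\xi=0$ the weight is $1$, and in fact $e^{s\mathcal L}g\to(\int g)\,G$, so no decay in $s$ is available in general.
\item The ``$\min(1,e^{s}e^{-\delta s})$-type'' bound is asserted, not derived.
\item Mass conservation gives no pointwise control.
\item The large-$s$ bookkeeping in Step~2 (``polynomial in $e^{s/2}\abs\xi$'') inherits the same error.
\end{itemize}

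What is missing is the correctly normalized Mehler formula and one completion of the square. With $a_s=1-e^{-s}$ and $b_s=e^{-s/2}$,
\[
e^{s\mathcal L}g(\xi)=\frac{1}{4\pi a_s}\int_{\mathbb R^2}e^{-\abs{\xi-b_s\eta}^2/(4a_s)}\,g(\eta)\,d\eta .
\]
Set $d_s=b_s^2+\gamma_1a_s\in[\gamma_1,1]$. Completing the square gives
\[
\abs{e^{s\mathcal L}g(\xi)}\le d_s^{-1}\mathfrak n_{0,\gamma_1}(g)e^{-\gamma_1\abs\xi^2/(4d_s)}\le\gamma_1^{-1}\mathfrak n_{0,\gamma_1}(g)e^{-\gamma_1\abs\xi^2/4},
\]
uniformly in $s$. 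So there is no $e^{s}$ growth to beat.

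The splitting $\xi-b_s\eta=(\gamma_1a_s/d_s)\xi-b_s(\eta-b_s\xi/d_s)$ then gives
\[
\abs{\nabla e^{s\mathcal L}g(\xi)}\lesssim(\abs\xi+a_s^{-1/2})\,\mathfrak n_{0,\gamma_1}(g)e^{-\gamma_1\abs\xi^2/4}.
\]
The loss $\gamma_1\to\gamma$ is needed only to absorb the factor $\abs\xi$ here, and the factors $(1+\abs\xi)^{m+1}$ for $s\ge1$. In that range $a_s\ge1-e^{-1}$, and the differentiated kernel is a polynomial in $\xi-b_s\eta$ with coefficients bounded uniformly in $s$.

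With these bounds in hand, your Steps~2--3 go through as planned. For $s\le1$, move all but one derivative onto $g$; the factors $e^{\abs\alpha s/2}$ are bounded there. For $s\ge1$, keep all derivatives on the kernel. Conclude with $\int_0^\infty e^{-s}a_s^{-1/2}\,ds=2$.
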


\begin{proof}
For related Gaussian semigroup estimates, see
\cite{Gallay2018,GallayMaekawa2011}.

~\\
Let $P_\tau=e^{\tau\mathcal L}$.
The rescaling of the heat equation
$P_\tau g(\xi)
=e^\tau(e^{(e^\tau-1)\Delta}g)(e^{\tau/2}\xi)$
gives
\begin{equation}\label{eq:OU-kernel-explicit}
P_\tau g(\xi)
=
\frac1{4\pi a_\tau}
\int_{\mathbb R^2}
e^{-|\xi-b_\tau\eta|^2/(4a_\tau)}g(\eta)\,d\eta,
\end{equation}
where $a_\tau=1-e^{-\tau}$ and $b_\tau=e^{-\tau/2}$.
The semigroup is contractive:
\[
\|P_\tau g\|_Y
\stackrel{\eqref{eq:fixed-mode-energy-form}}{\le}\|g\|_Y.
\]
Consequently,
\begin{equation}\label{eq:OU-resolvent-integral}
(1-\mathcal L)^{-1}g
=
\int_0^\infty e^{-\tau}P_\tau g\,d\tau,
\end{equation}
with convergence in $Y$.

~\\
We first show a Gaussian bound for one derivative of $P_\tau g$.
Set $d_\tau=b_\tau^2+\gamma_1a_\tau$, so that
$\gamma_1\le d_\tau\le1$.
Completion of the square gives
\[
\frac{|\xi-b_\tau\eta|^2}{a_\tau}
+\gamma_1|\eta|^2
=
\frac{d_\tau}{a_\tau}
\left|\eta-\frac{b_\tau}{d_\tau}\xi\right|^2
+
\frac{\gamma_1}{d_\tau}|\xi|^2.
\]
Integrating the centered Gaussian yields
\[
|P_\tau g(\xi)|
\le
d_\tau^{-1}\mathfrak n_{0,\gamma_1}(g)
e^{-\gamma_1|\xi|^2/(4d_\tau)}.
\]
For the differentiated kernel, the identity
$\xi-b_\tau\eta
=(\gamma_1a_\tau/d_\tau)\xi
-b_\tau(\eta-b_\tau\xi/d_\tau)$ gives
\[
|\nabla P_\tau g(\xi)|
\lsim{\gamma_1} \mathfrak n_{0,\gamma_1}(g)
\bigl(|\xi|+a_\tau^{-1/2}\bigr)
e^{-\gamma_1|\xi|^2/4}.
\]
Since $\gamma<\gamma_1$, the factor $|\xi|$ is absorbed by the Gaussian.
Together with $a_\tau\le1$, this proves
\begin{equation}\label{eq:OU-one-derivative-Gaussian}
\mathfrak n_{1,\gamma}(P_\tau g)
\lsim{\gamma,\gamma_1} a_\tau^{-1/2}
\mathfrak n_{0,\gamma_1}(g).
\end{equation}

~\\
We next estimate the higher derivatives for $0<\tau\le1$.
Integration by parts in the kernel gives
$\partial^\alpha P_\tau g
=e^{|\alpha|\tau/2}P_\tau\partial^\alpha g$.
After transferring all but one derivative to $g$,
\[
\mathfrak n_{m+1,\gamma}(P_\tau g)
\stackrel{\eqref{eq:OU-one-derivative-Gaussian}}{\lsim{m,\gamma,\gamma_1}} a_\tau^{-1/2}
\mathfrak n_{m,\gamma_1}(g),
\qquad 0<\tau\le1,
\]
because the commutation factors are bounded on this interval.

~\\
For $\tau\ge1$, we instead place all derivatives on the kernel.
Since $a_\tau$ is bounded below, each differentiated kernel
is a polynomial in $\xi-b_\tau\eta$ times the original Gaussian,
with coefficients bounded uniformly in $\tau$.
The same completion of the square and Gaussian moment estimates
therefore give
\[
|\partial^\alpha P_\tau g(\xi)|
\lsim{m,\gamma_1} \mathfrak n_{0,\gamma_1}(g)
(1+|\xi|)^{m+1}e^{-\gamma_1|\xi|^2/4},
\qquad |\alpha|\le m+1.
\]
Since $\gamma<\gamma_1$, the polynomial factor is absorbed by the Gaussian, and
\[
\mathfrak n_{m+1,\gamma}(P_\tau g)
\lsim{m,\gamma,\gamma_1} \mathfrak n_{0,\gamma_1}(g),
\qquad \tau\ge1.
\]

~\\
Since $a_\tau\le1$, these estimates give
\[
\mathfrak n_{m+1,\gamma}(P_\tau g)
\lsim{m,\gamma,\gamma_1} a_\tau^{-1/2}
\mathfrak n_{m,\gamma_1}(g),
\qquad \tau>0.
\]
Finally,
\[
\int_0^\infty e^{-\tau}a_\tau^{-1/2}\,d\tau
=
\int_0^1 s^{-1/2}\,ds
=2.
\]
This integrable bound justifies differentiation under
\eqref{eq:OU-resolvent-integral} and proves
\eqref{eq:OU-resolvent-one-derivative}.
\end{proof}

\begin{lemma}
\label{lem:regularized-fixed-mode-inverse}
Fix $n\ge2$ and $q\in Y_n\cap\mathcal Z$ independent of
$\nu$. For every $0<\nu\le1$, the equation
\begin{equation}\label{eq:regularized-fixed-mode-equation}
\bigl[\nu(1-\mathcal L)+\Lambda\bigr]f^\nu=q
\end{equation}
has a unique weak solution \(f^\nu\in\mathcal V\cap Y_n\), and this
solution belongs to \(Y_n\cap\mathcal Z\).
We denote the solution by
\[
f^\nu=R_\nu q
:=
\bigl[\nu(1-\mathcal L)+\Lambda\bigr]^{-1}q.
\]
Then, for every $m\in\mathbb N_0$ and $0<\gamma<1$,
\begin{equation}\label{eq:fixed-mode-higher-fixed-source}
\sup_{0<\nu\le1}\mathfrak n_{m,\gamma}(f^\nu)
\lsim{q,n,m,\gamma} 1.
\end{equation}
Moreover, if $f^0=\Lambda^{-1}q$, then
\begin{equation}\label{eq:fixed-mode-strong-fixed-source-convergence}
\mathfrak n_{m,\gamma}(f^\nu-f^0)
\lsim{q,n,m,\gamma} \nu,
\qquad 0<\nu\le1.
\end{equation}
In particular,
\begin{equation}\label{eq:nu-fixed-mode-first-order}
\|f^\nu-f^0\|_Y\lsim{q,n} \nu,
\qquad 0<\nu\le1,
\end{equation}
and
\begin{equation}\label{eq:nu-fixed-mode-Gaussian}
\|f^\nu\|_{L^\infty_\gamma}
+\|\nabla f^\nu\|_{L^\infty_\gamma}
\lsim{q,n,\gamma} 1,
\qquad 0<\nu\le1.
\end{equation}
\end{lemma}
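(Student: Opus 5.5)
The plan is to treat \eqref{eq:regularized-fixed-mode-equation} as a perturbation of the invertible problem $\Lambda f^0=q$ from Lemma~\ref{lem:fixed-mode-Green-inverse}. Existence and uniqueness come first.

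\textbf{Existence and uniqueness.} On $\mathcal V\cap Y_n$ consider the bilinear form
\[
B_\nu(f,g)=\nu\,\mathfrak q(f,g)+\langle\Lambda f,g\rangle_Y .
\]
By \eqref{eq:fixed-mode-Lambda-skew}, $\langle\Lambda f,f\rangle_Y=0$. Together with \eqref{eq:fixed-mode-energy-form}, this gives the coercivity bound $B_\nu(f,f)\ge\nu\|f\|_{\mathcal V}^2$ up to a constant, since $\mathfrak q(f,f)\ge\|f\|_Y^2$ controls $\|f\|_{\mathcal V}^2/2$. Boundedness of $\Lambda$ on $Y_n$ makes $B_\nu$ continuous. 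Lax--Milgram then gives a unique weak solution $f^\nu\in\mathcal V\cap Y_n$, with $\|f^\nu\|_{\mathcal V}\le C\nu^{-1}\|q\|_Y$. This bound is not uniform in $\nu$, so it is only a starting point.

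\textbf{Uniform bound via the difference.} Write $f^\nu=f^0+g^\nu$. By Lemmas~\ref{lem:fixed-mode-Green-inverse} and~\ref{lem:fixed-mode-elementary-tools}, $f^0\in Y_n\cap\mathcal Z$, and $(1-\mathcal L)f^0\in Y_n\cap\mathcal Z$. The difference then solves
\[
[\nu(1-\mathcal L)+\Lambda]g^\nu=-\nu(1-\mathcal L)f^0=:\nu q_1 .
\]
Testing with $g^\nu$ and using skew-symmetry gives
\[
\nu\,\mathfrak q(g^\nu,g^\nu)=\nu\langle q_1,g^\nu\rangle_Y ,
\]
so $\|g^\nu\|_{\mathcal V}\lesssim\|q_1\|_Y$. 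This is uniform in $\nu$ but yields only $O(1)$, not $O(\nu)$.

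To gain the factor $\nu$, iterate once more. Set $f^1=\Lambda^{-1}q_1\in Y_n\cap\mathcal Z$ and $h^\nu=g^\nu-\nu f^1$. Then
\[
[\nu(1-\mathcal L)+\Lambda]h^\nu=-\nu^2(1-\mathcal L)f^1 .
\]
The same energy identity gives $\|h^\nu\|_{\mathcal V}\lesssim\nu$, hence $\|f^\nu-f^0\|_Y\lesssim\nu$, which is \eqref{eq:nu-fixed-mode-first-order}. Iterating $k$ times gives an expansion $f^\nu=\sum_{j<k}\nu^jf^j+h_k^\nu$ with $f^j\in Y_n\cap\mathcal Z$ and $\|h_k^\nu\|_{\mathcal V}\lesssim\nu^{k-1}$.

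\textbf{Upgrade to pointwise Gaussian seminorms.} This is the main obstacle. Energy control does not immediately give $\mathfrak n_{m,\gamma}$ bounds uniformly in $\nu$, because the elliptic term carries the small coefficient $\nu$. The first attempt is to rewrite the remainder equation as
\[
h=\nu^{-1}\bigl[(1-\mathcal L)^{-1}\bigr](\text{source}-\Lambda h).
\]
Lemma~\ref{lem:OU-Gaussian-smoothing} gains one derivative but costs $\nu^{-1}$; with the source of size $\nu^k$, this leaves $\nu^{k-1}$. The term $\Lambda h$ costs at most the same number of derivatives as it gains: $\varphi\partial_\theta$ loses none on a fixed mode, via the rotation trick, and $v^h\cdot\nabla G$ is smoothing by \eqref{eq:Biot-Savart-finite-derivatives}. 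A bootstrap from $Y$ to $\mathfrak n_{0,\gamma}$ and then up in $m$ should therefore close with one power of $\nu$ lost per step. The Gaussian-rate loss from $\gamma_1$ to $\gamma$ at each step is handled by choosing a finite ladder $\gamma<\gamma_{(m)}<\dots<\gamma_1<1$.

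The passage from the $L^2$ space $Y$ to a weighted sup norm requires a Sobolev-type step. I would take it from the semigroup representation \eqref{eq:OU-kernel-explicit} with a $Y$-valued input, using Cauchy--Schwarz against the kernel. Taking $k$ large enough relative to $m$ absorbs all these losses, giving $\mathfrak n_{m,\gamma}(h_k^\nu)\lesssim\nu$. This yields \eqref{eq:fixed-mode-strong-fixed-source-convergence}, and hence \eqref{eq:fixed-mode-higher-fixed-source} and \eqref{eq:nu-fixed-mode-Gaussian}.

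Membership of $f^\nu$ in $\mathcal Z$ for fixed $\nu$ follows the same way: all derivatives have finite $\mathfrak n_{m,\gamma}$ for every $\gamma<1$, which gives polynomial growth of the Gaussian quotient. This may need a slight refinement by weighting with $e^{|\xi|^2/4}$ times a polynomial, which I would obtain by differentiating the equation in $\xi$ and repeating the argument.
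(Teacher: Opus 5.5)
\emph{There is a gap: your passage from $Y$ to weighted sup norms cannot reach Gaussian rates above $\gamma=1/2$.}

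Your Lax--Milgram step and your expansion $f^\nu=\sum_{j<k}\nu^jf^j+h_k^\nu$ are the same as in the paper. Your energy identity for the remainder gives \eqref{eq:nu-fixed-mode-first-order} directly, whereas the paper deduces it from an $\mathfrak n_{0,3/4}$ bound. The problem is the upgrade to $\mathfrak n_{m,\gamma}$. Cauchy--Schwarz against the kernel \eqref{eq:OU-kernel-explicit} with input in $Y$ gives
\[
|P_\tau g(\xi)|\le\bigl(4\pi a_\tau(1+e^{-\tau})\bigr)^{-1/2}\exp\Bigl(-\frac{|\xi|^2}{4(1+e^{-\tau})}\Bigr)\|g\|_Y .
\]
So the small-$\tau$ part of $(1-\cL)^{-1}=\int_0^\infty e^{-\tau}P_\tau\,d\tau$ only yields decay $e^{-|\xi|^2/8}$. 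This is sharp, not an artifact of the estimate. The function $g=(1+|\xi|)^{-2}e^{-|\xi|^2/8}$ lies in $Y$, yet $(1-\cL)^{-1}g\gtrsim|\xi|^{-4}e^{-|\xi|^2/8}$ for large $|\xi|$, because $P_\tau$ has a positive kernel and is close to the identity for $\tau\lesssim|\xi|^{-2}$.

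Nothing later in your bootstrap raises the exponent. For the same small-$\tau$ reason, $(1-\cL)^{-1}$ does not map $L^\infty_{\gamma_1}$ into $L^\infty_\gamma$ for any $\gamma>\gamma_1$. In $\Lambda h$, the term $v^G\cdot\nabla h=\varphi\partial_\theta h$ decays exactly as fast as $h$; only $v^h\cdot\nabla G$ has rate one. Your ladder $\gamma<\gamma_{(m)}<\dots<\gamma_1<1$ therefore cannot be started near $1$. The argument gives \eqref{eq:fixed-mode-higher-fixed-source} and \eqref{eq:fixed-mode-strong-fixed-source-convergence} only for $\gamma<1/2$. The lemma asserts them for every $\gamma<1$, and its later uses need $\gamma>1/2$ to pair against $\pw\le2e^{|\xi|^2/4}$.

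The missing ingredient is a pointwise argument that uses the structure of the equation, not just mapping properties of $(1-\cL)^{-1}$. The paper uses a comparison principle for the local part of the operator. Write $[\nu(1-\cL)+v^G\cdot\nabla]f^\nu=q-v^{f^\nu}\cdot\nabla G$. The crude bound $\|f^\nu\|_Y+\|\nabla f^\nu\|_Y\le2\nu^{-1}\|q\|_Y$ gives $\|v^{f^\nu}\|_{L^\infty}\lesssim\nu^{-1}$. Hence the right side is $O(\nu^{-1}e^{-\eta|\xi|^2/4})$ for every $\eta<1$. For $\max\{\gamma,\frac12\}<\eta<1$, the radial Gaussian is a strict supersolution:
\[
[\nu(1-\cL)+v^G\cdot\nabla]e^{-\eta|\xi|^2/4}=\nu\Bigl(\eta+\frac{\eta(1-\eta)}{4}|\xi|^2\Bigr)e^{-\eta|\xi|^2/4}.
\]
Set $u=f^\nu-M\nu^{-2}e^{-\eta|\xi|^2/4}$ and test the resulting differential inequality with $\chi_R^2u_+$ in $Y$. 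The transport term drops because $\nabla\cdot(\pw_G\chi_R^2v^G)=0$, and $\eta>1/2$ puts $u$ in $Y$. This shows $u_+=0$. Repeating the argument for $-f^\nu$ gives $\mathfrak n_{0,\gamma}(f^\nu)\lesssim\nu^{-2}$ for every $\gamma<1$. From there, your one-derivative-per-$\nu^{-1}$ bootstrap and the finite expansion close exactly as in the paper.

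Separately, your $\mathcal Z$-membership step fails as stated. Finiteness of $\mathfrak n_{m,\gamma}$ for all $\gamma<1$ only bounds $e^{|\xi|^2/4}|\partial^\alpha f|$ by $C_\gamma e^{(1-\gamma)|\xi|^2/4}$, which is not polynomial growth. The paper takes this membership from Gallay's fixed-mode regularity argument.
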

\begin{proof}
Let \(\mathcal V_n:=\mathcal V\cap Y_n\), a closed subspace of
\(\mathcal V\).  On this space consider
the bilinear form
\[
\mathfrak a_\nu(f,g)
:=
\nu\int_{\mathbb R^2}\mathsf p_G\nabla f\cdot\nabla g
+\langle\Lambda f,g\rangle_Y.
\]
This form is bounded on \(\mathcal V_n\) by
Lemma~\ref{lem:fixed-mode-elementary-tools}. Moreover,
\[
\mathfrak a_\nu(f,f)
\stackrel{\eqref{eq:fixed-mode-Lambda-skew}}{=}\nu\|\nabla f\|_Y^2
\stackrel{\substack{\eqref{eq:fixed-mode-energy-form}\\
\eqref{eq:Gaussian-energy-space}}}{\ge}
\frac{\nu}{2}\|f\|_{\mathcal V}^2.
\]
Thus Lax--Milgram gives a unique \(f^\nu\in\mathcal V_n\) satisfying
\[
\mathfrak a_\nu(f^\nu,g)=\langle q,g\rangle_Y,
\qquad g\in\mathcal V_n,
\]
which is the weak form of
\eqref{eq:regularized-fixed-mode-equation}.
The fixed-mode regularity argument in
\cite{Ga11} shows that this solution belongs
to \(Y_n\cap\mathcal Z\).  We now prove the stronger seminorm bounds
that are uniform as \(\nu\to0\).

~\\
We first obtain a Gaussian bound with a polynomial loss in $\nu$.
Testing against $f^\nu$ in the Gaussian inner product
\eqref{eq:Gaussian-space-Y},
\[
\nu\|\nabla f^\nu\|_Y^2
\stackrel{\substack{\eqref{eq:regularized-fixed-mode-equation}\\
\eqref{eq:fixed-mode-Lambda-skew}}}{=}\langle q,f^\nu\rangle_Y
\le \|q\|_Y\|f^\nu\|_Y
\stackrel{\eqref{eq:fixed-mode-energy-form}}{\le}
\|q\|_Y\|\nabla f^\nu\|_Y.
\]
Consequently,
\begin{equation}\label{eq:regularized-preliminary-energy}
\|f^\nu\|_Y+\|\nabla f^\nu\|_Y
\le 2\nu^{-1}\|q\|_Y.
\end{equation}
Splitting the Biot--Savart kernel at radius one, with
$H^1(\mathbb R^2)\hookrightarrow L^4(\mathbb R^2)$, gives
\[
\|v^{f^\nu}\|_{L^\infty}
\lesssim \bigl(\|f^\nu\|_{L^4}+\|f^\nu\|_{L^1}\bigr)
\stackrel{\eqref{eq:regularized-preliminary-energy}}{\lesssim} \nu^{-1}\|q\|_Y.
\]

~\\
Fix $0<\gamma<1$ and choose
$\max\{\gamma,\tfrac12\}<\eta<1$.
Since $q\in\mathcal Z$ as in \eqref{eq:Gaussian-class-Z} and
$0<\nu\le1$,
\[
\left|
\bigl[\nu(1-\mathcal L)+v^G\cdot\nabla\bigr]f^\nu
\right|
=
|q-v^{f^\nu}\cdot\nabla G|
\lsim{q,\eta} \nu^{-1}e^{-\eta|\xi|^2/4}.
\]
On the other hand, radiality of the Gaussian gives
\[
\bigl[\nu(1-\mathcal L)+v^G\cdot\nabla\bigr]
e^{-\eta|\xi|^2/4}
=
\nu\left(
\eta+\frac{\eta(1-\eta)}4|\xi|^2
\right)e^{-\eta|\xi|^2/4}
\ge \nu\eta e^{-\eta|\xi|^2/4}.
\]
Choose \(M_{q,\eta}>0\), independent of \(\nu\), sufficiently large,
and define
\[
u(\xi):=
f^\nu(\xi)-M_{q,\eta}\nu^{-2}e^{-\eta|\xi|^2/4},
\qquad
u_+:=\max\{u,0\}.
\]
The preceding inequalities imply
\[
\bigl[\nu(1-\mathcal L)+v^G\cdot\nabla\bigr]u\le0.
\]
Since
\[
1-\mathcal L
=
-\mathsf p_G^{-1}\nabla\cdot(\mathsf p_G\nabla),
\]
this is equivalent to
\[
-\nu\nabla\cdot(\mathsf p_G\nabla u)
+\mathsf p_Gv^G\cdot\nabla u
\le0.
\]
Moreover, $\eta>1/2$ and
\eqref{eq:regularized-preliminary-energy} give $u,\nabla u\in Y$.
The positive part therefore satisfies
\[
u_+,\nabla u_+\in Y,
\qquad
\nabla u_+=\mathbf 1_{\{u>0\}}\nabla u
\quad\text{almost everywhere}.
\]

~\\
We show that $u_+=0$.
Let $\chi_R\in C_c^\infty(\mathbb R^2)$ be radial, with
$0\le\chi_R\le1$, equal to one on $B(0,R)$, supported in
$B(0,2R)$, and satisfying $|\nabla\chi_R|\lesssim R^{-1}$.
Testing the preceding differential inequality with
$\chi_R^2u_+$, justified by compactly supported Sobolev
approximation, gives
\[
\nu\int_{\mathbb R^2}
\mathsf p_G\nabla u\cdot\nabla(\chi_R^2u_+)\,d\xi
+
\int_{\mathbb R^2}
\mathsf p_G\chi_R^2(v^G\cdot\nabla u)u_+\,d\xi
\le0.
\]
Because $v^G$ is divergence free and tangent to circles,
while $\mathsf p_G$ and $\chi_R$ are radial,
\[
\nabla\cdot(\mathsf p_G\chi_R^2v^G)=0.
\]
Thus the transport term is
\[
\begin{split}
\int_{\mathbb R^2}
\mathsf p_G\chi_R^2(v^G\cdot\nabla u)u_+\,d\xi
&=
\frac12\int_{\mathbb R^2}
\mathsf p_G\chi_R^2v^G\cdot\nabla(u_+^2)\,d\xi
\\
&=
-\frac12\int_{\mathbb R^2}
\nabla\cdot(\mathsf p_G\chi_R^2v^G)u_+^2\,d\xi
=0.
\end{split}
\]
Since $\nabla u\cdot\nabla u_+=|\nabla u_+|^2$,
\[
\nu\int_{\mathbb R^2}
\mathsf p_G\chi_R^2|\nabla u_+|^2\,d\xi
\le
-2\nu\int_{\mathbb R^2}
\mathsf p_G\chi_Ru_+\nabla u_+\cdot\nabla\chi_R\,d\xi.
\]
Young's inequality then gives
\[
\begin{split}
\nu\int_{\mathbb R^2}
\mathsf p_G\chi_R^2|\nabla u_+|^2\,d\xi
&\le
\frac{\nu}{2}\int_{\mathbb R^2}
\mathsf p_G\chi_R^2|\nabla u_+|^2\,d\xi
\\
&\quad+
2\nu\int_{\mathbb R^2}
\mathsf p_Gu_+^2|\nabla\chi_R|^2\,d\xi.
\end{split}
\]
Absorbing the first term and dividing by $\nu>0$ yields
\[
\int_{\mathbb R^2}
\mathsf p_G\chi_R^2|\nabla u_+|^2\,d\xi
\lesssim
\frac{1}{R^2}
\int_{\{R\le|\xi|\le2R\}}\mathsf p_Gu_+^2\,d\xi.
\]
Since $u_+\in Y$, the right-hand side tends to zero as
$R\to\infty$. Fatou's lemma therefore gives
\[
\int_{\mathbb R^2}\mathsf p_G|\nabla u_+|^2\,d\xi=0.
\]
Hence $\nabla u_+=0$ almost everywhere, so $u_+$ is constant
on $\mathbb R^2$. Its membership in $Y$ forces this constant
to be zero, because
$\int_{\mathbb R^2}\mathsf p_G\,d\xi=\infty$.
Consequently,
\[
f^\nu(\xi)\lsim{q,\eta} \nu^{-2}e^{-\eta|\xi|^2/4}.
\]
Repeating the same argument with $-f^\nu$ in place of $f^\nu$
gives the opposite bound. Thus
\[
|f^\nu(\xi)|
\lsim{q,\eta} \nu^{-2}e^{-\eta|\xi|^2/4},
\]
and hence
\begin{equation}\label{eq:regularized-crude-zero-order}
\mathfrak n_{0,\gamma}(f^\nu)
\lsim{q,\gamma} \nu^{-2},
\qquad 0<\nu\le1.
\end{equation}

~\\
To estimate higher derivatives, write
\[
f^\nu
=
\nu^{-1}(1-\mathcal L)^{-1}
\bigl(q-\Lambda f^\nu\bigr).
\]
For every $k\ge0$ and $0<\gamma<\gamma_1<1$,
\[
\mathfrak n_{k+1,\gamma}(f^\nu)
\stackrel{\substack{\eqref{eq:OU-resolvent-one-derivative}\\
\eqref{eq:Lambda-finite-Gaussian-bound}}}{\lsim{n,k,\gamma,\gamma_1}} \nu^{-1}
\left(
\mathfrak n_{k,\gamma_1}(q)
+\mathfrak n_{k,\gamma_1}(f^\nu)
\right).
\]
Starting from \eqref{eq:regularized-crude-zero-order} and choosing
an intermediate Gaussian exponent at each application, induction
gives
\begin{equation}\label{eq:regularized-derivative-polynomial-loss}
\mathfrak n_{m,\gamma}(f^\nu)
\lsim{q,n,m,\gamma} \nu^{-(m+2)},
\qquad 0<\nu\le1.
\end{equation}

~\\
We now remove this loss by a finite expansion.
Fix $m$ and define
\[
f_0:=f^0=\Lambda^{-1}q,
\qquad
f_j:=-\Lambda^{-1}(1-\mathcal L)f_{j-1},
\qquad 1\le j\le m+2.
\]
We have the identity
\begin{equation}\label{eq:finite-resolvent-expansion-exact}
\bigl[\nu(1-\mathcal L)+\Lambda\bigr]
\left(
f^\nu-\sum_{j=0}^{m+2}\nu^j f_j
\right)
=
-\nu^{m+3}(1-\mathcal L)f_{m+2}.
\end{equation}
Set \(g:=(1-\mathcal L)f_{m+2}\in Y_n\cap\mathcal Z\).  By
linearity and uniqueness, the difference on the left of
\eqref{eq:finite-resolvent-expansion-exact} is
\(-\nu^{m+3}R_\nu g\).  For this fixed source \(g\),
\[
\mathfrak n_{m,\gamma}
\left(
f^\nu-\sum_{j=0}^{m+2}\nu^j f_j
\right)
\stackrel{\eqref{eq:regularized-derivative-polynomial-loss}}{\lsim{q,n,m,\gamma}} \nu^{m+3}\nu^{-(m+2)}
=
\nu.
\]
Moreover, since all $f_j$ are fixed and $\nu^j\le\nu$ for $j\ge1$,
\[
\mathfrak n_{m,\gamma}
\left(
\sum_{j=1}^{m+2}\nu^j f_j
\right)
\le
\nu\sum_{j=1}^{m+2}\mathfrak n_{m,\gamma}(f_j)
\lsim{q,n,m,\gamma} \nu.
\]
Since $f_0=f^0$, the last two estimates imply
\[
\mathfrak n_{m,\gamma}(f^\nu-f^0)
\lsim{q,n,m,\gamma} \nu.
\]
This proves
\eqref{eq:fixed-mode-strong-fixed-source-convergence}.
Adding $\mathfrak n_{m,\gamma}(f^0)$ gives
\eqref{eq:fixed-mode-higher-fixed-source}.

~\\
For $m=0$ and $\gamma=3/4$,
\[
\|f^\nu-f^0\|_Y^2
\stackrel{\substack{\eqref{eq:Gaussian-space-Y}\\
\eqref{eq:fixed-mode-strong-fixed-source-convergence}}}{\lsim{q,n}} \nu^2
\int_{\mathbb R^2}e^{-|\xi|^2/8}\,d\xi
\lsim{q,n} \nu^2,
\]
which proves \eqref{eq:nu-fixed-mode-first-order}.
Taking $m=1$ in
\eqref{eq:fixed-mode-higher-fixed-source} gives
\eqref{eq:nu-fixed-mode-Gaussian}.

\end{proof}
\begin{corollary}
\label{cor:regularized-finite-dimensional-sources}
Fix \(n\ge2\), a finite-dimensional subspace
\(\mathscr E\subset Y_n\cap\mathcal Z\), and any norm
\(\|\cdot\|_{\mathscr E}\) on \(\mathscr E\). Let \(R_\nu\) be the
solution operator in
Lemma~\ref{lem:regularized-fixed-mode-inverse}.
For every \(m\in\mathbb N_0\) and \(0<\gamma<1\),
\begin{equation}\label{eq:regularized-finite-dimensional-bound}
\mathfrak n_{m,\gamma}(R_\nu q)
+\nu^{-1}\mathfrak n_{m,\gamma}
\bigl(R_\nu q-\Lambda^{-1}q\bigr)
\lsim{\mathscr E,n,m,\gamma} \|q\|_{\mathscr E},
\qquad q\in\mathscr E,\quad 0<\nu\le1.
\end{equation}
If \(q^\nu\in C^k([0,T];\mathscr E)\), then
\begin{equation}\label{eq:regularized-finite-dimensional-time-bound}
\mathfrak n_{m,\gamma}
\bigl(\partial_t^jR_\nu q^\nu(t)\bigr)
\lsim{\mathscr E,n,m,\gamma} 
\|\partial_t^jq^\nu(t)\|_{\mathscr E},
\qquad 0\le j\le k.
\end{equation}
\end{corollary}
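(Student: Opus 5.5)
The plan is to reduce everything to Lemma~\ref{lem:regularized-fixed-mode-inverse} applied to the finitely many basis vectors of \(\mathscr E\). First fix a basis \(e_1,\dots,e_d\) of \(\mathscr E\). Since all norms on a finite-dimensional space are equivalent, there is a constant \(c_{\mathscr E}\) such that, writing \(q=\sum_i c_i e_i\),
\[
\sum_{i=1}^d\abs{c_i}\le c_{\mathscr E}\|q\|_{\mathscr E}.
\]
The second ingredient is linearity of the two solution operators. By uniqueness of weak solutions in \(\mathcal V\cap Y_n\), the map \(R_\nu\) is linear. The same holds for \(\Lambda^{-1}\), which is linear on \(Y_n\cap\mathcal Z\) by the uniqueness in Lemma~\ref{lem:fixed-mode-Green-inverse}. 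Hence
\[
R_\nu q=\sum_i c_iR_\nu e_i,
\qquad
R_\nu q-\Lambda^{-1}q=\sum_i c_i\bigl(R_\nu e_i-\Lambda^{-1}e_i\bigr).
\]

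Next apply \eqref{eq:fixed-mode-higher-fixed-source} and \eqref{eq:fixed-mode-strong-fixed-source-convergence} to each fixed source \(e_i\), which is independent of \(\nu\). This gives
\[
\mathfrak n_{m,\gamma}(R_\nu e_i)+\nu^{-1}\mathfrak n_{m,\gamma}\bigl(R_\nu e_i-\Lambda^{-1}e_i\bigr)\lsim{e_i,n,m,\gamma}1,
\qquad 0<\nu\le1.
\]
The constants depend on \(e_i\), but there are only \(d\) of them, so their maximum depends only on \(\mathscr E\) (through the chosen basis), \(n\), \(m\), \(\gamma\). The seminorm \(\mathfrak n_{m,\gamma}\) is subadditive and absolutely homogeneous. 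Summing over \(i\) with the coefficient bound therefore yields \eqref{eq:regularized-finite-dimensional-bound}.

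For the time bound, write \(q^\nu(t)=\sum_i c_i^\nu(t)e_i\). The coordinate map \(\mathscr E\to\mathbb R^d\) is a bounded linear isomorphism, so \(c_i^\nu\in C^k([0,T])\) and \(\partial_t^jc_i^\nu(t)\) are the coordinates of \(\partial_t^jq^\nu(t)\). Since \(R_\nu\) acts only in \(\xi\) and is independent of \(t\), we have \(R_\nu q^\nu(t)=\sum_i c_i^\nu(t)R_\nu e_i\) with fixed profiles \(R_\nu e_i\). Hence \(\partial_t^jR_\nu q^\nu(t)=\sum_i\partial_t^jc_i^\nu(t)R_\nu e_i=R_\nu\partial_t^jq^\nu(t)\), and derivatives in \(\xi\) commute with this finite sum. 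Applying \eqref{eq:regularized-finite-dimensional-bound} to \(\partial_t^jq^\nu(t)\in\mathscr E\) gives \eqref{eq:regularized-finite-dimensional-time-bound}.

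There is no genuine obstacle. The only points needing care are, first, that the implicit constants in Lemma~\ref{lem:regularized-fixed-mode-inverse} depend on the source, which is exactly why the reduction to a fixed finite basis is needed. Second, the linearity of \(R_\nu\) and \(\Lambda^{-1}\) must be justified through uniqueness.
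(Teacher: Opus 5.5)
Your proposal is correct and follows the paper's proof essentially verbatim. Like the paper, you expand \(q\) in a fixed basis of \(\mathscr E\), use uniqueness to get linearity of \(R_\nu\) and \(\Lambda^{-1}\), and apply the fixed-source bounds of Lemma~\ref{lem:regularized-fixed-mode-inverse} to the finitely many basis vectors together with equivalence of norms. The time bound then follows from \(\partial_t^jR_\nu q^\nu=R_\nu\partial_t^jq^\nu\).
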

\begin{proof}
Choose a basis \(q_1,\dots,q_d\) of \(\mathscr E\) and write
\(q=\sum_{\ell=1}^dc_\ell q_\ell\).  Uniqueness and linearity give
\[
R_\nu q=\sum_{\ell=1}^dc_\ell R_\nu q_\ell,
\qquad
\Lambda^{-1}q=\sum_{\ell=1}^dc_\ell\Lambda^{-1}q_\ell.
\]
The finitely many basis estimates and equivalence of norms give
\[
\begin{aligned}
&\mathfrak n_{m,\gamma}(R_\nu q)
+\nu^{-1}\mathfrak n_{m,\gamma}(R_\nu q-\Lambda^{-1}q)
\\
&\qquad\stackrel{\substack{\eqref{eq:fixed-mode-higher-fixed-source}\\
\eqref{eq:fixed-mode-strong-fixed-source-convergence}}}{\lsim{\mathscr E,n,m,\gamma}}
\sum_{\ell=1}^d |c_\ell|
\lsim{\mathscr E,n,m,\gamma} \|q\|_{\mathscr E}.
\end{aligned}
\]
This proves \eqref{eq:regularized-finite-dimensional-bound}.
Since \(R_\nu\) is independent of time,
\[
\mathfrak n_{m,\gamma}(\partial_t^jR_\nu q^\nu)
=\mathfrak n_{m,\gamma}(R_\nu\partial_t^jq^\nu)
\stackrel{\eqref{eq:regularized-finite-dimensional-bound}}{\lsim{\mathscr E,n,m,\gamma}} \|\partial_t^jq^\nu\|_{\mathscr E},
\]
which is \eqref{eq:regularized-finite-dimensional-time-bound}.
\end{proof}
\begin{proposition}
\label{prop:fixed-mode-velocity}
Let $n\ge2$ and $f\in Y_n\cap\mathcal Z$. Then $v^f=K*f$ is smooth and, for every $0<\gamma<1$,
\begin{equation}\label{eq:fixed-mode-velocity-decay}
|v^f(\xi)|
\lsim{n,\gamma}
\frac{1}{(1+|\xi|)^{n+1}}
\|f\|_{L^\infty_\gamma},
\qquad \xi\in\mathbb R^2.
\end{equation}
\end{proposition}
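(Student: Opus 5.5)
Since \(f\) lies in a single angular mode, the plan is to work directly with the stream function. Write \(f(r,\theta)=a(r)\cos(n\theta)+b(r)\sin(n\theta)\); by linearity and rotation it suffices to treat \(f=a(r)\cos(n\theta)\). The stream function \(\Psi\) with \(\Delta\Psi=f\) and \(v^f=\nabla^\perp\Psi\) is then \(\Psi=P(r)\cos(n\theta)\), where
\[
P''+\frac1rP'-\frac{n^2}{r^2}P=a .
\]
The radial Green function for \(r^n\) and \(r^{-n}\) gives
\[
P(r)=-\frac{1}{2n}\Bigl(r^{-n}\int_0^r s^{n+1}a(s)\,ds+r^{n}\int_r^\infty s^{1-n}a(s)\,ds\Bigr).
\]
This is the unique solution decaying at infinity, and it agrees with \(K*f\) because \(f\in\mathcal Z\) has zero mass and decays like a Gaussian.

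The bound \(|a(r)|\le\|f\|_{L^\infty_\gamma}e^{-\gamma r^2/4}\) is immediate, since \(a\) is \(f\) evaluated along the ray \(\theta=0\). From the formula, \(|v^f|\lesssim |P'|+|P|/r\), and differentiating gives
\[
P'(r)=\frac12\Bigl(r^{-n-1}\int_0^r s^{n+1}a\,ds-r^{n-1}\int_r^\infty s^{1-n}a\,ds\Bigr),
\]
because the boundary terms cancel.

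For \(r\ge1\), the first integral is bounded by \(\int_0^\infty s^{n+1}e^{-\gamma s^2/4}ds\lsim{n,\gamma}1\), which gives \(r^{-n-1}\). For the second term, \(r^{n-1}\int_r^\infty s^{1-n}e^{-\gamma s^2/4}ds\le \int_r^\infty s\,e^{-\gamma s^2/4}ds\lesssim e^{-\gamma r^2/4}\), which is far smaller. The quantity \(P/r\) obeys the same bounds. Together these yield \((1+|\xi|)^{-n-1}\) decay for \(|\xi|\ge1\).

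For \(r\le1\), it is enough to prove a uniform bound. This follows directly from the Biot--Savart estimate \eqref{eq:Biot-Savart-finite-derivatives} with \(j=0\), which gives \(\|v^f\|_{L^\infty}\lsim{\gamma}\mathfrak n_{0,\gamma}(f)=\|f\|_{L^\infty_\gamma}\). Smoothness of \(v^f\) follows from \(f\in\mathcal Z\) and the same estimate applied to derivatives.

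The only delicate step is matching \(\nabla^\perp\Psi\) with \(K*f\), that is, fixing the homogeneous solution. The two candidates differ by harmonic mode-\(n\) functions \(c\,r^{\pm n}\cos n\theta\). The term \(r^{-n}\) is excluded by regularity at the origin, and \(r^{n}\) is excluded by the decay of \(K*f\) at infinity, which follows from \(f\in L^1\cap L^\infty\) with Gaussian tails. A more structural way to see the decay is that the \(r^{-n-1}\) rate reflects the vanishing of all multipole moments of order \(<n\) for a pure mode-\(n\) function. I would mention this but use the explicit ODE computation, since it is self-contained and gives constants depending only on \(n,\gamma\).
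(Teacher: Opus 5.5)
Your proof is correct, but it takes a different route from the paper. The paper never introduces the stream function; it works directly with the convolution:
\begin{itemize}
\item it shows that a pure mode-$n$ function has vanishing truncated moments $\int_{|\eta|\le R}\eta^\alpha f(\eta)\,d\eta=0$ for $|\alpha|<n$ and every $R>0$;
\item it Taylor-expands $K(\xi-\eta)$ to order $n$ on $|\eta|\le|\xi|/2$, so these cancellations remove the entire Taylor polynomial and leave an $O(|\xi|^{-n-1}|\eta|^n)$ remainder;
\item it controls the region $|\eta|>|\xi|/2$ by the Gaussian tail, splitting once more at $|\xi-\eta|=1$.
\end{itemize}
The bounded region is handled, as in your proposal, by \eqref{eq:Biot-Savart-finite-derivatives}. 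This is exactly the multipole mechanism you mention at the end.

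Your separation of variables replaces the kernel Taylor expansion and the near/far splitting by a one-dimensional Green-function computation. This makes the rate transparent: the $r^{-n-1}$ term comes from the $r^{-n}$ homogeneous solution with the explicit coefficient $\tfrac12\int_0^\infty s^{n+1}a(s)\,ds$, and the other piece is Gaussian-small.

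It costs two things. First, you need the identification $\nabla^\perp\Psi=K*f$, which you handle correctly. Your formula gives $P(r)=O(r^2\log(1/r))$ near $0$ using only $|a|\le\|f\|_{L^\infty_\gamma}$, so the difference is removable at the origin. A nonzero harmonic term $c\,r^n\cos(n\theta)$ would then contradict the boundedness of $K*f$. Second, your method relies on exact angular separation.

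The paper's argument needs no uniqueness step and uses only the vanishing of low-order truncated moments. It therefore applies verbatim to any Gaussian-decaying $f$ with that property, such as sums of modes of order at least $n$. Your method would handle such sums mode by mode, by linearity.
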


\begin{proof}
The derivative bounds in \eqref{eq:Biot-Savart-finite-derivatives}
ensure smoothness of $v^f$, and
\[
\|v^f\|_{L^\infty}
\stackrel{\eqref{eq:Biot-Savart-finite-derivatives}}{\lsim{\gamma}} \|f\|_{L^\infty_\gamma}.
\]
This proves the desired estimate on $|\xi|\le2$, after increasing
the constant. We therefore fix $\xi$ with $r:=|\xi|\ge2$.
Recall that
\[
|f(\eta)|
\le
\|f\|_{L^\infty_\gamma}e^{-\gamma|\eta|^2/4}.
\]

~\\
We first show that the angular mode of $f$ forces all moments
of degree less than $n$ to vanish on every disk centered at the
origin. In polar coordinates,
\[
\eta=(\rho\cos\theta,\rho\sin\theta),
\qquad
f(\rho,\theta)
=
a(\rho)\cos(n\theta)+b(\rho)\sin(n\theta).
\]
For a multi-index $\alpha=(\alpha_1,\alpha_2)$,
\[
\eta^\alpha
=
\rho^{|\alpha|}
(\cos\theta)^{\alpha_1}(\sin\theta)^{\alpha_2},
\qquad
|\alpha|=\alpha_1+\alpha_2.
\]
The identities
\[
\cos\theta=\frac{e^{i\theta}+e^{-i\theta}}2,
\qquad
\sin\theta=\frac{e^{i\theta}-e^{-i\theta}}{2i},
\]
show that
$(\cos\theta)^{\alpha_1}(\sin\theta)^{\alpha_2}$
is a finite linear combination of $e^{ik\theta}$ with
$|k|\le|\alpha|$.
If $|\alpha|<n$, then $k+n$ and $k-n$ are both nonzero for
every such frequency. Hence
\[
\int_0^{2\pi}e^{i(k+n)\theta}\,d\theta
=
\int_0^{2\pi}e^{i(k-n)\theta}\,d\theta
=0.
\]
It follows that, for every $\rho>0$,
\[
\int_0^{2\pi}
(\cos\theta)^{\alpha_1}(\sin\theta)^{\alpha_2}
f(\rho,\theta)\,d\theta
=0.
\]
Integrating this identity against
$\rho^{|\alpha|+1}\,d\rho$ on $(0,R)$ gives
\begin{equation}\label{eq:fixed-mode-truncated-moments}
\int_{|\eta|\le R}\eta^\alpha f(\eta)\,d\eta=0,
\qquad R>0,\quad |\alpha|<n.
\end{equation}
We next estimate the contribution to the Biot--Savart integral
from $|\eta|\le r/2$.
For every such $\eta$ and every $0\le s\le1$,
\[
|\xi-s\eta|
\ge|\xi|-s|\eta|
\ge r-\frac r2
=\frac r2.
\]
Taylor's theorem therefore applies to $K$ along the segment
joining $\xi$ to $\xi-\eta$:
\[
\frac{d^k}{ds^k}K(\xi-s\eta)
=
(-1)^k
\sum_{|\alpha|=k}
\frac{k!}{\alpha!}\eta^\alpha
\partial^\alpha K(\xi-s\eta),
\qquad
\alpha!:=\alpha_1!\alpha_2!.
\]
Applying the one-dimensional Taylor formula at $s=0$ and
evaluating at $s=1$ therefore gives
\[
K(\xi-\eta)
=
\sum_{|\alpha|<n}
\frac{(-1)^{|\alpha|}}{\alpha!}
\partial^\alpha K(\xi)\eta^\alpha
+\mathcal R_n(\xi,\eta),
\]
where the integral remainder is
\[
\mathcal R_n(\xi,\eta)
=
(-1)^n n
\sum_{|\alpha|=n}
\frac{\eta^\alpha}{\alpha!}
\int_0^1
(1-s)^{n-1}\partial^\alpha K(\xi-s\eta)\,ds.
\]

~\\
The Biot--Savart kernel is homogeneous of degree $-1$:
\[
K(\lambda x)=\lambda^{-1}K(x),
\qquad \lambda>0,\quad x\ne0.
\]
Differentiating with respect to $x$ yields
\[
(\partial^\alpha K)(\lambda x)
=
\lambda^{-1-|\alpha|}(\partial^\alpha K)(x).
\]
Since these derivatives are bounded on the unit circle,
\[
|\partial^\alpha K(x)|
\lsim{n} |x|^{-n-1},
\qquad |\alpha|=n,\quad x\ne0.
\]
In particular, throughout the segment considered above,
\[
|\partial^\alpha K(\xi-s\eta)|
\lsim{n} |\xi-s\eta|^{-n-1}
\lsim{n}  r^{-n-1}.
\]
Since $|\eta^\alpha|\le|\eta|^n$ for $|\alpha|=n$ and
$\int_0^1(1-s)^{n-1}\,ds=1/n$, the remainder satisfies
\[
|\mathcal R_n(\xi,\eta)|
\lsim{n}  r^{-n-1}|\eta|^n,
\qquad |\eta|\le r/2.
\]

~\\
The moment cancellations eliminate the entire Taylor polynomial:
\[
\begin{split}
&\int_{|\eta|\le r/2}K(\xi-\eta)f(\eta)\,d\eta
\\
&\quad=
\sum_{|\alpha|<n}
\frac{(-1)^{|\alpha|}}{\alpha!}
\partial^\alpha K(\xi)
\int_{|\eta|\le r/2}\eta^\alpha f(\eta)\,d\eta
\\
&\qquad\quad+
\int_{|\eta|\le r/2}
\mathcal R_n(\xi,\eta)f(\eta)\,d\eta
\\
&\quad\stackrel{\eqref{eq:fixed-mode-truncated-moments}}{=}
\int_{|\eta|\le r/2}
\mathcal R_n(\xi,\eta)f(\eta)\,d\eta,
\end{split}
\]
with the truncation radius $R=r/2$.
Consequently,
\[
\left|
\int_{|\eta|\le r/2}K(\xi-\eta)f(\eta)\,d\eta
\right|
\lsim{n}  r^{-n-1}
\int_{\mathbb R^2}|\eta|^n|f(\eta)|\,d\eta.
\]
The Gaussian bound controls the remaining absolute moment:
\[
\begin{split}
\int_{\mathbb R^2}|\eta|^n|f(\eta)|\,d\eta
&\le
2\pi\|f\|_{L^\infty_\gamma}
\int_0^\infty
\rho^{n+1}e^{-\gamma\rho^2/4}\,d\rho
\\
&\lsim{n,\gamma} \|f\|_{L^\infty_\gamma}.
\end{split}
\]
We have therefore proved
\[
\left|
\int_{|\eta|\le r/2}K(\xi-\eta)f(\eta)\,d\eta
\right|
\lsim{n,\gamma} r^{-n-1}\|f\|_{L^\infty_\gamma}.
\]

~\\
It remains to estimate the contribution from $|\eta|>r/2$.
We split this region into $|\xi-\eta|<1$ and $|\xi-\eta|\ge1$.

~\\
On $|\eta|>r/2$, we have
\[
|f(\eta)|
\le
e^{-\gamma r^2/16}\|f\|_{L^\infty_\gamma}.
\]
Since $|K(z)|=(2\pi|z|)^{-1}$ is integrable on the unit disk,
\[
\begin{split}
&\int_{\substack{|\eta|>r/2\\|\xi-\eta|<1}}
|K(\xi-\eta)|\,|f(\eta)|\,d\eta
\\
&\quad\le
e^{-\gamma r^2/16}\|f\|_{L^\infty_\gamma}
\int_{|\xi-\eta|<1}|K(\xi-\eta)|\,d\eta
\\
&\quad\lesssim  e^{-\gamma r^2/16}\|f\|_{L^\infty_\gamma}.
\end{split}
\]
On the complementary region $|\xi-\eta|\ge1$,
the kernel is bounded by $1/(2\pi)$. Moreover,
\[
\int_{|\eta|>r/2}e^{-\gamma|\eta|^2/4}\,d\eta
=
2\pi\int_{r/2}^\infty
\rho e^{-\gamma\rho^2/4}\,d\rho
=
\frac{4\pi}{\gamma}e^{-\gamma r^2/16}.
\]
Therefore,
\[
\begin{split}
&\int_{\substack{|\eta|>r/2\\|\xi-\eta|\ge1}}
|K(\xi-\eta)|\,|f(\eta)|\,d\eta
\\
&\quad\le
\frac1{2\pi}\|f\|_{L^\infty_\gamma}
\int_{|\eta|>r/2}e^{-\gamma|\eta|^2/4}\,d\eta
\\
&\quad\lsim{\gamma}  e^{-\gamma r^2/16}\|f\|_{L^\infty_\gamma}.
\end{split}
\]
Combining these two estimates gives
\[
\left|
\int_{|\eta|>r/2}K(\xi-\eta)f(\eta)\,d\eta
\right|
\lsim{\gamma}  e^{-\gamma r^2/16}\|f\|_{L^\infty_\gamma}.
\]
Finally,
\[
\sup_{r\ge2}r^{n+1}e^{-\gamma r^2/16}<\infty,
\]
so this contribution is also
\(\lsim{n,\gamma}r^{-n-1}\|f\|_{L^\infty_\gamma}\).
Thus
\[
|v^f(\xi)|
\lsim{n,\gamma} r^{-n-1}\|f\|_{L^\infty_\gamma},
\qquad r=|\xi|\ge2.
\]
Since $r$ and $1+r$ are comparable on $r\ge2$, combining
this estimate with the initial bound on $|\xi|\le2$ proves
\eqref{eq:fixed-mode-velocity-decay}.
\end{proof}

\subsection{Construction of the profiles}
\paragraph{The Gaussian profile.}
Set \(w^{(0)}:=G\) and \(v^{(0)}:=v^G\).
Since \(\cL G=0\), \(v^G\cdot\nabla G=0\), and
\(\nabla G=-\tfrac{1}{2}\xi G\),
\(\Phi(w^{(0)},\vapp)=\Phi(G,\vapp)
=-\frac12\sqrt{t/\nu}\,(b(\xi,t)\cdot\xi)G\).
Define
\begin{equation}\label{eq:ABC-def}
\begin{aligned}
A(\xi,t)
&:=\bigl(\Sigma(t)\xi\bigr)\cdot\nabla G(\xi)
\\
&\stackrel{\eqref{eq:b-strain-id}}{=}
\frac{1}{4\pi}\abs{\xi}^2G(\xi)
\int_{\Rt}
\frac{\sin(2\psi)}{\abs{\Zn-y}^2}\wapp(y,t)\,dy,
\\
B(\xi,t)
&:=
-\frac{1}{4\pi}\abs{\xi}^3G(\xi)
\int_{\Rt}
\frac{\sin(3\psi)}{\abs{\Zn-y}^3}\wapp(y,t)\,dy,
\\
C(\xi,t)
&:=
\frac{1}{4\pi}\abs{\xi}^4G(\xi)
\int_{\Rt}
\frac{\sin(4\psi)}{\abs{\Zn-y}^4}\wapp(y,t)\,dy.
\end{aligned}
\end{equation}
The formulas show that \(A\), \(B\), and \(C\) are pure angular
harmonics of orders two, three, and four, respectively.
Define $\mathcal R_0$ by
\begin{equation}\label{eq:naive-residual-expansion}
\Phi(w^{(0)},\vapp)
=t\left(A+(\nu t)^{\frac{1}{2}}B+(\nu t) C\right)+\mathcal R_0.
\end{equation}
\begin{lemma}
For every \(\gamma\in(\tfrac12,1)\),
\begin{equation}\label{eq:R0-target}
\norm[L^\infty_\gamma]{\mathcal R_0(\cdot,t)}
\lsim{\gamma} (\nu t)^{\frac{3}{2}},
\qquad
0<t\le T,\quad 0<\nu\le\nu_{0,T}.
\end{equation}
\end{lemma}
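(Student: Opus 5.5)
The plan is to compute $\mathcal R_0$ explicitly and split $\Rt$ into the inner region $\{\sqrt{\nu t}\abs{\xi}\le1\}$, where Lemma~\ref{lem:b-exp} applies, and the outer region $\{\sqrt{\nu t}\abs{\xi}>1\}$, where Gaussian decay absorbs everything. Since $\Phi(G,\vapp)=-\frac12\sqrt{t/\nu}\,(b\cdot\xi)G$, the definition \eqref{eq:naive-residual-expansion} gives
\[
\mathcal R_0=-\tfrac12\sqrt{t/\nu}\,(b\cdot\xi)G-t\bigl(A+(\nu t)^{1/2}B+(\nu t)C\bigr).
\]
Throughout, the uniform bound $\sup_{0\le t\le T}\norm[L^1]{\wapp(t)}\lesssim1$ from Proposition~\ref{prop:regular-corrector} controls every multipole coefficient. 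We also use $t\le T$ and $\nu t\le1$ freely.

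\emph{Inner region.} Here I substitute the multipole expansion \eqref{eq:b-multipole}. The quadratic term, by \eqref{eq:b-strain-id}, contributes
\[
-\tfrac12\sqrt{t/\nu}\,\sqrt{\nu t}\,(\xi\cdot\Sigma\xi)G=-\tfrac t2(\xi\cdot\Sigma\xi)G=t\,(\Sigma\xi)\cdot\nabla G=tA,
\]
using $\nabla G=-\frac12\xi G$. In the same way, the cubic and quartic terms of \eqref{eq:b-multipole}, multiplied by $-\frac12\sqrt{t/\nu}\,G$, reproduce exactly $t(\nu t)^{1/2}B$ and $t(\nu t)C$ with the signs in \eqref{eq:ABC-def}; I will check the signs term by term. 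Hence on this region
\[
\mathcal R_0=-\tfrac12\sqrt{t/\nu}\,r_b\,G .
\]
By \eqref{eq:b-tail},
\[
\abs{\mathcal R_0}\lesssim \sqrt{t/\nu}\,(\nu t)^2\abs{\xi}^5G=t\,(\nu t)^{3/2}\abs{\xi}^5G .
\]
Multiplying by $e^{\gamma\abs\xi^2/4}$ leaves $\abs\xi^5e^{-(1-\gamma)\abs\xi^2/4}$, which is bounded because $\gamma<1$. This gives $(\nu t)^{3/2}$ with a constant depending on $\gamma$ and $T$.

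\emph{Outer region.} Here I do not expand, and I bound each term separately. By \eqref{eq:b-global}, the first term is at most
\[
\sqrt{t/\nu}\,\abs\xi\,G=t(\nu t)^{-1/2}\abs\xi\,G .
\]
The coefficients of $A$, $B$, $C$ are bounded, using $\abs{\Zn-y}\ge8$ on $\supp\wapp$, so those terms are at most $t(1+\abs\xi^4)G$. After weighting, each term is bounded by $T(\nu t)^{-1/2}(1+\abs\xi)^4e^{-(1-\gamma)\abs\xi^2/4}$. Since $\abs\xi^2>(\nu t)^{-1}$ here, I split the Gaussian in half. One half absorbs the polynomial factor. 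The other half is at most $e^{-(1-\gamma)/(8\nu t)}$, and \eqref{eq:localized-flatness} gives $(\nu t)^{-1/2}e^{-(1-\gamma)/(8\nu t)}\lesssim_{\gamma}(\nu t)^{3/2}$ uniformly for $\nu t\le1$.

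The only delicate point is the exact cancellation in the inner region, namely verifying that the signs and normalizations in \eqref{eq:ABC-def} match the $m=2,3,4$ terms of \eqref{eq:b-multipole}. Everything else is routine Gaussian absorption. The restriction $\gamma>\frac12$ is not needed for this estimate; only $\gamma<1$ is used.
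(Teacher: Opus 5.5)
Your proposal is correct and follows the paper's proof: the same split at $\sqrt{\nu t}\abs{\xi}=1$, the exact cancellation of the $m=2,3,4$ multipole terms against $tA$, $t(\nu t)^{1/2}B$, $t(\nu t)C$ (your sign bookkeeping against \eqref{eq:ABC-def} checks out), leaving $-\frac12\sqrt{t/\nu}\,r_bG$ in the inner region, and in the outer region the bound $(\nu t)^{-1/2}(1+\abs{\xi})^4e^{-\abs{\xi}^2/4}$ followed by Gaussian absorption using $\abs{\xi}^2>(\nu t)^{-1}$. You are also right that only $\gamma<1$ is used.
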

\begin{proof}
Write \(r=\abs\xi\). If \(\sqrt{\nu t}\,r\le1\), then
\[
\begin{aligned}
\Phi(G,\vapp)
&\stackrel{\substack{\eqref{eq:b-multipole}\\\eqref{eq:ABC-def}}}{=}
t\left(A+(\nu t)^{\frac{1}{2}}B+(\nu t) C\right)
-\frac12\sqrt{\frac t\nu}\,r_bG,
\\
\mathcal R_0(\xi,t)
&\stackrel{\eqref{eq:naive-residual-expansion}}{=}
-\frac12\sqrt{\frac t\nu}\,r_b(\xi,t)G(\xi).
\end{aligned}
\]
Hence
\[
\abs{\mathcal R_0(\xi,t)}
\stackrel{\eqref{eq:b-tail}}{\lesssim} t(\nu t)^{3/2}r^5G(\xi)
\lsim{\gamma} (\nu t)^{3/2}e^{-\gamma r^2/4}.
\]

~\\
Suppose now that \(\sqrt{\nu t}\,r>1\), equivalently
\(r>(\sqrt{\nu t})^{-1}\). Proposition~\ref{prop:regular-corrector}
supplies the required uniform coefficient bounds, and
\[
\begin{aligned}
\abs{\mathcal R_0(\xi,t)}
&\stackrel{\substack{\eqref{eq:b-global}\\\eqref{eq:ABC-def}}}{\lesssim} \left(
(\nu t)^{-\frac{1}{2}}r+r^2+(\nu t)^{\frac{1}{2}}r^3+(\nu t)r^4
\right)e^{-\frac{r^2}{4}}
\\
&\lesssim (\nu t)^{-\frac{1}{2}}(1+r)^4e^{-\frac{r^2}{4}}.
\end{aligned}
\]
Since \(r>(\sqrt{\nu t})^{-1}\),
\[
\begin{aligned}
(\nu t)^{-\frac{1}{2}}(1+r)^4e^{-\frac{(1-\gamma)r^2}{4}}
&\lsim{\gamma} (\nu t)^{-\frac{1}{2}}
e^{-\frac{1-\gamma}{8\nu t}}
\lsim{\gamma} (\nu t)^{\frac{3}{2}}.
\end{aligned}
\]
Multiplying by \(e^{-\frac{\gamma r^2}{4}}\) proves
\eqref{eq:R0-target} in the outer region.
\end{proof}
\paragraph{Step 1: the second angular mode.}
Introduce
\begin{equation}\label{eq:first-partial-approximation}
w^{(1)}:=G+(\nu t) F_{\ne0},
\qquad
v^{(1)}:=v^G+(\nu t) v^{F_{\ne0}}.
\end{equation}
By \eqref{eq:core-residual-operator},
\eqref{eq:first-partial-approximation}, and
\eqref{eq:naive-residual-expansion}, we obtain
\begin{equation}\label{eq:first-uncancelled-residual}
\begin{aligned}
\Phi(w^{(1)},\vapp)
={} &
\mathcal R_0+t(\nu t)^{\frac12}B
\\
&+t\left\{
A+\Lambda F_{\ne0}
+\nu(1-\cL)F_{\ne0}
\right\}
\\
&+t(\nu t)\left\{
\partial_tF_{\ne0}
+
C+
\bigl(v^{F_{\ne0}}+\Sigma(t)\xi\bigr)
\cdot\nabla F_{\ne0}
\right\}
\\
&+\left(
\sqrt{\frac t\nu}\,b-t\Sigma(t)\xi
\right)\cdot\nabla\bigl((\nu t)F_{\ne0}\bigr).
\end{aligned}
\end{equation}
We define \(F_{\ne0}\) by
\begin{equation}\label{eq:F-nonzero-equation}
\bigl[\nu(1-\cL)+\Lambda\bigr]F_{\ne0}(\cdot,t)
=-A(\cdot,t).
\end{equation}
Substituting \eqref{eq:F-nonzero-equation} into
\eqref{eq:first-uncancelled-residual} gives
\begin{equation}\label{eq:first-corrected-residual}
\begin{aligned}
\Phi(w^{(1)},\vapp)
={} &
\mathcal R_0+t(\nu t)^{\frac{1}{2}}B
\\
&+(\nu t)\left\{
t\partial_tF_{\ne0}
+t\left[
C+
\bigl(v^{F_{\ne0}}+\Sigma(t)\xi\bigr)
\cdot\nabla F_{\ne0}
\right]
\right\}
\\
&+\left(
\sqrt{\frac t\nu}\,b-t\Sigma(t)\xi
\right)\cdot\nabla((\nu t) F_{\ne0}).
\end{aligned}
\end{equation}
The correction \((\nu t)F_{\ne0}\) also produces the term
\(t(\nu t)(v^{F_{\ne0}}+\Sigma(t)\xi)\cdot\nabla F_{\ne0}\).
Since \(F_{\ne0},\partial_tF_{\ne0}\in L^2_{\pw_G,2}\) and
\(C\in L^2_{\pw_G,4}\), with the angular spaces as in
\eqref{eq:Gaussian-angular-spaces},
and the product of two second angular modes contains only the radial and
fourth modes, the radial part of the order-\(t(\nu t)\) defect in
 \eqref{eq:first-corrected-residual} is exactly
\begin{equation}\label{eq:first-radial-obstruction}
t(\nu t)\Pi_{\mathrm{rad}}\left[
\bigl(v^{F_{\ne0}}+\Sigma(t)\xi\bigr)
\cdot\nabla F_{\ne0}
\right].
\end{equation}
\paragraph{Step 2: the radial profile.}
Introduce the radial profile by
\begin{equation}\label{eq:second-partial-approximation}
w^{(2)}:=w^{(1)}+(\nu t)F_{\mathrm{rad}},
\qquad
v^{(2)}:=v^{(1)}+(\nu t)v^{F_{\mathrm{rad}}}.
\end{equation}
Since \(F_{\mathrm{rad}}\) is radial,
\begin{equation}\label{eq:Lambda-radial-kernel}
\Lambda F_{\mathrm{rad}}
=v^G\cdot\nabla F_{\mathrm{rad}}
+v^{F_{\mathrm{rad}}}\cdot\nabla G
=0.
\end{equation}
Substituting \eqref{eq:second-partial-approximation} and
\eqref{eq:Lambda-radial-kernel} into
\eqref{eq:first-corrected-residual} gives
\begin{equation}\label{eq:second-uncancelled-residual}
\begin{aligned}
\Phi(w^{(2)},\vapp)
={} &
\mathcal R_0+t(\nu t)^{\frac{1}{2}}B
\\
&+(\nu t)\Bigl[
t\partial_tF_{\ne0}+tC
+t\bigl(v^{F_{\ne0}}+\Sigma(t)\xi\bigr)\cdot\nabla F_{\ne0}
\\
&\qquad
+t\partial_tF_{\mathrm{rad}}+F_{\mathrm{rad}}-\cL F_{\mathrm{rad}}
\\
&\qquad
+t\bigl(v^{F_{\ne0}}+\Sigma(t)\xi\bigr)
  \cdot\nabla F_{\mathrm{rad}}
+t v^{F_{\mathrm{rad}}}\cdot\nabla F_{\ne0}
\Bigr]
\\
&+\left(
\sqrt{\frac t\nu}\,b-t\Sigma(t)\xi
\right)\cdot
\left[(\nu t)\nabla F_{\ne0}+(\nu t)\nabla F_{\mathrm{rad}}\right].
\end{aligned}
\end{equation}
Moreover,
\begin{equation}\label{eq:nonlinear-radial-projection}
\begin{aligned}
&\Pi_{\mathrm{rad}}\Bigl[
\bigl(v^{F_{\ne0}}+\Sigma(t)\xi\bigr)\cdot\nabla F_{\ne0}
+\bigl(v^{F_{\ne0}}+\Sigma(t)\xi\bigr)\cdot\nabla F_{\mathrm{rad}}
+v^{F_{\mathrm{rad}}}\cdot\nabla F_{\ne0}
\Bigr]
\\
&\qquad=
\Pi_{\mathrm{rad}}\left[
\bigl(v^{F_{\ne0}}+\Sigma(t)\xi\bigr)\cdot\nabla F_{\ne0}
\right].
\end{aligned}
\end{equation}
We choose
\(F_{\mathrm{rad}}\) as the solution of
\begin{equation}\label{eq:Frad-equation}
\begin{cases}
\displaystyle
t\partial_tF_{\mathrm{rad}}
+(1-\cL)F_{\mathrm{rad}}
=
-t\Pi_{\mathrm{rad}}\left[
\bigl(v^{F_{\ne0}}+\Sigma(t)\xi\bigr)
\cdot\nabla F_{\ne0}
\right],
\\[1mm]
F_{\mathrm{rad}}(\cdot,0)=0.
\end{cases}
\end{equation}
By \eqref{eq:nonlinear-radial-projection} and
\eqref{eq:Frad-equation}, the radial cancellation leaves
\begin{equation}\label{eq:radial-corrected-residual}
\begin{aligned}
\Phi(w^{(2)},\vapp)
=
{}&\mathcal R_0+t(\nu t)^{\frac{1}{2}}B
\\
&+t(\nu t)\Biggl\{
\partial_tF_{\ne0}+C
+\Pi_{\ne0}\Bigl[
\bigl(v^{F_{\ne0}}+\Sigma(t)\xi\bigr)\cdot\nabla F_{\ne0}
\\
&\qquad\qquad
+\bigl(v^{F_{\ne0}}+\Sigma(t)\xi\bigr)\cdot\nabla F_{\mathrm{rad}}
+v^{F_{\mathrm{rad}}}\cdot\nabla F_{\ne0}
\Bigr]
\Biggr\}
\\
&+\left(
\sqrt{\frac t\nu}\,b-t\Sigma(t)\xi
\right)\cdot
\left[(\nu t)\nabla F_{\ne0}+(\nu t)\nabla F_{\mathrm{rad}}\right].
\end{aligned}
\end{equation}
Thus the radial part of the order-\((\nu t)\) defect has been cancelled,
while the term \(t(\nu t)^{\frac{1}{2}}B\) is unchanged.

\paragraph{Step 3: the profile at order \((\nu t)^{\frac{3}{2}}\).}
Set
\begin{equation}\label{eq:third-partial-approximation}
\begin{aligned}
w^{(3)}
&:=w^{(2)}+(\nu t)^{\frac{3}{2}}H,
\\
v^{(3)}
&:=v^{(2)}+(\nu t)^{\frac{3}{2}}v^H.
\end{aligned}
\end{equation}
Substituting \eqref{eq:third-partial-approximation} into
\eqref{eq:radial-corrected-residual} gives
\begin{equation}\label{eq:third-uncancelled-residual}
\begin{aligned}
&\Phi(w^{(3)},\vapp)\\
={} &
\mathcal R_0+t(\nu t)^{\frac{1}{2}}(B+\Lambda H)
\\
&+t(\nu t)\Biggl\{
\partial_tF_{\ne0}+C
+\Pi_{\ne0}\Bigl[
\bigl(v^{F_{\ne0}}+\Sigma(t)\xi\bigr)\cdot\nabla F_{\ne0}
\\
&\qquad\qquad
+\bigl(v^{F_{\ne0}}+\Sigma(t)\xi\bigr)\cdot\nabla F_{\mathrm{rad}}
+v^{F_{\mathrm{rad}}}\cdot\nabla F_{\ne0}
\Bigr]
\Biggr\}
\\
&+(\nu t)^{\frac{3}{2}}
\left(t\partial_tH+\frac{3}{2}H-\cL H\right)
\\
&+t(\nu t)^{\frac{3}{2}}\Bigl[
\bigl(v^{F_{\ne0}}+v^{F_{\mathrm{rad}}}+\Sigma(t)\xi\bigr)
\cdot\nabla H
\\
&\qquad\qquad
+v^H\cdot\nabla F_{\ne0}
+v^H\cdot\nabla F_{\mathrm{rad}}
\Bigr]
\\
&+t(\nu t)^2v^H\cdot\nabla H
\\
&+\left(
\sqrt{\frac t\nu}\,b-t\Sigma(t)\xi
\right)\cdot\Bigl[
(\nu t)\nabla F_{\ne0}
+(\nu t)\nabla F_{\mathrm{rad}}
+(\nu t)^{\frac{3}{2}}\nabla H
\Bigr].
\end{aligned}
\end{equation}
Choose \(H\) to solve
\begin{equation}\label{eq:H-equation}
\Lambda H=-B.
\end{equation}
Then the order-\((\nu t)^{\frac{1}{2}}\) term in
\eqref{eq:third-uncancelled-residual} vanishes, and
\begin{equation}\label{eq:third-corrected-residual}
\begin{aligned}
&\Phi(w^{(3)},\vapp)\\
={} &\mathcal R_0+t(\nu t)\Biggl\{
\partial_tF_{\ne0}+C
+\Pi_{\ne0}\Bigl[
\bigl(v^{F_{\ne0}}+\Sigma(t)\xi\bigr)\cdot\nabla F_{\ne0}
\\
&\qquad\qquad
+\bigl(v^{F_{\ne0}}+\Sigma(t)\xi\bigr)\cdot\nabla F_{\mathrm{rad}}
+v^{F_{\mathrm{rad}}}\cdot\nabla F_{\ne0}
\Bigr]
\Biggr\}
\\
&+(\nu t)^{\frac{3}{2}}
\left(t\partial_tH+\frac{3}{2}H-\cL H\right)
\\
&+t(\nu t)^{\frac{3}{2}}\Bigl[
\bigl(v^{F_{\ne0}}+v^{F_{\mathrm{rad}}}+\Sigma(t)\xi\bigr)
\cdot\nabla H
\\
&\qquad\qquad
+v^H\cdot\nabla F_{\ne0}
+v^H\cdot\nabla F_{\mathrm{rad}}
\Bigr]
\\
&+t(\nu t)^2v^H\cdot\nabla H
\\
&+\left(
\sqrt{\frac t\nu}\,b-t\Sigma(t)\xi
\right)\cdot\Bigl[
(\nu t)\nabla F_{\ne0}
+(\nu t)\nabla F_{\mathrm{rad}}
+(\nu t)^{\frac{3}{2}}\nabla H
\Bigr].
\end{aligned}
\end{equation}

\paragraph{Step 4: the order-\((\nu t)^2\) profile.}
Add one final profile:
\begin{equation}\label{eq:fourth-partial-approximation}
w^{(4)}:=w^{(3)}+(\nu t)^2J,
\qquad
v^{(4)}:=v^{(3)}+(\nu t)^2v^J.
\end{equation}
Substituting \eqref{eq:fourth-partial-approximation} into
\eqref{eq:third-corrected-residual} gives
\begin{equation}\label{eq:fourth-uncancelled-residual}
\begin{aligned}
&\Phi(w^{(4)},\vapp)\\
={} &\mathcal R_0
+t(\nu t)\Biggl\{
\partial_tF_{\ne0}+C+\Lambda J+\Pi_{\ne0}\Bigl[
\bigl(v^{F_{\ne0}}+\Sigma(t)\xi\bigr)\cdot\nabla F_{\ne0}
\\
&\qquad\qquad
+\bigl(v^{F_{\ne0}}+\Sigma(t)\xi\bigr)\cdot\nabla F_{\mathrm{rad}}
+v^{F_{\mathrm{rad}}}\cdot\nabla F_{\ne0}
\Bigr]
\Biggr\}
\\
&+(\nu t)^{\frac{3}{2}}
\left(t\partial_tH+\frac{3}{2}H-\cL H\right)
+(\nu t)^2\left(t\partial_tJ+2J-\cL J\right)
\\
&+t(\nu t)^{\frac{3}{2}}\Bigl[
\bigl(v^{F_{\ne0}}+v^{F_{\mathrm{rad}}}+\Sigma(t)\xi\bigr)
\cdot\nabla H
+v^H\cdot\nabla F_{\ne0}
+v^H\cdot\nabla F_{\mathrm{rad}}
\Bigr]
\\
&+t(\nu t)^2\Bigl[
\bigl(v^{F_{\ne0}}+v^{F_{\mathrm{rad}}}+\Sigma(t)\xi\bigr)
\cdot\nabla J
+v^J\cdot\nabla F_{\ne0}
+v^J\cdot\nabla F_{\mathrm{rad}}
+v^H\cdot\nabla H
\Bigr]
\\
&+t(\nu t)^{\frac{5}{2}}
\left(v^H\cdot\nabla J+v^J\cdot\nabla H\right)
+t(\nu t)^3v^J\cdot\nabla J
\\
&+\left(
\sqrt{\frac t\nu}\,b-t\Sigma(t)\xi
\right)\cdot\Bigl[
(\nu t)\nabla F_{\ne0}
+(\nu t)\nabla F_{\mathrm{rad}}
+(\nu t)^{\frac{3}{2}}\nabla H
+(\nu t)^2\nabla J
\Bigr].
\end{aligned}
\end{equation}
Let \(\mathcal Q_J\) denote the order-\((\nu t)\) source in braces,
apart from \(\Lambda J\); explicitly,
\begin{equation}\label{eq:J-source-def}
\begin{aligned}
\mathcal Q_J
:={}&C+\partial_tF_{\ne0}\\
&+\Pi_{\ne0}\Bigl[
  \bigl(v^{F_{\ne0}}+\Sigma(t)\xi\bigr)
  \cdot\nabla\bigl(F_{\ne0}+F_{\mathrm{rad}}\bigr)
  +v^{F_{\mathrm{rad}}}\cdot\nabla F_{\ne0}
\Bigr].
\end{aligned}
\end{equation}
We choose \(J\) by the unregularized fixed-mode equation
\begin{equation}\label{eq:J-equation}
\Lambda J=-\mathcal Q_J.
\end{equation}
Thus \(\mathcal Q_J+\Lambda J=0\). The terms involving
\(\mathcal Q_J\) and the linear part of \(J\) in
\eqref{eq:fourth-uncancelled-residual} therefore satisfy
\begin{equation}\label{eq:J-linear-residual-cancellation}
\begin{aligned}
&t(\nu t)\bigl(\mathcal Q_J+\Lambda J\bigr)
+(\nu t)^2\bigl(t\partial_tJ+2J-\cL J\bigr)
\\
&\qquad\stackrel{\eqref{eq:J-equation}}{=}(\nu t)^2
\bigl(t\partial_tJ+2J-\cL J\bigr).
\end{aligned}
\end{equation}
We estimate the bracket below and use \((\nu t)^2\le(\nu t)^{3/2}\)
for \(\nu t\le1\). The dependence of \(J\) and \(F_{\ne0}\) on
\(\nu\) is suppressed in the notation.
The final approximation is
\begin{equation}\label{eq:high-app}
\begin{aligned}
w_{2,\mathrm{app}}
&:=w^{(4)}
=G+(\nu t)F_{\ne0}+(\nu t)F_{\mathrm{rad}}
+(\nu t)^{\frac{3}{2}}H+(\nu t)^2J,
\\
v_{2,\mathrm{app}}
&:=v^{(4)}
=v^G+(\nu t)v^{F_{\ne0}}+(\nu t)v^{F_{\mathrm{rad}}}
+(\nu t)^{\frac{3}{2}}v^H+(\nu t)^2v^J.
\end{aligned}
\end{equation}
Combining \eqref{eq:fourth-uncancelled-residual} and
\eqref{eq:J-linear-residual-cancellation}, we obtain the final residual
\begin{equation}\label{eq:profile-residual-reduction}
\begin{aligned}
&\Phi(w_{2,\mathrm{app}},\vapp)\\
={} &
\mathcal R_0
+(\nu t)^{\frac{3}{2}}
\left(t\partial_tH+\frac{3}{2}H-\cL H\right)
+(\nu t)^2\left(t\partial_tJ+2J-\cL J\right)
\\
&+t(\nu t)^{\frac{3}{2}}\Bigl[
\bigl(v^{F_{\ne0}}+v^{F_{\mathrm{rad}}}+\Sigma(t)\xi\bigr)
\cdot\nabla H
+v^H\cdot\nabla F_{\ne0}
+v^H\cdot\nabla F_{\mathrm{rad}}
\Bigr]
\\
&+t(\nu t)^2\Bigl[
\bigl(v^{F_{\ne0}}+v^{F_{\mathrm{rad}}}+\Sigma(t)\xi\bigr)
\cdot\nabla J
+v^J\cdot\nabla F_{\ne0}
+v^J\cdot\nabla F_{\mathrm{rad}}
+v^H\cdot\nabla H
\Bigr]
\\
&+t(\nu t)^{\frac{5}{2}}
\left(v^H\cdot\nabla J+v^J\cdot\nabla H\right)
+t(\nu t)^3v^J\cdot\nabla J
\\
&+\left(
\sqrt{\frac t\nu}\,b-t\Sigma(t)\xi
\right)\cdot\Bigl[
(\nu t)\nabla F_{\ne0}
+(\nu t)\nabla F_{\mathrm{rad}}
+(\nu t)^{\frac{3}{2}}\nabla H
+(\nu t)^2\nabla J
\Bigr].
\end{aligned}
\end{equation}
\subsection{Uniform profile estimates}
\paragraph{Finite-dimensional source spaces.}
For \(n=2,3,4\), write \(\xi=(r\cos\theta,r\sin\theta)\) and set
\begin{equation}\label{eq:finite-harmonic-source-spaces}
\mathscr E_n
:=
\operatorname{span}\left\{
r^nG(r)\cos(n\theta),\
r^nG(r)\sin(n\theta)
\right\}
\stackrel{\substack{\eqref{eq:Gaussian-angular-spaces}\\
\eqref{eq:Gaussian-class-Z}}}{\subset}Y_n\cap\mathcal Z.
\end{equation}
The three source profiles satisfy
\[
(A(\cdot,t),B(\cdot,t),C(\cdot,t))
\stackrel{\substack{\eqref{eq:psi-def}\\\eqref{eq:ABC-def}}}{\in}
\mathscr E_2\times\mathscr E_3\times\mathscr E_4.
\]
Fix any norms on these three spaces.  Differentiation of their
coefficients under the separated Biot--Savart integrals gives
\begin{equation}\label{eq:ABC-finite-dimensional-time-bounds}
\sup_{\substack{0\le t\le T\\0<\nu\le\nu_{0,T}}}
\left[
\max_{0\le j\le2}
\|\partial_t^jA(\cdot,t)\|_{\mathscr E_2}
+
\max_{0\le j\le1}
\left(
\|\partial_t^jB(\cdot,t)\|_{\mathscr E_3}
+
\|\partial_t^jC(\cdot,t)\|_{\mathscr E_4}
\right)
\right]
\stackrel{\substack{\eqref{eq:regular-corrector-bounds}\\
\eqref{eq:approx-separation}}}{\lesssim} 1.
\end{equation}
Since \(A\) depends on \(\nu\) through \(\Zn\) and \(\wapp\),
we use Corollary~\ref{cor:regularized-finite-dimensional-sources}
to obtain uniform bounds.

\paragraph{Step 1. Estimates for \(F_{\ne0}\), \(H\), and their velocities.}
\[
F_{\ne0}(\cdot,t)
\stackrel{\eqref{eq:F-nonzero-equation}}{=}-R_\nu A(\cdot,t).
\]
For every
\(m\in\mathbb N_0\) and \(0<\gamma<1\),
\begin{equation}\label{eq:F-regularized-space-time-bounds}
\max_{0\le j\le2}
\sup_{\substack{0\le t\le T\\0<\nu\le\nu_{0,T}}}
\mathfrak n_{m,\gamma}
\bigl(\partial_t^jF_{\ne0}(\cdot,t)\bigr)
\stackrel{\substack{\eqref{eq:regularized-finite-dimensional-time-bound}\\
\eqref{eq:ABC-finite-dimensional-time-bounds}}}{\lsim{m,\gamma}} 1.
\end{equation}
The uniform inverse bound keeps \((\nu t)F_{\ne0}\) at order
\(\nu t\); the preliminary estimate
\eqref{eq:regularized-preliminary-energy} loses a factor \(\nu^{-1}\).

The unregularized equation for \(H\) gives
\[
H\stackrel{\eqref{eq:H-equation}}{=}-\Lambda^{-1}B,
\qquad
\partial_tH\stackrel{\eqref{eq:H-equation}}{=}-\Lambda^{-1}\partial_tB.
\]
Consequently,
\begin{equation}\label{eq:H-unregularized-space-time-bounds}
\sup_{\substack{0\le t\le T\\0<\nu\le\nu_{0,T}}}
\left(
\mathfrak n_{2,\gamma}(H(\cdot,t))
+
\mathfrak n_{0,\gamma}(\partial_tH(\cdot,t))
\right)
\stackrel{\substack{\eqref{eq:fixed-mode-higher-unregularized}\\
\eqref{eq:ABC-finite-dimensional-time-bounds}}}{\lsim{\gamma}} 1.
\end{equation}
Moreover,
\begin{equation}\label{eq:F-H-mode-membership}
F_{\ne0}(\cdot,t)\in L^2_{\pw_G,2}\cap\cZ,
\qquad
H(\cdot,t)\in L^2_{\pw_G,3}\cap\cZ.
\end{equation}
For every \(0<\gamma<1\), uniformly for
\(0\le t\le T\) and \(0<\nu\le\nu_{0,T}\),
\begin{equation}\label{eq:F-H-Gallay-bounds}
\begin{aligned}
&\norm[L^\infty_\gamma]{F_{\ne0}(\cdot,t)}
+\norm[L^\infty_\gamma]{\nabla F_{\ne0}(\cdot,t)}
\\
&\qquad
+\norm[L^\infty_\gamma]{H(\cdot,t)}
+\norm[L^\infty_\gamma]{\nabla H(\cdot,t)}
\lsim{\gamma} 1.
\end{aligned}
\end{equation}
For the second angular mode \(F_{\ne0}\),
\begin{equation}\label{eq:F-nonzero-velocity-bound}
\abs{v^{F_{\ne0}}(\xi,t)}
\stackrel{\substack{\eqref{eq:fixed-mode-velocity-decay}\\
\eqref{eq:F-H-Gallay-bounds}}}{\lesssim}
\frac{1}{(1+\abs{\xi}^2)^{\frac{3}{2}}},
\qquad
\xi\in\Rt,\quad 0\le t\le T.
\end{equation}
Similarly, the third angular mode of \(H\) gives
\begin{equation}\label{eq:H-velocity-bound}
\abs{v^H(\xi,t)}
\stackrel{\substack{\eqref{eq:fixed-mode-velocity-decay}\\
\eqref{eq:F-H-Gallay-bounds}}}{\lesssim}
\frac{1}{(1+\abs{\xi}^2)^2},
\qquad
\xi\in\Rt,\quad 0\le t\le T.
\end{equation}
\paragraph{Step 2. Estimate for \(F_{\mathrm{rad}}\).}
The regularized fixed-mode inverse does not control this profile:
\(\Lambda F_{\mathrm{rad}}
\stackrel{\eqref{eq:Lambda-radial-kernel}}{=}0\).
The radial evolution has the Duhamel representation
\begin{equation}\label{eq:Frad-Duhamel}
\begin{aligned}
F_{\mathrm{rad}}(\cdot,t)
\stackrel{\eqref{eq:Frad-equation}}{=}{}&-\int_0^t\frac{s}{t}
e^{\left(\log\frac{t}{s}\right)\cL}
\\
&\quad\times
\Pi_{\mathrm{rad}}\left[
\bigl(v^{F_{\ne0}}(\cdot,s)+\Sigma(s)\xi\bigr)
\cdot\nabla F_{\ne0}(\cdot,s)
\right]\,ds.
\end{aligned}
\end{equation}
We claim that, for every
\(\frac{1}{2}<\gamma<1\),
\begin{equation}\label{eq:Frad-Gaussian-bound}
\norm[L^\infty_\gamma]{F_{\mathrm{rad}}(\cdot,t)}
+\norm[L^\infty_\gamma]{\nabla F_{\mathrm{rad}}(\cdot,t)}
\lsim{\gamma} t,
\qquad 0\le t\le T.
\end{equation}
Fix \(1/2<\gamma<\gamma_1<\gamma_2<1\).  Since
\(\Pi_{\mathrm{rad}}\) is an angular average,
\eqref{eq:F-H-Gallay-bounds}, \eqref{eq:F-nonzero-velocity-bound},
and \eqref{eq:Sigma-time-bound} give
\[
\left|
\Pi_{\mathrm{rad}}\left[
\bigl(v^{F_{\ne0}}(\cdot,t)+\Sigma(t)\xi\bigr)
\cdot\nabla F_{\ne0}(\cdot,t)
\right](\xi)
\right|
\lsim{\gamma_2} (1+\abs{\xi})
e^{-\frac{\gamma_2\abs{\xi}^2}{4}}.
\]
Consequently,
\begin{equation}\label{eq:radial-source-Gaussian-bound}
\sup_{0\le t\le T}
\norm[L^\infty_{\gamma_1}]{
\Pi_{\mathrm{rad}}\left[
\bigl(v^{F_{\ne0}}(\cdot,t)+\Sigma(t)\xi\bigr)
\cdot\nabla F_{\ne0}(\cdot,t)
\right]}
\lsim{\gamma_1} 1.
\end{equation}
We now estimate the right-hand side of \eqref{eq:Frad-Duhamel}.
The kernel of \(e^{\tau\cL}\) is
\[
e^{\tau\cL}g(\xi)
\stackrel{\eqref{eq:OU-kernel-explicit}}{=}
\frac{1}{4\pi\left(1-e^{-\tau}\right)}
\int_{\Rt}
\exp\left(
-\frac{\abs{\xi-e^{-\frac{\tau}{2}}\eta}^2}
{4\left(1-e^{-\tau}\right)}
\right)g(\eta)\,d\eta.
\]
Since \(0<\gamma_1<1\),
\(\gamma_1\le e^{-\tau}+\gamma_1(1-e^{-\tau})\le1\).
For every \(\tau>0\), completing the square gives
\[
\begin{aligned}
&\frac{\abs{\xi-e^{-\frac{\tau}{2}}\eta}^2}
{1-e^{-\tau}}
+\gamma_1\abs{\eta}^2
\\
&\quad=
\frac{e^{-\tau}+\gamma_1\left(1-e^{-\tau}\right)}
{1-e^{-\tau}}
\left|
\eta-
\frac{e^{-\frac{\tau}{2}}}
{e^{-\tau}+\gamma_1\left(1-e^{-\tau}\right)}\xi
\right|^2
\\
&\qquad\quad+
\frac{\gamma_1}
{e^{-\tau}+\gamma_1\left(1-e^{-\tau}\right)}
\abs{\xi}^2.
\end{aligned}
\]
The weighted norm \eqref{eq:Gaussian-supremum-norm} gives
$
\abs{g(\eta)}
\le
\norm[L^\infty_{\gamma_1}]{g}
e^{-\frac{\gamma_1\abs{\eta}^2}{4}}
$
and hence
\[
\begin{aligned}
\abs{e^{\tau\cL}g(\xi)}
\le{}&
\frac{\norm[L^\infty_{\gamma_1}]{g}}
{4\pi\left(1-e^{-\tau}\right)}
\exp\left(
-\frac{\gamma_1\abs{\xi}^2}
{4\left(e^{-\tau}+\gamma_1(1-e^{-\tau})\right)}
\right)
\\
&\times
\int_{\Rt}
\exp\left(
-\frac{e^{-\tau}+\gamma_1(1-e^{-\tau})}
{4(1-e^{-\tau})}
\left|
\eta-
\frac{e^{-\frac{\tau}{2}}}
{e^{-\tau}+\gamma_1(1-e^{-\tau})}\xi
\right|^2
\right)d\eta
\\
={}&
\frac{1}{e^{-\tau}+\gamma_1(1-e^{-\tau})}
\exp\left(
-\frac{\gamma_1\abs{\xi}^2}
{4\left(e^{-\tau}+\gamma_1(1-e^{-\tau})\right)}
\right)
\norm[L^\infty_{\gamma_1}]{g}.
\end{aligned}
\]
Consequently,
\[
\begin{aligned}
&e^{\frac{\gamma\abs{\xi}^2}{4}}
\abs{e^{\tau\cL}g(\xi)}
\\
&\quad\le
\frac{1}{e^{-\tau}+\gamma_1(1-e^{-\tau})}
\exp\left[
-\frac{1}{4}
\left(
\frac{\gamma_1}
{e^{-\tau}+\gamma_1(1-e^{-\tau})}
-\gamma
\right)\abs{\xi}^2
\right]
\norm[L^\infty_{\gamma_1}]{g}
\\
&\quad\le
\frac{1}{\gamma_1}
\norm[L^\infty_{\gamma_1}]{g},
\end{aligned}
\]
because
\(\gamma_1/[e^{-\tau}+\gamma_1(1-e^{-\tau})]-\gamma
\ge\gamma_1-\gamma>0\).
Thus
\begin{equation}\label{eq:radial-OU-zero-order-bound}
\norm[L^\infty_\gamma]{e^{\tau\cL}g}
\le
\frac{1}{\gamma_1}
\norm[L^\infty_{\gamma_1}]{g}.
\end{equation}

Differentiating the kernel with respect to \(\xi\) gives
\[
\begin{aligned}
\nabla e^{\tau\cL}g(\xi)
={}&-
\frac{1}{8\pi\left(1-e^{-\tau}\right)^2}
\\
&\quad\times
\int_{\Rt}
\left(\xi-e^{-\frac{\tau}{2}}\eta\right)
\exp\left(
-\frac{\abs{\xi-e^{-\frac{\tau}{2}}\eta}^2}
{4\left(1-e^{-\tau}\right)}
\right)g(\eta)\,d\eta.
\end{aligned}
\]
Moreover,
\[
\begin{aligned}
\xi-e^{-\frac{\tau}{2}}\eta
={}&
\frac{\gamma_1\left(1-e^{-\tau}\right)}
{e^{-\tau}+\gamma_1\left(1-e^{-\tau}\right)}\xi
\\
&-e^{-\frac{\tau}{2}}
\left(
\eta-
\frac{e^{-\frac{\tau}{2}}}
{e^{-\tau}+\gamma_1\left(1-e^{-\tau}\right)}\xi
\right).
\end{aligned}
\]
The two Gaussian integrals needed below are
\[
\begin{aligned}
&\int_{\Rt}
\exp\left(
-\frac{e^{-\tau}+\gamma_1(1-e^{-\tau})}
{4(1-e^{-\tau})}
\left|
\eta-
\frac{e^{-\frac{\tau}{2}}}
{e^{-\tau}+\gamma_1(1-e^{-\tau})}\xi
\right|^2
\right)d\eta
\\
&\qquad=
\frac{4\pi(1-e^{-\tau})}
{e^{-\tau}+\gamma_1(1-e^{-\tau})},
\\[1ex]
&\int_{\Rt}
\left|
\eta-
\frac{e^{-\frac{\tau}{2}}}
{e^{-\tau}+\gamma_1(1-e^{-\tau})}\xi
\right|
\\
&\qquad\times
\exp\left(
-\frac{e^{-\tau}+\gamma_1(1-e^{-\tau})}
{4(1-e^{-\tau})}
\left|
\eta-
\frac{e^{-\frac{\tau}{2}}}
{e^{-\tau}+\gamma_1(1-e^{-\tau})}\xi
\right|^2
\right)d\eta
\\
&\qquad=
4\pi^{\frac{3}{2}}
\frac{(1-e^{-\tau})^{\frac{3}{2}}}
{\left(e^{-\tau}+\gamma_1(1-e^{-\tau})\right)^{\frac{3}{2}}}.
\end{aligned}
\]
The completed-square identity and these two integrals imply
\[
\begin{aligned}
\abs{\nabla e^{\tau\cL}g(\xi)}
\le{}&
\norm[L^\infty_{\gamma_1}]{g}
\exp\left(
-\frac{\gamma_1\abs{\xi}^2}
{4\left(e^{-\tau}+\gamma_1(1-e^{-\tau})\right)}
\right)
\\
&\times
\left(
\frac{\gamma_1\abs{\xi}}
{2\left(e^{-\tau}+\gamma_1(1-e^{-\tau})\right)^2}
\right.
\\
&\hspace{4.2em}\left.
+\frac{\sqrt\pi}{2}
\frac{e^{-\frac{\tau}{2}}}
{\sqrt{1-e^{-\tau}}
\left(e^{-\tau}+\gamma_1(1-e^{-\tau})\right)^{\frac{3}{2}}}
\right).
\end{aligned}
\]
Since
\[
\begin{aligned}
&\abs{\xi}\exp\left[
-\frac{1}{4}
\left(
\frac{\gamma_1}
{e^{-\tau}+\gamma_1(1-e^{-\tau})}
-\gamma
\right)\abs{\xi}^2
\right]
\\
&\qquad\le
\sqrt{\frac{2}{e(\gamma_1-\gamma)}},
\end{aligned}
\]
and \(e^{-\tau}+\gamma_1(1-e^{-\tau})\ge\gamma_1\),
\(e^{-\tau/2}\le1\), and \(0<1-e^{-\tau}\le1\),
we obtain
\begin{equation}\label{eq:radial-OU-gradient-bound}
\norm[L^\infty_\gamma]{\nabla e^{\tau\cL}g}
\lsim{\gamma,\gamma_1}
\frac{1}
{\sqrt{1-e^{-\tau}}}
\norm[L^\infty_{\gamma_1}]{g}.
\end{equation}
The time integrals satisfy
\(\int_0^t(s/t)\,ds=t/2\) and
\(\int_0^t(s/t)(1-s/t)^{-1/2}\,ds=4t/3\).
Using \eqref{eq:Frad-Duhamel}, \eqref{eq:radial-source-Gaussian-bound},
\eqref{eq:radial-OU-zero-order-bound}, and
\eqref{eq:radial-OU-gradient-bound}, we obtain
\[
\begin{aligned}
&\|F_{\mathrm{rad}}(t)\|_{L^\infty_\gamma}
+\|\nabla F_{\mathrm{rad}}(t)\|_{L^\infty_\gamma}
\\
&\qquad\lsim{\gamma} \int_0^t\frac{s}{t}
\left(1+\frac1{\sqrt{1-s/t}}\right)ds
\lsim{\gamma} t,
\end{aligned}
\]
which is \eqref{eq:Frad-Gaussian-bound}. The radial profile also has
zero mass:
\begin{equation}\label{eq:Frad-zero-mass}
\int_{\Rt}F_{\mathrm{rad}}(\xi,t)\,d\xi=0.
\end{equation}
Indeed, the vector field
\(v^{F_{\ne0}}+\Sigma(t)\xi\) is divergence free, and therefore
\(\int_{\Rt}(v^{F_{\ne0}}+\Sigma(t)\xi)
\cdot\nabla F_{\ne0}\,d\xi=0\).
The radial projection and the semigroup \(e^{\tau\cL}\) preserve the
integral, so \eqref{eq:Frad-Duhamel} proves
\eqref{eq:Frad-zero-mass}.

We next claim that
\begin{equation}\label{eq:Frad-velocity-bound}
\abs{v^{F_{\mathrm{rad}}}(\xi,t)}
\lesssim
\frac{t}{(1+\abs{\xi}^2)^{\frac32}},
\qquad
\xi\in\Rt,\quad 0\le t\le T.
\end{equation}
For a radial vorticity, the Biot--Savart law gives
\[
\begin{aligned}
v^{F_{\mathrm{rad}}}(\xi,t)
&=
\frac{\xi^\perp}{2\pi\abs{\xi}^2}
\int_{\{\abs\eta\le\abs\xi\}}F_{\mathrm{rad}}(\eta,t)\,d\eta
\\
&\stackrel{\eqref{eq:Frad-zero-mass}}{=}
-\frac{\xi^\perp}{2\pi\abs{\xi}^2}
\int_{\{\abs\eta>\abs\xi\}}F_{\mathrm{rad}}(\eta,t)\,d\eta.
\end{aligned}
\]
The first formula controls the velocity on bounded sets; for large
\(\abs\xi\), the second formula gives
\[
\abs{v^{F_{\mathrm{rad}}}(\xi,t)}
\stackrel{\eqref{eq:Frad-Gaussian-bound}}{\lsim{\gamma}}
\frac{t}{\abs\xi}
\int_{\{\abs\eta>\abs\xi\}}
e^{-\frac{\gamma\abs{\eta}^2}{4}}\,d\eta
\lsim{\gamma}
\frac{t\,e^{-\frac{\gamma\abs\xi^2}{8}}}
{1+\abs\xi}.
\]
This proves \eqref{eq:Frad-velocity-bound}.
\paragraph{Step 3. Higher space--time bounds and estimates for \(J\).}
We first establish the mixed derivative estimates needed for the
source of \(J\), and show that the radial profile belongs to the class
\(\cZ\) defined in \eqref{eq:Gaussian-class-Z}.
\begin{lemma}\label{lem:profile-mixed-regularity}
After possibly decreasing \(\nu_{0,T}\), for every \(0<\gamma<1\),
uniformly for \(0\le t\le T\) and
\(0<\nu\le\nu_{0,T}\),
\begin{equation}\label{eq:F-mixed-regularity}
\mathfrak n_{3,\gamma}(F_{\ne0})
+\mathfrak n_{2,\gamma}(\partial_tF_{\ne0})
+\mathfrak n_{0,\gamma}(\partial_t^2F_{\ne0})
\lsim{\gamma} 1,
\end{equation}
and
\begin{equation}\label{eq:Frad-mixed-regularity}
\mathfrak n_{3,\gamma}(F_{\mathrm{rad}}(\cdot,t))
\lsim{\gamma} t,
\qquad
\mathfrak n_{1,\gamma}(\partial_tF_{\mathrm{rad}}(\cdot,t))
\lsim{\gamma} 1.
\end{equation}
Moreover,
\(F_{\mathrm{rad}}(\cdot,t),\partial_tF_{\mathrm{rad}}(\cdot,t)\in\cZ\).
The map \(t\mapsto F_{\mathrm{rad}}(\cdot,t)\) is continuous with
respect to \(\mathfrak n_{3,\gamma}\) and \(C^1\) with respect to
\(\mathfrak n_{1,\gamma}\).
\end{lemma}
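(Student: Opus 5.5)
For \(F_{\ne0}=-R_\nu A\), the bounds \eqref{eq:F-mixed-regularity} follow from Corollary~\ref{cor:regularized-finite-dimensional-sources} once the source has enough time regularity. The space \(\mathscr E_2\) is finite dimensional, \(R_\nu\) is independent of \(t\), and \eqref{eq:regularized-finite-dimensional-time-bound} with \(j\le2\) gives
\[
\mathfrak n_{m,\gamma}(\partial_t^jF_{\ne0})\lesssim\|\partial_t^jA\|_{\mathscr E_2}
\]
for every \(m\). Combined with \eqref{eq:ABC-finite-dimensional-time-bounds}, this bounds all three seminorms in \eqref{eq:F-mixed-regularity}, and the bounds are uniform in \(\nu\). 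The same argument shows that \(F_{\ne0}\) and \(\partial_tF_{\ne0}\) lie in \(\mathcal Z\) with polynomial-growth constants uniform in \(t\) and \(\nu\). These bounds are stronger than needed.

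For \(F_{\mathrm{rad}}\), the plan is to work with the Duhamel formula \eqref{eq:Frad-Duhamel}. Set \(\mathcal S(s):=\Pi_{\mathrm{rad}}[(v^{F_{\ne0}}+\Sigma\xi)\cdot\nabla F_{\ne0}]\).

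1. Using Leibniz' rule, the velocity bounds \eqref{eq:Biot-Savart-finite-derivatives}, and the fact that \(\Pi_{\mathrm{rad}}\) commutes with rotations and is bounded on every \(\mathfrak n_{m,\gamma}\), show that
\[
\mathfrak n_{m,\gamma_1}(\mathcal S)+\mathfrak n_{m,\gamma_1}(\partial_s\mathcal S)\lesssim1
\]
for \(m\le3\), using \eqref{eq:Sigma-time-bound} and the \(F_{\ne0}\) bounds. The linear factor \(\xi\) is absorbed by lowering \(\gamma\).

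2. Obtain \(\mathfrak n_{3,\gamma}\) bounds for the semigroup without a singularity by commuting derivatives: \(\partial^\alpha e^{\tau\cL}g=e^{|\alpha|\tau/2}e^{\tau\cL}\partial^\alpha g\). Here \(\tau=\log(t/s)\), so \(e^{|\alpha|\tau/2}=(t/s)^{|\alpha|/2}\le(t/s)^{3/2}\). Then \eqref{eq:radial-OU-zero-order-bound} gives
\[
\mathfrak n_{3,\gamma}(F_{\mathrm{rad}}(t))\lesssim\int_0^t\frac st\Bigl(\frac ts\Bigr)^{3/2}ds=\int_0^t\Bigl(\frac ts\Bigr)^{1/2}ds=2t .
\]

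3. For \(\partial_tF_{\mathrm{rad}}\), use the equation \eqref{eq:Frad-equation}:
\[
\partial_tF_{\mathrm{rad}}=t^{-1}\bigl(\cL F_{\mathrm{rad}}-F_{\mathrm{rad}}\bigr)-\mathcal S .
\]
The operator \(\cL\) costs two derivatives and a factor \(|\xi|\), so step 2 only gives \(\mathfrak n_{1,\gamma}(\cL F_{\mathrm{rad}})\lesssim t\), hence \(\mathfrak n_{1,\gamma}(\partial_tF_{\mathrm{rad}})\lesssim1\). This is exactly \eqref{eq:Frad-mixed-regularity}.

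Membership in \(\mathcal Z\) comes from writing \(F_{\mathrm{rad}}=\rho u\) with \(\rho=e^{-|\xi|^2/4}\). In this conjugation \(\cL\) becomes \(\Delta-\tfrac12\xi\cdot\nabla\), and \(\rho^{-1}e^{\tau\cL}\rho\) maps polynomial-growth smooth functions to polynomial-growth smooth functions, with constants polynomial in \(e^{\tau/2}\). The \(\mathcal Z\) data of \(\mathcal S\) are locally bounded in \(s\), so the same commutation argument as in step 2, now for all orders, places \(F_{\mathrm{rad}}\) in \(\mathcal Z\). The identity in step 3 then places \(\partial_tF_{\mathrm{rad}}\) in \(\mathcal Z\).

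\textbf{Main obstacle.} The continuity and \(C^1\) claims are the delicate part, because of the degenerate factor \(t\partial_t\) at \(t=0\). The plan is to substitute \(s=t\sigma\), which turns the Duhamel formula into
\[
F_{\mathrm{rad}}(t)=-t\int_0^1\sigma\,e^{-(\log\sigma)\cL}\mathcal S(t\sigma)\,d\sigma .
\]
Continuity in \(t\) now follows from continuity of \(\mathcal S\) and dominated convergence, using the integrable bound \(\sigma\cdot\sigma^{-3/2}\) from step 2. Differentiating under the integral gives the \(C^1\) property in \(\mathfrak n_{1,\gamma}\). Where the extra \(\sigma^{-1/2}\) factor would break integrability, use the smoothing estimate \eqref{eq:radial-OU-gradient-bound} for the top derivative in place of commutation. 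Decreasing \(\nu_{0,T}\) is only needed so that the constants from Corollary~\ref{cor:regularized-finite-dimensional-sources} and \eqref{eq:ABC-finite-dimensional-time-bounds} are uniform.
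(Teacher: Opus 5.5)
\paragraph{Comparison with the paper.} Your treatment of $F_{\ne0}$ matches the paper's: it is the case $(m,j)=(3,0),(2,1),(0,2)$ of \eqref{eq:F-regularized-space-time-bounds}. Your quantitative radial bounds are correct, but you reach them by a different route.

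The paper keeps the source at the regularity recorded in \eqref{eq:F-mixed-regularity}, namely $\mathfrak n_{2,\gamma_1}(\mathcal N)$ and $\mathfrak n_{0,\gamma_1}(\partial_t\mathcal N)$. It commutes only $k-1$ derivatives through the semigroup and uses the one-derivative smoothing estimate for the last one, which produces the factor $u^{-(k-1)/2}(1-u)^{-1/2}$ in \eqref{eq:OU-higher-derivative-bound}. It then bounds $\partial_tF_{\mathrm{rad}}$ by differentiating the rescaled Duhamel formula, as in \eqref{eq:Frad-time-Duhamel}.

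You instead spend one more derivative of $F_{\ne0}$. This is legitimate, since \eqref{eq:F-regularized-space-time-bounds} holds for every $m$. You then commute all three derivatives, which gives the integrable factor $(t/s)^{1/2}$ without any smoothing estimate. Finally, you read $\mathfrak n_{1,\gamma}(\partial_tF_{\mathrm{rad}})\lesssim1$ off \eqref{eq:Frad-equation}, where the $O(t)$ bound on $\mathfrak n_{3,\gamma'}(F_{\mathrm{rad}})$, $\gamma'>\gamma$, exactly compensates the factor $t^{-1}$.

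Your shortcut avoids the $(1-u)^{-1/2}$ singularity and does not need $\partial_t\mathcal S$ for the bound. You still need $\partial_t\mathcal S$ and the differentiated rescaled formula for the $C^1$ claim, and to know that the Duhamel formula solves the equation classically; your last paragraph supplies this. The paper's route demands less regularity of the source.

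\paragraph{The $\cZ$-membership step fails as written.} You write that ``the same commutation argument as in step 2, now for all orders'' applies. If this means using $\partial^\alpha e^{\tau\cL}=e^{|\alpha|\tau/2}e^{\tau\cL}\partial^\alpha$, the rescaled Duhamel integrand carries the factor $\sigma\cdot\sigma^{-|\alpha|/2}$. Its integral over $(0,1)$ diverges once $|\alpha|\ge4$. For the same reason, ``constants polynomial in $e^{\tau/2}$'' is not enough. Membership in $\cZ$ requires every order, while the weight $\sigma$ gives $\int_0^1\sigma\,C\,d\sigma=\int_0^\infty e^{-2\tau}C(\tau)\,d\tau$, so it only absorbs growth slower than $e^{2\tau}$.

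The missing point is that the Gaussian conjugation reverses the sign of the commutator. With $\mathcal P_\tau=e^{\tau(\Delta-\frac12\xi\cdot\nabla)}$ one has $\partial^\alpha\mathcal P_\tau h=e^{-|\alpha|\tau/2}\mathcal P_\tau\partial^\alpha h$. Mehler's formula then gives polynomial bounds uniform in $\tau\ge0$; this is \eqref{eq:OU-polynomial-seminorm-bound}. With this, dominated convergence in the rescaled formula yields $F_{\mathrm{rad}}(\cdot,t)\in\cZ$.

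\paragraph{Two smaller points.}
\begin{itemize}
\item At $t=0$ your step-3 identity has the form $0/0$. So $\partial_tF_{\mathrm{rad}}(\cdot,0)\in\cZ$ must come from the trace $-\int_0^1\sigma\,e^{-(\log\sigma)\cL}\mathcal S(0)\,d\sigma$.
\item Your claim that the $\cZ$ constants of $F_{\ne0}$ are uniform in $\nu$ is not supplied by Corollary~\ref{cor:regularized-finite-dimensional-sources}, which controls only $\mathfrak n_{m,\gamma}$ with $\gamma<1$. However, you only need membership for fixed $\nu$, and that follows from the finite-dimensional representation $F_{\ne0}=-\sum_\ell c_\ell(t)R_\nu q_\ell$.
\end{itemize}
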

\begin{proof}
The estimate \eqref{eq:F-mixed-regularity} is the special case
\((m,j)=(3,0),(2,1),(0,2)\) of
\eqref{eq:F-regularized-space-time-bounds}.  It remains to establish
the assertions for the radial profile.

Set
\begin{equation}\label{eq:Frad-source-N}
\mathcal N(t)
:=
\Pi_{\mathrm{rad}}\left[
\bigl(v^{F_{\ne0}}+\Sigma(t)\xi\bigr)\cdot\nabla F_{\ne0}
\right].
\end{equation}
For any \(0<\gamma_1<\gamma_2<1\),
\eqref{eq:F-mixed-regularity}, \eqref{eq:Biot-Savart-finite-derivatives},
and \eqref{eq:Sigma-time-bound} imply
\begin{equation}\label{eq:Frad-source-space-time-bounds}
\mathfrak n_{2,\gamma_1}(\mathcal N(t))
+\mathfrak n_{0,\gamma_1}(\partial_t\mathcal N(t))
\lsim{\gamma_1} 1,
\end{equation}
where
\begin{equation}\label{eq:Frad-source-time-formula}
\begin{aligned}
\partial_t\mathcal N
=\Pi_{\mathrm{rad}}\Bigl[
&\bigl(v^{\partial_tF_{\ne0}}+\partial_t\Sigma(t)\xi\bigr)
\cdot\nabla F_{\ne0}
\\
&+\bigl(v^{F_{\ne0}}+\Sigma(t)\xi\bigr)
\cdot\nabla\partial_tF_{\ne0}
\Bigr].
\end{aligned}
\end{equation}
The loss from \(\gamma_2\) to \(\gamma_1\) absorbs \(\abs\xi\).

For \(0<u<1\), write
\[
S_u:=e^{(\log(1/u))\cL},
\]
and set \(S_1=I\) by continuity.  Changing variables \(s=tu\) gives
\begin{equation}\label{eq:Frad-rescaled-Duhamel}
F_{\mathrm{rad}}(t)
\stackrel{\eqref{eq:Frad-Duhamel}}{=}
-t\int_0^1uS_u\mathcal N(tu)\,du.
\end{equation}
Differentiating with respect to time gives
\begin{equation}\label{eq:Frad-time-Duhamel}
\partial_tF_{\mathrm{rad}}(t)
\stackrel{\eqref{eq:Frad-rescaled-Duhamel}}{=}
-\int_0^1uS_u\mathcal N(tu)\,du
-t\int_0^1u^2S_u\partial_t\mathcal N(tu)\,du.
\end{equation}
Dominated convergence also gives
\begin{equation}\label{eq:Frad-time-trace}
\partial_tF_{\mathrm{rad}}(0)
\stackrel{\eqref{eq:Frad-time-Duhamel}}{=}
-\int_0^1uS_u\mathcal N(0)\,du.
\end{equation}

The commutation identity
\(\nabla e^{\tau\cL}=e^{\tau/2}e^{\tau\cL}\nabla\)
and \eqref{eq:OU-one-derivative-Gaussian} give, for \(k=1,2,3\),
\begin{equation}\label{eq:OU-higher-derivative-bound}
\norm[L^\infty_\gamma]{\nabla^kS_ug}
\lsim{\gamma,\gamma_1}
\frac{u^{-\frac{k-1}{2}}}{\sqrt{1-u}}\,
\mathfrak n_{k-1,\gamma_1}(g),
\quad 0<u<1,\quad 0<\gamma<\gamma_1<1.
\end{equation}
The endpoint factors are integrable; in particular,
\[
\int_0^1\frac{u}{\sqrt{1-u}}\,du=\frac43,\qquad
\int_0^1\frac{u^{1/2}}{\sqrt{1-u}}\,du=\frac\pi2,\qquad
\int_0^1\frac{1}{\sqrt{1-u}}\,du=2,
\]
and
\[
\int_0^1u^2\,du=\frac13,\qquad
\int_0^1\frac{u^2}{\sqrt{1-u}}\,du=\frac{16}{15}.
\]
Combining these integrals with
\eqref{eq:Frad-source-space-time-bounds} proves
\eqref{eq:Frad-mixed-regularity}.  The corresponding integrable
majorants, together with the continuity of \(\mathcal N\) in
\(\mathfrak n_{2,\gamma_1}\) and of \(\partial_t\mathcal N\) in
\(\mathfrak n_{0,\gamma_1}\), prove continuity of
\(F_{\mathrm{rad}}\) in \(\mathfrak n_{3,\gamma}\) and \(C^1\)
continuity in \(\mathfrak n_{1,\gamma}\), including the trace at
\(t=0\).

Finally, if \(g=Gh\), then
\[
\cL(Gh)=G\left(\Delta h-\frac12\xi\cdot\nabla h\right).
\]
Thus \(e^{\tau\cL}(Gh)=G\mathcal P_\tau h\), where \(\mathcal P_\tau\) is the
Ornstein--Uhlenbeck semigroup with drift \(-\xi/2\), and
\[
\partial^\alpha\mathcal P_\tau h
=e^{-\frac{\abs\alpha\tau}{2}}\mathcal P_\tau(\partial^\alpha h).
\]
If
\(\abs{\partial^\alpha h(\xi)}\le M_\alpha(1+\abs\xi)^N\),
the Mehler formula gives a bound in terms of the Gaussian moment
\[
\mathfrak m_N:=\frac1{4\pi}\int_{\mathbb R^2}
e^{-|y|^2/4}(1+|y|)^N\,dy<\infty:
\]
\begin{equation}\label{eq:OU-polynomial-seminorm-bound}
\abs{\partial^\alpha\mathcal P_{\log(1/u)}h(\xi)}
\le M_\alpha\mathfrak m_N u^{\abs\alpha/2}
\bigl(1+\sqrt u\,\abs\xi\bigr)^N
\le M_\alpha\mathfrak m_N(1+\abs\xi)^N.
\end{equation}
The source \(\mathcal N(t)\) and its time derivative belong to the
class \(\cZ\) in \eqref{eq:Gaussian-class-Z}.
For each fixed \(\nu\), both are linear combinations of finitely many
fixed \(\cZ\)-profiles with bounded continuous time coefficients.
This follows from the finite-dimensional representation of
\(F_{\ne0}\), \eqref{eq:Frad-source-N}, and
\eqref{eq:Frad-source-time-formula}.
For each fixed \(\nu\), each Gaussian quotient in this finite family
has polynomially growing derivatives by \eqref{eq:Gaussian-class-Z}.
The estimate \eqref{eq:OU-polynomial-seminorm-bound} preserves these
polynomial bounds.
The integrable factors \(u,u^2\) in
\eqref{eq:Frad-rescaled-Duhamel} and \eqref{eq:Frad-time-Duhamel}
then permit differentiation and dominated convergence. Consequently,
\(F_{\mathrm{rad}},\partial_tF_{\mathrm{rad}}\in\cZ\), including the
trace at \(t=0\), for each fixed \(\nu\).
The constants in this argument may depend on \(\nu\); the bounds in
\eqref{eq:Frad-mixed-regularity} are uniform.
\end{proof}

We next estimate \(\mathcal Q_J\).  Since \(F_{\ne0}\) is a second
mode, \(F_{\mathrm{rad}}\) is radial, and \(C\) is a fourth mode,
\(\mathcal Q_J\) contains only the second and fourth angular modes.
The product rule applied to \eqref{eq:J-source-def}, together with
Lemma~\ref{lem:profile-mixed-regularity},
\eqref{eq:Biot-Savart-finite-derivatives}, \eqref{eq:Sigma-time-bound},
and \eqref{eq:ABC-finite-dimensional-time-bounds}, gives
\begin{equation}\label{eq:J-source-bound}
\mathfrak n_{2,\gamma}(\mathcal Q_J(\cdot,t))
\lsim{\gamma} 1.
\end{equation}
Write \(F=F_{\ne0}\), \(R=F_{\mathrm{rad}}\), and
\(V=v^F+\Sigma(t)\xi\). Direct differentiation gives
\begin{equation}\label{eq:J-source-time-formula}
\begin{aligned}
\partial_t\mathcal Q_J
\stackrel{\eqref{eq:J-source-def}}{=}{}&\partial_tC+\partial_t^2F
\\
&+\Pi_{\ne0}\Bigl[
\bigl(v^{\partial_tF}+\partial_t\Sigma(t)\xi\bigr)
\cdot\nabla(F+R)
\\
&\qquad
+V\cdot\nabla\bigl(\partial_tF+\partial_tR\bigr)
+v^{\partial_tR}\cdot\nabla F
+v^R\cdot\nabla\partial_tF
\Bigr].
\end{aligned}
\end{equation}
Only first spatial derivatives occur in this formula, so
Lemma~\ref{lem:profile-mixed-regularity},
\eqref{eq:Biot-Savart-finite-derivatives}, \eqref{eq:Sigma-time-bound},
and \eqref{eq:ABC-finite-dimensional-time-bounds} yield
\begin{equation}\label{eq:J-source-time-bound}
\norm[L^\infty_\gamma]{\partial_t\mathcal Q_J(\cdot,t)}
\lsim{\gamma} 1.
\end{equation}
The Biot--Savart velocities in the displayed formulas have smooth
derivatives of polynomial growth; multiplying such functions by a
\(\cZ\)-profile preserves the class \eqref{eq:Gaussian-class-Z}.
It follows that
\begin{equation}\label{eq:J-source-Z-membership}
\mathcal Q_J,\ \partial_t\mathcal Q_J
\in
\left(L^2_{\pw_G,2}\oplus L^2_{\pw_G,4}\right)\cap\cZ,
\end{equation}
and \(t\mapsto\mathcal Q_J(\cdot,t)\) is continuous in
\(\mathfrak n_{2,\gamma}\) and \(C^1\) in
\(\mathfrak n_{0,\gamma}\), for every \(0<\gamma<1\).
These continuity statements follow from the displayed product
formulas and Lemma~\ref{lem:profile-mixed-regularity}.

Choose \(\gamma<\widetilde\gamma<\widehat\gamma<1\).
For each angular mode of \(J\), apply
\eqref{eq:fixed-mode-higher-unregularized} with \(m=2\) and
exponents \(\widetilde\gamma,\widehat\gamma\).
Differentiation in time gives
\[
\Lambda\partial_tJ
\stackrel{\eqref{eq:J-equation}}{=}
-\partial_t\mathcal Q_J.
\]
Apply the same estimate with \(m=0\) and exponents
\(\gamma,\widetilde\gamma\). Since
\[
\norm[L^\infty_\gamma]{\xi\cdot\nabla J}
\lsim{\gamma,\widetilde\gamma} 
\mathfrak n_{1,\widetilde\gamma}(J),
\]
\eqref{eq:fixed-mode-higher-unregularized}, \eqref{eq:J-source-bound},
and \eqref{eq:J-source-time-bound} give
\begin{equation}\label{eq:J-higher-space-time-bound}
\mathfrak n_{2,\gamma}(J(\cdot,t))
+\norm[L^\infty_\gamma]{\partial_tJ(\cdot,t)}
+\norm[L^\infty_\gamma]{\cL J(\cdot,t)}
\lsim{\gamma} 1.
\end{equation}
In particular,
\begin{equation}\label{eq:J-mode-and-Gaussian-bound}
\begin{aligned}
J(\cdot,t)
&\in
\left(L^2_{\pw_G,2}\oplus L^2_{\pw_G,4}\right)\cap\cZ,
\\
\norm[L^\infty_\gamma]{J(\cdot,t)}
+\norm[L^\infty_\gamma]{\nabla J(\cdot,t)}
&\lsim{\gamma} 1,
\qquad 0\le t\le T,\quad 0<\nu\le\nu_{0,T}.
\end{aligned}
\end{equation}

The second and fourth modes have the velocity bound
\begin{equation}\label{eq:J-velocity-bound}
\abs{v^J(\xi,t)}
\stackrel{\substack{\eqref{eq:fixed-mode-velocity-decay}\\
\eqref{eq:J-mode-and-Gaussian-bound}}}{\lesssim}
\frac{1}{(1+\abs{\xi}^2)^{\frac{3}{2}}},
\qquad
\xi\in\Rt,\quad 0\le t\le T,\quad 0<\nu\le\nu_{0,T}.
\end{equation}
The second mode gives the slower decay
\((1+\abs\xi^2)^{-\frac32}\).

\begin{theorem}\label{thm:concentrated-approximation-summary}
Let \((w_{2,\mathrm{app}},v_{2,\mathrm{app}})\) be the concentrated
approximation defined in \eqref{eq:high-app}.
For every \(t\in[0,T]\), the angular modes of the profiles satisfy
\begin{equation}\label{eq:profile-mode-membership}
\begin{aligned}
F_{\ne0}(\cdot,t)&\in L^2_{\pw_G,2}\cap\cZ,
&F_{\mathrm{rad}}(\cdot,t)&\in L^2_{\pw_G,\mathrm{rad}}\cap\cZ,
\\
H(\cdot,t)&\in L^2_{\pw_G,3}\cap\cZ,
&J(\cdot,t)&\in
\left(L^2_{\pw_G,2}\oplus L^2_{\pw_G,4}\right)\cap\cZ.
\end{aligned}
\end{equation}
For every \(\frac12<\gamma<1\), uniformly for
\(0\le t\le T\) and \(0<\nu\le\nu_{0,T}\),
\begin{equation}\label{eq:profile-Gaussian-bounds}
\begin{aligned}
&\norm[L^\infty_\gamma]{F_{\ne0}}
+\norm[L^\infty_\gamma]{\nabla F_{\ne0}}
+\norm[L^\infty_\gamma]{F_{\mathrm{rad}}}
+\norm[L^\infty_\gamma]{\nabla F_{\mathrm{rad}}}
\\
&\qquad
+\norm[L^\infty_\gamma]{H}
+\norm[L^\infty_\gamma]{\nabla H}
+\norm[L^\infty_\gamma]{J}
+\norm[L^\infty_\gamma]{\nabla J}
\lsim{\gamma} 1,
\end{aligned}
\end{equation}
and
\begin{equation}\label{eq:high-profile-velocity}
\begin{aligned}
\abs{v^{F_{\ne0}}(\xi,t)}
+\abs{v^{F_{\mathrm{rad}}}(\xi,t)}
+\abs{v^J(\xi,t)}
&\lesssim
\frac{1}{(1+\abs\xi^2)^{\frac32}},
\\
\abs{v^H(\xi,t)}
&\lesssim
\frac{1}{(1+\abs\xi^2)^2}.
\end{aligned}
\end{equation}
The approximation itself obeys
\begin{equation}\label{eq:concentrated-approximation-error}
\begin{aligned}
\mathfrak n_{1,\gamma}
\bigl(w_{2,\mathrm{app}}(\cdot,t)-G\bigr)
&\lsim{\gamma} \nu t,
\\
\sup_{\xi\in\mathbb R^2}
(1+|\xi|^2)^{3/2}
\bigl|v_{2,\mathrm{app}}(\xi,t)-v^G(\xi)\bigr|
&\lesssim \nu t,
\\
\|w_{2,\mathrm{app}}(\cdot,t)-G\|_{L^1}
&\lesssim \nu t.
\end{aligned}
\end{equation}
\end{theorem}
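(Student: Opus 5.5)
The theorem is essentially a collection of the profile estimates already established in this section, so the plan is to assemble them and add the two short arguments needed for the approximation bounds in \eqref{eq:concentrated-approximation-error}.

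\emph{Mode membership and Gaussian bounds.} For \eqref{eq:profile-mode-membership}, $F_{\ne0}$ and $H$ lie in the asserted spaces by \eqref{eq:F-H-mode-membership}. $F_{\mathrm{rad}}$ is radial by construction, since the source in \eqref{eq:Frad-equation} is radially projected and $e^{\tau\cL}$ commutes with rotations; Lemma~\ref{lem:profile-mixed-regularity} gives $F_{\mathrm{rad}}\in\cZ$. For $J$, the membership follows from \eqref{eq:J-mode-and-Gaussian-bound}. The Gaussian bounds \eqref{eq:profile-Gaussian-bounds} are \eqref{eq:F-H-Gallay-bounds}, \eqref{eq:Frad-Gaussian-bound} (using $t\le T$), and \eqref{eq:J-mode-and-Gaussian-bound}. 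The velocity bounds \eqref{eq:high-profile-velocity} come from \eqref{eq:F-nonzero-velocity-bound}, \eqref{eq:Frad-velocity-bound} (again with $t\le T$), \eqref{eq:J-velocity-bound}, and \eqref{eq:H-velocity-bound}.

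\emph{Approximation error.} Subtracting $G$ from \eqref{eq:high-app} gives
\[
w_{2,\mathrm{app}}-G=(\nu t)\bigl(F_{\ne0}+F_{\mathrm{rad}}+(\nu t)^{1/2}H+(\nu t)J\bigr).
\]
We use $\nu t\le\nu_{0,T}T\le1$, so that the powers $(\nu t)^{1/2}$ and $\nu t$ are at most one. Then $\mathfrak n_{1,\gamma}$ of the bracket is bounded by the sum of the four $\mathfrak n_{1,\gamma}$ seminorms, which are exactly the quantities controlled in \eqref{eq:profile-Gaussian-bounds}. This gives the first line of \eqref{eq:concentrated-approximation-error}.

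For the $L^1$ bound, take any $\frac12<\gamma<1$ and integrate the pointwise estimate $|w_{2,\mathrm{app}}-G|\lsim{\gamma}\nu t\,e^{-\gamma|\xi|^2/4}$; the Gaussian has finite integral, so $\|w_{2,\mathrm{app}}(\cdot,t)-G\|_{L^1}\lesssim\nu t$.

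For the velocity line, linearity of the Biot--Savart law gives
\[
v_{2,\mathrm{app}}-v^G=(\nu t)\bigl(v^{F_{\ne0}}+v^{F_{\mathrm{rad}}}+(\nu t)^{1/2}v^H+(\nu t)v^J\bigr).
\]
Each term in the bracket is $\lesssim(1+|\xi|^2)^{-3/2}$ by \eqref{eq:high-profile-velocity}, and $v^H$ even decays faster.

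\emph{Main point of care.} The one subtlety is uniformity in $\nu$ and $t$. $F_{\ne0}$ and $J$ depend on $\nu$, but all cited bounds are uniform for $0<\nu\le\nu_{0,T}$ and $0\le t\le T$. This uniformity rests on the regularized-inverse corollary, Corollary~\ref{cor:regularized-finite-dimensional-sources}, rather than on the crude energy estimate \eqref{eq:regularized-preliminary-energy}. The restriction $\gamma>\tfrac12$ is inherited from \eqref{eq:Frad-Gaussian-bound}. No further obstacle is expected.
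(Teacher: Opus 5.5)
Your proposal is correct and follows the paper's proof essentially step by step. The mode, Gaussian and velocity assertions are read off from the earlier profile estimates, and the approximation bounds follow by factoring out $\nu t$, using $\nu t\le1$, and integrating a Gaussian envelope for the $L^1$ estimate. The only difference is cosmetic: the paper fixes $\gamma=3/4$ for the $L^1$ bound, whereas you allow any $\gamma\in(\tfrac12,1)$.
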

\begin{proof}
The assertions on angular modes, Gaussian decay, and velocity follow from
\eqref{eq:F-H-mode-membership},
\eqref{eq:F-H-Gallay-bounds},
\eqref{eq:F-nonzero-velocity-bound},
\eqref{eq:H-velocity-bound},
\eqref{eq:Frad-Gaussian-bound},
\eqref{eq:Frad-velocity-bound},
\eqref{eq:Frad-mixed-regularity},
\eqref{eq:J-mode-and-Gaussian-bound}, and
\eqref{eq:J-velocity-bound}.

For \eqref{eq:concentrated-approximation-error}, set \(s=\nu t\).
The standing choice of \(\nu_{0,T}\) gives \(0\le s\le1\). Hence
\[
\begin{aligned}
\mathfrak n_{1,\gamma}(w_{2,\mathrm{app}}-G)
&\stackrel{\substack{\eqref{eq:high-app}\\
\eqref{eq:profile-Gaussian-bounds}}}{\lsim{\gamma}} (s+s^{3/2}+s^2)
\lsim{\gamma} s,
\\
\sup_\xi(1+|\xi|^2)^{3/2}|v_{2,\mathrm{app}}-v^G|
&\stackrel{\substack{\eqref{eq:high-app}\\
\eqref{eq:high-profile-velocity}}}{\lesssim} (s+s^{3/2}+s^2)
\lesssim s.
\end{aligned}
\]
Finally, the integrability of \(e^{-3|\xi|^2/16}\) gives
\[
\|w_{2,\mathrm{app}}-G\|_{L^1}
\stackrel{\eqref{eq:finite-Gaussian-seminorm}}{\le}
\mathfrak n_{1,3/4}(w_{2,\mathrm{app}}-G)
\int_{\mathbb R^2}e^{-3|\xi|^2/16}\,d\xi
\lesssim s.
\]
\end{proof}

\subsection{The residual estimate}
\begin{proposition}\label{prop:high-residual}
Define
\(\Phi_{\mathrm{app}}(\xi,t)
:=\Phi(w_{2,\mathrm{app}},\vapp)(\xi,t)\).
For every \(\gamma\in(\frac12,1)\), uniformly for
\(0<\nu\le\nu_{0,T}\),
\begin{equation}\label{eq:residual-bound}
\abs{\Phi_{\mathrm{app}}(\xi,t)}
\lsim{\gamma} (\nu t)^{\frac32}
e^{-\frac{\gamma\abs\xi^2}{4}},
\qquad
\xi\in\Rt,\quad 0<t\le T.
\end{equation}
\end{proposition}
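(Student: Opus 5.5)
The plan is to estimate, term by term, the explicit residual representation \eqref{eq:profile-residual-reduction}, using only the profile bounds already established. By \eqref{eq:R0-target}, \(\mathcal R_0\) is already \(\lsim{\gamma}(\nu t)^{3/2}e^{-\gamma|\xi|^2/4}\). For each remaining group, I will show a bound \((\nu t)^{3/2}\) times a polynomially weighted Gaussian \((1+|\xi|)^Ne^{-\gamma'|\xi|^2/4}\), with some \(\gamma'\in(\gamma,1)\). The loss from \(\gamma'\) to \(\gamma\) absorbs the polynomial factor. Throughout, I fix \(\tfrac12<\gamma<\gamma'<1\) and use \(\nu t\le1\), \(t\le T\), so that \((\nu t)^2\le(\nu t)^{3/2}\) and \(t\lesssim1\).

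\textbf{Linear profile terms.} The term \((\nu t)^{3/2}(t\partial_tH+\tfrac32H-\cL H)\) is controlled by \eqref{eq:H-unregularized-space-time-bounds}. That estimate bounds \(\mathfrak n_{2,\gamma'}(H)\), hence \(\cL H=\Delta H+\tfrac12\xi\cdot\nabla H+H\) up to the factor \(|\xi|\), and it also bounds \(\mathfrak n_{0,\gamma'}(\partial_tH)\). Similarly, \((\nu t)^2(t\partial_tJ+2J-\cL J)\) is bounded by \eqref{eq:J-higher-space-time-bound}, which controls \(J\), \(\partial_tJ\) and \(\cL J\) in \(L^\infty_\gamma\). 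Both terms are therefore \(\lesssim(\nu t)^{3/2}e^{-\gamma|\xi|^2/4}\).

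\textbf{Nonlinear interaction terms.} These carry the prefactors \(t(\nu t)^{3/2}\), \(t(\nu t)^2\), \(t(\nu t)^{5/2}\) and \(t(\nu t)^3\). Each is a product of a velocity and a gradient of a Gaussian profile. The velocities \(v^{F_{\ne0}},v^{F_{\mathrm{rad}}},v^H,v^J\) are bounded by \eqref{eq:high-profile-velocity}, and \(|\Sigma(t)\xi|\lesssim|\xi|\) by \eqref{eq:Sigma-time-bound}. The gradients satisfy \(\nabla F_{\ne0},\nabla F_{\mathrm{rad}},\nabla H,\nabla J=O(e^{-\gamma'|\xi|^2/4})\) by \eqref{eq:profile-Gaussian-bounds}. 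Each product is thus \(\lesssim(1+|\xi|)e^{-\gamma'|\xi|^2/4}\), and every prefactor is \(\lesssim(\nu t)^{3/2}\).

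\textbf{Drift mismatch term (main obstacle).} The remaining term is \(\bigl(\sqrt{t/\nu}\,b-t\Sigma\xi\bigr)\cdot\nabla\bigl[(\nu t)F_{\ne0}+\dots\bigr]\). Its prefactor is only \(\nu t\), so one must gain an extra \((\nu t)^{1/2}\) from the mismatch. I will split into two regions.

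\emph{Inner region} \(\sqrt{\nu t}|\xi|\le1\). Here \eqref{eq:b-vector-Taylor} gives
\[
\Bigl|\sqrt{t/\nu}\,b-t\Sigma\xi\Bigr|\lesssim t\sqrt{\nu t}\,|\xi|^2 .
\]
Multiplying by \((\nu t)\,|\nabla(\cdot)|\lesssim(\nu t)e^{-\gamma'|\xi|^2/4}\) yields \(\lesssim(\nu t)^{3/2}|\xi|^2e^{-\gamma'|\xi|^2/4}\), as required.

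\emph{Outer region} \(\sqrt{\nu t}|\xi|>1\). There is no Taylor gain here, but the global bound \eqref{eq:b-global} gives \(|\sqrt{t/\nu}\,b|\lesssim(\nu t)^{-1/2}\), and \(|t\Sigma\xi|\lesssim|\xi|\). The whole term is then at most \((\nu t)^{1/2}(1+|\xi|)e^{-\gamma'|\xi|^2/4}\). Because \(|\xi|^2>(\nu t)^{-1}\), I can extract a factor \(e^{-(\gamma'-\gamma)|\xi|^2/8}\le e^{-(\gamma'-\gamma)/(8\nu t)}\lsim{\gamma}(\nu t)^{2}\), exactly as in the proof of \eqref{eq:R0-target}. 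This yields the bound \((\nu t)^{3/2}e^{-\gamma|\xi|^2/4}\) in the outer region as well.

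Summing all the contributions gives \eqref{eq:residual-bound}. The implicit constants are uniform in \(\nu\) because every profile bound quoted above is uniform for \(0<\nu\le\nu_{0,T}\).
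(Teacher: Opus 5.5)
Your proposal is correct and follows essentially the same route as the paper. Both arguments estimate \eqref{eq:profile-residual-reduction} term by term and take \(\mathcal R_0\) from \eqref{eq:R0-target}. Both bound the evolution and interaction terms by the uniform profile and velocity bounds, at the cost of a small loss in the Gaussian exponent. Both treat the drift mismatch by splitting at \(\sqrt{\nu t}\abs\xi=1\), using \eqref{eq:b-vector-Taylor} inside and \eqref{eq:b-global} with Gaussian absorption of negative powers of \(\nu t\) outside. The only cosmetic difference is that you use one intermediate exponent \(\gamma'\) where the paper uses two, which is harmless.
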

\begin{proof}
Fix \(1/2<\gamma<\gamma_1<\gamma_2<1\).
The profile \(H\) satisfies
\[
\mathfrak n_{2,\gamma_2}(H)
+\norm[L^\infty_{\gamma_2}]{\partial_tH}
\stackrel{\eqref{eq:H-unregularized-space-time-bounds}}{\lsim{\gamma_2}} 1.
\]
Since
\(\norm[L^\infty_{\gamma_1}]{\xi\cdot\nabla H}
\lsim{\gamma_1,\gamma_2} \mathfrak n_{1,\gamma_2}(H)\),
we have
\(\norm[L^\infty_{\gamma_1}]{\cL H}\lsim{\gamma_1} 1\).
Hence the evolution terms satisfy
\begin{equation}\label{eq:residual-profile-regularity}
\begin{aligned}
&\norm[L^\infty_{\gamma_1}]{t\partial_tH}
+\norm[L^\infty_{\gamma_1}]{\cL H}
+\norm[L^\infty_{\gamma_1}]{t\partial_tJ}
+\norm[L^\infty_{\gamma_1}]{J}
+\norm[L^\infty_{\gamma_1}]{\cL J}
\stackrel{\substack{\eqref{eq:H-unregularized-space-time-bounds}\\
\eqref{eq:J-higher-space-time-bound}}}{\lsim{\gamma_1}} 1.
\end{aligned}
\end{equation}

Set \(s:=\nu t\le1\), and denote the last line of
\eqref{eq:profile-residual-reduction} by \(\mathcal D_{\mathrm{drift}}\).
By \eqref{eq:profile-Gaussian-bounds}, \eqref{eq:high-profile-velocity},
and \eqref{eq:residual-profile-regularity},
\[
|\Phi_{\mathrm{app}}-\mathcal R_0-\mathcal D_{\mathrm{drift}}|
\lsim{\gamma} s^{3/2}e^{-\gamma|\xi|^2/4}.
\]
Here \(s^2\le s^{\frac32}\), \(t\le T\), and the loss from
\(\gamma_1\) to \(\gamma\) absorbs all polynomial factors in \(\xi\).
The same estimate for \(\mathcal R_0\) is
\eqref{eq:R0-target}.

It remains to estimate the drift term in the last line of
\eqref{eq:profile-residual-reduction}.  If
\(\sqrt{s}\abs\xi\le1\), then
\[
\begin{aligned}
&\left|
\left(
\sqrt{\frac t\nu}\,b-t\Sigma(t)\xi
\right)\cdot
\left[
s\nabla F_{\ne0}+s\nabla F_{\mathrm{rad}}
+s^{\frac32}\nabla H+s^2\nabla J
\right]
\right|
\\
&\qquad\stackrel{\substack{\eqref{eq:b-vector-Taylor}\\
\eqref{eq:profile-Gaussian-bounds}}}{\lsim{\gamma_1}} t\sqrt{s}\,\abs\xi^2\,
s(1+\abs\xi)^N
e^{-\frac{\gamma_1\abs\xi^2}{4}}
\lsim{\gamma} s^{\frac32}
e^{-\frac{\gamma\abs\xi^2}{4}},
\end{aligned}
\]
where \(N\) is a fixed integer and \(t\le T\).
If \(\sqrt{s}\abs\xi>1\), then
\[
\left|\sqrt{t/\nu}\,b-t\Sigma(t)\xi\right|
\stackrel{\substack{\eqref{eq:b-global}\\\eqref{eq:Sigma-time-bound}}}{\lesssim} \left(\frac{t}{\sqrt{s}}+t\abs\xi\right).
\]
Therefore,
\[
\begin{aligned}
&\left|
\left(
\sqrt{\frac t\nu}\,b-t\Sigma(t)\xi
\right)\cdot
\left[
s\nabla F_{\ne0}+s\nabla F_{\mathrm{rad}}
+s^{\frac32}\nabla H+s^2\nabla J
\right]
\right|
\\
&\qquad
\stackrel{\eqref{eq:profile-Gaussian-bounds}}{\lsim{\gamma_1}} 
\left(t\sqrt{s}+ts\abs\xi\right)
(1+\abs\xi)^N
e^{-\frac{\gamma_1\abs\xi^2}{4}}
\lsim{\gamma} s^{\frac32}e^{-\frac{\gamma\abs\xi^2}{4}}.
\end{aligned}
\]
The last inequality follows from the elementary bound
\begin{equation}\label{eq:outer-Gaussian-absorption}
\sup_{\substack{0<s\le1\\r\ge s^{-1/2}}}
s^{-M}(1+r)^N
e^{-\delta r^2/4}
<\infty,
\qquad M,N\ge0,\quad\delta>0,
\end{equation}
which follows from \(s^{-M}\le r^{2M}\).  Apply it with
\(\delta=\gamma_1-\gamma\).
The estimates on the two regions give
\(|\mathcal D_{\mathrm{drift}}|
\lsim{\gamma} s^{3/2}e^{-\gamma|\xi|^2/4}\); consequently,
\[
|\Phi_{\mathrm{app}}|
\le |\Phi_{\mathrm{app}}-\mathcal R_0-\mathcal D_{\mathrm{drift}}|
+|\mathcal D_{\mathrm{drift}}|+|\mathcal R_0|
\stackrel{\eqref{eq:R0-target}}{\lsim{\gamma}} s^{3/2}e^{-\gamma|\xi|^2/4}.
\]
This is \eqref{eq:residual-bound}.
\end{proof}

\section{Equations for the remainder variables}
\label{sec:remainder-equations}

We compare the exact vorticity components solving \eqref{eq:comp-eq},
which are smooth for \(t>0\), with the regular approximation
\eqref{eq:regular-approx-def} and the concentrated approximation
\eqref{eq:high-app}. The regular component
and its approximation have mass \(\int\w_0^E\), while the
concentrated component and its approximation have mass one. The estimates obtained for positive times will be extended to \(t=0\)
in Section~\ref{sec:closure}.
We write the correction to the Gaussian profile as
\begin{equation}\label{eq:remainder-core-composite}
\begin{aligned}
w_{2,a}
&:=F_{\ne0}+F_{\mathrm{rad}}
+(\nu t)^{\frac{1}{2}}H+(\nu t)J,
\\
v_{2,a}
&:=v^{F_{\ne0}}+v^{F_{\mathrm{rad}}}
+(\nu t)^{\frac{1}{2}}v^H+(\nu t)v^J
=K\star_\xi w_{2,a}.
\end{aligned}
\end{equation}
Thus \(w_{2,\mathrm{app}}=G+\nu t\,w_{2,a}\) and
\(v_{2,\mathrm{app}}=v^G+\nu t\,v_{2,a}\).
For \(t>0\), define the wave and concentrated remainders by
\begin{equation}\label{eq:remainder-ansatz}
\w^{E,\nu}=\wapp+\nu^{\frac{3}{2}}\wb_1,
\qquad
w_2=w_{2,\mathrm{app}}+\nu t\,\wb_2,
\end{equation}
and set
\begin{equation}\label{eq:remainder-velocity-ansatz}
\vb_1:=K\star_x\wb_1,
\qquad
\vb_2:=K\star_\xi\wb_2.
\end{equation}
It follows that
\(v^{E,\nu}=\vapp+\nu^{3/2}\vb_1\) and
\(v_2=v^G+\nu t(v_{2,a}+\vb_2)\).

\paragraph{The concentrated remainder in self-similar variables.}
Throughout this paragraph, \(x=\Zn(t)+\sqrt{\nu t}\,\xi\).
The velocity \(\vb_1(x,t)\) is evaluated in the original variables
corresponding to \(\xi\).
Since \(\partial_t\Zn=\vapp(\Zn,t)\),
\[
v^{E,\nu}(x,t)-\partial_t\Zn
\stackrel{\substack{\eqref{eq:strain-field}\\
\eqref{eq:remainder-ansatz}}}{=}
b(\xi,t)+\nu^{3/2}\vb_1(x,t).
\]
Subtracting the approximate residual and dividing by \(\nu t\) gives
\begin{equation}\label{eq:concentrated-remainder-equation}
\begin{aligned}
0\stackrel{\substack{\eqref{eq:exact-inner-equation}\\
\eqref{eq:remainder-ansatz}}}{=}&
(t\partial_t-\cL+1)\wb_2
+\frac{1}{\nu}\Lambda\wb_2
+\sqrt{\frac{t}{\nu}}\,b\cdot\nabla\wb_2
\\
&+t\Bigl(
\vb_2\cdot\nabla w_{2,a}
+v_{2,a}\cdot\nabla\wb_2
+\vb_2\cdot\nabla\wb_2
\Bigr)
\\
&+\frac{1}{\sqrt t}\,
\vb_1(x,t)\cdot\nabla G
+\nu\sqrt t\,
\vb_1(x,t)\cdot\nabla\bigl(w_{2,a}+\wb_2\bigr)
\\
&+\frac{1}{\nu t}\Phi(w_{2,\mathrm{app}},\vapp),
\qquad \xi\in\Rt,\quad t>0.
\end{aligned}
\end{equation}
Here the residual in the last line is given explicitly by
\eqref{eq:profile-residual-reduction}.  The coefficients in
\eqref{eq:concentrated-remainder-equation} follow from
\((t\partial_t-\cL)(\nu t\,\wb_2)
=\nu t(t\partial_t-\cL+1)\wb_2\),
\(\Lambda\wb_2=v^G\cdot\nabla\wb_2+\vb_2\cdot\nabla G\),
\(\sqrt{t/\nu}\,\nu^{3/2}/(\nu t)=t^{-1/2}\), and
\(\sqrt{t/\nu}\,\nu^{3/2}=\nu\sqrt t\).

\paragraph{The wave remainder in the original variables.}
The Biot--Savart kernel is homogeneous of degree \(-1\).  For
\(\xi=(x-\Zn(t))/\sqrt{\nu t}\),
\eqref{eq:remainder-ansatz}, \eqref{eq:remainder-core-composite}, and
\eqref{eq:remainder-velocity-ansatz} give
\begin{equation}\label{eq:physical-total-velocity}
\begin{aligned}
v^{B,\nu}(x,t)
&=V_G^{\Zn,\nu}(x,t)
+\sqrt{\nu t}\,
\bigl(v_{2,a}+\vb_2\bigr)(\xi,t),
\\
u^\nu(x,t)
&\stackrel{\substack{\eqref{eq:remainder-ansatz}\\
\eqref{eq:regular-approx-def}}}{=}\vapp(x,t)+V_G^{\Zn,\nu}(x,t)
+\nu^{\frac{3}{2}}\vb_1(x,t)
+\sqrt{\nu t}\,
\bigl(v_{2,a}+\vb_2\bigr)(\xi,t).
\end{aligned}
\end{equation}
Using \eqref{eq:comp-eq}, \eqref{eq:regular-approx-residual},
\eqref{eq:remainder-ansatz}, and \eqref{eq:physical-total-velocity},
we obtain the wave remainder equation
\begin{equation}\label{eq:wave-remainder-equation}
\begin{aligned}
\partial_t\wb_1
+u^\nu\cdot\nabla\wb_1
-\nu\Delta\wb_1
={}&-\vb_1\cdot\nabla\wapp
\\
&-
\frac{\sqrt t}{\nu}
\bigl(v_{2,a}+\vb_2\bigr)
\left(
\frac{x-\Zn(t)}{\sqrt{\nu t}},t
\right)\cdot\nabla\wapp
\\
&-
\sqrt\nu\left(
v_{1,a}\cdot\nabla w_{1,a}-\Delta w_{1,a}
\right),
\qquad x\in\Rt,\quad t>0.
\end{aligned}
\end{equation}
The transport velocity \(u^\nu\) includes both the exact wave velocity
and the exact concentrated velocity.

\paragraph{Mass constraints.}
Conservation of the component masses, the initial value
\(w_{1,a}(0)=0\), and the zero masses of the core corrections give
\begin{equation}\label{eq:remainder-mass-constraints}
\int_{\Rt}\wb_1(x,t)\,dx
\stackrel{\substack{\eqref{eq:remainder-ansatz}\\
\eqref{eq:w1a-zero-mass}}}{=}0,
\qquad
\int_{\Rt}\wb_2(\xi,t)\,d\xi
\stackrel{\eqref{eq:remainder-ansatz}}{=}0,
\qquad t>0.
\end{equation}

\section{Estimate for the linearized concentrated equation}
\label{sec:linearized-estimate}
We adapt the strain-cancelling energy estimates of
\cite{Ga11,DonatiGallay2026} to the forced linear equation used for
the concentrated remainder.
The linear equation is
\begin{equation}\label{eq:linearized}
(t\partial_t-\cL+1)w
+\frac{1}{\nu}\Lambda w
+\sqrt{\frac{t}{\nu}}\,b(\xi,t)\cdot\nabla w
=f,
\end{equation}
with
\(v=K\star_\xi w\) and
\(\Lambda w=v^G\cdot\nabla w+v\cdot\nabla G\),
and the zero-mass condition
\begin{equation}\label{eq:zero-mass}
\int_{\Rt}w(\xi,t)\,d\xi=0.
\end{equation}
The drift in \eqref{eq:linearized} is the field defined in
\eqref{eq:strain-field}.  We also recall that
\(\Sigma(t)=\operatorname{sym}\nabla\vapp(\Zn(t),t)\) and
\(\operatorname{tr}\Sigma(t)=0\).
In this section, we assume that \(b\) is divergence-free and satisfies
\begin{equation}\label{eq:linear-b-bounds}
\begin{aligned}
\nabla_\xi\cdot b(\xi,t)&=0,
\qquad
\abs{b(\xi,t)}\lsim{T} 1,
&&\xi\in\Rt,
\\
\abs{b(\xi,t)}
&\lsim{T} \sqrt{\nu t}\abs\xi,
&&\sqrt{\nu t}\abs\xi\le1,
\\
\abs{b(\xi,t)\cdot\xi
-\sqrt{\nu t}\,\xi\cdot\Sigma(t)\xi}
&\lsim{T} \nu t\abs\xi^3,
&&\sqrt{\nu t}\abs\xi\le1.
\end{aligned}
\end{equation}

\begin{theorem}
\label{thm:linear-weighted-estimate}
Fix \(T>0\), and assume \eqref{eq:linear-b-bounds} and
\eqref{eq:Sigma-time-bound}, with
\(\Sigma(t)^T=\Sigma(t)\) and \(\operatorname{tr}\Sigma(t)=0\).
There exist \(a_0>0\), \(B_0>1\),
\(\nu_T>0\), and \(\kappa_T>0\) such that the following
holds.  If \(0<\nu\le\nu_T\) and \(w\) is a smooth real-valued solution of
\eqref{eq:linearized} satisfying \eqref{eq:zero-mass}, with finite
weighted energy and locally integrable dissipation, then,
for almost every \(0<t\le T\),
\begin{equation}\label{eq:linear-weighted-estimate}
\begin{aligned}
t\frac{d}{dt}E_{\pw}(t)
+\kappa_T\mathcal Q(t)
-\abs{\inner{f(t)}{w(t)}_{\pw(t)}}
&\lsim{T}tE_{\pw}(t).
\end{aligned}
\end{equation}
\end{theorem}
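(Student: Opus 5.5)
We will construct the time-dependent weight \(\pw=\pw_0+\nu t\,q\) announced in the outline, and define \(E_{\pw}(t)=\frac12\int_{\Rt}\pw\,w^2\,d\xi\) together with the dissipation
\(\mathcal Q(t)=\int\pw\abs{\nabla w}^2\,d\xi+\int\pw\,\abs{\xi}^2w^2\,d\xi\) (or a variant adapted to the region structure of \(\pw_0\)). The radial weight \(\pw_0\) is chosen as follows:
\begin{itemize}
\item it equals \(e^{\abs\xi^2/4}\) for \(\abs\xi\le R_1\);
\item it is constant on a plateau \(R_1\le\abs\xi\le R_2\);
\item it has an exponential tail \(e^{a_0\abs\xi}\) beyond.
\end{itemize}
The radii are tied to time and \(\nu\) through the self-similar scale: the Gaussian region should be \(\abs\xi\lesssim A\sqrt{\log(1/(\nu t))}\) or \(\abs\xi\le\epsilon(\nu t)^{-1/2}\), so that the Taylor bounds \eqref{eq:linear-b-bounds} apply there. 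The constant \(B_0\) enters as the transition scale.

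The corrector \(q\) solves the fixed-mode equation \(v^G\cdot\nabla q=-\Sigma(t)\xi\cdot\nabla\pw_0\) in the inner region. Since \(\Sigma\xi\cdot\xi\,e^{\abs\xi^2/4}/2\) is a pure second mode, \(q=\phi(\abs\xi)\,\xi\cdot\Sigma^{\perp}\xi\)-type with \(\phi\) explicit via \(\varphi\) from \eqref{eq:fixed-mode-radial-coefficients}, and it is cut off outside the inner region. Bounds on \(\partial_t q\) come from \eqref{eq:Sigma-time-bound}.

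Multiply \eqref{eq:linearized} by \(\pw w\) and integrate. The term \(t\partial_t\) gives \(tE_{\pw}'-\frac t2\int\partial_t\pw\,w^2\). The operator \(1-\cL\) gives the weighted Dirichlet form (as in Lemma~\ref{lem:fixed-mode-elementary-tools}) plus commutator terms from \(\nabla\pw\). In the Gaussian region this yields the coercivity \(\int\pw\abs{\nabla w}^2\gtrsim\) full \(\mathcal Q\), using the zero-mass condition \eqref{eq:zero-mass} to remove the kernel direction \(G\) (the spectral gap of \(\cL\) on zero-mass functions in \(Y\)). On the plateau \(\nabla\pw_0=0\), and in the tail the exponential weight gives a drift term \(\frac12\xi\cdot\nabla\pw\sim a_0\abs\xi\pw\), which dominates.

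The \(\nu^{-1}\Lambda\) term is where the weight matters. With \(\pw_0\) radial, the transport part \(\nu^{-1}\int\pw_0 v^G\cdot\nabla(w^2/2)\) vanishes. The nonlocal part \(\nu^{-1}\int\pw_0 w\,v\cdot\nabla G\) vanishes in the Gaussian region by the skew-symmetry \eqref{eq:fixed-mode-Lambda-skew}. Outside that region \(\nabla G\) is super-exponentially small, so the part of the integral there is \(O(\nu^{-1}e^{-cR_1^2})\) times \(\mathcal Q\), which is negligible by the choice of \(R_1\). The \(q\)-part of the weight produces \(\nu^{-1}\cdot\nu t\int q\,(v^G\cdot\nabla)(w^2/2)=-t\int(v^G\cdot\nabla q)w^2/2\). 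By construction this cancels the leading strain contribution \(\frac t2\int\Sigma\xi\cdot\nabla\pw_0\,w^2\) coming from the drift term \(\sqrt{t/\nu}\,b\cdot\nabla w\). That drift contribution is \(-\frac12\sqrt{t/\nu}\int(b\cdot\nabla\pw)w^2\) after using \(\nabla\cdot b=0\). The Taylor remainder \(\nu t\abs\xi^3\) from \eqref{eq:linear-b-bounds} multiplies \(\sqrt{t/\nu}\,\abs{\nabla\pw_0}/\pw_0\sim\abs\xi\) and is bounded by \(t\sqrt{\nu t}\abs\xi^4\pw w^2\), which is absorbed into \(\mathcal Q\) when \(\abs\xi\le\epsilon(\nu t)^{-1/2}\). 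The residual nonlocal term \(\nu t\,q\,v\cdot\nabla G\), divided by \(\nu\), is of size \(t\), and is bounded by \(tE_{\pw}\) via the \(L^4\) Biot--Savart bound. The \(t\partial_t\pw\) terms are likewise \(\lesssim tE_{\pw}\) plus the helpful \(\partial_t\pw_0\) sign, if the radii move outward (cf.\ \eqref{eq:p0-pointwise-coercivity}).

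In the tail, \(\abs{b}\lesssim1\) gives \(\sqrt{t/\nu}\,a_0\int\pw w^2\). This term must be dominated by the \(1-\cL\) drift gain \(a_0\abs\xi\pw w^2\) with \(\abs\xi\ge R_2\gg\sqrt{t/\nu}\). Hence \(R_2\) must be of order \(B_0(\nu t)^{-1/2}\) or larger, and this is the main obstacle: reconciling the plateau endpoints, the nonlocal leakage from \(\nabla G\), and the factor \(\nu^{-1}\) with uniform constants in \(t\), without any smallness of \(t\abs\Sigma\). I would first try \(R_1=\epsilon(\nu t)^{-1/2}\) and \(R_2=B_0(\nu t)^{-1/2}\), check each term region by region, and choose \(\nu_T\) last. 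Summing the region estimates yields \eqref{eq:linear-weighted-estimate}, with \(\abs{\inner{f}{w}_{\pw}}\) left as is.
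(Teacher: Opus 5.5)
There is a genuine gap, and it is in your choice of the edge of the Gaussian core. You take \(R_1=\epsilon(\nu t)^{-1/2}\) so that the Taylor bounds \eqref{eq:linear-b-bounds} apply there. The binding constraint, however, is the size of the strain corrector. In the core \(v^G\cdot\nabla=\frac{1-e^{-|\xi|^2/4}}{2\pi|\xi|^2}\,\partial_\theta\) and \(\Sigma\xi\cdot\nabla\pw_0\sim|\xi|^2\pw_0\), so the cell equation forces \(|q|\sim|\Sigma|\,|\xi|^4\pw_0\) up to a radial function, which does not affect the second mode (compare \eqref{eq:q-pointwise}). Hence \(\nu t\,|q|/\pw_0\sim\nu t|\xi|^4\), which at \(|\xi|\sim\epsilon(\nu t)^{-1/2}\) is of order \(\epsilon^4/(\nu t)\gg1\).

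Since \(\xi\cdot\Sigma\xi^\perp\) changes sign, \(\pw=\pw_0+\nu t\,q\) is then not even positive. None of the \(q\)-errors \(\nu t\int q|\nabla w|^2\), \(\nu t\int w\nabla q\cdot\nabla w\), \(\frac{\nu t}{2}\int(q+t\partial_tq)|w|^2\), \(\frac{\nu t}{2}\sqrt{t/\nu}\int(b\cdot\nabla q)|w|^2\) can be absorbed. For the same reason, your claim that the \(t\partial_t(\nu t\,q)\) contribution is \(\lesssim tE_{\pw}\) is false. That term has size \(\nu t|\xi|^4\pw_0|w|^2\), is unbounded in \(\xi\), and must be absorbed into the confinement part of the dissipation.

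The paper instead places the core edge at \(a_\nu(t)=a_0(\nu t)^{-1/4}\), so that \((\nu t)(1+|\xi|^2)^2\le\frac94a_0^4\) on \(\supp q\) by \eqref{eq:inner-weight-scale}. This gives \(\frac34\pw_0\le\pw\le\frac54\pw_0\) and \eqref{eq:normalized-q-bounds}, while the Taylor remainder \(\mathcal T_1\) costs only \(C_Tt\,a_0(\nu t)^{1/4}\mathcal J\).

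The same scaling removes what you call the main obstacle. Take plateau height \(e^{a_\nu^2/4}\), outer edge \(B_\nu=B_0(\nu t)^{-1/2}\) (your tail argument for this edge is exactly the paper's \(\mathcal T_3\) estimate), and tail \(e^{\beta|\xi|/4}\) with \(\beta=a_0^2/B_0\). The identity \(\beta B_\nu=a_\nu^2\) then makes \(\pw_0\) continuous with a time-independent tail rate. Your fixed rate \(e^{a_0|\xi|}\) cannot match a plateau of height \(e^{\epsilon^2/(4\nu t)}\). A downward jump at \(R_2\) would put a boundary term of the wrong sign into \(-\frac14\int(\xi\cdot\nabla\pw)|w|^2\).

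A second gap is continuity of \(\pw\) at the inner edge. If \(\pw_0\) merely has a kink at \(R_1\), then \(q\propto\xi\cdot\nabla\pw_0\) jumps there and \(\pw\) is discontinuous. The energy identity then acquires interface terms involving traces of \(w\,\partial_rw\), which \(\mathcal Q\) does not control. Cutting \(q\) off with a separate radial cutoff where \(\nabla\pw_0\ne0\) does not help either. It leaves an uncancelled strain term of relative size \(t|\Sigma|\), which is exactly what the theorem must avoid.

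The paper uses the \(C^1\) transition profile \(\vartheta\) on \(a_\nu^2-1\le|\xi|^2\le a_\nu^2+1\), designed so that \(\widetilde\vartheta=\vartheta+4\vartheta'\) runs from \(0\) to \(1\). Then \(q\) in \eqref{eq:q-def} satisfies the cell equation exactly on the whole inner region and vanishes continuously at the plateau. Lemma~\ref{lem:p0-pointwise-coercivity} supplies coercivity in the annulus and on the plateau, using the good sign of \(t\partial_t\pw_0\) (the plateau height decreases in \(t\)).

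Two smaller remarks:
\begin{itemize}
\item The coercivity does not need the zero-mass spectral gap. The \(+1\) in \((t\partial_t-\cL+1)\) leaves the term \(-\frac12\int\pw|w|^2\) in \(t\frac{d}{dt}E_{\pw}\) directly.
\item In the nonlocal term, the cancellation \eqref{eq:gaussian-reaction-zero} is global, not a statement about the Gaussian region. The leakage is therefore \(\nu^{-1}\int(\pw_0-\pw_G)(v\cdot\nabla G)w\), where \(\pw_G|\nabla G|\sim|\xi|\) is not small. The smallness comes from \(\pw_0^{-1/2}\lesssim e^{-\beta|\xi|/8}\) on \(|\xi|\gtrsim a_\nu\), as in Lemma~\ref{lem:reaction}.
\end{itemize}
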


Here \(\pw\) is defined by \eqref{eq:p-def}, \(E_{\pw}\) by
\eqref{eq:energy}, and \(\mathcal Q\) by
\eqref{eq:IJK-def}--\eqref{eq:Q-def}; the pairing is defined in
\eqref{eq:p-weighted-pairing}.  No smallness condition is imposed on
\(\sup_{0<t\le T}t\abs{\Sigma(t)}\).
Throughout the construction, we take
\(0<a_0\ll1\), \(B_0\gg1\), and
\(\beta=a_0^2/B_0\), in that order.

In Section~\ref{sec:w2-estimate}, we apply this estimate to
\(w=\wb_2\) and bound each source term.

\subsection{The radial weight}
The radial part of the weight contains a Gaussian
core, a constant plateau, and an outer exponential tail.  The angular
part is supported in the core and solves the cell equation that cancels
the quadratic strain.  Fix \(0<a_0\ll1\) and \(B_0\gg1\), chosen in
that order, and set
\begin{equation}\label{eq:weight-scales}
a_\nu(t)
:=
a_0(\nu t)^{-\frac{1}{4}},
\qquad
B_\nu(t)
:=
B_0(\nu t)^{-\frac{1}{2}},
\qquad
\beta:=\frac{a_0^2}{B_0}.
\end{equation}
Then
\begin{equation}\label{eq:weight-scale-identity}
\beta B_\nu(t)
\stackrel{\eqref{eq:weight-scales}}{=}a_\nu(t)^2.
\end{equation}
We impose
\begin{equation}\label{eq:standing-smallness}
\nu T\le\frac{a_0^4}{16},
\qquad
\beta\le1.
\end{equation}
Define \(\vartheta:(-\infty,1]\to[0,1]\) by
\begin{equation}\label{eq:weight-cutoff}
\vartheta(y)
:=
\begin{cases}
0,
&y\le-1,
\\[1mm]
\zeta_2(y+1)^2-\zeta_3(y+1)^3,
&-1\le y\le1,
\end{cases}
\end{equation}
where
\begin{equation}\label{eq:weight-cutoff-coefficients}
\zeta_2
:=
\frac34-\frac78e^{-\frac{1}{4}},
\qquad
\zeta_3
:=
\frac14-\frac5{16}e^{-\frac{1}{4}}.
\end{equation}
The radial background weight is
\begin{equation}\label{eq:p0-def}
\pw_0(t,\xi)
:=
\begin{cases}
\displaystyle
e^{\frac{\abs{\xi}^2}{4}}
\left(
1-\vartheta\bigl(\abs{\xi}^2-a_\nu(t)^2\bigr)
\right),
&
0\le\abs{\xi}^2\le a_\nu(t)^2+1,
\\[2mm]
\displaystyle
e^{\frac{a_\nu(t)^2}{4}},
&
a_\nu(t)^2+1\le\abs{\xi}^2\le B_\nu(t)^2,
\\[2mm]
\displaystyle
e^{\frac{\beta\abs{\xi}}{4}},
&
\abs{\xi}\ge B_\nu(t).
\end{cases}
\end{equation}
The weight \(\pw_0\) is \(C^1\) across the transition annulus
and continuous at \(|\xi|=B_\nu(t)\). These matching conditions imply the continuity of
\(q\) in \eqref{eq:q-def}. The weighted energy identity uses only
first derivatives of \(\pw_0\).
\begin{lemma}\label{lem:weight-matching}
The function \(\vartheta\) defined in
\eqref{eq:weight-cutoff}--\eqref{eq:weight-cutoff-coefficients}
satisfies
\begin{equation}\label{eq:weight-cutoff-matching}
\begin{gathered}
\vartheta(-1)=\vartheta'(-1)=0,
\\
\vartheta(1)=1-e^{-\frac14},
\qquad
\vartheta'(1)=\frac14e^{-\frac14}.
\end{gathered}
\end{equation}
Consequently, for every \(t>0\), the radial weight
\(\pw_0(t,\cdot)\) defined in \eqref{eq:p0-def} is continuous on
\(\mathbb{R}^2\) and belongs to
\(W^{1,\infty}_{\mathrm{loc}}(\mathbb{R}^2)\).
In particular, the weighted integration by parts identities used below
are valid for \(\pw_0\).
\end{lemma}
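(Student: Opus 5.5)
The four identities in \eqref{eq:weight-cutoff-matching} are direct evaluations of the cubic on \([-1,1]\). We use them to check that the three pieces of \eqref{eq:p0-def} glue continuously, and in \(C^1\) fashion across the transition annulus. Write \(Y=y+1\in[0,2]\), so \(\vartheta=\zeta_2Y^2-\zeta_3Y^3\) and \(\vartheta'=2\zeta_2Y-3\zeta_3Y^2\). At \(y=-1\) both vanish because \(Y=0\). At \(y=1\) we have \(\vartheta(1)=4\zeta_2-8\zeta_3=3-\tfrac72e^{-1/4}-2+\tfrac52e^{-1/4}=1-e^{-1/4}\). Similarly \(\vartheta'(1)=4\zeta_2-12\zeta_3=3-\tfrac72e^{-1/4}-3+\tfrac{15}4e^{-1/4}=\tfrac14e^{-1/4}\). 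This proves \eqref{eq:weight-cutoff-matching}.

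For the weight, set \(\rho=|\xi|^2\) and \(a=a_\nu(t)\). On the core region, \(\pw_0=e^{\rho/4}(1-\vartheta(\rho-a^2))\) is smooth in \(\rho\), with \(\vartheta\equiv0\) for \(\rho\le a^2-1\). Here we need \(a^2\ge1\); this follows from \eqref{eq:standing-smallness}, since \(a^2=a_0^2(\nu t)^{-1/2}\ge a_0^2\cdot 4/a_0^2=4\). At \(\rho=a^2+1\), the value is \(e^{(a^2+1)/4}e^{-1/4}=e^{a^2/4}\), which matches the plateau. The \(\rho\)-derivative is \(e^{\rho/4}\bigl[\tfrac14(1-\vartheta)-\vartheta'\bigr]\). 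At this endpoint it equals \(e^{\rho/4}\bigl[\tfrac14e^{-1/4}-\tfrac14e^{-1/4}\bigr]=0\), which matches the plateau derivative. Because \(\rho=|\xi|^2\) is smooth, \(\pw_0\) is therefore \(C^1\) across the sphere \(|\xi|^2=a^2+1\). We also note that \(\pw_0>0\) there, since \(\vartheta\le1-e^{-1/4}<1\) is increasing on \([-1,1]\) (as \(\vartheta'=Y(2\zeta_2-3\zeta_3Y)\ge0\) for \(Y\le2\)).

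At \(|\xi|=B_\nu\), continuity is the identity \(\beta B_\nu/4=a^2/4\) from \eqref{eq:weight-scale-identity}. This requires \(a^2+1\le B_\nu^2\). That holds because \(B_\nu^2=B_0^2(\nu t)^{-1}\) dominates \(a_0^2(\nu t)^{-1/2}+1\) when \(B_0\gg1\) and \(\nu t\le1\).

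Thus \(\pw_0(t,\cdot)\) is a continuous, piecewise \(C^1\) radial function, smooth away from two circles. Its one-sided gradients are bounded on compact sets, with a jump only at \(|\xi|=B_\nu\). A continuous function that is \(C^1\) on each closed piece of a finite partition by Lipschitz circles lies in \(W^{1,\infty}_{\mathrm{loc}}\), and its distributional gradient is the piecewise gradient, since no interface delta terms arise when the function itself is continuous. Integration by parts against such weights, with smooth decaying \(w\), is then valid by approximating or splitting the domain along the circles: the boundary terms cancel by continuity of \(\pw_0\).

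There is no real obstacle here. The only care needed is the ordering constraint \(a^2-1\ge0\) and \(a^2+1\le B_\nu^2\), which is ensured by \eqref{eq:standing-smallness} and the choice \(B_0\gg1\).
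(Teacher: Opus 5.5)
Your proof is correct and follows the paper's argument: you evaluate the cubic at \(y=\pm1\), check that value and radial derivative match across the transition annulus, check value matching at \(|\xi|=B_\nu(t)\) via \(\beta B_\nu(t)=a_\nu(t)^2\), and conclude \(W^{1,\infty}_{\mathrm{loc}}\) because a continuous piecewise \(C^1\) function has no interface terms in its distributional gradient. One small correction: \(\pw_0\) is not smooth across \(|\xi|^2=a_\nu(t)^2-1\), since \(\vartheta''(-1)=2\zeta_2\neq0\), so there are three interface circles rather than two. Across that inner circle \(\pw_0\) is only \(C^1\), which is exactly what your identities \(\vartheta(-1)=\vartheta'(-1)=0\) give, and \(C^1\) is all the argument needs.
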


\begin{proof}
Fix \(t>0\), and abbreviate \(a=a_\nu(t)\), \(B=B_\nu(t)\),
\(r=|\xi|\), and \(y=r^2-a^2\).  On \([-1,1]\),
\(\vartheta(y)=\zeta_2(y+1)^2-\zeta_3(y+1)^3\) and
\(\vartheta'(y)=2\zeta_2(y+1)-3\zeta_3(y+1)^2\).  Hence
\(\vartheta(-1)=\vartheta'(-1)=0\).  At \(y=1\),
\(\vartheta(1)=4\zeta_2-8\zeta_3\) and
\(\vartheta'(1)=4\zeta_2-12\zeta_3\).  Moreover,
\[
\begin{aligned}
4\zeta_2-8\zeta_3
&\stackrel{\eqref{eq:weight-cutoff-coefficients}}{=}
\left(3-\frac72e^{-\frac14}\right)
-
\left(2-\frac52e^{-\frac14}\right)=
1-e^{-\frac14},
\\
4\zeta_2-12\zeta_3
&\stackrel{\eqref{eq:weight-cutoff-coefficients}}{=}
\left(3-\frac72e^{-\frac14}\right)
-
\left(3-\frac{15}{4}e^{-\frac14}\right)=
\frac14e^{-\frac14}.
\end{aligned}
\]
This proves \eqref{eq:weight-cutoff-matching}.

We next check the matching conditions.  In the Gaussian and transition
regions, \(\pw_0(t,r)=e^{r^2/4}(1-\vartheta(y))\), and therefore
\begin{equation}\label{eq:p0-radial-derivative}
\partial_r\pw_0(t,r)
=
r e^{\frac{r^2}{4}}
\left(
\frac12(1-\vartheta(y))-2\vartheta'(y)
\right).
\end{equation}

Since \(a\stackrel{\eqref{eq:standing-smallness}}{\ge}2\), the interfaces are
well defined.  At
\(\Gamma_-(t):=\{|\xi|^2=a^2-1\}\), the identities
\(\vartheta(-1)=\vartheta'(-1)=0\) show that both one-sided traces are
\(\pw_0=e^{r^2/4}\) and
\(\partial_r\pw_0=(r/2)e^{r^2/4}\).

At \(\Gamma_+(t):=\{|\xi|^2=a^2+1\}\), the transition trace is
\(e^{(a^2+1)/4}e^{-1/4}=e^{a^2/4}\), which is the plateau value.
Moreover,
\[
\partial_r\pw_0
\stackrel{\substack{\eqref{eq:p0-radial-derivative}\\
\eqref{eq:weight-cutoff-matching}}}{=}
r e^{r^2/4}\left(\frac12e^{-1/4}
-2\cdot\frac14e^{-1/4}\right)=0.
\]
Thus both the value and the radial derivative match at the two
interfaces surrounding the transition annulus.

At the outer interface
\(\Gamma_B(t):=\{|\xi|=B\}\), the plateau and exponential traces are
\(e^{a^2/4}\) and \(e^{\beta B/4}\), respectively.  They coincide
because \(\beta B\stackrel{\eqref{eq:weight-scale-identity}}{=}a^2\).
The radial derivatives need not match:
\(\partial_r^-\pw_0(t,B)=0\), whereas
\(\partial_r^+\pw_0(t,B)=(\beta/4)e^{\beta B/4}\).  Since \(\pw_0\) is continuous, its first distributional derivatives
contain no interface measure. The energy identity uses only these
derivatives; the mixed diffusion term is not integrated by parts again.
Each piece has locally bounded first derivatives, so
\(\pw_0(t,\cdot)\in W^{1,\infty}_{\mathrm{loc}}(\mathbb R^2)\).
\end{proof}

\begin{lemma}\label{lem:weight-cutoff-properties}
For \(-1<y\le1\), the transition cutoff satisfies
\begin{equation}\label{eq:weight-cutoff-properties}
0<\vartheta'(y)
\le
\frac14\bigl(1-\vartheta(y)\bigr).
\end{equation}
\end{lemma}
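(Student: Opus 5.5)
We will verify the inequality by direct computation in the variable \(s:=y+1\in(0,2]\), on which \(\vartheta(y)=\zeta_2s^2-\zeta_3s^3\) and \(\vartheta'(y)=s(2\zeta_2-3\zeta_3s)\). The elementary numerical bounds needed are \(0.77<e^{-1/4}<0.78\). These give \(\zeta_2=\frac34-\frac78e^{-1/4}>0\) and \(\zeta_3=\frac14-\frac5{16}e^{-1/4}>0\), since \(e^{-1/4}<\frac67\) and \(e^{-1/4}<\frac45\).

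\emph{Positivity.} The factor \(2\zeta_2-3\zeta_3s\) is affine and decreasing in \(s\), because \(\zeta_3>0\). Its minimum on \([0,2]\) is therefore attained at \(s=2\), where it equals \(\frac12(4\zeta_2-12\zeta_3)\). By \eqref{eq:weight-cutoff-matching}, this value is \(\frac18e^{-1/4}>0\). Since \(s>0\), it follows that \(\vartheta'(y)>0\) for \(-1<y\le1\).

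\emph{Upper bound.} Consider the cubic
\[
g(s):=\tfrac14\bigl(1-\vartheta\bigr)-\vartheta'
=\tfrac14-2\zeta_2s+\bigl(3\zeta_3-\tfrac14\zeta_2\bigr)s^2+\tfrac14\zeta_3s^3 .
\]
The claim is equivalent to \(g\ge0\) on \((0,2]\).

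\begin{itemize}
\item At the right endpoint, \eqref{eq:weight-cutoff-matching} gives \(g(2)=\frac14e^{-1/4}-\frac14e^{-1/4}=0\).
\item The derivative is \(g'(s)=-2\zeta_2+2\bigl(3\zeta_3-\frac14\zeta_2\bigr)s+\frac34\zeta_3s^2\). This is a quadratic with positive leading coefficient, hence convex. Its maximum on \([0,2]\) is therefore attained at an endpoint, and the sign of the middle coefficient plays no role.
\item At the endpoints, \(g'(0)=-2\zeta_2<0\) and \(g'(2)=-3\zeta_2+15\zeta_3=\frac32-\frac{33}{16}e^{-1/4}<0\), since \(\frac{33}{16}\cdot0.77>1.5\).
\end{itemize}

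Hence \(g'<0\) on \([0,2]\), so \(g\) is strictly decreasing there. Together with \(g(2)=0\), this gives \(g(s)\ge0\) for \(s\in(0,2]\), which is \eqref{eq:weight-cutoff-properties}. Equality holds exactly at \(y=1\), consistent with the matching of \(\partial_r\pw_0\) at \(\Gamma_+\).

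The only delicate step is the sign of \(g'(2)\), which rests on the explicit constants. Convexity of \(g'\) reduces the whole argument to this single endpoint check, so no estimate is needed in the interior of the interval.
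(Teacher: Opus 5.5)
Your proof is correct and follows the paper's route: show that $1-\vartheta-4\vartheta'$ (your $4g$) is strictly decreasing on $(-1,1]$ and vanishes at $y=1$ by \eqref{eq:weight-cutoff-matching}. The only difference is how the monotonicity is checked. The paper writes the derivative as $-\vartheta'-4\vartheta''$ and uses $\vartheta',\vartheta''>0$, where the sign of $\vartheta''$ at $y=1$ needs $e^{-1/4}>3/4$; you instead test the convex quadratic $g'$ at its two endpoints, which needs $e^{-1/4}>8/11$. Both versions are equally elementary.
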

\begin{proof}
Direct differentiation gives
\[
\vartheta'(y)
=(y+1)\bigl(2\zeta_2-3\zeta_3(y+1)\bigr),
\qquad
\vartheta''(y)
=2\zeta_2-6\zeta_3(y+1).
\]
By \eqref{eq:weight-cutoff-coefficients}, both derivatives are
positive on \((-1,1]\).  Hence
\(h:=1-\vartheta-4\vartheta'\) is strictly decreasing there.
Moreover,
\(h(1)\stackrel{\eqref{eq:weight-cutoff-matching}}{=}0\), and therefore
\(h(y)\ge0\) for \(-1<y\le1\), which is exactly the upper bound in
\eqref{eq:weight-cutoff-properties}.
\end{proof}
Set
\begin{equation}\label{eq:vartheta-tilde-def}
\widetilde\vartheta
:=
\vartheta+4\vartheta'.
\end{equation}
\begin{lemma}\label{lem:p0-inner-comparison}
Assume \eqref{eq:standing-smallness}.  For every \(t>0\) and every
\(\xi\in\Rt\) with \(\abs{\xi}^2\le a_\nu(t)^2+1\),
\begin{equation}\label{eq:p0-vs-gaussian}
e^{-\frac{1}{4}}e^{\frac{\abs{\xi}^2}{4}}
\le
\pw_0(t,\xi)
\le
e^{\frac{\abs{\xi}^2}{4}}.
\end{equation}
\end{lemma}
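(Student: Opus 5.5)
The estimate is a direct consequence of the definition \eqref{eq:p0-def} together with the range of the cutoff \(\vartheta\). We split the region \(\abs{\xi}^2\le a_\nu(t)^2+1\) into the Gaussian core \(\abs{\xi}^2\le a_\nu(t)^2-1\) and the transition annulus \(a_\nu(t)^2-1\le\abs{\xi}^2\le a_\nu(t)^2+1\). This split makes sense because \(a_\nu(t)\ge2\) under \eqref{eq:standing-smallness}. In both regions
\[
\pw_0(t,\xi)=e^{\frac{\abs{\xi}^2}{4}}\bigl(1-\vartheta(y)\bigr),
\qquad y:=\abs{\xi}^2-a_\nu(t)^2\in(-\infty,1],
\]
with \(\vartheta(y)=0\) for \(y\le-1\). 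The claim is therefore equivalent to
\[
e^{-\frac14}\le 1-\vartheta(y)\le1,
\qquad\text{equivalently}\qquad
0\le\vartheta(y)\le1-e^{-\frac14},
\qquad y\le1 .
\]

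For \(y\le-1\) this is trivial. For \(-1<y\le1\), we use Lemma~\ref{lem:weight-cutoff-properties}, which gives \(\vartheta'(y)>0\). So \(\vartheta\) is increasing on \([-1,1]\), and hence
\[
\vartheta(-1)\le\vartheta(y)\le\vartheta(1).
\]
By \eqref{eq:weight-cutoff-matching}, \(\vartheta(-1)=0\) and \(\vartheta(1)=1-e^{-1/4}\). This gives exactly the required bounds.

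The upper bound \(\pw_0\le e^{\abs{\xi}^2/4}\) follows from \(\vartheta\ge0\). The lower bound follows from \(1-\vartheta(y)\ge e^{-1/4}\).

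The only point needing care is the monotonicity of \(\vartheta\) on the whole interval \((-1,1]\). It is already supplied by Lemma~\ref{lem:weight-cutoff-properties}: there \(\vartheta'(y)=(y+1)\bigl(2\zeta_2-3\zeta_3(y+1)\bigr)\), and this is positive because \(2\zeta_2>6\zeta_3\) by \eqref{eq:weight-cutoff-coefficients}. So there is no real obstacle.
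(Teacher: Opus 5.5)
Your proposal is correct and follows the paper's proof essentially verbatim. Both reduce the claim to $e^{-1/4}\le 1-\vartheta(y)\le 1$ for $y\le 1$, dispose of $y\le -1$ trivially, and on $[-1,1]$ use the monotonicity from Lemma~\ref{lem:weight-cutoff-properties} together with the endpoint values in \eqref{eq:weight-cutoff-matching}.
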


\begin{proof}
Set \(y:=\abs{\xi}^2-a_\nu(t)^2\). In the region under consideration,
\(y\le1\), and
\[
\pw_0(t,\xi)
\stackrel{\eqref{eq:p0-def}}{=}
e^{\abs{\xi}^2/4}\bigl(1-\vartheta(y)\bigr),
\]
and
\eqref{eq:p0-vs-gaussian} is equivalent to
\[
e^{-\frac14}\le1-\vartheta(y)\le1,
\qquad y\le1.
\]

If \(y\le-1\), then
\(\vartheta(y)\stackrel{\eqref{eq:weight-cutoff}}{=}0\), and both
inequalities are immediate.  If \(-1\le y\le1\), then
\(\vartheta'\stackrel{\eqref{eq:weight-cutoff-properties}}{>}0\)
on \((-1,1]\), so \(\vartheta\) is increasing there.  Hence
\[
0
\stackrel{\eqref{eq:weight-cutoff-matching}}{=}
\vartheta(-1)
\le\vartheta(y)\le
\vartheta(1)
\stackrel{\eqref{eq:weight-cutoff-matching}}{=}
1-e^{-\frac14},
\]
which is the required two-sided bound.
\end{proof}
If \(y=\abs{\xi}^2-a_\nu(t)^2\), direct differentiation on each
smooth region gives
\begin{equation}\label{eq:p0-gradient}
\nabla\pw_0(t,\xi)
\stackrel{\substack{\eqref{eq:p0-def}\\
\eqref{eq:vartheta-tilde-def}}}{=}
\begin{cases}
\displaystyle
\frac{\xi}{2}e^{\frac{\abs{\xi}^2}{4}}
\left(1-\widetilde\vartheta(y)\right),
&
\abs{\xi}^2\le a_\nu(t)^2+1,
\\[2mm]
0,
&
a_\nu(t)^2+1<\abs{\xi}^2<B_\nu(t)^2,
\\[2mm]
\displaystyle
\frac{\beta}{4}
\frac{\xi}{\abs{\xi}}
e^{\frac{\beta\abs{\xi}}{4}},
&
\abs{\xi}>B_\nu(t),
\end{cases}
\end{equation}
and
\begin{equation}\label{eq:p0-time}
t\partial_t\pw_0(t,\xi)
\stackrel{\substack{\eqref{eq:p0-def}\\
\eqref{eq:weight-scales}}}{=}
\begin{cases}
\displaystyle
-\frac{a_\nu(t)^2}{2}
e^{\frac{\abs{\xi}^2}{4}}\vartheta'(y),
&
\abs{\xi}^2\le a_\nu(t)^2+1,
\\[2mm]
\displaystyle
-\frac{a_\nu(t)^2}{8}\pw_0(t,\xi),
&
a_\nu(t)^2+1<\abs{\xi}^2<B_\nu(t)^2,
\\[2mm]
0,
&
\abs{\xi}>B_\nu(t).
\end{cases}
\end{equation}

\begin{lemma}\label{lem:p0-pointwise-coercivity}
Let \(\pw_0\) be the radial weight defined in
\eqref{eq:p0-def}, and let
\begin{equation}\label{eq:chi-def}
\chi_\nu(t,\xi)
:=
\begin{cases}
\abs{\xi}^2,
&0\le\abs{\xi}\le a_\nu(t),
\\
a_\nu(t)^2,
&a_\nu(t)\le\abs{\xi}\le B_\nu(t),
\\
\beta\abs{\xi},
&\abs{\xi}\ge B_\nu(t).
\end{cases}
\end{equation}
Under the smallness assumptions \eqref{eq:standing-smallness}, for
every \(t\in(0,T]\) and almost every \(\xi\in\Rt\), one has
the pointwise bound
\begin{equation}\label{eq:p0-pointwise-coercivity}
\frac{\xi\cdot\nabla\pw_0}{4\pw_0}
-\frac{\abs{\nabla\pw_0}^2}{3\pw_0^2}
-\frac{t\partial_t\pw_0}{2\pw_0}
\ge
\frac1{48}\chi_\nu(t,\xi).
\end{equation}
\end{lemma}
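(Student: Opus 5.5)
The proof is a region-by-region computation of the left-hand side of \eqref{eq:p0-pointwise-coercivity}, using the explicit formulas \eqref{eq:p0-gradient} and \eqref{eq:p0-time}. Since \(\pw_0\) is piecewise smooth and continuous (Lemma~\ref{lem:weight-matching}), it suffices to check the inequality off the interfaces. Write \(r=\abs\xi\), \(a=a_\nu(t)\), \(B=B_\nu(t)\), \(y=r^2-a^2\). By \eqref{eq:standing-smallness}, \(a\ge2\), and \(B=a^2/\beta\ge a^2\) because \(\beta\le1\).

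\emph{Gaussian region \(r^2\le a^2-1\).} Here \(\vartheta=\vartheta'=0\), so \(\nabla\pw_0/\pw_0=\xi/2\) and \(t\partial_t\pw_0=0\). The left side equals \(r^2/8-r^2/12=r^2/24\), which is at least \(\chi_\nu/48\).

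\emph{Transition annulus \(a^2-1\le r^2\le a^2+1\).} Set \(u=1-\vartheta(y)\in[e^{-1/4},1]\) and \(s=\vartheta'(y)\ge0\). By \eqref{eq:p0-gradient} and \eqref{eq:p0-time},
\[
\frac{\nabla\pw_0}{\pw_0}=\frac{\xi}{2}\,\frac{u-4s}{u},
\qquad
-\frac{t\partial_t\pw_0}{2\pw_0}=\frac{a^2 s}{4u}.
\]
The left side is therefore
\[
\frac{r^2}{8}\,\frac{u-4s}{u}-\frac{r^2}{12}\,\frac{(u-4s)^2}{u^2}+\frac{a^2s}{4u}.
\]
Lemma~\ref{lem:weight-cutoff-properties} gives \(0\le x:=(u-4s)/u\le1\). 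The first two terms equal \(r^2\bigl(x/8-x^2/12\bigr)\ge r^2x/24\). The time term equals \(a^2(1-x)/16\). Since \(r^2\ge a^2-1\ge\tfrac34a^2\), the sum is at least \(a^2\bigl(x/32+(1-x)/16\bigr)\ge a^2/32\). This dominates \(\chi_\nu/48\), because \(\chi_\nu\le a^2+1\le\tfrac54a^2\) on this annulus. The key input is that the time derivative of the moving cutoff compensates exactly where \(\nabla\pw_0\) degenerates; this is the step where the choice of \(\vartheta\) matters.

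\emph{Plateau \(a^2+1<r^2<B^2\).} Here \(\nabla\pw_0=0\) and \(-t\partial_t\pw_0/(2\pw_0)=a^2/16\ge\chi_\nu/48\), since \(\chi_\nu=a^2\) in this region.

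\emph{Exponential tail \(r>B\).} Here \(\nabla\pw_0/\pw_0=(\beta/4)\xi/r\) and \(t\partial_t\pw_0=0\). The left side equals \(\beta r/16-\beta^2/48\). Since \(r>B\ge1\ge\beta\), this is at least \(\beta r/16-\beta r/48=\beta r/24\ge\chi_\nu/48\).

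The main obstacle is the transition annulus, since \(\nabla\pw_0\) can nearly vanish there. The fix is to retain the \(-t\partial_t\pw_0\) contribution and use the cutoff bound of Lemma~\ref{lem:weight-cutoff-properties} to keep \(x\in[0,1]\). With \(x\) in that range, the convex combination above stays bounded below by a fixed multiple of \(a^2\).
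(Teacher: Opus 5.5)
Your proof is correct and follows the paper's argument essentially step for step. It uses the same four-region computation, with your $x$ and $s/u$ playing the roles of the paper's $\mathcal A$ and $\mathcal B$ (related by $\mathcal A=1-4\mathcal B$), and Lemma~\ref{lem:weight-cutoff-properties} supplying $0\le x\le1$ in the transition annulus; the only differences are inessential constants, such as your $r^2\ge\frac34a^2$ where the paper uses $r^2\ge\frac12a^2$.
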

\begin{proof}
Denote by \(\mathfrak c_0(t,\xi)\) the left-hand side of
\eqref{eq:p0-pointwise-coercivity}, and write \(r=\abs\xi\) and
\(y=r^2-a_\nu(t)^2\).  Let 
\[
\widetilde\vartheta=\vartheta+4\vartheta',
\qquad
\mathcal A(y)=\frac{1-\widetilde\vartheta(y)}{1-\vartheta(y)},
\qquad
\mathcal B(y)=\frac{\vartheta'(y)}{1-\vartheta(y)},
\]
so that \(0\le\mathcal A\le1\), \(\mathcal B\ge0\), and
\(\frac1{48}\mathcal A+\frac14\mathcal B\ge\frac1{48}\).
The standing smallness assumption \(\nu T\le a_0^4/16\) gives
\(a_\nu(t)\ge2\) for \(0<t\le T\), and \(\beta\le1\).
We evaluate \(\mathfrak c_0\) in the four radial regions.

\smallskip
\noindent\emph{1. Gaussian region \(r^2\le a_\nu(t)^2-1\).}
Here \(y\le-1\), so \(\vartheta(y)=\vartheta'(y)=\widetilde\vartheta(y)=0\)
and \(\pw_0=e^{r^2/4}\).  Thus
\[
\frac{\nabla\pw_0}{\pw_0}=\frac\xi2,
\qquad
\frac{t\partial_t\pw_0}{\pw_0}=0,
\qquad
\mathfrak c_0=\frac{r^2}{8}-\frac{r^2}{12}
=\frac{r^2}{24}=\frac{\chi_\nu}{24}.
\]

\smallskip
\noindent\emph{2. Transition region \(a_\nu(t)^2-1\le r^2\le a_\nu(t)^2+1\).}
Here \(\pw_0=e^{r^2/4}(1-\vartheta(y))\), and therefore
\[
\frac{\nabla\pw_0}{\pw_0}=\frac\xi2\mathcal A(y),
\qquad
\frac{t\partial_t\pw_0}{\pw_0}=-\frac{a_\nu(t)^2}{2}\mathcal B(y),
\]
so that
\[
\mathfrak c_0
=r^2\left(\frac18\mathcal A(y)-\frac1{12}\mathcal A(y)^2\right)
+\frac{a_\nu(t)^2}{4}\mathcal B(y).
\]
Since \(0\le\mathcal A\le1\), one has
\(\frac18\mathcal A-\frac1{12}\mathcal A^2\ge\frac1{24}\mathcal A\).
Moreover \(a_\nu(t)\ge2\) gives \(r^2\ge a_\nu(t)^2-1\ge a_\nu(t)^2/2\),
while \(\chi_\nu\le a_\nu(t)^2\) throughout this region.  Hence
\[
\mathfrak c_0
\ge
a_\nu(t)^2
\left(\frac1{48}\mathcal A(y)+\frac14\mathcal B(y)\right)
\ge
\frac{a_\nu(t)^2}{48}
\ge
\frac{\chi_\nu}{48}.
\]

\smallskip
\noindent\emph{3. Plateau region \(a_\nu(t)^2+1\le r^2\le B_\nu(t)^2\).}
Here \(\pw_0=e^{a_\nu(t)^2/4}\), so \(\nabla\pw_0=0\); and since
\(t\partial_ta_\nu(t)^2=-\frac12a_\nu(t)^2\), one has
\(t\partial_t\pw_0=-\frac{a_\nu(t)^2}{8}\pw_0\).  Therefore
\[
\mathfrak c_0=\frac{a_\nu(t)^2}{16}=\frac{\chi_\nu}{16}.
\]

\smallskip
\noindent\emph{4. Outer exponential region \(r\ge B_\nu(t)\).}
Here \(\pw_0=e^{\beta r/4}\) is independent of \(t\), and
\(\nabla\pw_0=\frac\beta4\frac\xi r\pw_0\).  Hence
\[
\mathfrak c_0=\frac{\beta r}{16}-\frac{\beta^2}{48}.
\]
Since \(\beta B_\nu(t)=a_\nu(t)^2\), we have
\(\beta r\ge a_\nu(t)^2\ge4\), and \(\beta\le1\) gives
\(\beta^2\le\beta\le\frac14\beta r\).  Consequently
\[
\mathfrak c_0
\ge\frac{\beta r}{16}-\frac{\beta r}{192}
=\frac{11}{192}\beta r
\ge\frac{\chi_\nu}{48}.
\]

\smallskip
Combining the four cases proves the claim.
\end{proof}
\subsection{The strain-adapted angular correction}
We use an additive angular correction to implement the strain
cancellation of \cite{DonatiGallay2026}. Define
\begin{equation}\label{eq:q-def}
q(t,\xi)
:=
\begin{cases}
0,
&
\xi=0,
\\[2mm]
\displaystyle
\pi
\frac{\xi\cdot\nabla\pw_0(t,\xi)}
     {1-e^{-\frac{\abs{\xi}^2}{4}}}
\bigl(\xi\cdot\Sigma(t)\xi^\perp\bigr),
&
0<\abs{\xi}^2\le a_\nu(t)^2+1,
\\[3mm]
0,
&
\abs{\xi}^2>a_\nu(t)^2+1.
\end{cases}
\end{equation}
We extend \(q\) to \(t=0\) by setting
\begin{equation}\label{eq:q-initial-extension}
q(0,\xi)
:=
\begin{cases}
0,
&
\xi=0,
\\[2mm]
\displaystyle
\pi
\frac{
\frac{\abs{\xi}^2}{2}
e^{\frac{\abs{\xi}^2}{4}}
}{
1-e^{-\frac{\abs{\xi}^2}{4}}
}
\bigl(\xi\cdot\Sigma(0)\xi^\perp\bigr),
&
\xi\neq0.
\end{cases}
\end{equation}
For \(s\ge0\), set
\begin{equation}\label{eq:h-def}
\mathfrak h(s)
:=
\begin{cases}
\displaystyle
\frac{s e^{s/4}}{2(1-e^{-s/4})},
&s>0,
\\[2mm]
2,
&s=0.
\end{cases}
\end{equation}
\begin{lemma}\label{lem:q-zero-extension}
Assume \eqref{eq:standing-smallness}.  The function \(q\) defined by
\eqref{eq:q-def} and \eqref{eq:q-initial-extension} is continuous and
locally Lipschitz on \([0,T]\times\Rt\).  It is \(C^1\) on each smooth
piece determined by the moving interfaces
\(\abs{\xi}^2=a_\nu(t)^2\pm1\), and its first derivatives are locally
bounded on those pieces.  More precisely, for every \(L>0\),
\begin{equation}\label{eq:q-local-Lipschitz}
\sup_{\substack{0<t\le T,\ \abs{\xi}\le L\\
\abs{\xi}^2\ne a_\nu(t)^2\pm1}}
\left(
\abs{q(t,\xi)}
+\abs{\nabla q(t,\xi)}
+\abs{\partial_tq(t,\xi)}
\right)
<\infty.
\end{equation}
Moreover,
\begin{equation}\label{eq:q-boundary-trace}
q(t,\xi)=0
\qquad
\text{whenever }
0<t\le T
\quad\text{and}\quad
\abs{\xi}^2=a_\nu(t)^2+1.
\end{equation}
\end{lemma}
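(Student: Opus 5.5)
The plan is to rewrite $q$ through the function $\mathfrak h$ of \eqref{eq:h-def} and then check the three regions of \eqref{eq:p0-def} one by one. In the inner region $\abs{\xi}^2\le a_\nu(t)^2+1$, the gradient formula \eqref{eq:p0-gradient} gives $\xi\cdot\nabla\pw_0=\frac{\abs\xi^2}{2}e^{\abs\xi^2/4}(1-\widetilde\vartheta(y))$ with $y=\abs\xi^2-a_\nu(t)^2$. Hence, for $\xi\neq0$,
\[
q(t,\xi)=\pi\,\mathfrak h(\abs\xi^2)\bigl(1-\widetilde\vartheta(y)\bigr)\bigl(\xi\cdot\Sigma(t)\xi^\perp\bigr).
\]
Since $\mathfrak h(s)=\frac{s e^{s/4}}{2(1-e^{-s/4})}$ is real-analytic in $s$ near $0$ with $\mathfrak h(0)=2$, the map $\xi\mapsto\mathfrak h(\abs\xi^2)$ is smooth on $\Rt$. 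The factor $\xi\cdot\Sigma\xi^\perp$ is a quadratic polynomial vanishing at $0$, so $q(t,\cdot)$ extends continuously by $0$ at the origin and is smooth on the inner pieces. This matches the convention $q(t,0)=0$.

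Next I check the interfaces. At $\abs\xi^2=a_\nu(t)^2+1$ we have $y=1$, and \eqref{eq:weight-cutoff-matching} gives $\widetilde\vartheta(1)=\vartheta(1)+4\vartheta'(1)=1-e^{-1/4}+e^{-1/4}=1$. So $q$ vanishes there, which is \eqref{eq:q-boundary-trace}, and $q$ is continuous across this interface to the zero exterior. At $\abs\xi^2=a_\nu(t)^2-1$ we have $\vartheta(-1)=\vartheta'(-1)=0$, so both one-sided formulas agree. On each piece $q$ is a product of smooth functions of $(t,\xi)$: $\mathfrak h(\abs\xi^2)$, the polynomial $\vartheta$ and its derivative evaluated at $y$, and $\Sigma(t)$. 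Here $a_\nu(t)^2=a_0^2(\nu t)^{-1/2}$ is smooth for $t>0$, and $\Sigma$ is $C^1$ by \eqref{eq:Sigma-time-bound}.

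For the bound \eqref{eq:q-local-Lipschitz}, the key observation is that $a_\nu(t)\to\infty$ as $t\to0$. Fix $L$ and let $t_L>0$ be such that $a_\nu(t)^2-1>L^2$ for $t<t_L$. On $\{t<t_L,\ \abs\xi\le L\}$ the cutoff terms vanish identically, so
\[
q(t,\xi)=\pi\,\mathfrak h(\abs\xi^2)\bigl(\xi\cdot\Sigma(t)\xi^\perp\bigr).
\]
Its $\xi$- and $t$-derivatives are bounded by \eqref{eq:Sigma-time-bound}, and it agrees with \eqref{eq:q-initial-extension} as $t\to0$, because there $\xi\cdot\nabla\pw_0=\frac{\abs\xi^2}{2}e^{\abs\xi^2/4}$. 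This gives continuity at $t=0$. On the compact set $\{t_L\le t\le T,\ \abs\xi\le L\}$, the quantities $a_\nu$, $\partial_ta_\nu$, $\vartheta$, $\vartheta'$, $\vartheta''$ and $\mathfrak h$ together with its derivatives are bounded. The chain rule then bounds $\abs{q}+\abs{\nabla q}+\abs{\partial_t q}$ on each smooth piece.

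Finally, local Lipschitz continuity follows from continuity across the moving interfaces together with piecewise bounded derivatives. Along any segment, which crosses the interfaces at finitely many points, I would integrate the piecewise derivative. The step needing the most care is the $t\to0$ limit, and it is handled by the observation that the transition annulus escapes every fixed ball.
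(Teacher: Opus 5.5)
Your proposal is correct and is essentially the paper's own proof. It uses the same rewriting \(q=\pi\,\mathfrak h(\abs{\xi}^2)\bigl(1-\widetilde\vartheta(y)\bigr)\bigl(\xi\cdot\Sigma(t)\xi^\perp\bigr)\), the same interface checks \(\widetilde\vartheta(-1)=0\) and \(\widetilde\vartheta(1)=1\), and the same split at a time \(t_L\) below which the transition annulus has left the ball \(\abs{\xi}\le L\). Your segment argument for local Lipschitz continuity is slightly more explicit than the paper's; the finitely-many-crossings claim holds because \(s\mapsto\abs{\xi(s)}^2-a_\nu(t(s))^2\mp1\) is real-analytic and not identically zero along any segment with \(t\ge t_L>0\).
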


\begin{proof}
The function \(\mathfrak h\) is smooth on \([0,\infty)\).  Writing
\(s=\abs{\xi}^2\) and \(y=s-a_\nu(t)^2\),
\eqref{eq:p0-gradient}, \eqref{eq:q-def}, and \eqref{eq:h-def} give
\begin{equation}\label{eq:q-compact-form}
q(t,\xi)
=
\pi\mathfrak h(s)
\bigl(1-\widetilde\vartheta(y)\bigr)
\bigl(\xi\cdot\Sigma(t)\xi^\perp\bigr),
\qquad y\le1,
\end{equation}
while \(q(t,\xi)=0\) for \(y>1\).  Since the last factor is quadratic
in \(\xi\), and
\(a_\nu(t)\stackrel{\eqref{eq:standing-smallness}}{\ge}2\),
\eqref{eq:q-compact-form} also proves that the value
\(q(t,0)=0\) removes the apparent singularity at the origin.

The endpoint values are
\[
\widetilde\vartheta(-1)
\stackrel{\substack{\eqref{eq:vartheta-tilde-def}\\
\eqref{eq:weight-cutoff-matching}}}{=}0,
\qquad
\widetilde\vartheta(1)
\stackrel{\substack{\eqref{eq:vartheta-tilde-def}\\
\eqref{eq:weight-cutoff-matching}}}{=}1.
\]
Hence the traces in
\eqref{eq:q-compact-form} match at \(y=-1\), and the inner trace at
\(y=1\) equals the exterior value zero.  Thus \(q\) is continuous
across both moving interfaces, and \eqref{eq:q-boundary-trace} follows.

Fix \(L>0\).  Since \(a_\nu(t)\to\infty\) as \(t\to0\), there
exists \(t_L\in(0,T]\) such that \(y\le-1\) whenever
\(0<t\le t_L\) and \(\abs{\xi}\le L\).  On this set,
\[
q(t,\xi)
\stackrel{\eqref{eq:q-compact-form}}{=}
\pi\mathfrak h(\abs{\xi}^2)
\bigl(\xi\cdot\Sigma(t)\xi^\perp\bigr),
\]
whose value at \(t=0\) is precisely \eqref{eq:q-initial-extension}.
It therefore gives a \(C^1\) extension with bounded first derivatives
on \([0,t_L]\times\{\abs{\xi}\le L\}\).

On \([t_L,T]\times\{\abs{\xi}\le L\}\), formula
\eqref{eq:q-compact-form} is \(C^1\) on each smooth piece and has
bounded one-sided first derivatives.  Together with the matching
traces proved above, this yields local Lipschitz continuity across the
two smooth interfaces and proves \eqref{eq:q-local-Lipschitz}.
\end{proof}

We define the full weight by
\begin{equation}\label{eq:p-def}
\pw(t,\xi)
:=
\pw_0(t,\xi)+(\nu t)q(t,\xi).
\end{equation}

\begin{lemma}\label{lem:q-cell}
For every \(t\in(0,T]\), one has, for almost every
\(\xi\in\{\abs{\xi}^2\le a_\nu(t)^2+1\}\),
\begin{equation}\label{eq:q-cell}
v^G(\xi)\cdot\nabla q(t,\xi)
=
-\Sigma(t)\xi\cdot\nabla\pw_0(t,\xi).
\end{equation}
\end{lemma}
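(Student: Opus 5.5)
The identity is a pointwise computation in polar coordinates, carried out on each smooth piece of the inner disk. The plan is to use three facts. The Oseen velocity is purely azimuthal, so $v^G\cdot\nabla$ reduces to a multiple of $\partial_\theta$. The radial weight $\pw_0$ has a purely radial gradient. The quadratic form $\xi\cdot\Sigma(t)\xi^\perp$ is the angular antiderivative of $-\tfrac12\,\xi\cdot\Sigma(t)\xi$ because $\Sigma$ is symmetric and trace free. I will work on the open set
\[
\{0<\abs\xi^2<a_\nu(t)^2+1,\ \abs\xi^2\ne a_\nu(t)^2-1\},
\]
which has full measure in the closed disk. On it $\pw_0$ is $C^1$ by Lemma~\ref{lem:weight-matching}, and $q$ is $C^1$ by Lemma~\ref{lem:q-zero-extension}. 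The origin and the two interface circles form a null set, and this is why the statement is only claimed almost everywhere.

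\emph{Step 1 (the Oseen transport is angular).} Write $\xi=r(\cos\theta,\sin\theta)$. Since $\partial_\theta=\xi^\perp\cdot\nabla$, formula \eqref{eq:oseen} gives
\[
v^G\cdot\nabla=\frac{1-e^{-r^2/4}}{2\pi r^2}\,\partial_\theta .
\]
In the inner region, \eqref{eq:q-def} reads $q=\pi\,F(r)\,\bigl(\xi\cdot\Sigma(t)\xi^\perp\bigr)$ with
\[
F(r):=\frac{\xi\cdot\nabla\pw_0}{1-e^{-r^2/4}} .
\]
By \eqref{eq:p0-gradient}, $\xi\cdot\nabla\pw_0$ depends only on $r$, so $F$ is radial and only the quadratic factor is differentiated in $\theta$.

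\emph{Step 2 (the angular derivative of the quadratic form).} From $\partial_\theta\xi=\xi^\perp$ and $\partial_\theta\xi^\perp=-\xi$,
\[
\partial_\theta\bigl(\xi\cdot\Sigma\xi^\perp\bigr)=\xi^\perp\cdot\Sigma\xi^\perp-\xi\cdot\Sigma\xi .
\]
The pair $\xi/r,\ \xi^\perp/r$ is an orthonormal basis, so $\operatorname{tr}\Sigma=0$ (see \eqref{eq:strain-jacobian}) gives $\xi^\perp\cdot\Sigma\xi^\perp=-\xi\cdot\Sigma\xi$. Symmetry of $\Sigma$ lets me write both cross terms in this form. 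Hence
\[
\partial_\theta\bigl(\xi\cdot\Sigma\xi^\perp\bigr)=-2\,\xi\cdot\Sigma\xi .
\]

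\emph{Step 3 (comparison).} Combining the two steps, the factor $1-e^{-r^2/4}$ cancels and
\[
v^G\cdot\nabla q=\frac{1-e^{-r^2/4}}{2\pi r^2}\,\pi F(r)\,\bigl(-2\,\xi\cdot\Sigma\xi\bigr)=-\frac{\xi\cdot\nabla\pw_0}{r^2}\,\xi\cdot\Sigma\xi .
\]
On the other side, \eqref{eq:p0-gradient} shows that $\nabla\pw_0=(\xi\cdot\nabla\pw_0/r^2)\,\xi$ is parallel to $\xi$. Therefore
\[
\Sigma\xi\cdot\nabla\pw_0=\frac{\xi\cdot\nabla\pw_0}{r^2}\,\xi\cdot\Sigma\xi ,
\]
and the two sides of \eqref{eq:q-cell} agree.

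There is no genuine obstacle here. The only point requiring care is to keep the sign conventions consistent: $\xi^\perp=(-\xi_2,\xi_1)$, the orientation of $\partial_\theta$, and the constant $\pi$ in \eqref{eq:q-def} must match the prefactor $1/(2\pi)$ in $v^G$. A slip in any of these would produce a mismatched sign or a factor of two, so I would verify the normalization on the explicit example $\Sigma=\operatorname{diag}(1,-1)$.
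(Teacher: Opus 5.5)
Your proposal is correct and is essentially the paper's proof: both write $v^G\cdot\nabla$ as $(1-e^{-\abs\xi^2/4})(2\pi\abs\xi^2)^{-1}\partial_\theta$, use $\partial_\theta(\xi\cdot\Sigma\xi^\perp)=-2\,\xi\cdot\Sigma\xi$, and use radiality of $\pw_0$ to reduce $\Sigma\xi\cdot\nabla\pw_0$ to $(\xi\cdot\nabla\pw_0/\abs\xi^2)\,\xi\cdot\Sigma\xi$. Your explicit exclusion of the origin and of the circle $\abs\xi^2=a_\nu(t)^2-1$, where $q$ is only Lipschitz, is a slightly more careful rendering of the paper's ``almost every''; and, as your Step~2 computation already shows, only $\operatorname{tr}\Sigma=0$ is needed there, not symmetry.
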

\begin{proof}
Because \(\pw_0\) is radial,
\begin{equation}\label{eq:Sigma-radial-reduction}
\Sigma\xi\cdot\nabla\pw_0
=
\frac{\xi\cdot\nabla\pw_0}{\abs{\xi}^2}
(\xi\cdot\Sigma\xi).
\end{equation}
Since \(\Sigma\) is symmetric and trace free,
\begin{equation}\label{eq:angular-strain-identity}
\partial_\theta
\bigl(\xi\cdot\Sigma\xi^\perp\bigr)
=
-2\xi\cdot\Sigma\xi,
\qquad
\partial_\theta:=\xi^\perp\cdot\nabla.
\end{equation}
Also,
\(v^G\cdot\nabla
=(1-e^{-\abs{\xi}^2/4})(2\pi\abs{\xi}^2)^{-1}\partial_\theta\).
Therefore, for almost every \(\xi\) in this region,
\[ v^G\cdot\nabla q \stackrel{\substack{\eqref{eq:q-def}\\ \eqref{eq:angular-strain-identity}}}{=} -\frac{\xi\cdot\nabla\pw_0}{\abs\xi^2}(\xi\cdot\Sigma\xi) \stackrel{\eqref{eq:Sigma-radial-reduction}}{=} -\Sigma\xi\cdot\nabla\pw_0, \]which is \eqref{eq:q-cell}.
\end{proof}
\begin{lemma}\label{lem:adapted-weight}
For all \(0<t\le T\) and
\(\xi\in\Rt\),
\begin{equation}\label{eq:q-pointwise}
\abs{q}
\lsim{T}
\abs{\xi}^2(1+\abs{\xi}^2)\pw_0,
\end{equation}
and, for every \(0<t\le T\) and almost every \(\xi\in\Rt\),
\begin{equation}\label{eq:q-derivative-pointwise}
\abs{\xi}\abs{\nabla q}
+\abs{t\partial_tq}
\lsim{T}
\abs{\xi}^2(1+\abs{\xi}^2)^2\pw_0.
\end{equation}
Consequently,
\begin{equation}\label{eq:normalized-q-bounds}
\begin{aligned}
(\nu t)\frac{\abs{q}}{\pw_0}
&\lsim{T}
a_0^4,
\\
(\nu t)
\frac{
  \abs{\xi}\abs{\nabla q}
  +\abs{q+t\partial_tq}
}{\pw_0}
&\lsim{T}
a_0^4\chi_\nu(t,\xi).
\end{aligned}
\end{equation}
For every \(0<t\le T\), the first line of
\eqref{eq:normalized-q-bounds} holds everywhere in \(\xi\), and the
second almost everywhere in \(\xi\).
After decreasing \(a_0\), if necessary, one also has
\begin{equation}\label{eq:p-comparable}
\frac34\pw_0
\le
\pw
\le
\frac54\pw_0.
\end{equation}
Moreover,
\begin{equation}\label{eq:p-global-bounds}
\frac12e^{\frac{\beta\abs{\xi}}{4}}
\le
\pw(t,\xi)
\le
2e^{\frac{\abs{\xi}^2}{4}}.
\end{equation}
\end{lemma}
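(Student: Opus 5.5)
The plan is to argue directly from the compact form \eqref{eq:q-compact-form}, $q=\pi\mathfrak h(s)(1-\widetilde\vartheta(y))(\xi\cdot\Sigma\xi^\perp)$ for $y\le1$ and $q=0$ otherwise, with $s=\abs\xi^2$, $y=s-a_\nu(t)^2$. The cutoff quantities $\vartheta,\vartheta'$ are bounded, and $\vartheta''$ is bounded on the transition annulus, so $\widetilde\vartheta$ and $\widetilde\vartheta'$ are bounded. The function $\mathfrak h$ satisfies $\mathfrak h(s)\lesssim(1+s)e^{s/4}$ and $\abs{\mathfrak h'(s)}\lesssim(1+s)e^{s/4}$. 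By Lemma~\ref{lem:p0-inner-comparison}, $e^{s/4}\le e^{1/4}\pw_0$ on the support $\{y\le1\}$. Since $\abs{\xi\cdot\Sigma\xi^\perp}\le\abs\Sigma\abs\xi^2$ and \eqref{eq:Sigma-time-bound} holds, we get \eqref{eq:q-pointwise} immediately. This includes $\xi$ near $0$, where $\mathfrak h(0)=2$.

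For \eqref{eq:q-derivative-pointwise}, I differentiate each factor on the smooth pieces, and $q\equiv0$ outside. The term $\xi\cdot\nabla$ applied to $\mathfrak h(s)$ gives $2s\mathfrak h'(s)$. Applied to $\widetilde\vartheta(y)$ it gives $2s\widetilde\vartheta'(y)$, and applied to the quadratic factor it reproduces that factor up to a constant. All of these are $\lesssim(1+s)^2e^{s/4}\abs\xi^2$.

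For the time derivative, $t\partial_t$ hits two places. It hits $\Sigma$, where $t\abs{\partial_t\Sigma}\lesssim T$. It also hits $\widetilde\vartheta(y)$, giving $\widetilde\vartheta'(y)\,t\partial_t y=\widetilde\vartheta'(y)a_\nu^2/2$. This is nonzero only on the annulus, where $a_\nu^2\le s+1$. So it is bounded by $(1+s)$ times the previous bound, which yields the extra factor $(1+\abs\xi^2)$.

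For \eqref{eq:normalized-q-bounds}, everything is supported where $s\le a_\nu^2+1\le 2a_\nu^2=2a_0^2(\nu t)^{-1/2}$. The first line then reads $(\nu t)s(1+s)\lesssim(\nu t)a_\nu^4\lesssim a_0^4$. For the second line, on the support we have $\chi_\nu\ge s/2$, up to the harmless annulus piece where $\chi_\nu=a_\nu^2\ge s-1\ge s/2$. Hence $(\nu t)s(1+s)^2\lesssim(\nu t)\,s\,a_\nu^4\lesssim a_0^4\chi_\nu$, and the term $\abs q$ is handled the same way.

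Comparability \eqref{eq:p-comparable} follows from the first line once $a_0$ is small enough that $C_Ta_0^4\le1/4$. Since $C_T$ is fixed before $a_0$ is chosen, this is consistent with the ordering $a_0\ll1$, then $B_0\gg1$.

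For \eqref{eq:p-global-bounds}, I use $\pw\ge\frac34\pw_0$ and show $\pw_0\ge e^{\beta\abs\xi/4}\cdot e^{-1/4}$ region by region.
\begin{itemize}
\item In the Gaussian and transition region, $\pw_0\ge e^{-1/4}e^{s/4}$. For $\abs\xi\ge\beta$ we have $s\ge\beta\abs\xi$, while for $\abs\xi<\beta\le1$ the bound $e^{\beta\abs\xi/4}\le e^{1/4}$ suffices.
\item On the plateau, $\pw_0=e^{a_\nu^2/4}=e^{\beta B_\nu/4}\ge e^{\beta\abs\xi/4}$.
\item In the outer region, equality holds.
\end{itemize}
This gives $\pw\ge\frac34e^{-1/4}e^{\beta\abs\xi/4}\ge\frac12e^{\beta\abs\xi/4}$. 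For the upper bound, $\pw_0\le e^{s/4}$ everywhere: this is Lemma~\ref{lem:p0-inner-comparison} inside, $a_\nu^2\le s$ on the plateau, and $\beta\abs\xi\le\abs\xi^2$ outside since $\abs\xi\ge B_\nu\ge1\ge\beta$. Combined with $\pw\le\frac54\pw_0$, this gives the upper bound.

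The only delicate step is tracking where $\widetilde\vartheta'$ and $t\partial_t y$ live, so that the powers of $a_\nu$ convert into powers of $s$. Everything else is bookkeeping.
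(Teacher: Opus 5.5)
Your proposal is correct and is essentially the paper's argument. You use the same compact form of $q$, the growth bounds on $\mathfrak h$, the fact that $t\partial_t y=a_\nu(t)^2/2\le(1+|\xi|^2)/2$ wherever $\widetilde\vartheta'\neq0$, the identity $(\nu t)a_\nu(t)^4=a_0^4$, and a region-by-region comparison for the global bounds. The differences are cosmetic: you bound $1-\widetilde\vartheta$ by a constant and use $e^{|\xi|^2/4}\le e^{1/4}\pw_0$ instead of $0\le1-\widetilde\vartheta\le1-\vartheta$, and you split at $|\xi|=\beta$ instead of completing the square.

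Two small bookkeeping repairs are needed.
\begin{itemize}
\item You estimated $\xi\cdot\nabla q$, but the statement needs $|\xi|\,|\nabla q|$, and the full gradient is what is used later against $\nabla w$ and $b$. Add the angular part via $|\xi|\,|\nabla(\xi\cdot\Sigma\xi^\perp)|\le2|\Sigma|\,|\xi|^2$. For the two radial factors the two quantities coincide, so nothing else changes.
\item For $|\xi|<\beta$, the bound $\pw_0\ge e^{-1/4}e^{|\xi|^2/4}$ together with $e^{\beta|\xi|/4}\le e^{1/4}$ gives only $e^{-1/2}e^{\beta|\xi|/4}$, and $\tfrac34e^{-1/2}<\tfrac12$. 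You must instead use that $\pw_0=e^{|\xi|^2/4}\ge1$ there, which holds because this is the pure Gaussian region ($a_\nu(t)^2-1\ge3$).
\end{itemize}
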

The identity \((\nu t)a_\nu(t)^4=a_0^4\) and
\eqref{eq:normalized-q-bounds} make the angular correction small
relative to \(\pw_0\). They ensure the positivity of \(\pw\) and
control the error terms after the cancellation \eqref{eq:q-cell}.
\begin{proof}
Write
\[
r=\abs{\xi},
\qquad
s=r^2,
\qquad
y=s-a_\nu(t)^2.\]
We first prove the pointwise estimates.  The function \(\mathfrak h\)
is smooth at \(s=0\), and
\begin{equation}\label{eq:h-growth-bounds}
\abs{\mathfrak h(s)}
\stackrel{\eqref{eq:h-def}}{\lesssim}
(1+s)e^{s/4},
\qquad
s\abs{\mathfrak h'(s)}
\stackrel{\eqref{eq:h-def}}{\lesssim}
s(1+s)e^{s/4}.
\end{equation}
Indeed, these bounds follow from smoothness on \(0\le s\le1\), while
for \(s\ge1\) the denominator \(1-e^{-s/4}\) is bounded away from zero
and direct differentiation applies.

On \(y\le1\),
\[
0\le1-\widetilde\vartheta(y)
\stackrel{\eqref{eq:weight-cutoff-properties}}{\le}
1-\vartheta(y).
\]
Moreover, on each smooth piece,
\[
\abs{\partial_y(1-\widetilde\vartheta(y))}
\lesssim \bigl(1-\vartheta(y)\bigr).
\]
The derivative vanishes for \(y<-1\); on \(-1<y<1\), this follows
from the boundedness of \(\vartheta'\), \(\vartheta''\), and the lower
bound \(1-\vartheta\ge e^{-1/4}\).  Since
\[
\nabla_\xi y=2\xi,
\qquad
t\partial_ty=\frac12a_\nu(t)^2,
\]
and a cutoff derivative can occur only for \(\abs y<1\), where
\(a_\nu(t)^2\le1+s\), we obtain, almost everywhere on \(y\le1\),
\begin{equation}\label{eq:cutoff-factor-derivative-bounds}
\begin{aligned}
r\abs{\nabla_\xi(1-\widetilde\vartheta(y))}
&\lesssim
s\bigl(1-\vartheta(y)\bigr),
\\
\abs{t\partial_t(1-\widetilde\vartheta(y))}
&\lesssim
(1+s)\bigl(1-\vartheta(y)\bigr).
\end{aligned}
\end{equation}

The quadratic strain factor satisfies
\begin{equation}\label{eq:quadratic-strain-factor-bounds}
\begin{aligned}
\abs{\xi\cdot\Sigma(t)\xi^\perp}
+r\abs{\nabla(\xi\cdot\Sigma(t)\xi^\perp)}
+\abs{t\partial_t(\xi\cdot\Sigma(t)\xi^\perp)}
\stackrel{\eqref{eq:Sigma-time-bound}}{\lsim{T}} s
\end{aligned}
\end{equation}
Since \(\pw_0=e^{s/4}(1-\vartheta(y))\) on \(y\le1\),
\eqref{eq:q-compact-form}, \eqref{eq:h-growth-bounds}, and
\eqref{eq:quadratic-strain-factor-bounds} imply
\[
\abs q
\lsim{T}
s(1+s)\pw_0.
\]
Differentiating \eqref{eq:q-compact-form} on each smooth piece and
applying \eqref{eq:h-growth-bounds},
\eqref{eq:cutoff-factor-derivative-bounds}, and
\eqref{eq:quadratic-strain-factor-bounds} gives
\[
r\abs{\nabla q}
+
\abs{t\partial_tq}
\lsim{T}
s(1+s)^2\pw_0.
\]
On \(y>1\), \(q\) and its weak first derivatives vanish almost
everywhere, proving \eqref{eq:q-pointwise} and~\eqref{eq:q-derivative-pointwise}.

\smallskip
\noindent\emph{Estimates in the inner region.}
In the inner region \(s\le a_\nu(t)^2+1\), the bound
\(a_\nu(t)\ge2\) gives
\[
1+s
\le
a_\nu(t)^2+2
\le
\frac32a_\nu(t)^2.
\]
Since
\((\nu t)a_\nu(t)^4
\stackrel{\eqref{eq:weight-scales}}{=}a_0^4\), it follows that
\begin{equation}\label{eq:inner-weight-scale}
(\nu t)(1+s)^2
\le
\frac94a_0^4
\qquad\text{if }s\le a_\nu(t)^2+1.
\end{equation}

\[
(\nu t)\frac{\abs q}{\pw_0}
\stackrel{\eqref{eq:q-pointwise}}{\lsim{T}}
(\nu t)s(1+s)
\stackrel{\eqref{eq:inner-weight-scale}}{\lsim{T}}
a_0^4.
\]
This proves the first estimate in
\eqref{eq:normalized-q-bounds}; outside this region, \(q=0\).

For the second estimate, note first that
\[
B_\nu(t)
\stackrel{\eqref{eq:weight-scale-identity}}{=}
\frac{a_\nu(t)^2}{\beta}
\stackrel{\eqref{eq:standing-smallness}}{\ge}a_\nu(t)^2
>
\sqrt{a_\nu(t)^2+1},
\]
where \(a_\nu(t)\ge2\).  Thus this region is contained in the first two regions in the
definition of \(\chi_\nu\).  If \(r\le a_\nu(t)\), then
\[
\chi_\nu(t,\xi)
\stackrel{\eqref{eq:chi-def}}{=}
r^2.
\]
In the remaining part of the inner region,
\[
a_\nu(t)\le r\le\sqrt{a_\nu(t)^2+1},
\]
we have
$
\chi_\nu(t,\xi)
\stackrel{\eqref{eq:chi-def}}{=}
a_\nu(t)^2.
$
Since \(a_\nu(t)\ge2\),
\[
r^2
\le
a_\nu(t)^2+1
\le
\frac54a_\nu(t)^2
=
\frac54\chi_\nu(t,\xi).
\]
Thus \(s\le\frac54\chi_\nu(t,\xi)\) throughout the inner region.

By \eqref{eq:q-derivative-pointwise}, \eqref{eq:q-pointwise}, and
\eqref{eq:inner-weight-scale}, almost everywhere,
\[
(\nu t)
\frac{
r\abs{\nabla q}
+\abs{q+t\partial_tq}
}{\pw_0}
\lsim{T}
a_0^4\chi_\nu(t,\xi).
\]
This proves the second estimate in
\eqref{eq:normalized-q-bounds}.

$
\pw
\stackrel{\eqref{eq:p-def}}{=}
\pw_0+(\nu t)q,
$
and the first normalized estimate gives
\[
\left|
\frac{\pw-\pw_0}{\pw_0}
\right|
=
(\nu t)\frac{\abs q}{\pw_0}
\stackrel{\eqref{eq:normalized-q-bounds}}{\lsim{T}}
a_0^4.
\]
After decreasing \(a_0\), if necessary, so that the last bound is
at most \(1/4\), we conclude that
\[
\frac34\pw_0
\le
\pw
\le
\frac54\pw_0.
\]
This proves \eqref{eq:p-comparable}.

\smallskip
\noindent\emph{Global upper and lower bounds.}
We first establish a lower bound for \(\pw_0\). In the inner and
transition regions,
\[
\pw_0
\stackrel{\eqref{eq:p0-vs-gaussian}}{\ge}
e^{-1/4}e^{r^2/4}.
\]
Moreover, since \(\beta\le1\),
\[
r^2-\beta r
=
\left(r-\frac{\beta}{2}\right)^2-\frac{\beta^2}{4}
\ge
-\frac14.
\]
Therefore,
\[
e^{-1/4}e^{r^2/4}
=
e^{-1/4}e^{\beta r/4}
e^{(r^2-\beta r)/4}
\ge
e^{-5/16}e^{\beta r/4}.
\]
Thus
\[
\pw_0
\ge
e^{-5/16}e^{\beta r/4}
\]
in the inner and transition regions.

On the plateau, \(r\le B_\nu(t)\), and hence
\[
\beta r
\le
\beta B_\nu(t)
\stackrel{\eqref{eq:weight-scale-identity}}{=}
a_\nu(t)^2.
\]
Consequently,
\[
\pw_0
=
e^{a_\nu(t)^2/4}
\ge
e^{\beta r/4}.
\]
In the outer region,
\[
\pw_0
\stackrel{\eqref{eq:p0-def}}{=}
e^{\beta r/4}
\]
and therefore we have proved the global lower bound
\begin{equation}\label{eq:p0-global-lower}
\pw_0(t,\xi)
\ge
e^{-5/16}e^{\beta r/4}.
\end{equation}

We next prove the global upper bound
\begin{equation}\label{eq:p0-below-gaussian}
\pw_0(t,\xi)
\le
\pw_G(\xi):=e^{r^2/4}.
\end{equation}
In the inner and transition regions,
\(\pw_0\stackrel{\eqref{eq:p0-def}}{\le}e^{r^2/4}\).
On the plateau,
\[
\pw_0
\stackrel{\eqref{eq:p0-def}}{=}
e^{a_\nu(t)^2/4}\le e^{r^2/4},
\]
because \(r^2\ge a_\nu(t)^2+1\). In the outer region,
\[
\pw_0
\stackrel{\eqref{eq:p0-def}}{=}
e^{\beta r/4}\le e^{r^2/4},
\]
because \(r\ge B_\nu(t)>1\) and \(\beta\le1\), so that
\(\beta r\le r^2\). This proves
\eqref{eq:p0-below-gaussian}.

Finally,
\[
\pw
\stackrel{\eqref{eq:p-comparable}}{\ge}
\frac34\pw_0
\stackrel{\eqref{eq:p0-global-lower}}{\ge}
\frac34e^{-5/16}e^{\beta r/4}
\ge
\frac12e^{\beta r/4},
\]
because \((3/4)e^{-5/16}>1/2\), and
\[
\pw
\stackrel{\eqref{eq:p-comparable}}{\le}
\frac54\pw_0
\stackrel{\eqref{eq:p0-below-gaussian}}{\le}
\frac54e^{r^2/4}
\le
2e^{r^2/4}.
\]
Therefore,
\[
\frac12e^{\frac{\beta\abs{\xi}}{4}}
\le
\pw(t,\xi)
\le
2e^{\frac{\abs{\xi}^2}{4}}.
\]
This proves \eqref{eq:p-global-bounds} and completes the proof.
\end{proof}

\subsection{Weighted energy identity}
For a positive weight \(\pw=\pw(t,\xi)\), define
\begin{equation}\label{eq:energy}
E_{\pw}(t)
:=
\frac12\int_{\Rt}\pw(t,\xi)\abs{w(\xi,t)}^2\,d\xi.
\end{equation}
For real-valued functions \(g,h\), define
\begin{equation}\label{eq:p-weighted-pairing}
\inner{g}{h}_{\pw(t)}
:=
\int_{\Rt}\pw(t,\xi)g(\xi)h(\xi)\,d\xi.
\end{equation}
\begin{lemma}\label{lem:energy-id}
Let \(w\) be a smooth solution of \eqref{eq:linearized}, let
\(v=K\star_\xi w\), and let \(\pw=\pw(t,\xi)\) be a positive,
continuous, locally Lipschitz weight with the integrability required
below. The first weak derivatives of \(\pw\) agree almost everywhere
with its derivatives on the smooth pieces, and continuity at the
transition circles cancels the interface terms. Then, for almost every
time,
\begin{equation}\label{eq:energy-id}
\begin{aligned}
t\frac{d}{dt}E_{\pw}(t)
\stackrel{\substack{\eqref{eq:linearized}\\
\eqref{eq:energy}}}{=}&
\frac{t}{2}\int_{\Rt}(\partial_t\pw)\abs{w}^2\,d\xi
-
\int_{\Rt}\pw\abs{\nabla w}^2\,d\xi
-
\int_{\Rt}w\nabla\pw\cdot\nabla w\,d\xi
\\
&-
\frac14\int_{\Rt}(\xi\cdot\nabla\pw)\abs{w}^2\,d\xi
-
\frac12\int_{\Rt}\pw\abs{w}^2\,d\xi
\\
&+
\frac{1}{2\nu}
\int_{\Rt}(v^G\cdot\nabla\pw)\abs{w}^2\,d\xi
+
\frac12\sqrt{\frac{t}{\nu}}
\int_{\Rt}(b\cdot\nabla\pw)\abs{w}^2\,d\xi
\\
&-
\frac1\nu
\int_{\Rt}\pw(v\cdot\nabla G)w\,d\xi
+
\inner{f(t)}{w(t)}_{\pw(t)}.
\end{aligned}
\end{equation}
\end{lemma}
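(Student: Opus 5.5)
The identity will follow by differentiating $E_{\pw}$ under the integral, substituting the equation \eqref{eq:linearized} for $t\partial_t w$, and integrating each term by parts against the weight. Specifically, I will write
\[
t\frac{d}{dt}E_{\pw}=\frac t2\int(\partial_t\pw)|w|^2\,d\xi+\int\pw\,w\,(t\partial_tw)\,d\xi ,
\]
and replace $t\partial_t w$ by $\cL w-w-\nu^{-1}\Lambda w-\sqrt{t/\nu}\,b\cdot\nabla w+f$. The pairing with $f$ gives $\inner{f}{w}_{\pw}$ directly. The remaining contributions are handled one at a time.

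\emph{Diffusion.} Integrating by parts once,
\[
\int\pw w\Delta w=-\int\pw|\nabla w|^2-\int w\nabla\pw\cdot\nabla w .
\]
The first derivatives of $\pw$ are only a.e.\ defined, but $\pw\in W^{1,\infty}_{\mathrm{loc}}$ is continuous (Lemma~\ref{lem:weight-matching}, Lemma~\ref{lem:q-zero-extension}). The boundary terms on the circles $|\xi|^2=a_\nu^2\pm1$ and $|\xi|=B_\nu$ therefore cancel between neighbouring pieces, and I will not integrate the mixed term by parts a second time.

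\emph{Drift and zeroth order.} For the drift term,
\[
\frac12\int\pw\,\xi\cdot\nabla(w^2)=-\frac12\int(2\pw+\xi\cdot\nabla\pw)w^2 ,
\]
using $\nabla\cdot\xi=2$. Combined with $\int\pw w^2$ from the $+w$ in $\cL$ and the $-\int\pw w^2$ from the $+1$, this gives $-\frac14\int(\xi\cdot\nabla\pw)w^2-\frac12\int\pw w^2$.

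\emph{Transport by $v^G$ and by $b$.} Both fields are divergence free: $v^G$ explicitly, and $b$ by \eqref{eq:linear-b-bounds}. Hence
\[
-\frac1\nu\int\pw w\,v^G\cdot\nabla w=\frac1{2\nu}\int(v^G\cdot\nabla\pw)w^2 ,
\]
and similarly the $b$ term yields $\frac12\sqrt{t/\nu}\int(b\cdot\nabla\pw)w^2$. The nonlocal part of $\Lambda$ is left untouched as $-\nu^{-1}\int\pw(v\cdot\nabla G)w$.

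\emph{Justification.} This is the main obstacle. I must justify differentiating under the integral and dropping the terms at infinity. The approach is to multiply by a radial cutoff $\chi_R$, which commutes with the radial parts of the structure. This produces error terms such as $\int w^2\pw\,\xi\cdot\nabla\chi_R$, $\int w^2\pw\,b\cdot\nabla\chi_R$, and $\int w\pw\nabla w\cdot\nabla\chi_R$. By \eqref{eq:p-global-bounds} in the outer region, $|\nabla\pw|\lesssim\pw$ and $|\xi||\nabla\chi_R|\lesssim1$ on $R\le|\xi|\le2R$. The assumed finiteness of $E_{\pw}$ and local integrability in time of $\int\pw|\nabla w|^2$ then make every error term vanish as $R\to\infty$ for a.e.\ $t$, by dominated convergence. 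One should take care that $v^G$ and $b$ are bounded. The term $v\cdot\nabla G$ is fine because $\pw|\nabla G|\lesssim|\xi|$ and $v\in L^4$.

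The time derivative is handled by using the integral-in-time form of the identity on $[t_1,t_2]$ and then Lebesgue differentiation. Here the piecewise $C^1$ time dependence of $\pw$ across the moving interfaces contributes no boundary term, by the same continuity of $\pw$.
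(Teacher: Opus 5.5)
Your proposal is correct and follows the paper's argument. You substitute the equation for $t\partial_t w$, integrate the diffusion, drift, and divergence-free transport terms by parts once against $\pw$, and let continuity of $\pw$ cancel the interface terms; your radial cutoff and time-integrated formulation supply justification the paper only asserts.

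Two small corrections. First, the drift pairing is $\frac12\int\pw w\,\xi\cdot\nabla w=\frac14\int\pw\,\xi\cdot\nabla(w^2)$, so your displayed identity carries twice the drift contribution, although your final coefficients $-\frac14$ and $-\frac12$ are right. Second, passing to the limit in the term with $\xi\cdot\nabla\pw=\frac{\beta}{4}\abs{\xi}\pw$ on $\abs{\xi}\ge B_\nu(t)$ requires $\int\abs{\xi}\pw w^2<\infty$ there. That is the $\mathcal J$ part of the assumed dissipation, not just $\int\pw\abs{\nabla w}^2$.
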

\begin{proof}
Since \(\cL w-w=\Delta w+\frac12\xi\cdot\nabla w\),
integration by parts gives
\[
\int_{\Rt}\pw w\Delta w\,d\xi
=
-\int_{\Rt}\pw\abs{\nabla w}^2\,d\xi
-\int_{\Rt}w\nabla\pw\cdot\nabla w\,d\xi
\]
and
\[
\begin{aligned}
\frac12\int_{\Rt}\pw w\,\xi\cdot\nabla w\,d\xi
&=
\frac14\int_{\Rt}\pw\,\xi\cdot\nabla\abs w^2\,d\xi
\\
&=
-\frac12\int_{\Rt}\pw\abs w^2\,d\xi
-\frac14\int_{\Rt}(\xi\cdot\nabla\pw)\abs w^2\,d\xi.
\end{aligned}
\]
Moreover,
\(\int_{\Rt}\pw w\,t\partial_tw\,d\xi
=t\,dE_{\pw}/dt-(t/2)\int_{\Rt}(\partial_t\pw)\abs w^2\,d\xi\).
Finally, \(v^G\) and \(b\) are divergence free, and therefore
\[
\begin{aligned}
-\int_{\Rt}\pw w\,(v^G\cdot\nabla w)\,d\xi
&=\frac12\int_{\Rt}(v^G\cdot\nabla\pw)\abs w^2\,d\xi,
\\
-\int_{\Rt}\pw w\,(b\cdot\nabla w)\,d\xi
&=\frac12\int_{\Rt}(b\cdot\nabla\pw)\abs w^2\,d\xi.
\end{aligned}
\]
These identities establish \eqref{eq:energy-id}.
For a piecewise \(C^1\) weight, one may perform the calculation in each
smooth region; the interface contributions cancel because \(\pw\) is
continuous.  Equivalently, approximate \(\pw\) by smooth positive weights
and pass to the limit.
\end{proof}
We write
\begin{equation}\label{eq:energy-parts}
t\frac{d}{dt}E_{\pw}(t)
\stackrel{\eqref{eq:energy-id}}{=}
\mathcal D_{\pw}(t)
+\mathcal T_{\pw}(t)
+\mathcal R_{\pw}(t)
+\mathcal F_{\pw}(t),
\end{equation}
where
\begin{equation}\label{eq:energy-parts-def}
\begin{aligned}
\mathcal D_{\pw}(t)
&:=
\frac{t}{2}\int(\partial_t\pw)\abs{w}^2
-\int\pw\abs{\nabla w}^2
-\int w\nabla\pw\cdot\nabla w
\\
&\quad
-\frac14\int(\xi\cdot\nabla\pw)\abs{w}^2
-\frac12\int\pw\abs{w}^2,
\\
\mathcal T_{\pw}(t)
&:=
\frac{1}{2\nu}\int(v^G\cdot\nabla\pw)\abs{w}^2
+\frac12\sqrt{\frac{t}{\nu}}
\int(b\cdot\nabla\pw)\abs{w}^2,
\\
\mathcal R_{\pw}(t)
&:=
-\frac1\nu\int\pw(v\cdot\nabla G)w,
\\
\mathcal F_{\pw}(t)
&:=
\inner{f(t)}{w(t)}_{\pw(t)}.
\end{aligned}
\end{equation}
All integrals without an indicated domain are over \(\Rt\).

\subsection{Diffusive coercivity}
Define
\begin{equation}\label{eq:IJK-def}
\begin{aligned}
\mathcal I(t)
&:=
\int_{\Rt}\pw_0\abs{\nabla w}^2\,d\xi,
\\
\mathcal J(t)
&:=
\int_{\Rt}\pw_0\chi_\nu(t,\xi)\abs{w}^2\,d\xi,
\\
\mathcal K(t)
&:=
\int_{\Rt}\pw_0\abs{w}^2\,d\xi,
\end{aligned}
\end{equation}
and
\begin{equation}\label{eq:Q-def}
\mathcal Q(t)
:=
\mathcal I(t)+\mathcal J(t)+\mathcal K(t).
\end{equation}

\begin{equation}\label{eq:weighted-energy-comparable}
\frac38\mathcal K(t)
\stackrel{\eqref{eq:p-comparable}}{\le}
E_{\pw}(t)
\stackrel{\eqref{eq:p-comparable}}{\le}
\frac58\mathcal K(t).
\end{equation}

We estimate the diffusion term \(\mathcal D_{\pw}\), the transport
term \(\mathcal T_{\pw}\), and the nonlocal Oseen term
\(\mathcal R_{\pw}\) separately. The forcing contributes
\(\mathcal F_{\pw}=\inner{f(t)}{w(t)}_{\pw(t)}\).
\begin{theorem}\label{thm:weight-coercivity}
Recall \(\mathcal D_{\pw}(t)\) from
\eqref{eq:energy-parts-def}, \(\mathcal I(t),\mathcal J(t),\mathcal K(t)\)
from \eqref{eq:IJK-def}, and \(\mathcal Q(t)\) from
\eqref{eq:Q-def}.  For \(a_0\) sufficiently small, depending only on
\(T\), and under \eqref{eq:standing-smallness},
\begin{equation}\label{eq:weight-coercivity}
\mathcal D_{\pw}(t)
\le
-\frac3{16}\mathcal I(t)
-\frac1{128}\mathcal J(t)
-\frac38\mathcal K(t)
\le
-\frac1{128}\mathcal Q(t).
\end{equation}
\end{theorem}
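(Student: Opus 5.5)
We split \(\pw=\pw_0+(\nu t)q\) in each term of \(\mathcal D_{\pw}\). The leading contribution comes from \(\pw_0\) alone and is controlled by the pointwise coercivity of Lemma~\ref{lem:p0-pointwise-coercivity}. The \(q\)-part is a perturbation, small by \eqref{eq:normalized-q-bounds} once \(a_0\) is small depending on \(T\).

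\emph{Step 1: the \(\pw_0\)-terms.} We treat the mixed term by Young's inequality,
\[
\Bigl|\int w\nabla\pw_0\cdot\nabla w\Bigr|\le \frac34\int\pw_0|\nabla w|^2+\frac13\int\frac{|\nabla\pw_0|^2}{\pw_0}|w|^2 .
\]
This is where the constant \(\tfrac13\) in \eqref{eq:p0-pointwise-coercivity} comes from. The diffusion term \(-\int\pw\abs{\nabla w}^2\) is written as \(-\mathcal I\) plus a \(q\)-error, leaving \(-\tfrac14\mathcal I\) after the Young step. Collecting the zero-order \(\pw_0\)-terms gives
\[
\int\Bigl(\frac{t\partial_t\pw_0}{2\pw_0}-\frac{\xi\cdot\nabla\pw_0}{4\pw_0}+\frac{|\nabla\pw_0|^2}{3\pw_0^2}\Bigr)\pw_0|w|^2\le-\frac1{48}\mathcal J .
\]
This uses \eqref{eq:p0-pointwise-coercivity}, which holds almost everywhere; the jump of \(\nabla\pw_0\) at \(|\xi|=B_\nu\) is harmless by Lemma~\ref{lem:weight-matching}. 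The term \(-\tfrac12\int\pw|w|^2\) gives \(-\tfrac12\mathcal K\) plus a \(q\)-error, with \(|q\text{-error}|\lsim{T}a_0^4\mathcal K\). At this stage we have \(\mathcal D_{\pw_0\text{-part}}\le-\tfrac14\mathcal I-\tfrac1{48}\mathcal J-\tfrac12\mathcal K\).

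\emph{Step 2: the \((\nu t)q\)-terms.} These are
\[
\frac t2\int\partial_t(\nu t q)|w|^2=\frac{\nu t}{2}\int(q+t\partial_tq)|w|^2 ,
\]
together with \(-\nu t\int q|\nabla w|^2\), \(-\nu t\int w\nabla q\cdot\nabla w\), \(-\tfrac{\nu t}{4}\int(\xi\cdot\nabla q)|w|^2\) and \(-\tfrac{\nu t}2\int q|w|^2\). We bound each pointwise by \eqref{eq:normalized-q-bounds}:
\begin{itemize}
\item \((\nu t)|q|\lsim{T}a_0^4\pw_0\), which gives \(\lsim{T}a_0^4(\mathcal I+\mathcal K)\);
\item \((\nu t)(|\xi||\nabla q|+|q+t\partial_tq|)\lsim{T}a_0^4\chi_\nu\pw_0\), which gives \(\lsim{T}a_0^4\mathcal J\) for the \(\partial_t\) and \(\xi\cdot\nabla q\) terms.
\end{itemize}

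\emph{Step 3: the mixed term \(\nu t\int w\nabla q\cdot\nabla w\).} This is the main obstacle. The available bound controls \(|\xi||\nabla q|\), not \(|\nabla q|\) itself, so Cauchy--Schwarz gives
\[
\nu t\int|w||\nabla q||\nabla w| \le \mathcal I^{1/2}\Bigl(\int(\nu t)^2\frac{|\nabla q|^2}{\pw_0}|w|^2\Bigr)^{1/2}.
\]
To close this we need \((\nu t)^2|\nabla q|^2/\pw_0^2\lsim{T}a_0^8\chi_\nu\) on the inner region. We would derive it from \eqref{eq:q-derivative-pointwise} directly: there \(|\nabla q|\lsim{T}|\xi|(1+|\xi|^2)^2\pw_0\), and \eqref{eq:inner-weight-scale} gives \((\nu t)(1+s)^2\lsim a_0^4\). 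Hence \((\nu t)|\nabla q|/\pw_0\lsim{T}a_0^4|\xi|\), and \(|\xi|^2\le\tfrac54\chi_\nu\) holds there. This yields a bound \(\lsim{T}a_0^4(\mathcal I+\mathcal J)\). The same inner-region reasoning handles \(q\) near the origin, where \(|\xi|^2\) is small, since \(q\) vanishes quadratically.

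\emph{Step 4: conclusion.} Summing the estimates gives
\[
\mathcal D_{\pw}\le-\Bigl(\tfrac14-Ca_0^4\Bigr)\mathcal I-\Bigl(\tfrac1{48}-Ca_0^4\Bigr)\mathcal J-\Bigl(\tfrac12-Ca_0^4\Bigr)\mathcal K .
\]
Choosing \(a_0\) small depending on \(T\) yields \(\mathcal D_{\pw}\le-\tfrac3{16}\mathcal I-\tfrac1{128}\mathcal J-\tfrac38\mathcal K\). The final inequality \(\le-\tfrac1{128}\mathcal Q\) is immediate from the definition \eqref{eq:Q-def}.
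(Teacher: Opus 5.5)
Your proposal is correct and follows essentially the same route as the paper. You use the same splitting $\pw=\pw_0+(\nu t)q$, the same Young step with constants $\tfrac34,\tfrac13$ feeding into \eqref{eq:p0-pointwise-coercivity} to obtain $-\tfrac14\mathcal I-\tfrac1{48}\mathcal J-\tfrac12\mathcal K$, and the same use of \eqref{eq:normalized-q-bounds} for the $q$-terms.

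The one minor difference is in the mixed $q$-term. You bound $(\nu t)\abs{\nabla q}\lsim{T}a_0^4\abs{\xi}\pw_0$ directly from \eqref{eq:q-derivative-pointwise} and absorb $a_0^4\mathcal I^{1/2}\mathcal J^{1/2}$ symmetrically. The paper instead passes through $(\nu t)\abs{\nabla q}\le\delta_T\pw_0\chi_\nu^{1/2}$ and splits the term as $\tfrac1{32}\mathcal I+8\delta_T^2\mathcal J$. Both leave room for the stated constants $\tfrac3{16},\tfrac1{128},\tfrac38$.
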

\begin{proof}
Expanding the angular part of the weight gives
\begin{equation}\label{eq:diffusive-full-splitting}
\begin{aligned}
\mathcal D_{\pw}(t)
&\stackrel{\substack{\eqref{eq:energy-parts-def}\\
\eqref{eq:p-def}}}{=}
\frac{t}{2}\int_{\Rt}(\partial_t\pw_0)|w|^2\,d\xi
-\int_{\Rt}\pw_0|\nabla w|^2\,d\xi
-\int_{\Rt}w\nabla\pw_0\cdot\nabla w\,d\xi
\\
&-
\frac14\int_{\Rt}(\xi\cdot\nabla\pw_0)|w|^2\,d\xi
-\frac12\int_{\Rt}\pw_0|w|^2\,d\xi
\\
&+
\frac{\nu t}{2}
\int_{\Rt}(q+t\partial_tq)|w|^2\,d\xi
-(\nu t)\int_{\Rt}q|\nabla w|^2\,d\xi
\\
&-
(\nu t)\int_{\Rt}w\nabla q\cdot\nabla w\,d\xi
-\frac{\nu t}{4}
\int_{\Rt}(\xi\cdot\nabla q)|w|^2\,d\xi
\\
&-
\frac{\nu t}{2}\int_{\Rt}q|w|^2\,d\xi .
\end{aligned}
\end{equation}

We first estimate the contribution of the background weight
\(\pw_0\). Let
\[
\begin{aligned}
\mathcal D_{\pw_0}
:={}&
\frac{t}{2}\int_{\Rt}(\partial_t\pw_0)|w|^2\,d\xi
-\int_{\Rt}\pw_0|\nabla w|^2\,d\xi
-\int_{\Rt}w\nabla\pw_0\cdot\nabla w\,d\xi
\\
&-
\frac14\int_{\Rt}(\xi\cdot\nabla\pw_0)|w|^2\,d\xi
-\frac12\int_{\Rt}\pw_0|w|^2\,d\xi .
\end{aligned}
\]

The mixed term satisfies, by Young's inequality,
\[
\begin{aligned}
\left|
\int_{\Rt}w\nabla\pw_0\cdot\nabla w\,d\xi
\right|
&\le
\frac34
\int_{\Rt}\pw_0|\nabla w|^2\,d\xi
\\
&\quad+
\frac13
\int_{\Rt}
\frac{|\nabla\pw_0|^2}{\pw_0}|w|^2\,d\xi .
\end{aligned}
\]
Therefore,
\[
\begin{aligned}
\mathcal D_{\pw_0}
\le{}&
-\frac14
\int_{\Rt}\pw_0|\nabla w|^2\,d\xi
\\
&+
\int_{\Rt}
\left(
\frac t2\partial_t\pw_0
-\frac14\xi\cdot\nabla\pw_0
-\frac12\pw_0
+\frac13\frac{|\nabla\pw_0|^2}{\pw_0}
\right)
|w|^2\,d\xi .
\end{aligned}
\]

The coefficient of \(|w|^2\) can be written as
\[
\begin{aligned}
&
\frac t2\partial_t\pw_0
-\frac14\xi\cdot\nabla\pw_0
-\frac12\pw_0
+\frac13\frac{|\nabla\pw_0|^2}{\pw_0}
\\
&=
-\pw_0
\left(
\frac{\xi\cdot\nabla\pw_0}{4\pw_0}
-\frac{|\nabla\pw_0|^2}{3\pw_0^2}
-\frac{t\partial_t\pw_0}{2\pw_0}
+\frac12
\right).
\end{aligned}
\]

The pointwise coercivity estimate gives
\[
\frac{\xi\cdot\nabla\pw_0}{4\pw_0}
-\frac{|\nabla\pw_0|^2}{3\pw_0^2}
-\frac{t\partial_t\pw_0}{2\pw_0}
\stackrel{\eqref{eq:p0-pointwise-coercivity}}{\ge}
\frac1{48}\chi_\nu(t,\xi).
\]
Consequently,
\[
\begin{aligned}
\mathcal D_{\pw_0}
\le{}&
-\frac14
\int_{\Rt}\pw_0|\nabla w|^2\,d\xi
\\
&-
\int_{\Rt}
\pw_0
\left(
\frac1{48}\chi_\nu(t,\xi)+\frac12
\right)
|w|^2\,d\xi .
\end{aligned}
\]
\begin{equation}\label{eq:p0-part-coercivity}
\mathcal D_{\pw_0}
\stackrel{\eqref{eq:IJK-def}}{\le}
-\frac14\mathcal I(t)
-\frac1{48}\mathcal J(t)
-\frac12\mathcal K(t).
\end{equation}

It remains to control the five terms in
\eqref{eq:diffusive-full-splitting} that contain \(q\).  Denote their
sum by \(\mathcal D_q(t)\), so that
\(\mathcal D_{\pw}(t)=\mathcal D_{\pw_0}(t)+\mathcal D_q(t)\).
Choose \(c_{q,T}>0\), independent of \(a_0,B_0,\nu\),
large enough that the three estimates below hold, and set
\(\delta_T:=c_{q,T}a_0^4\).
Then, for every \(0<t\le T\) and almost every \(\xi\in\Rt\),
\begin{equation}\label{eq:q-diffusive-normalized-bounds}
\begin{aligned}
(\nu t)\abs q
&\stackrel{\eqref{eq:normalized-q-bounds}}{\le}
\delta_T\pw_0,
\\
(\nu t)
\bigl(
  \abs{q+t\partial_tq}
  +\abs\xi\abs{\nabla q}
\bigr)
&\stackrel{\eqref{eq:normalized-q-bounds}}{\le}
\delta_T\pw_0\chi_\nu,
\\
(\nu t)\abs{\nabla q}
&\stackrel{\substack{\eqref{eq:normalized-q-bounds}\\
\eqref{eq:chi-def}}}{\le}
\delta_T\pw_0\chi_\nu^{1/2}.
\end{aligned}
\end{equation}
For the third line, in the inner region \(\abs\xi^2\le a_\nu(t)^2+1\),
\(a_\nu(t)\ge2\) implies
\(\chi_\nu(t,\xi)\stackrel{\eqref{eq:chi-def}}{\simeq}\abs\xi^2\).
Thus \(\chi_\nu/\abs\xi\lesssim \chi_\nu^{1/2}\) for \(\xi\ne0\);
outside this region, the weak derivatives of \(q\) vanish almost everywhere.

\begin{equation}\label{eq:q-gradient-coefficient-bound}
\left|
(\nu t)\int_{\Rt}q\abs{\nabla w}^2\,d\xi
\right|
\stackrel{\eqref{eq:q-diffusive-normalized-bounds}}{\le}
\delta_T\mathcal I(t).
\end{equation}
For the mixed term, Young's inequality gives
\begin{align}
\left|
(\nu t)\int_{\Rt}w\nabla q\cdot\nabla w\,d\xi
\right|
&\stackrel{\eqref{eq:q-diffusive-normalized-bounds}}{\le}
\delta_T
\int_{\Rt}
\pw_0\chi_\nu^{1/2}\abs w\abs{\nabla w}\,d\xi
\le
\frac1{32}\mathcal I(t)
+8\delta_T^2\mathcal J(t).
\label{eq:q-mixed-diffusive-bound}
\end{align}
Finally, the zeroth-order terms satisfy
\begin{align}
&\left|
\frac{\nu t}{2}
\int_{\Rt}(q+t\partial_tq)\abs w^2\,d\xi
\right|
+
\left|
\frac{\nu t}{4}
\int_{\Rt}(\xi\cdot\nabla q)\abs w^2\,d\xi
\right|
\stackrel{\eqref{eq:q-diffusive-normalized-bounds}}{\le}
\frac34\delta_T\mathcal J(t),
\label{eq:q-confinement-terms-bound}
\\
&\left|
\frac{\nu t}{2}
\int_{\Rt}q\abs w^2\,d\xi
\right|
\stackrel{\eqref{eq:q-diffusive-normalized-bounds}}{\le}
\frac12\delta_T\mathcal K(t).
\label{eq:q-L2-term-bound}
\end{align}
Combining \eqref{eq:q-gradient-coefficient-bound},
\eqref{eq:q-mixed-diffusive-bound},
\eqref{eq:q-confinement-terms-bound}, and \eqref{eq:q-L2-term-bound},
we bound the sum of the five terms by
\begin{equation}\label{eq:q-part-coercivity-error}
\mathcal D_q(t)
\le
\left(\delta_T+\frac1{32}\right)\mathcal I(t)
+\left(\frac34\delta_T+8\delta_T^2\right)\mathcal J(t)
+\frac12\delta_T\mathcal K(t).
\end{equation}
Since \(\delta_T=c_{q,T}a_0^4\), taking \(a_0\) small enough, depending
only on \(T\), gives
\begin{equation}\label{eq:q-part-absorbed}
\mathcal D_q(t)
\stackrel{\eqref{eq:q-part-coercivity-error}}{\le}
\frac1{16}\mathcal I(t)
+\frac5{384}\mathcal J(t)
+\frac18\mathcal K(t).
\end{equation}
Since \(1/48-5/384=1/128\) and
\(\mathcal I,\mathcal J,\mathcal K\ge0\),
\[
\begin{aligned}
\mathcal D_{\pw}
&\stackrel{\substack{\eqref{eq:p0-part-coercivity}\\
\eqref{eq:q-part-absorbed}}}{\le}
-\frac3{16}\mathcal I-\frac1{128}\mathcal J-\frac38\mathcal K
\\
&\stackrel{\eqref{eq:Q-def}}{\le}-\frac1{128}\mathcal Q.
\end{aligned}
\]
This proves \eqref{eq:weight-coercivity}.
\end{proof}
\subsection{Cancellation and estimate of the ambient transport}
Let
\begin{equation}\label{eq:inner-region}
\Omega_{\mathrm{in}}(t)
:=
\left\{
\xi:
\abs{\xi}^2\le a_\nu(t)^2+1
\right\},
\end{equation}

\begin{equation}\label{eq:plateau-region}
\Omega_{\mathrm{mid}}(t)
:=
\left\{
\xi:
a_\nu(t)^2+1
\le
\abs{\xi}^2
\le
B_\nu(t)^2
\right\},
\end{equation}

and
\begin{equation}\label{eq:outer-region}
\Omega_{\mathrm{out}}(t)
:=
\left\{
\xi:
\abs{\xi}\ge B_\nu(t)
\right\}.
\end{equation}

\smallskip
If \(\xi\in\Omega_{\mathrm{in}}(t)\), then
\[
\sqrt{\nu t}\abs{\xi}
\stackrel{\eqref{eq:inner-region}}{\le}
2\sqrt{\nu t}\,a_\nu(t)
\stackrel{\eqref{eq:weight-scales}}{=}
2a_0(\nu t)^{\frac{1}{4}}
\stackrel{\eqref{eq:standing-smallness}}{\le}
a_0^2
\le
\frac12.
\]
Thus the local estimates in \eqref{eq:linear-b-bounds} apply on
the support of \(q\).
Define
\begin{equation}\label{eq:rho-b-def}
\rho_b(\xi,t)
:=
b(\xi,t)\cdot\xi
-
\sqrt{\nu t}\,\xi\cdot\Sigma(t)\xi.
\end{equation}
\begin{equation}\label{eq:rho-b-bound}
\abs{\rho_b(\xi,t)}
\stackrel{\eqref{eq:linear-b-bounds}}{\lsim{T}}
\nu t\abs{\xi}^3,
\qquad
\xi\in\Omega_{\mathrm{in}}(t).
\end{equation}
\begin{lemma}\label{lem:transport-estimates}
For the weight \eqref{eq:p-def}, there exists
\(c_{\mathrm{tr},T}\ge1\), independent of \(a_0,B_0,\nu\)
in the ranges considered above, such that
\begin{equation}\label{eq:transport-total-bound}
\abs{\mathcal T_{\pw}(t)}
\le
c_{\mathrm{tr},T}
\left(
a_0(\nu T)^{\frac14}
+a_0^4
+\frac1{B_0}
\right)\mathcal J(t),
\qquad 0<t\le T.
\end{equation}
\end{lemma}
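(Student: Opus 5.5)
We split the transport term according to the three regions \eqref{eq:inner-region}--\eqref{eq:outer-region} and according to the decomposition \(\pw=\pw_0+(\nu t)q\). Write
\[
\mathcal T_{\pw}
=\frac1{2\nu}\int(v^G\cdot\nabla\pw_0)\abs w^2
+\frac t2\int(v^G\cdot\nabla q)\abs w^2
+\frac12\sqrt{\frac t\nu}\int(b\cdot\nabla\pw_0)\abs w^2
+\frac{\nu t}2\sqrt{\frac t\nu}\int(b\cdot\nabla q)\abs w^2 .
\]
The first integral vanishes identically, because \(\pw_0\) is radial and \(v^G\) is tangent to circles. On the plateau \(\nabla\pw_0=0\), and \(q\) together with its weak gradient vanishes off \(\Omega_{\mathrm{in}}\). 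Consequently only three contributions remain: an inner contribution from \(b\cdot\nabla\pw_0\) combined with \(v^G\cdot\nabla q\), an inner contribution from \(b\cdot\nabla q\), and an outer contribution from \(b\cdot\nabla\pw_0\) on \(\Omega_{\mathrm{out}}\).

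\emph{Inner cancellation.} On \(\Omega_{\mathrm{in}}\), radiality gives \(b\cdot\nabla\pw_0=\frac{\xi\cdot\nabla\pw_0}{\abs\xi^2}(b\cdot\xi)\). Insert \eqref{eq:rho-b-def} to write
\[
\tfrac12\sqrt{t/\nu}\,b\cdot\nabla\pw_0=\tfrac t2\,\Sigma\xi\cdot\nabla\pw_0+\tfrac12\sqrt{t/\nu}\,\frac{\xi\cdot\nabla\pw_0}{\abs\xi^2}\rho_b .
\]
By the cell identity \eqref{eq:q-cell}, the first term cancels exactly against \(\frac t2 v^G\cdot\nabla q\).

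For the leftover term, use \eqref{eq:rho-b-bound} together with \(\abs{\xi\cdot\nabla\pw_0}\lesssim\abs\xi^2\pw_0\) from \eqref{eq:p0-gradient}. This bounds it by \(t\sqrt{\nu t}\,\abs\xi^3\pw_0\abs w^2\). On \(\Omega_{\mathrm{in}}\) one has \(\abs\xi\le 2a_\nu\), so \(\sqrt{\nu t}\abs\xi\lesssim a_0(\nu t)^{1/4}\le a_0(\nu T)^{1/4}\). Since \(\abs\xi^2\lesssim\chi_\nu\) there, this piece is \(\lesssim_T a_0(\nu T)^{1/4}\mathcal J\).

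For the \(b\cdot\nabla q\) piece, use \(\abs b\lesssim\sqrt{\nu t}\abs\xi\) from \eqref{eq:linear-b-bounds}. This gives \(\frac{\nu t}{2}\sqrt{t/\nu}\abs{b}\abs{\nabla q}\lesssim_T (\nu t)\abs\xi\abs{\nabla q}\), which by \eqref{eq:normalized-q-bounds} is \(\lesssim_T a_0^4\chi_\nu\pw_0\). This yields the \(a_0^4\mathcal J\) term.

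\emph{Outer region.} On \(\Omega_{\mathrm{out}}\) we have \(\abs{\nabla\pw_0}=\frac\beta4\pw_0\), and \eqref{eq:linear-b-bounds} gives \(\abs b\lesssim1\) globally. Hence the integrand is bounded by \(\sqrt{t/\nu}\,\beta\,\pw_0\abs w^2\). Since \(\chi_\nu=\beta\abs\xi\ge\beta B_\nu=\beta B_0(\nu t)^{-1/2}\) there, we get
\[
\sqrt{t/\nu}\,\beta\le \frac{t\,\chi_\nu}{B_0}\le \frac{T\chi_\nu}{B_0},
\]
which produces the \(B_0^{-1}\mathcal J\) term.

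The step that needs the most care is the inner cancellation. One must check that the identity \eqref{eq:q-cell} holds almost everywhere on the transition annulus as well, which Lemma~\ref{lem:q-cell} supplies. One must also check that the replacement of \(b\cdot\xi\) by its strain part is legitimate on all of \(\Omega_{\mathrm{in}}\); this holds because \(\sqrt{\nu t}\abs\xi\le\frac12\) there, as verified just before \eqref{eq:rho-b-def}. Collecting the three bounds, with constants depending only on \(T\), gives \eqref{eq:transport-total-bound}.
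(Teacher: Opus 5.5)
Your proposal is correct and follows the paper's proof essentially step for step. It uses the same three-piece split. The inner strain term is cancelled against $v^G\cdot\nabla q$ by the cell identity, and the leftover is bounded through $\rho_b$ and $|\xi|\le 2a_\nu(t)$. The inner $b\cdot\nabla q$ term is bounded by the normalized $q$ estimates. The outer term is bounded using $\sqrt{t/\nu}\,\beta\le t\chi_\nu/B_0$.
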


\begin{proof}
Fix \(t\in(0,T]\), and suppress the time dependence of the
integration domains. Since \(\pw_0\) is radial,
\(v^G\cdot\nabla\pw_0=0\). Moreover,
\(q=\nabla q=0\) almost everywhere outside
\(\Omega_{\mathrm{in}}\), and
\(\nabla\pw_0=0\) on \(\Omega_{\mathrm{mid}}\).
Thus \eqref{eq:energy-parts-def}, \eqref{eq:p-def}, and
\eqref{eq:q-cell} give
\[
\begin{aligned}
\mathcal T_{\pw}(t)
={}&
\frac12\sqrt{\frac t\nu}
\int_{\Omega_{\mathrm{in}}}
\bigl(b-\sqrt{\nu t}\,\Sigma(t)\xi\bigr)
\cdot\nabla\pw_0\,\abs w^2\,d\xi
\\
&+
\frac{\nu t}{2}\sqrt{\frac t\nu}
\int_{\Omega_{\mathrm{in}}}
(b\cdot\nabla q)\abs w^2\,d\xi
\\
&+
\frac12\sqrt{\frac t\nu}
\int_{\Omega_{\mathrm{out}}}
(b\cdot\nabla\pw_0)\abs w^2\,d\xi
\\
=:{}&
\mathcal T_1+\mathcal T_2+\mathcal T_3.
\end{aligned}
\]
In particular, the leading strain contribution cancels exactly.

~\\
By \eqref{eq:p0-gradient} and \eqref{eq:rho-b-def},
\[
\mathcal T_1
=
\frac14\sqrt{\frac t\nu}
\int_{\Omega_{\mathrm{in}}}
\rho_b e^{\frac{\abs\xi^2}{4}}
\left(
1-\widetilde\vartheta
\bigl(\abs\xi^2-a_\nu(t)^2\bigr)
\right)\abs w^2\,d\xi.
\]
Since
\(0\le1-\widetilde\vartheta\le1-\vartheta\),
\eqref{eq:p0-def} and \eqref{eq:rho-b-bound} imply
\[
\abs{\mathcal T_1}
\lsim{T}
t\sqrt{\nu t}
\int_{\Omega_{\mathrm{in}}}
\abs\xi^3\pw_0\abs w^2\,d\xi.
\]
The standing assumption \eqref{eq:standing-smallness} gives
\(a_\nu(t)\ge2\). Hence, by \eqref{eq:chi-def},
\[
\abs\xi\le2a_\nu(t),
\qquad
\abs\xi^2\le\frac54\chi_\nu(t,\xi),
\qquad \xi\in\Omega_{\mathrm{in}},
\]
and therefore
\[
\begin{aligned}
\abs{\mathcal T_1}
&\lsim{T}
t\sqrt{\nu t}\,a_\nu(t)
\int_{\Omega_{\mathrm{in}}}
\pw_0\chi_\nu\abs w^2\,d\xi
\le
C_T t a_0(\nu t)^{\frac14}\mathcal J(t),
\end{aligned}
\]
where we used \eqref{eq:weight-scales} and
\eqref{eq:IJK-def}.

~\\
For the second term, the local velocity bound in
\eqref{eq:linear-b-bounds} gives
\[
\sqrt{\frac t\nu}\abs{b(\xi,t)}
\lsim{T}t\abs\xi,
\qquad \xi\in\Omega_{\mathrm{in}}.
\]
Using \eqref{eq:normalized-q-bounds}, we obtain
\[
\begin{aligned}
\abs{\mathcal T_2}
&\lsim{T}
t(\nu t)
\int_{\Omega_{\mathrm{in}}}
\abs\xi\abs{\nabla q}\abs w^2\,d\xi
\\
&\lsim{T}
t a_0^4
\int_{\Omega_{\mathrm{in}}}
\pw_0\chi_\nu\abs w^2\,d\xi
\\
&\le C_T t a_0^4\mathcal J(t).
\end{aligned}
\]

~\\
Finally, on \(\Omega_{\mathrm{out}}\),
\eqref{eq:linear-b-bounds}, \eqref{eq:p0-gradient}, and
\eqref{eq:chi-def} give
\[
\abs b\lsim{T}1,
\qquad
\abs{\nabla\pw_0}=\frac{\beta}{4}\pw_0,
\qquad
\chi_\nu=\beta\abs\xi.
\]
Since
\(\abs\xi\ge B_\nu(t)=B_0(\nu t)^{-1/2}\), we also have
\[
\sqrt{\frac t\nu}\le\frac{t\abs\xi}{B_0}.
\]
Consequently,
\[
\begin{aligned}
\abs{\mathcal T_3}
&\lsim{T}
\frac{t}{B_0}
\int_{\Omega_{\mathrm{out}}}
\beta\abs\xi\,\pw_0\abs w^2\,d\xi
\le
\frac{C_Tt}{B_0}\mathcal J(t).
\end{aligned}
\]
Summing the three estimates and using \(t\le T\) proves
\eqref{eq:transport-total-bound}, with
\(c_{\mathrm{tr},T}\) independent of \(a_0,B_0,\nu\).
\end{proof}

\subsection{Biot--Savart bounds and the nonlocal Oseen term}
\begin{lemma}
\label{lem:basic-biotsavart}
For the fixed parameters \(a_0,B_0\), uniformly in \(\nu\),
\begin{equation}\label{eq:w-L43}
\norm[L^{\frac{4}{3}}]{w}
\lsim{T,\beta}
\mathcal K(t)^{\frac{1}{2}},
\end{equation}
\begin{equation}\label{eq:v-L4}
\norm[L^4]{v}
\lsim{T,\beta}
\mathcal K(t)^{\frac{1}{2}},
\end{equation}
and
\begin{equation}\label{eq:w-L4}
\norm[L^4]{w}
\lesssim
\mathcal K(t)^{\frac{1}{4}}\mathcal I(t)^{\frac{1}{4}}.
\end{equation}
Consequently,
\begin{equation}\label{eq:v-Linfty-basic}
\norm[L^\infty]{v}
\lsim{T,\beta}
\mathcal K(t)^{\frac{1}{4}}
\bigl(\mathcal I(t)+\mathcal K(t)\bigr)^{\frac{1}{4}}.
\end{equation}
\end{lemma}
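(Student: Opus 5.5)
The plan is to reduce all four bounds to the lower bound \eqref{eq:p-global-bounds} on the weight. Recall \(\mathcal K=\int\pw_0\abs w^2\,d\xi\), and \eqref{eq:p-comparable} together with \eqref{eq:p-global-bounds} give
\[
\pw_0\ge e^{-5/16}e^{\beta\abs\xi/4}.
\]

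For \eqref{eq:w-L43}, I would use H\"older with exponents \(3/2\) and \(3\):
\[
\int\abs w^{4/3}\,d\xi
=\int\bigl(\pw_0\abs w^2\bigr)^{2/3}\pw_0^{-2/3}\,d\xi
\le\mathcal K^{2/3}\Bigl(\int\pw_0^{-2}\,d\xi\Bigr)^{1/3}.
\]
The last integral is at most \(C\int e^{-\beta\abs\xi/2}\,d\xi\lsim{\beta}1\). This gives \(\norm[L^{4/3}]{w}\lsim{\beta}\mathcal K^{1/2}\). The same argument with Cauchy--Schwarz also gives \(\norm[L^1]{w}+\norm[L^2]{w}\lsim{\beta}\mathcal K^{1/2}\), since \(\pw_0\gtrsim1\). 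This is uniform in \(\nu\), because the lower bound on the weight does not depend on \(\nu\).

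For \eqref{eq:v-L4}, the Hardy--Littlewood--Sobolev inequality in two dimensions, \(\norm[L^4]{K\star w}\lesssim\norm[L^{4/3}]{w}\) (from \(1/4=3/4-1/2\)), gives \(\norm[L^4]{v}\lsim{\beta}\mathcal K^{1/2}\) directly.

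For \eqref{eq:w-L4}, I would apply Ladyzhenskaya's inequality, \(\norm[L^4]{w}^2\lesssim\norm[L^2]{w}\norm[L^2]{\nabla w}\). Since \(\pw_0\ge e^{-5/16}\), we have \(\norm[L^2]{w}^2\lesssim\mathcal K\) and \(\norm[L^2]{\nabla w}^2\lesssim\mathcal I\) with absolute constants, which gives the stated bound with no dependence on \(\beta\). Rigorously this requires \(w\in H^1\), which follows from finiteness of \(\mathcal Q\).

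For \eqref{eq:v-Linfty-basic}, I would split the kernel as \(K=K\mathbf 1_{\abs\xi\le1}+K\mathbf 1_{\abs\xi>1}\). The near part lies in \(L^{4/3}\), so pairing it with \(w\in L^4\) gives the term \(\lesssim\mathcal K^{1/4}\mathcal I^{1/4}\). The far part is bounded by \(1/(2\pi)\), so pairing it with \(w\in L^1\) gives the term \(\lsim{\beta}\mathcal K^{1/2}=\mathcal K^{1/4}\mathcal K^{1/4}\). Adding the two and using \(\mathcal I^{1/4}+\mathcal K^{1/4}\lesssim(\mathcal I+\mathcal K)^{1/4}\) yields the claim.

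The only delicate point is the uniformity in \(\nu\). It rests on using the \(\nu\)-independent exponential lower bound from \eqref{eq:p-global-bounds} rather than the Gaussian weight, and this is why the constants depend on \(\beta\).
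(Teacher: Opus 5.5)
Your argument is correct and is essentially the paper's proof: H\"older against the \(\nu\)-independent exponential lower bound on \(\pw_0\) for \(L^{4/3}\) (and \(L^1\)), Hardy--Littlewood--Sobolev for \(v\), Ladyzhenskaya for \(\|w\|_{L^4}\), and a splitting of the Biot--Savart kernel for \(\|v\|_{L^\infty}\). The only difference is cosmetic: you split the kernel at unit radius and control the far part by \(\|w\|_{L^1}\), whereas the paper optimizes the splitting radius using \(\|w\|_{L^4}\) and \(\|w\|_{L^{4/3}}\). This gives the slightly sharper bound \(\mathcal K^{3/8}\mathcal I^{1/8}\) before it is weakened to the stated one.
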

\begin{proof}
The weight satisfies
\(\pw_0^{-1/2}\stackrel{\eqref{eq:p0-global-lower}}{\lesssim}
e^{-\beta\abs{\xi}/8}\in L^4(\Rt)\), where the
norm depends only on the fixed \(\beta>0\).  H\"older's inequality and
the Hardy--Littlewood--Sobolev inequality therefore give
\[
\norm[L^{4/3}]{w}
\stackrel{\substack{\eqref{eq:p-comparable}\\
\eqref{eq:p-global-bounds}}}{\lsim{\beta}}
\mathcal K^{1/2},
\qquad
\norm[L^4]{v}
\lesssim\norm[L^{4/3}]{w}.
\]
Since \(\pw_0\ge1\), the two-dimensional Gagliardo--Nirenberg inequality
also gives
\[
\norm[L^4]{w}\lesssim
\norm[L^2]{w}^{1/2}\norm[L^2]{\nabla w}^{1/2}
\le\mathcal K^{1/4}\mathcal I^{1/4}.
\]
Finally, splitting the Biot--Savart integral into
\(\{\abs{\xi-\eta}<R\}\) and its complement yields
\[
\norm[L^\infty]{v}
\lesssim
\left(
R^{1/2}\norm[L^4]{w}
+
R^{-1/2}\norm[L^{4/3}]{w}
\right).
\]
Choosing \(R=\norm[L^{4/3}]{w}/\norm[L^4]{w}\), with the zero cases
understood by approximation, gives
\[
\begin{aligned}
\norm[L^\infty]{v}
&\lesssim \norm[L^4]{w}^{1/2}\norm[L^{4/3}]{w}^{1/2}
\\
&\stackrel{\substack{\eqref{eq:w-L4}\\
\eqref{eq:w-L43}}}{\lsim{T,\beta}}
\mathcal K^{3/8}\mathcal I^{1/8}
\lsim{T,\beta} \mathcal K^{1/4}(\mathcal I+\mathcal K)^{1/4}.
\end{aligned}
\]
This proves \eqref{eq:v-Linfty-basic}.
\end{proof}
\begin{lemma}\label{lem:v-decay}
Assume \(w\in H^1(\Rt;\pw\,d\xi)\) and \(\int_{\Rt}w(\xi)\,d\xi=0\).
Then
\begin{equation}\label{eq:v-decay}
\abs{v(\xi)}
\lsim{\beta}
\frac{1}{1+\abs{\xi}^2}
\left(
\norm[L^2_{\pw}]{w}
+
\norm[L^2_{\pw}]{\nabla w}
\right).
\end{equation}
The implicit constant is uniform in \(t\in(0,T]\) and
\(0<\nu\le\nu_T\), with the fixed \(\beta=\frac{a_0^2}{B_0}>0\).
\end{lemma}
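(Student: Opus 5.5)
The estimate follows from the zero-mass cancellation in the Biot--Savart integral combined with the exponential lower bound \eqref{eq:p-global-bounds} on the weight. Set \(N:=\norm[L^2_{\pw}]{w}+\norm[L^2_{\pw}]{\nabla w}\). Since \(\pw\ge\frac12e^{\beta\abs\eta/4}\), Cauchy--Schwarz gives weighted moment bounds
\[
\int_{\Rt}(1+\abs\eta)^k e^{\beta\abs\eta/16}\abs{w(\eta)}\,d\eta\lsim{\beta,k}\norm[L^2_{\pw}]{w},
\]
with one exponential factor to spare. For bounded \(\abs\xi\le2\), the estimate \(\norm[L^\infty]{v}\lesssim \norm[L^4]{w}^{1/2}\norm[L^{4/3}]{w}^{1/2}\) from the proof of Lemma~\ref{lem:basic-biotsavart}, together with \(\norm[L^{4/3}]{w}\lsim{\beta}\norm[L^2_{\pw}]{w}\) and Gagliardo--Nirenberg (using \(\pw\ge\frac12\)), gives \(\abs{v(\xi)}\lsim{\beta}N\). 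This settles \(\abs\xi\le2\).

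For \(r=\abs\xi\ge2\), split \(v(\xi)=\int K(\xi-\eta)w(\eta)\,d\eta\) into \(\abs\eta\le r/2\) and \(\abs\eta>r/2\). On the inner region I would use the zero mass \(\int w=0\) to subtract \(K(\xi)\int_{\abs\eta\le r/2}w\), obtaining
\[
\int_{\abs\eta\le r/2}\bigl(K(\xi-\eta)-K(\xi)\bigr)w(\eta)\,d\eta-K(\xi)\int_{\abs\eta>r/2}w(\eta)\,d\eta .
\]
On this region \(\abs{K(\xi-\eta)-K(\xi)}=\abs\eta/(2\pi\abs{\xi-\eta}\abs\xi)\lesssim\abs\eta/r^2\), so the first term is \(\lesssim r^{-2}\int\abs\eta\abs w\lsim{\beta}r^{-2}N\). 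The second term is at most \(r^{-1}\int_{\abs\eta>r/2}\abs w\lsim{\beta}r^{-1}e^{-\beta r/32}N\lsim{\beta}r^{-2}N\) by the moment bound.

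On the outer region \(\abs\eta>r/2\), the difficulty is the local singularity of \(K\) near \(\eta=\xi\), where pointwise control of \(w\) is unavailable. Near \(\abs{\xi-\eta}<1\), Hölder gives the bound \(\norm[L^{4/3}(B_1)]{K}\norm[L^4(\abs\eta>r/2)]{w}\). Here \(\norm[L^4(\abs\eta>r/2)]{w}\) is controlled by applying Gagliardo--Nirenberg to \(\pw^{1/2}w\) restricted near the exterior. More simply, I would apply it to \(e^{\beta\abs\eta/8}w\), whose \(H^1\) norm is \(\lsim{\beta}N\) since \(\nabla e^{\beta\abs\eta/8}\) is bounded by \(\frac{\beta}{8}e^{\beta\abs\eta/8}\) and \(e^{\beta\abs\eta/4}\lesssim\pw\). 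This yields \(\norm[L^4(\abs\eta>r/2)]{w}\lsim{\beta}e^{-\beta r/16}N\). Away from the singularity, \(\abs K\le1/(2\pi)\) and the exponential moment bound gives \(e^{-\beta r/32}N\). Since \(e^{-cr}\lsim{\beta}r^{-2}\), combining the two regions gives \(\abs{v(\xi)}\lsim{\beta}(1+r^2)^{-1}N\).

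The main obstacle is this exterior singular piece, which forces the use of \(\nabla w\) through Sobolev embedding of the exponentially weighted function \(e^{\beta\abs\eta/8}w\). Uniformity in \(t\) and \(\nu\) is automatic because only \eqref{eq:p-global-bounds} with the fixed \(\beta\) is used.
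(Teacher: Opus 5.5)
Your proof is correct and follows essentially the paper's route: the same zero-mass splitting of the Biot--Savart integral at \(\abs\eta=r/2\) into a difference term, a tail term with \(K(\xi)\), and an exterior piece, with Gagliardo--Nirenberg supplying the \(L^4\) control of \(w\) needed near the singularity of \(K\). The difference is cosmetic. The paper first proves the unweighted bound \(\norm[L^\infty]{(1+\abs\xi^2)v}\lesssim\norm[\lbb]{(1+\abs\xi^2)w}\), and then controls \(g=(1+\abs\xi^2)w\) through \(W=(1+\abs\xi^2)\pw^{-1/2}\in L^4\cap L^\infty\). You instead insert the exponential weight \(e^{\beta\abs\eta/8}\) directly, which gives exponential rather than \(r^{-2}\) decay on the exterior region.
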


\begin{proof}
Set \(g(\xi)=(1+\abs\xi^2)w(\xi)\). We first prove
\begin{equation}\label{eq:v-weighted-unweighted}
\norm[L^\infty]{
(1+\abs{\xi}^2)v
}
\lesssim
\norm[L^4\cap L^{\frac{4}{3}}]{
(1+\abs{\xi}^2)w
}.
\end{equation}
Write \(M=\norm[L^4]{g}+\norm[L^{4/3}]{g}\).
Splitting the kernel at unit distance and applying H\"older's
inequality gives
\[
\sup_{\xi\in\Rt}\int_{\Rt}\abs{K(\xi-\eta)}\abs{g(\eta)}\,d\eta
\lesssim M,
\qquad
\int_{\Rt}\abs\eta\abs{w(\eta)}\,d\eta\lesssim M,
\]
where the second estimate uses
\(\abs\eta/(1+\abs\eta^2)\in L^4(\Rt)\).
The first estimate controls \(v(\xi)\) when \(\abs\xi\le2\).
For \(R=\abs\xi\ge2\), the zero-mass condition yields
\[
\begin{aligned}
v(\xi)
={}&\int_{\abs\eta\le R/2}
\bigl(K(\xi-\eta)-K(\xi)\bigr)w(\eta)\,d\eta
\\
&+\int_{\abs\eta>R/2}K(\xi-\eta)w(\eta)\,d\eta
-K(\xi)\int_{\abs\eta>R/2}w(\eta)\,d\eta.
\end{aligned}
\]
On the first region,
\(\abs{K(\xi-\eta)-K(\xi)}\lesssim R^{-2}\abs\eta\).
On the second region, \(\abs{w(\eta)}\le4R^{-2}\abs{g(\eta)}\).
The last term is bounded by
\(CR^{-2}\int\abs\eta\abs{w(\eta)}\,d\eta\).
Thus all three terms are bounded by \(CR^{-2}M\), proving
\eqref{eq:v-weighted-unweighted}.

~\\
Set \(W=(1+\abs{\xi}^2)\pw^{-1/2}\). The weight satisfies
\(W\stackrel{\eqref{eq:p-global-bounds}}{\le}
\sqrt2(1+\abs{\xi}^2)e^{-\beta\abs{\xi}/8}
\in L^4\cap L^\infty\).  Hence
\[
\begin{aligned}
\norm[L^{4/3}]{g}
&\le\norm[L^4]{W}\norm[L^2_{\pw}]{w},\\
\norm[L^2]{g}
&\le\norm[L^\infty]{W}\norm[L^2_{\pw}]{w}.
\end{aligned}
\]
Moreover, since
\(\nabla g=2\xi w+(1+\abs{\xi}^2)\nabla w\),
\[
\norm[L^2]{\nabla g}
\lesssim
\norm[L^\infty]{W}
\left(
\norm[L^2_{\pw}]{w}
+
\norm[L^2_{\pw}]{\nabla w}
\right).
\]
The two-dimensional Gagliardo--Nirenberg inequality controls
\(\norm[L^4]{g}\).  Consequently,
\[
\begin{aligned}
\norm[L^\infty]{(1+\abs\xi^2)v}
&\stackrel{\eqref{eq:v-weighted-unweighted}}{\lesssim}
\norm[L^4\cap L^{4/3}]{g}
\\
&\lsim{\beta} 
\left(\norm[L^2_{\pw}]{w}
+\norm[L^2_{\pw}]{\nabla w}\right),
\end{aligned}
\]
which proves \eqref{eq:v-decay}.
\end{proof}
\begin{lemma}\label{lem:reaction-cancellation}
For every real-valued \(w\) such that
\((1+\abs\xi)w\in L^1\cap L^2\), with \(v=K\star w\), \(\int_{\Rt}(v\cdot\xi)w\,d\xi=0\).
Consequently,
\begin{equation}\label{eq:gaussian-reaction-zero}
\int_{\Rt}
e^{\frac{\abs{\xi}^2}{4}}
(v\cdot\nabla G)w\,d\xi
=0.
\end{equation}
\end{lemma}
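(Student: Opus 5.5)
The plan is to write the pairing as a double integral, use the antisymmetry of the integrand under exchange of variables, and then specialize to the Gaussian. Write \(v(\xi)=\int_{\Rt}K(\xi-\eta)w(\eta)\,d\eta\), so that
\[
\int_{\Rt}(v\cdot\xi)w\,d\xi
=\iint_{\Rt\times\Rt}\bigl(\xi\cdot K(\xi-\eta)\bigr)w(\xi)w(\eta)\,d\eta\,d\xi .
\]

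\emph{Step 1 (symmetrization).} Since \(K(z)=(2\pi)^{-1}z^\perp/\abs z^2\) is odd, exchanging \(\xi\) and \(\eta\) shows the same integral equals \(-\iint (\eta\cdot K(\xi-\eta))w(\xi)w(\eta)\). Averaging the two expressions gives
\[
\frac12\iint (\xi-\eta)\cdot K(\xi-\eta)\,w(\xi)w(\eta)\,d\eta\,d\xi .
\]
This vanishes identically because \(z\cdot z^\perp=0\). The computation is the same one already used for the skew-symmetry of the nonlocal term in Lemma~\ref{lem:fixed-mode-elementary-tools}.

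\emph{Step 2 (Fubini).} This is the only real point to check: the exchange and splitting of the double integral must be justified, and I expect it to be the main (minor) obstacle. I will bound \(\abs{\xi\cdot K(\xi-\eta)}\le\abs\xi/(2\pi\abs{\xi-\eta})\). Then it suffices that
\[
\iint \frac{\abs\xi\abs{w(\xi)}\abs{w(\eta)}}{\abs{\xi-\eta}}\,d\eta\,d\xi<\infty .
\]
I will split the kernel \(\abs{\xi-\eta}^{-1}\) at unit distance:
\begin{itemize}
\item the near part lies in \(L^{4/3}\), and Young's inequality with \(w\in L^2\) places \(\abs{\cdot}^{-1}\mathbf 1_{\abs\cdot<1}\star\abs w\) in \(L^{4}\cap L^\infty\);
\item the far part is bounded by \(1\), so its convolution with \(\abs w\in L^1\) is in \(L^\infty\).
\end{itemize}
Hence \(\abs{K}\star\abs w\in L^\infty\), and pairing with \(\abs\xi\abs w\in L^1\) (from the hypothesis \((1+\abs\xi)w\in L^1\)) gives absolute convergence. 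Each of the two integrals appearing in Step 1 then converges absolutely, so Fubini applies and \(\int(v\cdot\xi)w\,d\xi=0\).

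\emph{Step 3 (the Gaussian identity).} Since \(\nabla G=-\tfrac12\xi G\) and \(e^{\abs\xi^2/4}G=1/(4\pi)\),
\[
e^{\abs\xi^2/4}\,v\cdot\nabla G=-\frac{1}{8\pi}\,v\cdot\xi .
\]
Therefore \eqref{eq:gaussian-reaction-zero} equals \(-\frac1{8\pi}\int(v\cdot\xi)w\,d\xi\), which is zero by Step 2.
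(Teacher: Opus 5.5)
Your argument is the same as the paper's: symmetrize the double integral, use $K(z)\cdot z=0$, and then use $e^{|\xi|^2/4}\nabla G=-\xi/(8\pi)$. The paper does not justify Fubini at all, so your Step~2 is an addition. One claim in it is false as stated, and the final bound rests on it.

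With the near kernel $|z|^{-1}\mathbf 1_{\{|z|<1\}}\in L^{4/3}$ and $w\in L^2$, Young's inequality gives $L^4$, not $L^\infty$. An $L^\infty$ bound would need the kernel in $L^2$, which fails logarithmically in the plane, or $w\in L^4$, which is not assumed. For example, $w(\eta)=|\eta|^{-1}\bigl(\log(e/|\eta|)\bigr)^{-1}\mathbf 1_{\{|\eta|<1\}}$ satisfies $(1+|\xi|)w\in L^1\cap L^2$. Yet $|\cdot|^{-1}\star|w|$ equals $2\pi\int_1^\infty s^{-1}\,ds=\infty$ at the origin, and by Fatou's lemma it is essentially unbounded near it. So you cannot use ``$|K|\star|w|\in L^\infty$'' paired with $|\xi||w|\in L^1$.

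The repair comes directly from your hypothesis. Interpolation gives $|\xi|w\in L^1\cap L^2\subset L^{4/3}$. Pair the near part, which lies in $L^4$, with $|\xi||w|\in L^{4/3}$. Pair the far part, which lies in $L^\infty$, with $|\xi||w|\in L^1$. This gives absolute convergence of $\iint|\xi|\,|w(\xi)|\,|w(\eta)|\,|\xi-\eta|^{-1}\,d\eta\,d\xi$, and the rest of your proof goes through unchanged.
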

\begin{proof}
Symmetrizing the Biot--Savart integral gives
\[
\begin{aligned}
2\int_{\Rt}(v\cdot\xi)w\,d\xi
&=
\iint_{\Rt\times\Rt}
K(\xi-\eta)\cdot(\xi-\eta)
w(\xi)w(\eta)\,d\eta\,d\xi
\\
&=0,
\end{aligned}
\]
because \(K(z)\cdot z=0\). Since
\(\nabla G(\xi)=-(8\pi)^{-1}\xi e^{-\abs{\xi}^2/4}\),
\eqref{eq:gaussian-reaction-zero} follows.
\end{proof}
\begin{lemma}\label{lem:reaction}
For the fixed parameters \(a_0,B_0\) and every \(\eta>0\),
there exists \(\nu_{T,\eta}>0\) such that
\begin{equation}\label{eq:reaction-bound}
\bigl|\mathcal R_{\mathsf p}(t)\bigr|
\le
\eta\mathcal K(t)
+C_{T,\beta}\,t\mathcal K(t),
\qquad
0<\nu\le\nu_{T,\eta},
\quad 0<t\le T,
\end{equation}
where \(C_{T,\beta}\) is independent of \(\nu,t,\eta\).
\end{lemma}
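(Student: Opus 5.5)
The plan is to exploit the exact cancellation of Lemma~\ref{lem:reaction-cancellation}: the Gaussian weight \(\pw_G=e^{\abs\xi^2/4}\) annihilates the nonlocal Oseen term. Hence I subtract \(\pw_G\) from \(\pw\) and write
\[
\mathcal R_{\pw}(t)
=-\frac1\nu\int_{\Rt}(\pw_0-\pw_G)(v\cdot\nabla G)w\,d\xi
-t\int_{\Rt}q\,(v\cdot\nabla G)w\,d\xi
=:\mathcal R_a+\mathcal R_b,
\]
using \eqref{eq:p-def} and \eqref{eq:gaussian-reaction-zero}. The integrability needed for Lemma~\ref{lem:reaction-cancellation} follows from the finite weighted energy and \eqref{eq:p-global-bounds}. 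Both terms are then estimated by H\"older in \(L^4\times L^{4/3}\), with \(\norm[L^4]{v}\lsim{T,\beta}\mathcal K^{1/2}\) from \eqref{eq:v-L4}. Each time I factor \(w=\pw_0^{-1/2}\cdot\pw_0^{1/2}w\), so that \(\norm[L^2]{\pw_0^{1/2}w}=\mathcal K^{1/2}\).

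For \(\mathcal R_b\), the function \(q\) is supported in \(\Omega_{\mathrm{in}}\). There \eqref{eq:q-pointwise} gives \(\abs q\lesssim\abs\xi^2(1+\abs\xi^2)\pw_0\), and \(\abs{\nabla G}\lesssim\abs\xi e^{-\abs\xi^2/4}\). Since \(\pw_0\le e^{\abs\xi^2/4}\) by \eqref{eq:p0-below-gaussian}, the multiplier \(q\nabla G\,\pw_0^{-1/2}\) is bounded by \((1+\abs\xi)^7e^{-\abs\xi^2/8}\), which lies in \(L^4\) uniformly in \(\nu,t\). H\"older's inequality then gives
\[
\abs{\mathcal R_b}\lsim{T,\beta}t\,\norm[L^4]{v}\,\mathcal K^{1/2}\lsim{T,\beta}t\,\mathcal K,
\]
which is the second term in \eqref{eq:reaction-bound}. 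The \(\nu\)-factor in \((\nu t)q\) exactly compensates the prefactor \(1/\nu\), so no smallness is required here.

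For \(\mathcal R_a\), note that \(\pw_0=\pw_G\) on \(\{\abs\xi^2\le a_\nu(t)^2-1\}\). Hence the integrand is supported in \(\{\abs\xi\ge a_\nu(t)/2\}\), using \(a_\nu\ge2\). There \(\abs{\pw_0-\pw_G}\le\pw_G\), so \(\abs{(\pw_0-\pw_G)\nabla G}\lesssim\abs\xi\), and
\[
\abs{\mathcal R_a}\lesssim\frac1\nu\norm[L^4]{v}\,\bigl\|\abs\xi\,\pw_0^{-1/2}\mathbf 1_{\{\abs\xi\ge a_\nu/2\}}\bigr\|_{L^4}\,\mathcal K^{1/2}.
\]
By the global lower bound \eqref{eq:p0-global-lower}, \(\pw_0^{-1/2}\lesssim e^{-\beta\abs\xi/8}\). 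The middle factor is therefore at most \(C_\beta(1+a_\nu)^2e^{-\beta a_\nu/32}\). Since \(a_\nu(t)=a_0(\nu t)^{-1/4}\ge a_0(\nu T)^{-1/4}\) and this tail bound is decreasing in \(a_\nu\) for large \(a_\nu\), I obtain
\[
\abs{\mathcal R_a}\le C_{T,\beta}\,\nu^{-1}\exp\!\bigl(-c_\beta a_0(\nu T)^{-1/4}\bigr)\mathcal K
\]
uniformly in \(t\in(0,T]\). The coefficient tends to zero as \(\nu\to0\), so it is at most \(\eta\) for \(\nu\le\nu_{T,\eta}\).

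The main obstacle is the singular prefactor \(1/\nu\) in \(\mathcal R_a\). It can only be beaten because the weight differs from the Gaussian solely at distances \(\gtrsim(\nu t)^{-1/4}\), where \(\pw_0^{-1/2}\) supplies stretched-exponential smallness. I must check that this smallness is uniform down to \(t\to0\). This works because \(a_\nu\) only grows as \(t\) decreases. I must also check that I use the lower bound \(e^{\beta\abs\xi/4}\) rather than the Gaussian one, since in the plateau and outer regions \(\pw_0\) is far below \(\pw_G\).
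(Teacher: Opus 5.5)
Your proposal is correct and follows the paper's proof essentially step for step. It uses the same subtraction of the Gaussian weight via \eqref{eq:gaussian-reaction-zero}, the same H\"older splitting with \eqref{eq:v-L4}, the same uniform \(L^4\) bound on \(q\nabla G\,\pw_0^{-1/2}\), and the same stretched-exponential smallness of \((\pw_0-\pw_G)\nabla G\,\pw_0^{-1/2}\) beyond radius \(\gtrsim(\nu t)^{-1/4}\) coming from \eqref{eq:p0-global-lower}. The only cosmetic difference is that the paper works directly on the \(t\)-independent set \(\{\abs\xi\ge r_\nu\}\), with \(r_\nu=\frac{a_0}{2}(\nu T)^{-1/4}\le a_\nu(t)/2\), in place of your monotonicity remark about the tail bound.
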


\begin{proof}
Let \(\mathsf p_G(\xi)=e^{|\xi|^2/4}\).
By \eqref{eq:gaussian-reaction-zero} and \eqref{eq:p-def},
\[
\begin{aligned}
\mathcal R_{\mathsf p}(t)
={}&
-\frac1\nu
\int_{\mathbb R^2}
(\mathsf p_0-\mathsf p_G)(v\cdot\nabla G)w\,d\xi
\\
&-
t\int_{\mathbb R^2}q(v\cdot\nabla G)w\,d\xi.
\end{aligned}
\]
Thus H\"older's inequality and \eqref{eq:v-L4} give
\[
\begin{aligned}
\bigl|\mathcal R_{\mathsf p}(t)\bigr|
&\le
\|v\|_{L^4}
\|\mathsf p_0^{1/2}w\|_{L^2}
\left(
\frac1\nu
\left\|
\frac{(\mathsf p_0-\mathsf p_G)\nabla G}
{\mathsf p_0^{1/2}}
\right\|_{L^4}
+
t\left\|
\frac{q\nabla G}{\mathsf p_0^{1/2}}
\right\|_{L^4}
\right)
\\
&\le
C_{T,\beta}\mathcal K(t)
\left(
\frac1\nu
\left\|
\frac{(\mathsf p_0-\mathsf p_G)\nabla G}
{\mathsf p_0^{1/2}}
\right\|_{L^4}
+
t\left\|
\frac{q\nabla G}{\mathsf p_0^{1/2}}
\right\|_{L^4}
\right).
\end{aligned}
\]

~\\
We first estimate the coefficient containing
\(\mathsf p_0-\mathsf p_G\).
Set
\[
r_\nu:=\frac{a_0}{2}(\nu T)^{-1/4}.
\]
By \eqref{eq:standing-smallness} and \eqref{eq:weight-scales},
\(a_\nu(t)\ge2\) and
\[
\sqrt{a_\nu(t)^2-1}
\ge \frac12a_\nu(t)
\ge r_\nu,
\qquad 0<t\le T.
\]
Since \(\mathsf p_0=\mathsf p_G\) when
\(|\xi|^2\le a_\nu(t)^2-1\), the difference
\(\mathsf p_0-\mathsf p_G\) vanishes on \(\{|\xi|<r_\nu\}\).
Moreover, \eqref{eq:p0-below-gaussian} gives
\[
|\mathsf p_0-\mathsf p_G|\,|\nabla G|
\le \mathsf p_G|\nabla G|
\lesssim |\xi|.
\]
Using \eqref{eq:p0-global-lower}, we obtain
\[
\left|
\frac{(\mathsf p_0-\mathsf p_G)\nabla G}
{\mathsf p_0^{1/2}}
\right|
\lesssim
|\xi|e^{-\beta|\xi|/8}
\mathbf 1_{\{|\xi|\ge r_\nu\}}.
\]
For sufficiently small \(\nu\), one has \(\beta r_\nu\ge2\), and
polar coordinates yield
\[
\begin{aligned}
\left\|
|\xi|e^{-\beta|\xi|/8}
\mathbf 1_{\{|\xi|\ge r_\nu\}}
\right\|_{L^4}^4
&=
2\pi\int_{r_\nu}^{\infty}r^5e^{-\beta r/2}\,dr
\lesssim
\beta^{-1}r_\nu^5e^{-\beta r_\nu/2}.
\end{aligned}
\]
Consequently,
\[
\sup_{0<t\le T}
\frac1\nu
\left\|
\frac{(\mathsf p_0-\mathsf p_G)\nabla G}
{\mathsf p_0^{1/2}}
\right\|_{L^4}
\lesssim
\frac{\beta^{-1/4}}{\nu}
r_\nu^{5/4}e^{-\beta r_\nu/8}
\to0
\qquad\text{as }\nu\to0,
\]
because \(r_\nu=\frac{a_0}{2}(\nu T)^{-1/4}\).

~\\
For the remaining coefficient, \eqref{eq:q-pointwise} and
\eqref{eq:p0-below-gaussian} imply
\[
\begin{aligned}
\frac{|q\nabla G|}{\mathsf p_0^{1/2}}
&\le
C_T|\xi|^3(1+|\xi|^2)
\mathsf p_0^{1/2}e^{-|\xi|^2/4}
\\
&\le
C_T|\xi|^3(1+|\xi|^2)e^{-|\xi|^2/8}.
\end{aligned}
\]
The right-hand side belongs to \(L^4(\mathbb R^2)\), so
\[
\left\|
\frac{q\nabla G}{\mathsf p_0^{1/2}}
\right\|_{L^4}
\le C_T,
\]
uniformly in \(t\) and \(\nu\).
Returning to the initial estimate and choosing
\(\nu_{T,\eta}\) sufficiently small proves
\eqref{eq:reaction-bound}.
\end{proof}

\begin{proof}[Proof of Theorem~\ref{thm:linear-weighted-estimate}]
First choose \(0<a_0\ll1\) so that
Lemma~\ref{lem:adapted-weight} and
Theorem~\ref{thm:weight-coercivity} apply and
\(c_{\mathrm{tr},T}a_0^4\le1/1536\).
Next choose \(B_0\gg1\) so that
\(c_{\mathrm{tr},T}/B_0\le1/1536\), and set
\(\beta=a_0^2/B_0\).

Finally, decrease \(\nu_T\) so that
\begin{equation}\label{eq:linear-parameter-choice}
\nu_TT\le\frac{a_0^4}{16},
\qquad
c_{\mathrm{tr},T}a_0(\nu_TT)^{\frac14}\le\frac1{1536},
\qquad
\nu_T\le\nu_{T,\frac1{1024}},
\end{equation}
where the last constant is supplied by Lemma~\ref{lem:reaction}.

For these parameters,
\[
\mathcal D_{\pw}(t)
\stackrel{\eqref{eq:weight-coercivity}}{\le}
-\frac1{128}\mathcal Q(t),
\qquad
\abs{\mathcal T_{\pw}(t)}
\stackrel{\substack{\eqref{eq:transport-total-bound}\\
\eqref{eq:linear-parameter-choice}}}{\le}
\frac1{512}\mathcal Q(t).
\]
Moreover, with \(\eta=1/1024\),
\[
\abs{\mathcal R_{\pw}(t)}-\frac1{1024}\mathcal K(t)
\stackrel{\eqref{eq:reaction-bound}}{\lsim{T}}t\mathcal K(t).
\]
Since \(\mathcal K\stackrel{\eqref{eq:Q-def}}{\le}\mathcal Q\),
combining \eqref{eq:energy-parts}, \eqref{eq:weight-coercivity},
\eqref{eq:transport-total-bound}, \eqref{eq:reaction-bound}, and
\eqref{eq:linear-parameter-choice} yields
\[
\begin{aligned}
t\frac{d}{dt}E_{\pw}(t)
+
\frac5{1024}\mathcal Q
-\abs{\inner{f(t)}{w(t)}_{\pw(t)}}
&\lsim{T}
t\mathcal K
\\
&\stackrel{\eqref{eq:weighted-energy-comparable}}{\lesssim}
tE_{\pw}(t).
\end{aligned}
\]
This proves
\eqref{eq:linear-weighted-estimate} with
\(\kappa_T=\frac5{1024}\).
\end{proof}
\section{Estimate for the concentrated remainder}
\label{sec:w2-estimate}
After decreasing \(\nu_T\), we assume throughout
Sections~\ref{sec:w2-estimate}--\ref{sec:closure} that
\(\nu_T\le\nu_{0,T}\) and \(\nu_TT\le1\).
In this section, \(E_{\pw},\mathcal I,\mathcal J,\mathcal K,\mathcal Q\)
denote the quantities of Section~\ref{sec:linearized-estimate} evaluated at
\(w=\wb_2\) and \(v=\vb_2\).
For the regular remainder, we use the norm
\begin{equation}\label{eq:wave-remainder-norm}
\norm{\wb_1(t)}
:=
\norm[L^4(\Rt)]{\wb_1(\cdot,t)}
+
\norm[L^{\frac{4}{3}}(\Rt)]{\wb_1(\cdot,t)}.
\end{equation}
Both Lebesgue norms are taken in the original spatial variable \(x\).

\begin{proposition}
\label{prop:w2-apriori}
After decreasing \(\nu_T\), there is
\(\kappa_1>0\) such that, for almost every
\(t\in(0,T]\),
\begin{equation}\label{eq:w2-apriori}
\begin{aligned}
t\frac{d}{dt}E_{\pw}(t)
+\kappa_1\mathcal Q(t)
\lsim{T}{}&
t\bigl(E_{\pw}(t)+E_{\pw}(t)^2\bigr)
+t^{-1}\norm{\wb_1(t)}^2
\\
&+
\nu t
+\sqrt\nu\,\norm{\wb_1(t)}\mathcal K(t).
\end{aligned}
\end{equation}
\end{proposition}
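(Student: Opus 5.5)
Apply Theorem~\ref{thm:linear-weighted-estimate} to \(w=\wb_2\), which solves \eqref{eq:linearized} with zero mass by \eqref{eq:remainder-mass-constraints}. Read off from \eqref{eq:concentrated-remainder-equation} the forcing
\[
f=-t\bigl(\vb_2\cdot\nabla w_{2,a}+v_{2,a}\cdot\nabla\wb_2+\vb_2\cdot\nabla\wb_2\bigr)
-t^{-1/2}\vb_1\cdot\nabla G
-\nu\sqrt t\,\vb_1\cdot\nabla(w_{2,a}+\wb_2)
-(\nu t)^{-1}\Phi_{\mathrm{app}} .
\]
The task is then to bound \(\abs{\inner{f}{\wb_2}_{\pw}}\) term by term, and to absorb every contribution involving \(\mathcal I\) or \(\mathcal J\) into \(\kappa_T\mathcal Q\) with a small fraction. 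This leaves \(\kappa_1=\kappa_T/2\).

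\emph{Residual and linear core terms.} By Proposition~\ref{prop:high-residual} with \(\gamma\in(\frac12,1)\) close to \(1\), \(\abs{\Phi_{\mathrm{app}}}\lesssim(\nu t)^{3/2}e^{-\gamma|\xi|^2/4}\). Since \(\pw\le2e^{|\xi|^2/4}\), Cauchy--Schwarz gives a bound \(\lesssim\sqrt{\nu t}\,\mathcal K^{1/2}\le\nu t/\epsilon+\epsilon\mathcal K\).

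For \(t\,\vb_2\cdot\nabla w_{2,a}\), use the Gaussian bounds \eqref{eq:profile-Gaussian-bounds}: \(\pw|\nabla w_{2,a}|\lesssim e^{-(\gamma-1)|\xi|^2/4}\cdot\) a decaying factor. Then \eqref{eq:v-L4} and H\"older give \(\lesssim t\mathcal K\).

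The term \(t\,v_{2,a}\cdot\nabla\wb_2\) is treated as a divergence-free transport: it contributes \(\frac t2\int(v_{2,a}\cdot\nabla\pw)|\wb_2|^2\). By \eqref{eq:high-profile-velocity}, \(|v_{2,a}|\lesssim1\). In the inner region \(|\nabla\pw|\lesssim|\xi|\pw_0\) plus a \(q\)-term, which is controlled through \eqref{eq:normalized-q-bounds}. On the exponential tail \(|\nabla\pw|\lesssim\beta\pw\), and on the plateau it vanishes. This yields \(\lesssim t\int\pw_0(1+|\xi|)|\wb_2|^2\), and with \(|\xi|\le1+|\xi|^2\) this is bounded by \(t\mathcal K+t\mathcal J\cdot(\cdot)\).

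\emph{Main obstacle.} I expect this transport term to be the hardest step, since \(t\mathcal J\) is not allowed on the right-hand side. I would first try to exploit the decay \(|v_{2,a}|\lesssim(1+|\xi|^2)^{-3/2}\): it makes \(|v_{2,a}||\nabla\pw_0|/\pw_0\lesssim|\xi|(1+|\xi|)^{-3}\lesssim1\). Hence this term is genuinely \(\lesssim t\mathcal K\).

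\emph{Nonlinear term.} The transport part of \(t\,\vb_2\cdot\nabla\wb_2\) gives \(\frac t2\int(\vb_2\cdot\nabla\pw)|\wb_2|^2\). Lemma~\ref{lem:v-decay} gives \(|\vb_2|\lesssim(1+|\xi|^2)^{-1}(\mathcal K+\mathcal I)^{1/2}\), so this term is \(\lesssim t(\mathcal K+\mathcal I)^{1/2}\mathcal K\le\epsilon\mathcal I+C t^2\mathcal K^2+Ct\mathcal K\), which fits the form \(t(E+E^2)\) (since \(t\le T\)). Alternatively one can use \eqref{eq:v-Linfty-basic}.

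\emph{Wave coupling.} In physical variables \(x=\Zn+\sqrt{\nu t}\,\xi\) we have \(\norm[L^4_\xi]{\vb_1}=(\nu t)^{-1/2}\norm[L^4_x]{\vb_1}\lesssim(\nu t)^{-1/2}\norm{\wb_1}\) by Hardy--Littlewood--Sobolev. H\"older in \(L^4\) with \(\pw|\nabla G|\pw^{-1/2}\in L^{4}\) then gives
\[
t^{-1/2}\abs{\inner{\vb_1\cdot\nabla G}{\wb_2}_{\pw}}\lesssim t^{-1/2}(\nu t)^{-1/2}\norm{\wb_1}\mathcal K^{1/2}.
\]
This is too large: it shows the rescaling is wrong. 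The correct approach is to bound \(\vb_1\) in \(L^\infty\) by interpolation of \(L^4\) and \(L^{4/3}\), which gives \(\norm[L^\infty]{\vb_1}\lesssim\norm{\wb_1}\) independently of scaling. Then the term is \(\lesssim t^{-1/2}\norm{\wb_1}\mathcal K^{1/2}\le t^{-1}\norm{\wb_1}^2+\mathcal K\)-type. The \(\mathcal K\) here must be made small, so I would use \(\epsilon\mathcal K+C_\epsilon t^{-1}\norm{\wb_1}^2\).

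Similarly, \(\nu\sqrt t\,\vb_1\cdot\nabla w_{2,a}\lesssim\nu\norm{\wb_1}\mathcal K^{1/2}\le\nu t+\nu t^{-1}\norm{\wb_1}^2\). The transport part \(\nu\sqrt t\,\vb_1\cdot\nabla\wb_2\) gives \(\nu\sqrt t\int|\vb_1||\nabla\pw||\wb_2|^2\), where \(|\nabla\pw|/\pw\lesssim\sqrt{\chi_\nu}\lesssim a_\nu\). This yields \(\nu\sqrt t\,a_\nu\norm{\wb_1}\mathcal K\lesssim\sqrt\nu\norm{\wb_1}\mathcal K\), which is the last term in \eqref{eq:w2-apriori}.

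Finally, sum all contributions, take \(\epsilon\) small relative to \(\kappa_T\), and use \eqref{eq:weighted-energy-comparable} to convert \(\mathcal K\) into \(E_{\pw}\).
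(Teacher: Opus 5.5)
Your proposal is the paper's proof. You apply Theorem~\ref{thm:linear-weighted-estimate} to \(\wb_2\) with the forcing \eqref{eq:w2-linear-source} and integrate the three transport-type pairings by parts. You control them by \(\abs{\nabla\pw}\lesssim(1+\abs\xi)\pw\), played against the decay of \(v_{2,a}\), the decay of \(\vb_2\) from Lemma~\ref{lem:v-decay}, or the small prefactor \(\nu\sqrt t\). The wave couplings go through \(\norm[L^\infty]{\vb_1}\lesssim\norm{\wb_1}\), the residual through Cauchy--Schwarz and \eqref{eq:residual-bound}, and you absorb with \(\kappa_1=\kappa_T/2\), exactly as in Lemmas~\ref{lem:w2-corrector-vorticity}--\ref{lem:w2-residual}. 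Several displayed steps are misstated, but each can be repaired without changing the argument.

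\emph{The term \(t\,\vb_2\cdot\nabla w_{2,a}\).} The factor to put in \(L^4\) is \(\pw^{1/2}\abs{\nabla w_{2,a}}\lesssim e^{-(2\gamma-1)\abs\xi^2/8}\). Your bound \(e^{-(\gamma-1)\abs\xi^2/4}\) grows, so it does not close H\"older.

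\emph{Two Young steps.} The bound \(\nu\norm{\wb_1}\mathcal K^{1/2}\le\nu t+\nu t^{-1}\norm{\wb_1}^2\) has lost the factor \(\mathcal K^{1/2}\). The correct bound is \(\epsilon\mathcal K+C_\epsilon\nu^2t\norm{\wb_1}^2\le\epsilon\mathcal K+C_\epsilon t^{-1}\norm{\wb_1}^2\), using \(\nu t\le1\). In the nonlinear term, \(t\mathcal K^{3/2}\) is not bounded by \(Ct\mathcal K+Ct^2\mathcal K^2\). Use instead \(t(\mathcal I+\mathcal K)^{1/2}\mathcal K\le\epsilon(\mathcal I+\mathcal K)+C_\epsilon t^2\mathcal K^2\), as the paper does.

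\emph{The aside about \eqref{eq:v-Linfty-basic}.} It would not work there. A uniform bound on \(\vb_2\) does not offset the growth \(\abs{\nabla\pw}/\pw\sim\abs\xi\) in the inner region. You therefore pay \(\mathcal J^{1/2}\mathcal K^{1/2}\), and Young leaves \(t^4\mathcal K^3\), which is not of the form in \eqref{eq:w2-apriori}. What is needed is the \((1+\abs\xi^2)^{-1}\) decay that comes from the zero mass.

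\emph{The term \(\nu\sqrt t\,\vb_1\cdot\nabla\wb_2\).} The claim \(\sqrt{\chi_\nu}\lesssim a_\nu(t)\) fails on the tail \(\abs\xi>B_\nu(t)\), where \(\chi_\nu=\beta\abs\xi\). However, \(\abs{\nabla\pw}/\pw\lesssim a_\nu(t)\) holds directly: it is \(\lesssim\abs\xi\le2a_\nu(t)\) inside, zero on the plateau, and \(\beta/4\) on the tail. So your estimate stands, and it even gives \(\nu^{3/4}\) where the paper's \eqref{eq:nu-grad-p-small} gives \(\sqrt\nu\).
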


To prove Proposition~\ref{prop:w2-apriori}, we estimate the forcing
in \eqref{eq:linearized}.
For each \((\xi,t)\), we evaluate the regular velocity at
\begin{equation}\label{eq:w2-physical-point}
x=\Zn(t)+\sqrt{\nu t}\,\xi.
\end{equation}
Equation \eqref{eq:concentrated-remainder-equation} has the form
\eqref{eq:linearized}, with
\begin{equation}\label{eq:w2-linear-source}
\begin{aligned}
f(\xi,t)
={}&-t\,\vb_2\cdot\nabla w_{2,a}
-t\,v_{2,a}\cdot\nabla\wb_2
-t\,\vb_2\cdot\nabla\wb_2
\\
&-\frac{1}{\sqrt t}\,\vb_1(x,t)\cdot\nabla G
-\nu\sqrt t\,\vb_1(x,t)\cdot\nabla w_{2,a}
\\
&-\nu\sqrt t\,\vb_1(x,t)\cdot\nabla\wb_2
-\frac{1}{\nu t}\Phi_{\mathrm{app}}(\xi,t).
\end{aligned}
\end{equation}
The zero-mass condition in Theorem~\ref{thm:linear-weighted-estimate}
follows from \eqref{eq:remainder-mass-constraints}. We estimate the
pairing of each term in \eqref{eq:w2-linear-source} with \(\wb_2\).

By \cite{NN}, the regular velocity obeys
\begin{equation}\label{eq:wave-remainder-velocity-bound}
\begin{aligned}
\norm[L^\infty]{\vb_1(t)}
&\lesssim
\norm[L^4]{\wb_1(t)}^{\frac12}
\norm[L^{\frac{4}{3}}]{\wb_1(t)}^{\frac12}
\stackrel{\eqref{eq:wave-remainder-norm}}{\lesssim}
\norm{\wb_1(t)}.
\end{aligned}
\end{equation}
Thus the three terms in \eqref{eq:w2-linear-source} involving the regular
wave are controlled by the \(L^4\cap L^{\frac{4}{3}}\) norm of \(\wb_1\).
\begin{lemma}\label{lem:weight-pairing}
For the weight constructed above, one has, for every \(0<t\le T\) and
almost every \(\xi\in\Rt\),
\begin{equation}\label{eq:grad-p-global}
\abs{\nabla\pw(t,\xi)}
\lsim{T}
(1+\abs{\xi})\pw(t,\xi),
\end{equation}
and, more sharply,
\begin{equation}\label{eq:nu-grad-p-small}
\nu\sqrt t\,\abs{\nabla\pw(t,\xi)}
\lsim{T}
\sqrt\nu\,\pw(t,\xi).
\end{equation}
Moreover, for each \(\gamma\in(\tfrac12,1)\),
\begin{equation}\label{eq:gaussian-pairing}
\norm[L^2\cap L^4]{
\pw^{\frac{1}{2}}e^{-\frac{\gamma\abs{\xi}^2}{4}}
}
+
\norm[L^2]{\pw^{\frac{1}{2}}\nabla G}
\lsim{\gamma,T}
1.
\end{equation}
The composite corrector satisfies
\begin{equation}\label{eq:composite-profile-bounds}
\abs{w_{2,a}}+\abs{\nabla w_{2,a}}
\lsim{\gamma,T}
e^{-\frac{\gamma\abs{\xi}^2}{4}},
\qquad
\abs{v_{2,a}(\xi,t)}
\lsim{T}
\frac{1}{(1+\abs{\xi}^2)^{\frac{3}{2}}}.
\end{equation}
\end{lemma}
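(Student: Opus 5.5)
The estimates are checked region by region, following the three-part structure of the radial weight $\pw_0$ in \eqref{eq:p0-def}. Throughout, the comparability $\frac34\pw_0\le\pw\le\frac54\pw_0$ from \eqref{eq:p-comparable} lets us replace $\pw$ by $\pw_0$ at the cost of absolute constants. The gradient splits as $\nabla\pw=\nabla\pw_0+\nu t\nabla q$.

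\emph{Gradient bounds.} By \eqref{eq:p0-gradient}:
\begin{itemize}
\item in the inner region, $\abs{\nabla\pw_0}\le\frac{\abs\xi}{2}e^{\abs\xi^2/4}\abs{1-\widetilde\vartheta}$, and since $\abs{1-\widetilde\vartheta}\lesssim 1-\vartheta$, this gives $\abs{\nabla\pw_0}\lesssim\abs\xi\,\pw_0$;
\item on the plateau, $\nabla\pw_0=0$;
\item in the outer region, $\abs{\nabla\pw_0}=\frac\beta4\pw_0\le\pw_0$.
\end{itemize}
The angular part is controlled by \eqref{eq:normalized-q-bounds}: $\nu t\abs{\nabla q}\lesssim a_0^4\chi_\nu\pw_0/\abs\xi$. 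It is supported in $\Omega_{\mathrm{in}}$, where $\chi_\nu\simeq\abs\xi^2$, so $\nu t\abs{\nabla q}\lsim{T}\abs\xi\,\pw_0$. Together these give \eqref{eq:grad-p-global}.

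For \eqref{eq:nu-grad-p-small}, the factor $1+\abs\xi$ cannot be kept, since $\xi$ is unbounded; we use the scale of each region instead.
\begin{itemize}
\item Inner region: $\abs\xi\le 2a_\nu(t)=2a_0(\nu t)^{-1/4}$, so $\nu\sqrt t\,\abs\xi\lesssim \nu^{3/4}t^{1/4}a_0\lsim{T}\sqrt\nu$, because $\nu\le1$. The $q$-part is handled identically.
\item Plateau: the gradient vanishes, so there is nothing to prove.
\item Outer region: $\nu\sqrt t\,\abs{\nabla\pw_0}\le\nu\sqrt T\,\pw_0\lsim{T}\sqrt\nu\,\pw_0$.
\end{itemize}

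\emph{Gaussian pairing.} Using $\pw\le2e^{\abs\xi^2/4}$ from \eqref{eq:p-global-bounds}, we get $\pw^{1/2}e^{-\gamma\abs\xi^2/4}\le\sqrt2\,e^{-(2\gamma-1)\abs\xi^2/8}$. Since $\gamma>\frac12$, this lies in $L^2\cap L^4$. Likewise $\pw^{1/2}\abs{\nabla G}\lesssim\abs\xi e^{-\abs\xi^2/8}\in L^2$. This proves \eqref{eq:gaussian-pairing}.

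\emph{Profile bounds.} For \eqref{eq:composite-profile-bounds}, expand the definition \eqref{eq:remainder-core-composite} and use \eqref{eq:profile-Gaussian-bounds}, together with $\nu t\le1$ and $\abs{F_{\mathrm{rad}}}\lesssim t\le T$, to get the bound on $w_{2,a}$ and its gradient. The bound on $v_{2,a}$ follows from \eqref{eq:high-profile-velocity}, since $(1+\abs\xi^2)^{-2}\le(1+\abs\xi^2)^{-3/2}$.

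\emph{Main obstacle.} The only delicate point is the sharp bound \eqref{eq:nu-grad-p-small} in the inner region. There the gain of $\sqrt\nu$ rests on the precise scale $a_\nu(t)\sim(\nu t)^{-1/4}$; this explains the choice of inner radius. A cruder bound would lose a power of $\abs\xi$ that cannot be absorbed.
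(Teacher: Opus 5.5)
Your proposal is correct and follows the paper's proof essentially step by step. You use the same region-by-region bounds on \(\nabla\pw_0\) and \((\nu t)\nabla q\) via \eqref{eq:p0-gradient}, \eqref{eq:normalized-q-bounds} and \eqref{eq:p-comparable}, the same upper bound \(\pw\le2e^{\abs\xi^2/4}\) for the Gaussian pairings, and the same appeal to \eqref{eq:profile-Gaussian-bounds} and \eqref{eq:high-profile-velocity} for the profiles. The only difference is in \eqref{eq:nu-grad-p-small}: the paper treats the whole disk \(\abs\xi\le B_\nu(t)\) at once using \(\nu\sqrt t\,B_\nu(t)=B_0\sqrt\nu\), so the inner scale \(a_\nu(t)\) is not needed and the step you call the main obstacle is not delicate (your finer split just yields the stronger factor \(\nu^{3/4}\) in the inner region).
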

\begin{proof}
In the inner region \(\abs\xi^2\le a_\nu(t)^2+1\),
\(\chi_\nu(t,\xi)\lesssim\abs\xi(1+\abs\xi)\); outside this region,
\(\nabla q=0\) almost everywhere.  Hence
\[
\begin{aligned}
\abs{\nabla\pw}
&\stackrel{\eqref{eq:p-def}}{\le}
\abs{\nabla\pw_0}+(\nu t)\abs{\nabla q}
\stackrel{\substack{\eqref{eq:p0-gradient}\\
\eqref{eq:normalized-q-bounds}}}{\lsim{T}}
(1+\abs\xi)\pw_0
\stackrel{\eqref{eq:p-comparable}}{\lesssim}
(1+\abs\xi)\pw,
\end{aligned}
\]
which proves \eqref{eq:grad-p-global}.  Also,
\[
\pw^{\frac{1}{2}}e^{-\frac{\gamma\abs{\xi}^2}{4}}
\stackrel{\eqref{eq:p-global-bounds}}{\le}
\sqrt2e^{-\frac{(2\gamma-1)\abs{\xi}^2}{8}},
\qquad
\pw^{\frac{1}{2}}\abs{\nabla G}
\stackrel{\eqref{eq:p-global-bounds}}{\lesssim}
\abs{\xi}e^{-\frac{\abs{\xi}^2}{8}}.
\]
These bounds imply \eqref{eq:gaussian-pairing}.  On
\(\{\abs{\xi}\le B_\nu(t)\}\),
\[
\nu\sqrt t\,\abs{\nabla\pw}
\stackrel{\eqref{eq:grad-p-global}}{\lsim{T}}
\nu\sqrt t\,(1+\abs\xi)\pw
\stackrel{\eqref{eq:weight-scales}}{\lesssim}
\sqrt\nu\,\pw.
\]
On \(\{\abs{\xi}\ge B_\nu(t)\}\), one has \(q=0\) and
\[
\nu\sqrt t\,\abs{\nabla\pw}
=\frac{\beta}{4}\nu\sqrt t\,\pw
\lsim{T}\sqrt\nu\,\pw.
\]
This proves \eqref{eq:nu-grad-p-small}.  For the composite profile,
\[
\begin{aligned}
\abs{w_{2,a}}+\abs{\nabla w_{2,a}}
&\stackrel{\substack{\eqref{eq:remainder-core-composite}\\
\eqref{eq:profile-Gaussian-bounds}}}{\lsim{\gamma,T}}
e^{-\gamma\abs\xi^2/4},
\\
\abs{v_{2,a}}
&\stackrel{\substack{\eqref{eq:remainder-core-composite}\\
\eqref{eq:high-profile-velocity}}}{\lsim{T}}
\frac{1}{(1+\abs\xi^2)^{3/2}},
\end{aligned}
\]
where we used \(\nu t\le1\).
\end{proof}
\begin{lemma}\label{lem:w2-corrector-vorticity}
For \(0<t\le T\),
\begin{equation}\label{eq:corrector-vorticity-estimate}
t\left|
\inner{\vb_2\cdot\nabla w_{2,a}}{\wb_2}_{\pw(t)}
\right|
\lsim{T}t\mathcal K(t).
\end{equation}
\end{lemma}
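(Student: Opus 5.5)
The estimate is a direct Hölder argument: the velocity \(\vb_2\) is controlled in \(L^4\) by \(\mathcal K^{1/2}\), and the weighted profile factor is controlled in \(L^4\) uniformly in \(t\) and \(\nu\). Write
\[
\inner{\vb_2\cdot\nabla w_{2,a}}{\wb_2}_{\pw(t)}
=\int_{\Rt}\vb_2\cdot\bigl(\pw^{1/2}\nabla w_{2,a}\bigr)\,\pw^{1/2}\wb_2\,d\xi .
\]
Hölder's inequality with exponents \(4,4,2\) then gives
\[
\left|\inner{\vb_2\cdot\nabla w_{2,a}}{\wb_2}_{\pw(t)}\right|
\le \norm[L^4]{\vb_2}\,\norm[L^4]{\pw^{1/2}\nabla w_{2,a}}\,\norm[L^2]{\pw^{1/2}\wb_2}.
\]

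The three factors are bounded as follows.

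\emph{Third factor.} By \eqref{eq:p-comparable}, \(\norm[L^2]{\pw^{1/2}\wb_2}\lesssim\mathcal K(t)^{1/2}\).

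\emph{First factor.} By Lemma~\ref{lem:basic-biotsavart}, estimate \eqref{eq:v-L4}, applied with \(w=\wb_2\), we get \(\norm[L^4]{\vb_2}\lsim{T,\beta}\mathcal K(t)^{1/2}\). The parameters \(a_0\), \(B_0\), and hence \(\beta\), were fixed in the proof of Theorem~\ref{thm:linear-weighted-estimate} depending only on \(T\), so this dependence is absorbed into \(\lsim{T}\). This estimate needs only \(\pw_0^{-1/2}\in L^4\) and the Hardy--Littlewood--Sobolev inequality; the zero-mass condition is not required.

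\emph{Middle factor.} Fix some \(\gamma\in(\tfrac12,1)\), say \(\gamma=3/4\). By \eqref{eq:composite-profile-bounds}, \(\abs{\nabla w_{2,a}}\lsim{T}e^{-\gamma\abs\xi^2/4}\), uniformly for \(0<t\le T\) and \(0<\nu\le\nu_T\). By \eqref{eq:gaussian-pairing}, \(\norm[L^4]{\pw^{1/2}e^{-\gamma\abs\xi^2/4}}\lsim{T}1\). Together these give \(\norm[L^4]{\pw^{1/2}\nabla w_{2,a}}\lsim{T}1\).

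Multiplying the three bounds and then by \(t\) yields \eqref{eq:corrector-vorticity-estimate}.

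The only point needing care is the middle factor, because \(\pw\) may grow like \(e^{\abs\xi^2/4}\). Against \(e^{-\gamma\abs\xi^2/4}\) this leaves \(e^{-(2\gamma-1)\abs\xi^2/8}\), which is integrable precisely because \(\gamma>\tfrac12\); this is already recorded in \eqref{eq:gaussian-pairing}. One should also check that the bound on \(\nabla w_{2,a}\) is uniform in \(\nu\). This holds because \(w_{2,a}\) involves \((\nu t)^{1/2}H\) and \((\nu t)J\) with \(\nu t\le1\), together with the uniform profile bounds of Theorem~\ref{thm:concentrated-approximation-summary}.
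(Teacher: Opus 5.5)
Your proposal is correct and is essentially the paper's own proof: H\"older with exponents \(2,4,4\), then \eqref{eq:p-comparable} for \(\pw^{1/2}\wb_2\), \eqref{eq:v-L4} for \(\vb_2\), and \eqref{eq:gaussian-pairing} combined with \eqref{eq:composite-profile-bounds} for \(\pw^{1/2}\nabla w_{2,a}\). Your added remarks are accurate: the \(\beta\)-dependence is absorbed because \(\beta\) is fixed by \(T\), no zero-mass condition is needed, and \(\nu t\le1\) gives uniformity in \(\nu\).
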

\begin{proof}
H\"older's inequality with exponents \(2,4,4\), followed by
\eqref{eq:v-L4}, \eqref{eq:p-comparable},
\eqref{eq:gaussian-pairing}, and
\eqref{eq:composite-profile-bounds}, gives
\begin{align*}
t\left|
\inner{\vb_2\cdot\nabla w_{2,a}}{\wb_2}_{\pw(t)}
\right|
&\le
t\norm[L^2]{\pw^{\frac12}\wb_2}
\norm[L^4]{\vb_2}
\norm[L^4]{\pw^{\frac12}\nabla w_{2,a}}
\lsim{T}
t\mathcal K(t).
\end{align*}
\end{proof}

\begin{lemma}\label{lem:w2-corrector-transport}
For \(0<t\le T\),
\begin{equation}\label{eq:corrector-transport-estimate}
t\left|
\inner{v_{2,a}\cdot\nabla\wb_2}{\wb_2}_{\pw(t)}
\right|
\lsim{T}t\mathcal K(t).
\end{equation}
\end{lemma}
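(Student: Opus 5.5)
The plan is to move the derivative off \(\wb_2\) by integration by parts, using the divergence-free structure of \(v_{2,a}\). Since \(v_{2,a}=K\star_\xi w_{2,a}\) is divergence free, we have \(\wb_2\,v_{2,a}\cdot\nabla\wb_2=\tfrac12 v_{2,a}\cdot\nabla\abs{\wb_2}^2\). Integrating by parts (legitimate because \(\pw\) is continuous and locally Lipschitz, as in Lemma~\ref{lem:energy-id}) gives
\[
\inner{v_{2,a}\cdot\nabla\wb_2}{\wb_2}_{\pw(t)}
=-\frac12\int_{\Rt}(v_{2,a}\cdot\nabla\pw)\abs{\wb_2}^2\,d\xi .
\]
The interface contributions cancel by continuity of \(\pw\). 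Decay at infinity follows from finite weighted energy together with the exponential growth bound \eqref{eq:grad-p-global}; this can be justified by a radial cutoff, as in the proof of Lemma~\ref{lem:regularized-fixed-mode-inverse}.

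It then suffices to prove the pointwise bound \(\abs{v_{2,a}}\abs{\nabla\pw}\lsim{T}\pw\). Indeed, by \eqref{eq:p-comparable} this bound gives
\[
t\abs{\cdots}\lsim{T} t\int\pw_0\abs{\wb_2}^2=t\mathcal K(t).
\]
For the pointwise bound, combine \eqref{eq:grad-p-global}, namely \(\abs{\nabla\pw}\lsim{T}(1+\abs\xi)\pw\), with the decay \(\abs{v_{2,a}}\lsim{T}(1+\abs\xi^2)^{-3/2}\) from \eqref{eq:composite-profile-bounds}. Their product is bounded by \((1+\abs\xi)(1+\abs\xi^2)^{-3/2}\pw\lesssim\pw\), uniformly in \(t\) and \(\nu\).

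The only delicate point is that \(\nabla\pw\) is merely a weak derivative of a piecewise \(C^1\) weight. The integration by parts must therefore be done region by region, using the continuity of \(\pw_0\) at the moving interfaces (Lemma~\ref{lem:weight-matching}) and the vanishing of \(q\) at the inner boundary \eqref{eq:q-boundary-trace}. This is routine given the earlier lemmas. An important feature of the argument is that \(\mathcal K\) suffices and no gradient term \(\mathcal I\) appears, since all derivatives have been moved onto the weight.
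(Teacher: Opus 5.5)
Your proposal is correct and is essentially the paper's own proof: integrate by parts using $\nabla\cdot v_{2,a}=0$ to obtain $-\frac12\int(v_{2,a}\cdot\nabla\pw)\abs{\wb_2}^2\,d\xi$, bound $\abs{v_{2,a}\cdot\nabla\pw}\lsim{T}(1+\abs\xi)(1+\abs\xi^2)^{-3/2}\pw\lesssim\pw$ using \eqref{eq:grad-p-global} and \eqref{eq:composite-profile-bounds}, and conclude with \eqref{eq:p-comparable}. The region-by-region integration by parts you describe is harmless but unnecessary: since $\pw$ is locally Lipschitz, integration by parts with its weak gradient applies directly.
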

\begin{proof}
Since \(v_{2,a}=K\star_\xi w_{2,a}\) is divergence free,
integration by parts gives
\[
\inner{v_{2,a}\cdot\nabla\wb_2}{\wb_2}_{\pw(t)}
=
-\frac12\int_{\Rt}
(v_{2,a}\cdot\nabla\pw)\abs{\wb_2}^2\,d\xi.
\]
\[
\abs{v_{2,a}\cdot\nabla\pw}
\stackrel{\substack{\eqref{eq:grad-p-global}\\
\eqref{eq:composite-profile-bounds}}}{\lsim{T}}
\frac{1+\abs\xi}{(1+\abs\xi^2)^{\frac32}}\pw
\lesssim\pw.
\]
Consequently,
\[
\begin{aligned}
t\left|
\inner{v_{2,a}\cdot\nabla\wb_2}{\wb_2}_{\pw(t)}
\right|
&\lsim{T}
t\int_{\Rt}\pw\abs{\wb_2}^2\,d\xi
\stackrel{\eqref{eq:p-comparable}}{\lesssim}
t\mathcal K(t).
\end{aligned}
\]
\end{proof}

\begin{lemma}\label{lem:w2-self-advection}
For every \(\eta>0\),
\begin{equation}\label{eq:nonlinear-core-estimate}
\begin{aligned}
t\left|
\inner{\vb_2\cdot\nabla\wb_2}{\wb_2}_{\pw(t)}
\right|
-\eta\bigl(\mathcal I(t)+\mathcal K(t)\bigr)
\lsim{\eta,T}{}&
t\mathcal K(t)^2.
\end{aligned}
\end{equation}
\end{lemma}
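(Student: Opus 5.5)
The plan is to move the derivative off \(\wb_2\) by integrating by parts, and then close with the Biot--Savart bounds of Lemma~\ref{lem:basic-biotsavart}. Since \(\vb_2=K\star_\xi\wb_2\) is divergence free,
\[
\inner{\vb_2\cdot\nabla\wb_2}{\wb_2}_{\pw(t)}
=-\frac12\int_{\Rt}(\vb_2\cdot\nabla\pw)\abs{\wb_2}^2\,d\xi .
\]
The justification is the same as in Lemma~\ref{lem:w2-corrector-transport}: \(\pw\) is continuous and locally Lipschitz, and \(\wb_2\) has finite weighted energy. By \eqref{eq:grad-p-global}, \(\abs{\nabla\pw}\lsim{T}(1+\abs\xi)\pw\), so
\[
t\abs{\inner{\vb_2\cdot\nabla\wb_2}{\wb_2}_{\pw}}
\lsim{T} t\,\norm[L^\infty]{\vb_2}\int_{\Rt}(1+\abs\xi)\pw\abs{\wb_2}^2\,d\xi .
\]

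The next step is to absorb the factor \(\abs\xi\) using the confinement functional \(\mathcal J\). Since \(\chi_\nu\) grows only linearly in the outer region, the Gaussian moment \(\abs\xi^2\) is not available. Instead I would note that \(\chi_\nu\ge\min\{\abs\xi^2,a_\nu^2,\beta\abs\xi\}\). Moreover \(a_\nu^2\ge4\), \(\beta\le1\), and \(\beta\abs\xi\ge a_\nu^2\) on the outer region. Together these give \(1+\abs\xi\lsim{\beta}1+\chi_\nu\) everywhere: on the plateau \(\abs\xi\le B_\nu=a_\nu^2/\beta\), and in the outer region \(\abs\xi=\chi_\nu/\beta\). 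Using \eqref{eq:p-comparable}, this yields
\[
t\abs{\inner{\vb_2\cdot\nabla\wb_2}{\wb_2}_{\pw}}\lsim{T,\beta} t\,\norm[L^\infty]{\vb_2}\,\bigl(\mathcal K+\mathcal J\bigr).
\]

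This is where the obstacle appears. The right-hand side contains \(\mathcal J\), and the statement absorbs only \(\eta(\mathcal I+\mathcal K)\), while \(\norm[L^\infty]{\vb_2}\) is not small a priori. So I would avoid \(\mathcal J\) and keep the \(L^\infty\)-based interpolation in a form that produces only \(\mathcal I\) and \(\mathcal K\). To do this, split \(\abs\xi\pw\abs{\wb_2}^2\) as \((\pw^{1/2}\abs{\wb_2})\cdot(\abs\xi\pw^{1/2}\abs{\wb_2})\). The idea is then a weighted Hardy-type bound for \(\norm[L^2]{\abs\xi\pw^{1/2}\wb_2}\), obtained from the identity \(\int\nabla\pw\cdot\xi\ldots\) as in the proof of Lemma~\ref{lem:fixed-mode-elementary-tools}.

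If that does not close, the fallback is to use the decay \eqref{eq:v-decay}, namely \(\abs{\vb_2}\lesssim(1+\abs\xi^2)^{-1}(\mathcal I+\mathcal K)^{1/2}\), which applies by the zero-mass condition \eqref{eq:remainder-mass-constraints}. It gives
\[
\abs{\vb_2\cdot\nabla\pw}\lsim{T,\beta}(\mathcal I+\mathcal K)^{1/2}\pw ,
\]
which removes the factor \(\abs\xi\) entirely. Then
\[
t\abs{\inner{\vb_2\cdot\nabla\wb_2}{\wb_2}_{\pw}}\lsim{T,\beta} t\,(\mathcal I+\mathcal K)^{1/2}\mathcal K .
\]
Young's inequality bounds this by \(\eta(\mathcal I+\mathcal K)+C_\eta t^2\mathcal K^2\). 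Finally \(t\le T\) gives \(t^2\mathcal K^2\lsim{T}t\mathcal K^2\), which is \eqref{eq:nonlinear-core-estimate}. The dependence on \(\beta\) is harmless because \(a_0,B_0\) are fixed in terms of \(T\).

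The one point I would check carefully is that \eqref{eq:v-decay} applies with \(\norm[L^2_{\pw}]{\nabla\wb_2}^2\lesssim\mathcal I\) and \(\norm[L^2_{\pw}]{\wb_2}^2\lesssim\mathcal K\), which holds by \eqref{eq:p-comparable}.
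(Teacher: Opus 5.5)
Your fallback argument is exactly the paper's proof: integrate by parts using $\nabla\cdot\vb_2=0$, then use the zero-mass decay bound \eqref{eq:v-decay} together with \eqref{eq:grad-p-global} and \eqref{eq:p-comparable} to get $\abs{\vb_2}\abs{\nabla\pw}\lesssim(\mathcal I+\mathcal K)^{1/2}\pw$, and finish with Young's inequality and $t^2\le Tt$. The earlier $\mathcal J$-based and Hardy-type attempts are unnecessary and can simply be deleted.
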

\begin{proof}
The velocity \(\vb_2\) is divergence free, so
\[
\inner{\vb_2\cdot\nabla\wb_2}{\wb_2}_{\pw(t)}
=
-\frac12\int_{\Rt}
(\vb_2\cdot\nabla\pw)\abs{\wb_2}^2\,d\xi.
\]
By \eqref{eq:remainder-mass-constraints} and
Lemma~\ref{lem:v-decay},
\[
\abs{\vb_2(\xi,t)}
\lsim{T}
\frac{1}{1+\abs\xi^2}
\bigl(\mathcal I(t)+\mathcal K(t)\bigr)^{\frac12}.
\]
\[
\begin{aligned}
t\left|
\inner{\vb_2\cdot\nabla\wb_2}{\wb_2}_{\pw(t)}
\right|
\stackrel{\substack{\eqref{eq:grad-p-global}\\
\eqref{eq:p-comparable}}}{\lsim{T}}
t\bigl(\mathcal I+\mathcal K\bigr)^{\frac12}\mathcal K.
\end{aligned}
\]
Young's inequality, followed by \(t^2\le Tt\), gives
\[
\begin{aligned}
t\left|
\inner{\vb_2\cdot\nabla\wb_2}{\wb_2}_{\pw(t)}
\right|
&\le\eta(\mathcal I+\mathcal K)
+C_{\eta,T}t^2\mathcal K^2
\\
&\le\eta(\mathcal I+\mathcal K)
+C_{\eta,T}Tt\mathcal K^2,
\end{aligned}
\]
which proves \eqref{eq:nonlinear-core-estimate}.
\end{proof}

\begin{lemma}\label{lem:w2-wave-G}
For every \(\eta>0\),
\begin{equation}\label{eq:regular-leading-coupling}
\frac{1}{\sqrt t}\left|
\inner{\vb_1(x,t)\cdot\nabla G}{\wb_2}_{\pw(t)}
\right|
-\eta\mathcal K(t)
\lsim{\eta,T}
t^{-1}\norm{\wb_1(t)}^2.
\end{equation}
\end{lemma}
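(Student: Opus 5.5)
The estimate follows from H\"older's inequality in the weighted pairing, the uniform bound \eqref{eq:wave-remainder-velocity-bound} on \(\vb_1\), and Young's inequality.

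First I write
\[
\inner{\vb_1(x,t)\cdot\nabla G}{\wb_2}_{\pw(t)}
=\int_{\Rt}\pw\,\bigl(\vb_1(\Zn(t)+\sqrt{\nu t}\,\xi,t)\cdot\nabla G(\xi)\bigr)\wb_2(\xi,t)\,d\xi .
\]
The composition with the affine map \(\xi\mapsto x\) does not change \(L^\infty\) norms, so I will only use the \(L^\infty\) bound on \(\vb_1\). This avoids any scaling loss from the factor \(\sqrt{\nu t}\).

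Next I split \(\pw=\pw^{1/2}\pw^{1/2}\) and apply Cauchy--Schwarz. This gives
\[
\Bigl|\inner{\vb_1\cdot\nabla G}{\wb_2}_{\pw}\Bigr|
\le\norm[L^\infty]{\vb_1(t)}\,\norm[L^2]{\pw^{1/2}\nabla G}\,\norm[L^2]{\pw^{1/2}\wb_2}.
\]
The middle factor is \(\lsim{T}1\) by \eqref{eq:gaussian-pairing}. By \eqref{eq:p-comparable}, \(\norm[L^2]{\pw^{1/2}\wb_2}\lesssim\mathcal K(t)^{1/2}\). By \eqref{eq:wave-remainder-velocity-bound}, \(\norm[L^\infty]{\vb_1(t)}\lesssim\norm{\wb_1(t)}\). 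Altogether,
\[
\frac1{\sqrt t}\Bigl|\inner{\vb_1\cdot\nabla G}{\wb_2}_{\pw}\Bigr|
\le C_T\,t^{-1/2}\norm{\wb_1(t)}\,\mathcal K(t)^{1/2}.
\]

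Finally, Young's inequality \(ab\le\eta a^2+(4\eta)^{-1}b^2\), applied with \(a=\mathcal K^{1/2}\) and \(b=C_Tt^{-1/2}\norm{\wb_1}\), gives
\[
\frac1{\sqrt t}\Bigl|\inner{\vb_1\cdot\nabla G}{\wb_2}_{\pw}\Bigr|
\le\eta\mathcal K(t)+\frac{C_T^2}{4\eta}\,t^{-1}\norm{\wb_1(t)}^2 ,
\]
which is \eqref{eq:regular-leading-coupling}.

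The one step that carries real content is the weighted Gaussian pairing bound \(\norm[L^2]{\pw^{1/2}\nabla G}\lesssim1\). It is already supplied by \eqref{eq:gaussian-pairing}, because \(\pw\le2e^{|\xi|^2/4}\) leaves a Gaussian factor \(e^{-|\xi|^2/8}\). The rest is routine. I expect no obstruction beyond checking that the \(L^\infty\) bound on \(\vb_1\) cited from \cite{NN} applies at each fixed \(t\), so that the constants are uniform in \(\nu\) and \(t\).
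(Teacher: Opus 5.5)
Your proposal is correct and is essentially the paper's proof. The paper likewise applies H\"older's inequality with the $L^\infty$ bound \eqref{eq:wave-remainder-velocity-bound} on $\vb_1$, the weighted Gaussian bound \eqref{eq:gaussian-pairing}, and the comparability \eqref{eq:p-comparable}, and then finishes with Young's inequality.
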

\begin{proof}
H\"older's inequality, \eqref{eq:wave-remainder-velocity-bound},
\eqref{eq:gaussian-pairing}, and \eqref{eq:p-comparable} give
\begin{align*}
\frac{1}{\sqrt t}\left|
\inner{\vb_1(x,t)\cdot\nabla G}{\wb_2}_{\pw(t)}
\right|
&\le
\frac{1}{\sqrt t}
\norm[L^\infty]{\vb_1(t)}
\norm[L^2]{\pw^{\frac12}\nabla G}
\norm[L^2]{\pw^{\frac12}\wb_2}
\\
&\lsim{T}
t^{-\frac12}\norm{\wb_1(t)}\mathcal K(t)^{\frac12}.
\end{align*}
Young's inequality proves \eqref{eq:regular-leading-coupling}.
\end{proof}

\begin{lemma}\label{lem:w2-wave-profile}
For every \(\eta>0\),
\begin{equation}\label{eq:regular-profile-coupling}
\begin{aligned}
\nu\sqrt t\left|
\inner{\vb_1(x,t)\cdot\nabla w_{2,a}}{\wb_2}_{\pw(t)}
\right|
-\eta\mathcal K(t)
&\lsim{\eta,T}
\nu^2t\norm{\wb_1(t)}^2
\\
&\le
t^{-1}\norm{\wb_1(t)}^2.
\end{aligned}
\end{equation}
\end{lemma}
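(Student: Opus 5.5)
The estimate is the same Hölder--Young argument as in Lemma~\ref{lem:w2-wave-G}, with $\nabla G$ replaced by $\nabla w_{2,a}$ and the prefactor $t^{-1/2}$ replaced by $\nu\sqrt t$. We bound the pairing by placing $\vb_1$ in $L^\infty$ and splitting the weight symmetrically:
\[
\nu\sqrt t\left|\inner{\vb_1(x,t)\cdot\nabla w_{2,a}}{\wb_2}_{\pw(t)}\right|
\le
\nu\sqrt t\,\norm[L^\infty]{\vb_1(t)}\,
\norm[L^2]{\pw^{\frac12}\nabla w_{2,a}}\,
\norm[L^2]{\pw^{\frac12}\wb_2}.
\]
The change of variables $x=\Zn(t)+\sqrt{\nu t}\,\xi$ does not affect the sup norm of $\vb_1$. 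Therefore \eqref{eq:wave-remainder-velocity-bound} gives $\norm[L^\infty]{\vb_1(t)}\lesssim\norm{\wb_1(t)}$.

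For the middle factor, we fix some $\gamma\in(\tfrac12,1)$, say $\gamma=3/4$. Then \eqref{eq:composite-profile-bounds} yields $\abs{\nabla w_{2,a}}\lsim{T}e^{-\gamma\abs\xi^2/4}$, and \eqref{eq:gaussian-pairing} gives
\[
\norm[L^2]{\pw^{\frac12}\nabla w_{2,a}}\lsim{T}\norm[L^2]{\pw^{\frac12}e^{-\gamma\abs\xi^2/4}}\lsim{T}1 .
\]
The Gaussian decay of $w_{2,a}$ beats the growth $\pw\le 2e^{\abs\xi^2/4}$ precisely because $\gamma>1/2$; this is the only point needing care. For the last factor, \eqref{eq:p-comparable} gives $\norm[L^2]{\pw^{1/2}\wb_2}\lesssim\mathcal K(t)^{1/2}$.

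Combining these bounds, the left-hand side is $\lsim{T}\nu\sqrt t\,\norm{\wb_1(t)}\,\mathcal K(t)^{1/2}$. Young's inequality gives, for any $\eta>0$,
\[
\nu\sqrt t\,\norm{\wb_1}\mathcal K^{1/2}\le\eta\mathcal K+C_{\eta,T}\,\nu^2t\,\norm{\wb_1}^2 ,
\]
which is the first inequality of \eqref{eq:regular-profile-coupling}.

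The second inequality is elementary. Since $\nu\le\nu_T$ with $\nu_TT\le1$ and $\nu_T\le1$, and $0<t\le T$, we have $\nu^2t^2\le(\nu T)^2\le1$. Hence $\nu^2t\le t^{-1}$, which gives $\nu^2t\norm{\wb_1}^2\le t^{-1}\norm{\wb_1}^2$.
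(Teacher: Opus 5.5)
Your proof is correct and follows the paper's own argument essentially step for step. Both use H\"older with $\vb_1$ in $L^\infty$ via \eqref{eq:wave-remainder-velocity-bound}, the weighted $L^2$ bound on $\nabla w_{2,a}$ from \eqref{eq:composite-profile-bounds} and \eqref{eq:gaussian-pairing}, \eqref{eq:p-comparable} for the $\wb_2$ factor, and Young's inequality, with the second inequality coming from $(\nu t)^2\le1$.
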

\begin{proof}
H\"older's inequality, \eqref{eq:wave-remainder-velocity-bound},
\eqref{eq:gaussian-pairing}, \eqref{eq:composite-profile-bounds},
and \eqref{eq:p-comparable} give
\begin{align*}
\nu\sqrt t\left|
\inner{\vb_1(x,t)\cdot\nabla w_{2,a}}{\wb_2}_{\pw(t)}
\right|
&\le
\nu\sqrt t\,\norm[L^\infty]{\vb_1(t)}
\norm[L^2]{\pw^{\frac12}\nabla w_{2,a}}
\norm[L^2]{\pw^{\frac12}\wb_2}
\\
&\lsim{T}
\nu\sqrt t\,\norm{\wb_1(t)}\mathcal K(t)^{\frac12}.
\end{align*}
Young's inequality gives the first estimate in
\eqref{eq:regular-profile-coupling}. Since \((\nu t)^2\le1\),
we have \(\nu^2t\norm{\wb_1(t)}^2\le t^{-1}\norm{\wb_1(t)}^2\), which gives the second.
\end{proof}

\begin{lemma}\label{lem:w2-wave-transport}
For \(0<t\le T\),
\begin{equation}\label{eq:regular-core-transport}
\nu\sqrt t\left|
\inner{\vb_1(x,t)\cdot\nabla\wb_2}{\wb_2}_{\pw(t)}
\right|
\lsim{T}
\sqrt\nu\,\norm{\wb_1(t)}\mathcal K(t).
\end{equation}
\end{lemma}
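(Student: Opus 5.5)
We prove \eqref{eq:regular-core-transport} by the same device as in Lemmas~\ref{lem:w2-corrector-transport} and~\ref{lem:w2-self-advection}: transfer the derivative onto the weight. In the variable \(\xi\), the field \(\xi\mapsto\vb_1(\Zn(t)+\sqrt{\nu t}\,\xi,t)\) is divergence free, because \(\vb_1=K\star_x\wb_1\) is divergence free in \(x\) and the change of variables is an affine dilation-translation (the \(\xi\)-divergence picks up only the factor \(\sqrt{\nu t}\)). Integrating by parts therefore gives
\[
\inner{\vb_1(x,t)\cdot\nabla\wb_2}{\wb_2}_{\pw(t)}
=
-\frac12\int_{\Rt}\bigl(\vb_1(x,t)\cdot\nabla\pw\bigr)\abs{\wb_2}^2\,d\xi,
\qquad x=\Zn(t)+\sqrt{\nu t}\,\xi .
\]
No interface terms arise, because \(\pw\) is continuous and locally Lipschitz; this is the same justification as in Lemma~\ref{lem:energy-id}. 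Boundary terms at infinity vanish thanks to the finite weighted energy, the growth bound \eqref{eq:grad-p-global} and the boundedness of \(\vb_1\). If one prefers, the identity can be checked first for truncated weights and then passed to the limit.

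Next we multiply by \(\nu\sqrt t\) and estimate the integrand pointwise. The sharper gradient bound \eqref{eq:nu-grad-p-small} gives \(\nu\sqrt t\,\abs{\nabla\pw}\lsim{T}\sqrt\nu\,\pw\) almost everywhere. The \(L^\infty\) bound \eqref{eq:wave-remainder-velocity-bound} controls \(\abs{\vb_1(x,t)}\lesssim\norm{\wb_1(t)}\) uniformly in \(x\), so evaluation at the rescaled point costs nothing. Combining these two bounds yields
\[
\nu\sqrt t\,\bigl|\inner{\vb_1\cdot\nabla\wb_2}{\wb_2}_{\pw}\bigr|
\lsim{T}\sqrt\nu\,\norm{\wb_1(t)}\int_{\Rt}\pw\abs{\wb_2}^2\,d\xi .
\]
Finally, the comparability \eqref{eq:p-comparable} bounds the last integral by \(\frac54\mathcal K(t)\).

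The only point needing care is the integration by parts with the piecewise-defined weight. The moving interfaces \(\abs\xi^2=a_\nu(t)^2\pm1\) and \(\abs\xi=B_\nu(t)\) contribute nothing, since \(\pw\) is continuous across them, and the identity uses only the first weak derivatives of \(\pw\). The tail at infinity is handled by the decay of \(\pw\abs{\wb_2}^2\) together with the bound \(\abs{\nabla\pw}\lesssim(1+\abs\xi)\pw\). Everything else is a direct substitution of the estimates already established.
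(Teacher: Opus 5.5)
Your proposal is correct and follows essentially the same route as the paper. The paper also observes that $\xi\mapsto\vb_1(\Zn(t)+\sqrt{\nu t}\,\xi,t)$ is divergence free in $\xi$, integrates by parts to move the derivative onto $\pw$, and then combines \eqref{eq:wave-remainder-velocity-bound}, \eqref{eq:nu-grad-p-small}, and \eqref{eq:p-comparable}; your remarks on the moving interfaces and the tail at infinity spell out justification the paper leaves implicit.
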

\begin{proof}
Since \(\nabla_x\cdot\vb_1=0\),
\[
\nabla_\xi\cdot\vb_1(x,t)
\stackrel{\eqref{eq:w2-physical-point}}{=}
\sqrt{\nu t}\,
(\nabla_x\cdot\vb_1)(x,t)
=0.
\]
Therefore, integration by parts, together with \eqref{eq:wave-remainder-velocity-bound},
\eqref{eq:nu-grad-p-small}, and \eqref{eq:p-comparable}, gives
\begin{align*}
\nu\sqrt t\left|
\inner{\vb_1(x,t)\cdot\nabla\wb_2}{\wb_2}_{\pw(t)}
\right|
&=
\frac{\nu\sqrt t}{2}
\left|
\int_{\Rt}
(\vb_1(x,t)\cdot\nabla\pw)\abs{\wb_2}^2\,d\xi
\right|
\\
&\lsim{T}
\sqrt\nu\,\norm{\wb_1(t)}\mathcal K(t).
\end{align*}
\end{proof}

\begin{lemma}\label{lem:w2-residual}
For every \(\eta>0\),
\begin{equation}\label{eq:high-residual-pairing}
\frac{1}{\nu t}\left|
\inner{\Phi_{\mathrm{app}}(t)}{\wb_2(t)}_{\pw(t)}
\right|
-\eta\mathcal K(t)
\lsim{\eta,T}
\nu t.
\end{equation}
\end{lemma}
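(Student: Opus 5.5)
The estimate follows from Cauchy--Schwarz in the weighted pairing, the residual bound of Proposition~\ref{prop:high-residual}, and the Gaussian integrability of the weight in \eqref{eq:gaussian-pairing}. Fix once and for all some \(\gamma\in(\tfrac12,1)\), say \(\gamma=3/4\).

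First, by Proposition~\ref{prop:high-residual},
\[
\abs{\Phi_{\mathrm{app}}(\xi,t)}\lsim{T}(\nu t)^{3/2}e^{-\gamma\abs\xi^2/4}.
\]
Next, apply Cauchy--Schwarz in \(L^2(\pw\,d\xi)\):
\[
\abs{\inner{\Phi_{\mathrm{app}}}{\wb_2}_{\pw}}
\le\norm[L^2]{\pw^{1/2}\Phi_{\mathrm{app}}}\,\norm[L^2]{\pw^{1/2}\wb_2}.
\]
The first factor is at most \((\nu t)^{3/2}\norm[L^2]{\pw^{1/2}e^{-\gamma\abs\xi^2/4}}\), which is \(\lsim{T}(\nu t)^{3/2}\) by \eqref{eq:gaussian-pairing}. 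This uses \(\pw\le2e^{\abs\xi^2/4}\) from \eqref{eq:p-global-bounds} and \(2\gamma>1\), so the product is a decaying Gaussian. The second factor is \(\lesssim\mathcal K(t)^{1/2}\) by \eqref{eq:p-comparable}.

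Dividing by \(\nu t\) gives
\[
\frac1{\nu t}\abs{\inner{\Phi_{\mathrm{app}}}{\wb_2}_{\pw}}\lsim{T}(\nu t)^{1/2}\mathcal K(t)^{1/2}.
\]
Young's inequality, \(ab\le\eta a^2+C_\eta b^2\) with \(a=\mathcal K^{1/2}\) and \(b=(\nu t)^{1/2}\), then bounds this by \(\eta\mathcal K(t)+C_{\eta,T}\,\nu t\), which is \eqref{eq:high-residual-pairing}.

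There is no real obstacle. The only point to check is that the weight, including the angular correction \((\nu t)q\), stays dominated by the Gaussian \(e^{\abs\xi^2/4}\) uniformly in \(t\) and \(\nu\). This is exactly \eqref{eq:p-global-bounds}, and it is what forces the choice \(\gamma>1/2\) in the residual bound.
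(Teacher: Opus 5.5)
Your proposal is correct and follows essentially the same route as the paper: Cauchy--Schwarz in the \(\pw\)-weighted pairing, the residual bound \eqref{eq:residual-bound} combined with \eqref{eq:gaussian-pairing} and \eqref{eq:p-comparable} to get \((\nu t)^{1/2}\mathcal K(t)^{1/2}\), and then Young's inequality. Your remark that \eqref{eq:p-global-bounds} together with \(\gamma>\tfrac12\) is what makes the weighted residual norm finite is exactly the content behind \eqref{eq:gaussian-pairing}.
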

\begin{proof}
Fix \(\gamma\in(\frac12,1)\).
The Cauchy--Schwarz inequality, \eqref{eq:residual-bound},
\eqref{eq:gaussian-pairing}, and \eqref{eq:p-comparable} give
\begin{align*}
\frac{1}{\nu t}\left|
\inner{\Phi_{\mathrm{app}}(t)}{\wb_2(t)}_{\pw(t)}
\right|
&\le
\frac{1}{\nu t}
\norm[L^2]{\pw^{\frac12}\Phi_{\mathrm{app}}(t)}
\norm[L^2]{\pw^{\frac12}\wb_2(t)}
\\
&\lsim{T}
(\nu t)^{\frac12}\mathcal K(t)^{\frac12}.
\end{align*}
Young's inequality proves \eqref{eq:high-residual-pairing}.
\end{proof}

\begin{proof}[Proof of Proposition~\ref{prop:w2-apriori}]
Theorem~\ref{thm:linear-weighted-estimate}, the source identity
\eqref{eq:w2-linear-source}, and
Lemmas~\ref{lem:w2-corrector-vorticity}--\ref{lem:w2-residual} give
\begin{align*}
t\frac{d}{dt}E_{\pw}
+\kappa_T\mathcal Q
-\eta(\mathcal I+\mathcal K)
-3\eta\mathcal K
\lsim{\eta,T}{}&
tE_{\pw}+t\mathcal K+t\mathcal K^2+t^{-1}\norm{\wb_1(t)}^2
+\sqrt\nu\,\norm{\wb_1(t)}\mathcal K+\nu t.
\end{align*}
Choose \(\eta=\kappa_T/8\).  Since
\(\mathcal I+\mathcal K\le\mathcal Q\), the four terms proportional to
\(\eta\) are bounded by \(\frac{\kappa_T}{2}\mathcal Q\) and can be
absorbed into the left-hand side.  Moreover,
\[
\mathcal K
\stackrel{\eqref{eq:weighted-energy-comparable}}{\le}
\frac83E_{\pw},
\]
and hence
\(t\mathcal K\lesssim tE_{\pw}\) and
\(t\mathcal K^2\lesssim tE_{\pw}^2\).
Taking \(\kappa_1=\frac{\kappa_T}{2}\) proves
\eqref{eq:w2-apriori}.
\end{proof}

\section{Estimate for the regular remainder}
\label{sec:w1-estimate}
Following \cite{NN}, we use
\eqref{eq:wave-remainder-equation} to estimate the regular remainder in
\(L^4\cap L^{4/3}\), using the norm
\eqref{eq:wave-remainder-norm}. Denote its right-hand side by
\begin{equation}\label{eq:regular-remainder-forcing}
\begin{aligned}
f_E
={}&
-\vb_1\cdot\nabla\wapp
-\frac{\sqrt t}{\nu}
\bigl(v_{2,a}+\vb_2\bigr)
\left(
\frac{x-\Zn(t)}{\sqrt{\nu t}},t
\right)\cdot\nabla\wapp
\\
&-
\sqrt\nu
\left(
v_{1,a}\cdot\nabla w_{1,a}-\Delta w_{1,a}
\right).
\end{aligned}
\end{equation}
Thus
\(\partial_t\wb_1+u^\nu\cdot\nabla\wb_1-\nu\Delta\wb_1=f_E\).
Its zero trace at \(t=0\) is established in
Theorem~\ref{lem:short-time-startup}.
\begin{lemma}
\label{lem:far-field-core}
If \(t\in(0,T]\) and \(x\in\supp\wapp(t)\), then
\begin{equation}\label{eq:far-field-xi}
\abs{\xi}
=
\frac{\abs{x-\Zn(t)}}{\sqrt{\nu t}}
\ge
\frac8{\sqrt{\nu t}}.
\end{equation}
Consequently,
\begin{equation}\label{eq:far-field-v2a}
\abs{v_{2,a}(\xi,t)}
\lsim{T}
(\nu t)^{\frac{3}{2}},
\end{equation}
and
\begin{equation}\label{eq:far-field-vb2}
\abs{\vb_2(\xi,t)}
\lsim{T}
\nu t\,\mathcal Q(t)^{\frac{1}{2}}.
\end{equation}
\end{lemma}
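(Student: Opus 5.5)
The plan is to prove the three claims in order: first the geometric bound, then the two decay estimates as direct consequences of the far-field velocity bounds already available.

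For \eqref{eq:far-field-xi}, the separation estimate \eqref{eq:approx-separation} of Proposition~\ref{prop:regular-corrector} gives \(\abs{x-\Zn(t)}\ge8\) for every \(x\in\supp\wapp(t)\). Dividing by \(\sqrt{\nu t}\) through the definition \eqref{eq:inner-variable} of \(\xi\) gives the bound. Since \(\nu t\le\nu_TT\le1\), it also follows that \(\abs\xi\ge8\), so \(1+\abs\xi^2\simeq\abs\xi^2\ge 64/(\nu t)\).

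For \eqref{eq:far-field-v2a}, I will use the composite profile velocity bound \eqref{eq:composite-profile-bounds}, \(\abs{v_{2,a}}\lsim{T}(1+\abs\xi^2)^{-3/2}\). The lower bound just obtained then yields \((1+\abs\xi^2)^{-3/2}\lesssim(\nu t)^{3/2}\). The exponent \(3/2\) comes entirely from the slowest-decaying second-mode profiles \(F_{\ne0}\) and \(J\) recorded in \eqref{eq:high-profile-velocity}. The radial profile does better: its velocity even has Gaussian decay, by the zero-mass formula.

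For \eqref{eq:far-field-vb2}, I will apply Lemma~\ref{lem:v-decay} to \(w=\wb_2\). Its zero-mass hypothesis holds by \eqref{eq:remainder-mass-constraints}, and it gives
\[
\abs{\vb_2(\xi,t)}\lsim{T}(1+\abs\xi^2)^{-1}\bigl(\norm[L^2_{\pw}]{\wb_2}+\norm[L^2_{\pw}]{\nabla\wb_2}\bigr).
\]
By the comparability \eqref{eq:p-comparable}, the bracket is bounded by \(\bigl(\mathcal K+\mathcal I\bigr)^{1/2}\le\mathcal Q^{1/2}\). Combining this with \((1+\abs\xi^2)^{-1}\lesssim\nu t\) from the first step gives the claim.

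The only delicate point is that Lemma~\ref{lem:v-decay} requires \(\wb_2\in H^1(\pw\,d\xi)\), which here means finiteness of \(\mathcal Q(t)\). This is part of the standing hypotheses of finite weighted energy and dissipation for the exact solution at positive times. When \(\mathcal Q(t)=\infty\) the estimate is vacuous, so no further argument is needed. The \(\beta\)-dependence of the constant is absorbed because \(a_0\) and \(B_0\) are fixed depending only on \(T\). No other obstacle is expected: the whole proof transfers the moment-cancellation decay established earlier to the physical region at distance \(8\) from the modulated center.
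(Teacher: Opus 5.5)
Your proposal is correct and follows the paper's proof essentially step for step. The separation bound \eqref{eq:approx-separation} gives the lower bound on $\abs{\xi}$, the $(1+\abs{\xi}^2)^{-3/2}$ decay of the composite profile velocity gives \eqref{eq:far-field-v2a}, and Lemma~\ref{lem:v-decay} combined with the zero-mass constraint \eqref{eq:remainder-mass-constraints} and \eqref{eq:p-comparable} gives \eqref{eq:far-field-vb2}. Your remark about finiteness of $\mathcal Q(t)$ is a harmless point that the paper leaves implicit.
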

\begin{proof}
For \(x\in\supp\wapp(t)\),
\[
\abs\xi
\stackrel{\eqref{eq:w2-physical-point}}{=}
\frac{\abs{x-\Zn(t)}}{\sqrt{\nu t}}
\stackrel{\eqref{eq:approx-separation}}{\ge}
\frac8{\sqrt{\nu t}},
\]
which proves \eqref{eq:far-field-xi}.  Moreover,
\[
\abs{v_{2,a}(\xi,t)}
\stackrel{\substack{\eqref{eq:high-profile-velocity}\\
\eqref{eq:remainder-core-composite}}}{\lsim{T}}
\frac{1}{(1+\abs\xi^2)^{3/2}}
\stackrel{\eqref{eq:far-field-xi}}{\lesssim}
(\nu t)^{3/2},
\]
which proves \eqref{eq:far-field-v2a}.
By \eqref{eq:remainder-mass-constraints} and
Lemma~\ref{lem:v-decay},
\[
\abs{\vb_2(\xi,t)}
\lsim{T}
\frac{1}{1+\abs{\xi}^2}
\left(
\mathcal I(t)+\mathcal K(t)
\right)^{\frac{1}{2}}
\stackrel{\eqref{eq:far-field-xi}}{\lesssim}
\nu t\,\mathcal Q(t)^{1/2}.
\]
\end{proof}
\begin{proposition}\label{prop:w1-estimate}
For every \(0<t_0\le t\le T\), uniformly in \(t_0\),
\begin{equation}\label{eq:w1-estimate}
\norm{\wb_1(t)}
\lsim{T}
\norm{\wb_1(t_0)}
+\int_{t_0}^t
s^{\frac32}\mathcal Q(s)^{\frac12}\,ds
+\sqrt\nu\,(t-t_0).
\end{equation}
\end{proposition}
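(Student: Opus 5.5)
The plan is an $L^p$ energy estimate for the wave remainder equation \eqref{eq:wave-remainder-equation}, run separately for $p=4$ and $p=4/3$. Multiply by $|\wb_1|^{p-2}\wb_1$ and integrate in $x$. The transport velocity $u^\nu$ is divergence free, so the transport term drops out. The diffusion term $-\nu\Delta\wb_1$ contributes a nonpositive amount for every $1<p<\infty$. This leaves
\[
\frac{d}{dt}\norm[L^p]{\wb_1(t)}\le\norm[L^p]{f_E(t)},
\]
with $f_E$ as in \eqref{eq:regular-remainder-forcing}, in the sense of the upper Dini derivative, or after smoothing $|\cdot|^p$. Because the transport is exact, the Gronwall constant does not depend on $u^\nu$; this is essential, since $u^\nu$ contains the singular core velocity. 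Integrating from $t_0$ to $t$ then reduces \eqref{eq:w1-estimate} to bounding $\norm{f_E(s)}$ by $C_T\norm{\wb_1(s)}+C_Ts^{3/2}\mathcal Q(s)^{1/2}+C_T\sqrt\nu$, and closing with Gronwall on $[t_0,T]$. The constant $e^{C_TT}$ is uniform in $t_0$.

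I bound the terms of $f_E$ one at a time.

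\emph{The linear term $\vb_1\cdot\nabla\wapp$.} By \eqref{eq:wave-remainder-velocity-bound}, $\norm[L^\infty]{\vb_1}\lesssim\norm{\wb_1}$. Proposition~\ref{prop:regular-corrector} gives $\nabla\wapp$ bounded in $L^4\cap L^{4/3}$ uniformly in $t$ and $\nu$. The product is therefore $\lesssim\norm{\wb_1}$.

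\emph{The residual term.} The $\sqrt\nu(v_{1,a}\cdot\nabla w_{1,a}-\Delta w_{1,a})$ term is $\lesssim\sqrt\nu$ in $L^4\cap L^{4/3}$, again by the bounds \eqref{eq:regular-corrector-bounds}.

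\emph{The core-velocity term.} This is the key term,
\[
\frac{\sqrt t}{\nu}\,(v_{2,a}+\vb_2)\!\left(\frac{x-\Zn}{\sqrt{\nu t}},t\right)\cdot\nabla\wapp .
\]
It is supported on $\supp\wapp(t)$, where Lemma~\ref{lem:far-field-core} applies. There \eqref{eq:far-field-v2a} gives $|v_{2,a}|\lesssim(\nu t)^{3/2}$, so that piece contributes $\frac{\sqrt t}{\nu}(\nu t)^{3/2}=\sqrt\nu\,t^2\lesssim_T\sqrt\nu$. Likewise \eqref{eq:far-field-vb2} gives $|\vb_2|\lesssim\nu t\,\mathcal Q^{1/2}$, which contributes $\frac{\sqrt t}{\nu}\,\nu t\,\mathcal Q^{1/2}=t^{3/2}\mathcal Q(t)^{1/2}$. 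Both pointwise bounds are multiplied by $\norm[L^4\cap L^{4/3}]{\nabla\wapp}\lesssim1$.

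The far-field decay is what makes the factor $1/\nu$ harmless. It rests on the zero-mass condition \eqref{eq:remainder-mass-constraints} through Lemma~\ref{lem:v-decay}, and on the separation \eqref{eq:approx-separation}. I expect it to be the main point requiring care. Specifically, Lemma~\ref{lem:v-decay} is stated with the norms $\norm[L^2_{\pw}]{\cdot}$, and these must be compared with $\mathcal I+\mathcal K$ using \eqref{eq:p-comparable}. That comparison is already built into \eqref{eq:far-field-vb2}.

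The remaining technical points are routine. The energy identity needs justification at positive times, and it holds because all fields are smooth and integrable for $t>0$. For $p=4/3$, where $|w|^{p-2}w$ is singular, I would use a regularization $(\epsilon^2+w^2)^{p/2}$ and let $\epsilon\to0$.
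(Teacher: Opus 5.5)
Your proposal is correct and is essentially the paper's proof. Both use the $L^4$ and $L^{4/3}$ estimates for the transport--diffusion equation with divergence-free $u^\nu$, then the term-by-term bound $\norm[\lbb]{f_E(s)}\lsim{T}\norm{\wb_1(s)}+s^{3/2}\mathcal Q(s)^{1/2}+\sqrt\nu$ obtained from \eqref{eq:wave-remainder-velocity-bound}, Proposition~\ref{prop:regular-corrector}, and the far-field bounds \eqref{eq:far-field-v2a}--\eqref{eq:far-field-vb2} on $\supp\nabla\wapp$, and close with Gronwall on $[t_0,t]$. Your regularization for $p=4/3$ and your remark that the Gronwall constant does not depend on the singular core velocity are details the paper leaves implicit, but they do not change the argument.
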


\begin{proof}
The regular remainder \(\wb_1\) solves
\eqref{eq:wave-remainder-equation}. Its transport velocity
\(u^\nu\) is divergence free, and its forcing \(f_E\) is defined in
\eqref{eq:regular-remainder-forcing}.
The standard \(L^p\) estimate, applied with \(p=4\) and \(p=4/3\),
gives
\[
\norm{\wb_1(t)}
\le
\norm{\wb_1(t_0)}
+\int_{t_0}^t\norm[\lbb]{f_E(s)}\,ds,
\]
where we use the norm \eqref{eq:wave-remainder-norm}.

By \eqref{eq:wave-remainder-velocity-bound} and the regularity
bounds in Proposition~\ref{prop:regular-corrector},
\[
\norm[\lbb]{\vb_1\cdot\nabla\wapp}
\lsim{T}\norm{\wb_1(t)}.
\]
On the support of \(\nabla\wapp\),
\eqref{eq:far-field-v2a} and \eqref{eq:far-field-vb2} yield
\[
\frac{\sqrt t}{\nu}\abs{v_{2,a}(\xi,t)}
\lsim{T}\sqrt\nu\,t^2
\lsim{T}\sqrt\nu,
\qquad
\frac{\sqrt t}{\nu}\abs{\vb_2(\xi,t)}
\lsim{T}t^{\frac32}\mathcal Q(t)^{\frac12}.
\]
Moreover, Proposition~\ref{prop:regular-corrector} gives
\[
\sqrt\nu\,
\norm[\lbb]{v_{1,a}\cdot\nabla w_{1,a}-\Delta w_{1,a}}
\lsim{T}\sqrt\nu.
\]
Combining these bounds in
\eqref{eq:regular-remainder-forcing}, we obtain
\[
\norm[\lbb]{f_E(t)}
\lsim{T}
\norm{\wb_1(t)}
+t^{\frac32}\mathcal Q(t)^{\frac12}
+\sqrt\nu.
\]
Gronwall's inequality on \([t_0,t]\) now proves
\eqref{eq:w1-estimate}, with a constant independent of \(t_0\).
\end{proof}
\Needspace{12\baselineskip}
\section{Coupled bootstrap and proof of the main theorem}
\label{sec:closure}

We first establish the initial traces of the remainders with
\(\nu>0\) fixed, and then close the estimates uniformly in \(\nu\)
on \([0,T]\). Throughout this section, \(\pw_G\) and \(Y\) are
defined in \eqref{eq:Gaussian-space-Y}, and
\(\norm{\wb_1(t)}\) denotes the norm in
\eqref{eq:wave-remainder-norm}.

\subsection{Initial traces of the remainders}

For \(t>0\), set
\[
h(t):=w_2(t)-w_{2,\mathrm{app}}(t),
\qquad
e(t):=\w^{E,\nu}(t)-\wapp(t).
\]
For each fixed \(\nu>0\), the atomic-profile convergence and
Gaussian bounds in \cite[Section~2.1 and Proposition~4.5]{GG},
after rescaling the viscosity and using
\(\abs{\Zn(t)-z_0}/\sqrt{\nu t}\to0\), give
\begin{equation}\label{eq:short-time-G-trace}
w_2(t)\to G\quad\hbox{in }Y,
\qquad
\abs{w_2(\xi,t)}\le
C_{\nu,\gamma}e^{-\gamma\abs\xi^2/4},
\qquad 0<\gamma<1,
\end{equation}
where the Gaussian bound holds uniformly for sufficiently small
positive times. Here the Gaussian bound upgrades the convergence
in polynomially weighted spaces to convergence in \(Y\).
The strong initial trace of the wave component follows from its
measure trace and the \(L^p\) contraction estimate. Together with
the initial traces of the approximations, this yields
\begin{equation}\label{eq:short-time-wave-trace}
\norm[Y]{h(t)}\to0,
\qquad
\norm[\lbb]{e(t)}\to0,
\qquad t\to0^+.
\end{equation}

These qualitative limits do not yet determine the initial trace of
\(\wb_2=h/(\nu t)\). We need a rate for \(h\) that remains
vanishing after division by \(t\), and a rate for \(e\) that makes
\(\norm{\wb_1(t)}^2/t^2\) integrable. The following estimate
provides both. Its proof first controls the wave error using
separation and mass cancellation, and then uses a Gaussian energy
estimate for the core error.

\begin{theorem}[Short-time estimates]
\label{lem:short-time-startup}
For each fixed \(0<\nu\le\nu_T\), there are \(C_\nu>0\) and a
time \(t_\nu\), with \(0<t_\nu\le\min\{T,1\}\), such that
\begin{equation}\label{eq:short-time-startup}
\norm[Y]{h(t)}\le C_\nu t^{3/2},
\qquad
\norm[\lbb]{e(t)}\le C_\nu t,
\qquad 0<t\le t_\nu.
\end{equation}
Consequently,
\begin{equation}\label{eq:short-time-remainder-traces}
E_{\pw}(t)\le C_\nu t,
\qquad
\norm{\wb_1(t)}\le C_\nu t,
\qquad 0<t\le t_\nu.
\end{equation}
The constants and \(t_\nu\) may depend on \(\nu\).
\end{theorem}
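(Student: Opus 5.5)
The argument runs with \(\nu>0\) fixed, so every constant may depend on \(\nu\). We first bound the wave error \(e\), then the core error \(h\), and finally read off the traces in \eqref{eq:short-time-remainder-traces}.

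\emph{Wave error.} Subtracting \eqref{eq:regular-approx-residual} from \eqref{eq:comp-eq} gives
\[
\partial_t e+u^\nu\cdot\nabla e-\nu\Delta e
=-(K\star_x e)\cdot\nabla\wapp
-\bigl(v^{B,\nu}-V_G^{\Zn,\nu}\bigr)\cdot\nabla\wapp
-\nu^2\bigl(v_{1,a}\cdot\nabla w_{1,a}-\Delta w_{1,a}\bigr).
\]
The last term is \(O_\nu(1)\) in \(\lbb\) by Proposition~\ref{prop:regular-corrector}. The middle term lives on \(\supp\nabla\wapp\), which by \eqref{eq:approx-separation} is at distance at least \(8\) from \(\Zn(t)\). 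There I write
\[
v^{B,\nu}-V_G^{\Zn,\nu}=K\star_x\bigl(\w^{B,\nu}-\Gamma_{\nu t,\Zn(t)}\bigr),
\]
which holds because \(K\star_x\Gamma_{\nu t,\Zn}=V_G^{\Zn,\nu}\). I then split the convolution at \(|y-\Zn|=4\).
\begin{itemize}
\item The near part uses the unit-mass cancellation of \(\w^{B,\nu}-\Gamma_{\nu t,\Zn}\). Taylor-expanding the kernel about \(\Zn\) bounds it by the first moment \(\int|y-\Zn|\,|\w^{B,\nu}-\Gamma_{\nu t,\Zn}|\,dy\).
\item The far part is controlled by Gaussian tails.
\end{itemize}
By \eqref{eq:short-time-G-trace}, \(w_2\) has Gaussian bounds in \(\xi\), so the first moment is \(O_\nu(\sqrt{t})\cdot\norm[L^1]{w_2-G}\), which is \(o(\sqrt t)\), and the tails are \(O(e^{-c/t})\).

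These bounds give a forcing of size \(O_\nu(1)\) in \(\lbb\), plus a term linear in \(\norm[\lbb]{e}\) (the Biot--Savart bound on \(K\star_x e\) is as in \eqref{eq:wave-remainder-velocity-bound}). The standard \(L^p\) estimate, Gronwall, and \(e(0^+)=0\) from \eqref{eq:short-time-wave-trace} then yield \(\norm[\lbb]{e(t)}\le C_\nu t\).

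\emph{Core error.} In the self-similar variables, \(h\) satisfies the equation obtained by subtracting \(\Phi(w_{2,\mathrm{app}},\vapp)=\Phi_{\mathrm{app}}\) from \(\Phi(w_2,v^{E,\nu})=0\). Schematically,
\[
(t\partial_t-\cL)h+\tfrac1\nu\Lambda h+\sqrt{t/\nu}\,b\cdot\nabla h
=-\Phi_{\mathrm{app}}-\tfrac1\nu(\text{quadratic terms in }h)-\sqrt{t/\nu}\,(K\star e)(x,t)\cdot\nabla w_2,
\]
with \(O(t)\) cross terms involving \(w_{2,\mathrm{app}}-G\).

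I would do a plain Gaussian-weight energy estimate in \(Y\), with no plateau weight, since \(\nu\) is fixed. The tools are:
\begin{itemize}
\item the coercivity \eqref{eq:fixed-mode-energy-form}, which gives \(\langle(t\partial_t-\cL)h,h\rangle_Y\ge t\partial_t\tfrac12\norm[Y]{h}^2+\tfrac12\norm[Y]{h}^2\) after using zero mass of \(h\);
\item the cancellation \eqref{eq:gaussian-reaction-zero} for \(\Lambda\);
\item \(|b|\lesssim\sqrt{\nu t}|\xi|\) from Lemma~\ref{lem:b-local} near the core, so the drift costs \(t(1+|\xi|^2)\) against \(\pw_G\). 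This is absorbable for small \(t\) with \(\nu\)-dependent constants, using the Gaussian decay of \(w_2\) from \eqref{eq:short-time-G-trace}.
\end{itemize}
The sources are \(\Phi_{\mathrm{app}}=O_\nu(t^{3/2})\) by Proposition~\ref{prop:high-residual}, and \(\sqrt{t}\,\norm[L^\infty]{K\star e}=O_\nu(t^{3/2})\) from the first step. This should lead to \(t\tfrac{d}{dt}\norm[Y]{h}^2+c\norm[Y]{h}^2\le C_\nu t^{3}+C_\nu\norm[Y]{h}^3\). Integrating with the trace \(\norm[Y]{h(0^+)}=0\), continuously for small \(t\), gives \(\norm[Y]{h}\le C_\nu t^{3/2}\).

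\emph{Consequences.} From \(\norm[Y]{h}\le C_\nu t^{3/2}\) we get \(\norm[Y]{\wb_2}=\norm[Y]{h}/(\nu t)\le C_\nu t^{1/2}\). Since \(\pw\le2\pw_G\) by \eqref{eq:p-global-bounds}, \(E_{\pw}\le C_\nu t\). Also \(\norm{\wb_1}=\nu^{-3/2}\norm[\lbb]{e}\le C_\nu t\).

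The main obstacle is the quadratic Oseen term \(\tfrac1\nu(K\star h)\cdot\nabla h\) in the weight \(\pw_G\). Its transport part is not skew in the Gaussian weight, so I would bound it by \(\norm[L^\infty]{K\star h}\,|\xi|\) against \(\pw_G\). To control this I would need a derivative or weighted bound on \(h\), which I would take from the parabolic smoothing of the \(\cL\)-semigroup (Lemma~\ref{lem:OU-Gaussian-smoothing}) in a Duhamel formulation in \(\log t\) rather than from a pure energy estimate. If closing in \(Y\) fails, I would instead run the Duhamel estimate in \(L^\infty_\gamma\) and pass to \(Y\) at the end.
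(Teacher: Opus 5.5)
Your wave-error step is essentially the paper's argument, and your final read-off of $E_{\pw}$ and $\norm{\wb_1}$ is the same. The paper packages the moment cancellation as $\abs{v_2-v^G}\le C_\nu(1+\abs\xi^2)^{-1}$, which on $\supp\nabla\wapp$ gives $\abs{v^{B,\nu}-V_G^{\Zn,\nu}}\le C_\nu\sqrt t$. Your first moment is $O_\nu(\sqrt t)$ by the Gaussian bound; it is not controlled by $\norm[L^1]{w_2-G}$, but $O_\nu(\sqrt t)$ is all you need.

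The gap is in the core-error step. From the zero-mass coercivity of $-\cL$ you keep only $\tfrac12\norm[Y]{h}^2$, and three terms cost weighted or derivative norms that this cannot absorb:
\begin{itemize}
\item The drift $\sqrt{t/\nu}\,b\cdot\nabla h$ costs $t\int\pw_G(1+\abs\xi^2)\abs h^2$.
\item The transport part $\sqrt{t/\nu}\,v^e\cdot\nabla h$ of your ``source'' $\sqrt{t/\nu}\,v^e\cdot\nabla w_2$ costs $\sqrt t\,\norm[L^\infty]{v^e}\int\pw_G\abs\xi\abs h^2$. You cannot treat all of $\nabla w_2$ as data, because \eqref{eq:short-time-G-trace} gives no bound on $\nabla w_2$.
\item The Oseen term $\nu^{-1}v^h\cdot\nabla h$ costs $\norm[L^\infty]{v^h}\int\pw_G\abs\xi\abs h^2$, and $\norm[L^\infty]{v^h}$ itself needs an $L^4$, hence gradient, bound.
\end{itemize}
The pointwise Gaussian bound on $w_2$ only gives $\int\pw_G\abs\xi^2\abs h^2\le C_\nu$, which is not small relative to $\norm[Y]{h}^2$. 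Inserted directly as forcing, it caps the conclusion at $\norm[Y]{h}\lesssim t^{1/2}$, not $t^{3/2}$. So the inequality $t\tfrac{d}{dt}\norm[Y]{h}^2+c\norm[Y]{h}^2\le C_\nu t^3+C_\nu\norm[Y]{h}^3$ is not actually derived, and the Duhamel fallback is not yet an argument.

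The missing idea is that no semigroup smoothing is needed. For zero-mass $h$, the Gaussian Dirichlet form already controls
\[
D_h=\int\pw_G\bigl(\abs{\nabla h}^2+(1+\abs\xi^2)\abs h^2\bigr),
\qquad\text{namely}\qquad
-\inner{\cL h}{h}_Y\ge cD_h .
\]
To see this, write $h=e^{-\abs\xi^2/4}u$, so that $-\inner{\cL h}{h}_Y=\int e^{-\abs\xi^2/4}\abs{\nabla u}^2$. Then use the Gaussian Poincar\'e inequality (zero mass means $u$ has zero Gaussian mean) and the identity $\abs\xi^2e^{-\abs\xi^2/4}=-2\xi\cdot\nabla e^{-\abs\xi^2/4}$. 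Note that \eqref{eq:fixed-mode-energy-form} alone gives only $-\inner{\cL h}{h}_Y\ge0$.

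With this dissipation, each term closes:
\begin{itemize}
\item The drift is $\le CtD_h$.
\item The $v^e\cdot\nabla h$ term is $\le C_\nu t^{3/2}D_h$.
\item Splitting the Biot--Savart kernel and using Gagliardo--Nirenberg gives $\norm[L^\infty]{v^h}\lesssim\widehat E_h^{1/4}D_h^{1/4}$, with $\widehat E_h=\tfrac12\norm[Y]{h}^2$. Hence the quadratic term is $\lesssim\widehat E_h^{3/4}D_h^{3/4}\le\epsilon D_h+C\widehat E_h^3$.
\end{itemize}
Only $\sqrt{t/\nu}\,v^e\cdot\nabla w_{2,\mathrm{app}}+\Phi_{\mathrm{app}}=O_\nu(t^{3/2})$ in $Y$ is treated as forcing. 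This gives
\[
t\widehat E_h'+cD_h\le C_\nu tD_h+C_\nu\widehat E_h^3+C_\nu t^3 .
\]
Since $\widehat E_h\to0$ and $D_h\gtrsim\widehat E_h$, the first two terms on the right are absorbed for small $t$. Then $\widehat E_h'\le C_\nu t^2$, so $\norm[Y]{h}\le C_\nu t^{3/2}$.
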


\begin{proof}
Fix \(\nu>0\), and write
\[
v^e:=K\star_x e,
\qquad
v^h:=K\star_\xi h.
\]
The Gaussian bounds and standard localized energy estimates justify
the weighted integrations by parts below on every compact interval
of positive times.

We first estimate the wave error. Since \(w_2-G\) has zero mass,
the Gaussian bound in \eqref{eq:short-time-G-trace} and the
Biot--Savart formula give
\begin{equation}\label{eq:short-time-far-field}
\abs{v_2(\xi,t)-v^G(\xi)}
\le\frac{C_\nu}{1+\abs\xi^2}.
\end{equation}
Indeed, subtracting \(K(\xi)\) inside the convolution removes the
leading far-field term. By \eqref{eq:approx-separation}, it follows
that
\[
\abs{v^{B,\nu}(x,t)-V_G^{\Zn,\nu}(x,t)}
\le C_\nu\sqrt t,
\qquad x\in\supp\nabla\wapp(t).
\]
Subtracting \eqref{eq:regular-approx-residual} from the exact wave
equation, and using the transport--diffusion estimate together with
\(\norm[L^\infty]{v^e}\lesssim\norm[\lbb]{e}\), gives
\[
\norm[\lbb]{e(t)}
\le C_\nu\int_0^t
\left(\norm[\lbb]{e(s)}+\sqrt s+\nu^2\right)\,ds.
\]
Here the initial term vanishes by
\eqref{eq:short-time-wave-trace}. Gronwall's inequality yields
\[
\norm[\lbb]{e(t)}+\norm[L^\infty]{v^e(t)}
\le C_\nu t,
\qquad 0<t\le t_\nu\le1.
\]

We next estimate the core error. A direct calculation from
\eqref{eq:concentrated-remainder-equation}, using
\(h=\nu t\,\wb_2\) and \(v^e=\nu^{3/2}\vb_1\), gives
\begin{equation}\label{eq:unscaled-core-error}
\begin{aligned}
0={}&(t\partial_t-\cL)h+\nu^{-1}\Lambda h
+\sqrt{\frac t\nu}\,b\cdot\nabla h
\\
&+t\left(v_{2,a}\cdot\nabla h
+v^h\cdot\nabla w_{2,a}\right)
+\nu^{-1}v^h\cdot\nabla h
\\
&+\sqrt{\frac t\nu}\,v^e(x,t)\cdot
\nabla(w_{2,\mathrm{app}}+h)
+\Phi_{\mathrm{app}},
\end{aligned}
\end{equation}
where \(x=\Zn(t)+\sqrt{\nu t}\,\xi\). Set
\[
\widehat E_h(t):=\frac12\norm[Y]{h(t)}^2,
\qquad
D_h(t):=\int_{\Rt}\pw_G
\left(\abs{\nabla h}^2+(1+\abs\xi^2)\abs h^2\right)\,d\xi.
\]
Since \(h\) has zero mass, the Gaussian spectral gap and the
skew-symmetry of \(\Lambda\) give
\[
-\inner{\cL h}{h}_Y\ge cD_h,
\qquad
\inner{\Lambda h}{h}_Y=0,
\]
as in \cite{Ga11}. In particular, \(D_h\) controls
\(\widehat E_h\).

The estimates needed for the remaining terms are
\[
\abs{b(\xi,t)}\le C_T\sqrt{\nu t}\,\abs\xi,
\qquad
\norm[L^\infty]{v^h}
\le C\widehat E_h^{1/4}D_h^{1/4},
\]
and
\[
\left\|
\sqrt{\frac t\nu}\,v^e(x,t)\cdot\nabla w_{2,\mathrm{app}}
+\Phi_{\mathrm{app}}
\right\|_Y
\le C_\nu t^{3/2}.
\]
The last bound follows from the wave estimate above,
\eqref{eq:composite-profile-bounds}, and
\eqref{eq:residual-bound}. Testing
\eqref{eq:unscaled-core-error} against \(h\) in \(Y\), integrating
the divergence-free transport terms by parts, and applying Young's
inequality therefore yields
\[
t\widehat E_h'(t)+cD_h(t)
\le C_\nu tD_h(t)
+C_\nu\widehat E_h(t)^3+C_\nu t^3.
\]
The cubic term comes from \(\nu^{-1}v^h\cdot\nabla h\); the
last term is the square of the forcing bound above.

By \eqref{eq:short-time-wave-trace},
\(\widehat E_h(t)\to0\). Since \(D_h\) controls
\(\widehat E_h\), decreasing \(t_\nu\) allows us to absorb both
\(C_\nu tD_h\) and \(C_\nu\widehat E_h^3\). Hence
\[
\widehat E_h'(t)\le C_\nu t^2,
\qquad 0<t\le t_\nu.
\]
Integrating from \(\delta\) to \(t\) and letting
\(\delta\to0^+\), using \(\widehat E_h(\delta)\to0\), gives
\[
\widehat E_h(t)\le C_\nu t^3,
\qquad
\norm[Y]{h(t)}\le C_\nu t^{3/2}.
\]
Together with the wave estimate, this proves
\eqref{eq:short-time-startup}.

Finally, \(h=\nu t\,\wb_2\),
\(e=\nu^{3/2}\wb_1\), and \(\pw\le2\pw_G\) imply
\[
E_{\pw}(t)
\le\frac{\norm[Y]{h(t)}^2}{(\nu t)^2}
\le C_\nu t,
\qquad
\norm{\wb_1(t)}
=\nu^{-3/2}\norm[\lbb]{e(t)}
\le C_\nu t,
\]
after increasing the constant depending on \(\nu\). This proves
\eqref{eq:short-time-remainder-traces}.
\end{proof}

\subsection{The uniform coupled estimate}

We now obtain bounds whose constants are independent of \(\nu\).
The two remainder estimates are coupled: the core energy inequality
contains \(\norm{\wb_1(t)}^2/t^2\), while the wave estimate contains
\(\int_0^t s^{3/2}\mathcal Q(s)^{1/2}\,ds\).
The key observation is that this integral is controlled by the
accumulated core dissipation
\(\int_0^t\mathcal Q(s)/s\,ds\), which already appears when the
core energy inequality is integrated in time. Combining the energy
with this accumulated dissipation will therefore close the two
estimates in a single differential inequality.

\begin{proposition}\label{prop:uniform-closure}
After decreasing \(\nu_T>0\) if necessary, the integral
\begin{equation}\label{eq:integrated-core-dissipation}
D(t):=\int_0^t\frac{\mathcal Q(s)}s\,ds
\end{equation}
is finite, and
\begin{equation}\label{eq:uniform-closure}
E_{\pw}(t)+\kappa_1D(t)\le C_T\nu t,
\qquad
\norm{\wb_1(t)}\le C_T\sqrt\nu\,t,
\end{equation}
for \(0<\nu\le\nu_T\) and \(0<t\le T\).
The constant \(C_T\) is independent of \(\nu\).
\end{proposition}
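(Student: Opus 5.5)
The plan is to work on \([t_0,t]\) with \(t_0\in(0,t_\nu]\), let \(t_0\to0\) afterwards, and close \eqref{eq:w2-apriori} and \eqref{eq:w1-estimate} in a single bootstrap. First check finiteness of \(D\) near zero. Divide \eqref{eq:w2-apriori} by \(t\) and integrate from \(\delta\) to \(t\le t_\nu\). The inputs are \(E_{\pw}(\delta)\le C_\nu\delta\to0\) from Theorem~\ref{lem:short-time-startup}, \(\norm{\wb_1(s)}^2/s^2\le C_\nu^2\) from \eqref{eq:short-time-remainder-traces}, and \(\sqrt\nu\norm{\wb_1}\mathcal K/s\le C_\nu\mathcal K\), which can be absorbed for \(s\) small since \(\mathcal K\lesssim E_{\pw}\). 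Letting \(\delta\to0^+\) shows \(D(t)<\infty\), with \(\nu\)-dependent bounds. This fixes the initial data: \(Y_*(t):=E_{\pw}(t)+\kappa_1D(t)\to0\) as \(t\to0\).

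Next derive the uniform inequality. Divide \eqref{eq:w2-apriori} by \(t\). This gives
\[
Y_*'(t)\le C_T\Bigl(E_{\pw}+E_{\pw}^2+\frac{\norm{\wb_1(t)}^2}{t^2}+\nu+\frac{\sqrt\nu\,\norm{\wb_1(t)}\,\mathcal K(t)}{t}\Bigr).
\]
To bound the wave term, apply \eqref{eq:w1-estimate} with \(t_0\to0\), using \(\norm{\wb_1(t_0)}\le C_\nu t_0\to0\). Then Cauchy--Schwarz gives
\[
\int_0^t s^{3/2}\mathcal Q^{1/2}\,ds\le\Bigl(\int_0^t s^4\,ds\Bigr)^{1/2}D(t)^{1/2}\lesssim t^{5/2}D(t)^{1/2}.
\]
Hence \(\norm{\wb_1(t)}\lesssim t^{5/2}D^{1/2}+\sqrt\nu\,t\), and \(\norm{\wb_1}^2/t^2\lesssim t^3D+\nu\lesssim_T Y_*+\nu\).

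The last term is the main obstacle, because \(\mathcal K/t\) is not integrable unless it is controlled by dissipation. Here \(\norm{\wb_1}/t\lesssim t^{3/2}D^{1/2}+\sqrt\nu\), so
\[
\frac{\sqrt\nu\,\norm{\wb_1}\,\mathcal K}{t}\lesssim\sqrt\nu\,\bigl(Y_*^{1/2}+\sqrt\nu\bigr)\,\mathcal K.
\]
Since \(\mathcal K\lesssim E_{\pw}\le Y_*\), this is at most \(C(Y_*+Y_*^2+\nu)\) after Young's inequality. Every bound here is uniform in \(\nu\); only the trace at zero uses the \(\nu\)-dependent constants. Combining, \(Y_*'\le C_T(\nu+Y_*+Y_*^2)\) with \(Y_*(0^+)=0\).

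Finally, run a continuity argument. Let \(t^*\) be the supremum of times with \(Y_*\le1\) on \((0,t^*]\); the set is nonempty because \(Y_*\to0\). On that interval \(Y_*'\le 2C_T(\nu+Y_*)\), so Gronwall gives \(Y_*(t)\le 2C_T\nu t\,e^{2C_TT}\). This is below \(1/2\) once \(\nu_T\) is decreased, so \(t^*=T\) and \(Y_*\lesssim_T\nu t\). Plugging back, \(\norm{\wb_1(t)}\lesssim t^{5/2}(\nu t)^{1/2}+\sqrt\nu\,t\lesssim_T\sqrt\nu\,t\), which gives \eqref{eq:uniform-closure}.
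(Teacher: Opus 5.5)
Your proposal is correct and follows the paper's proof essentially step for step. It uses the same fixed-$\nu$ startup to obtain $D(t)<\infty$ and $Y_*(0^+)=0$, the same Cauchy--Schwarz bound $\int_0^t s^{3/2}\mathcal Q^{1/2}\,ds\lesssim t^{5/2}D(t)^{1/2}$, the same combined quantity $Y_*=E_{\pw}+\kappa_1D$, and the same continuity/Gronwall argument; the only cosmetic difference is that you bound the cross term $\sqrt\nu\,\norm{\wb_1}\mathcal K/t$ by inserting the wave bound directly, whereas the paper first splits it with Young's inequality into $\tfrac12\norm{\wb_1}^2/t^2+C\nu E_{\pw}^2$, and both routes give $Y_*'\le C_T(\nu+Y_*+Y_*^2)$.
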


\begin{proof}
We take \(\nu_T\le1\). Dividing \eqref{eq:w2-apriori} by \(t\)
gives
\begin{equation}\label{eq:closure-core-energy}
\begin{aligned}
E_{\pw}'(t)+\kappa_1\frac{\mathcal Q(t)}t
\le C_T\biggl(
&E_{\pw}(t)+E_{\pw}(t)^2
+\frac{\norm{\wb_1(t)}^2}{t^2}+\nu
\\
&+\sqrt\nu\,
\frac{\norm{\wb_1(t)}}t\,\mathcal K(t)
\biggr).
\end{aligned}
\end{equation}

We first verify integrability at the initial time. For fixed \(\nu\),
Theorem~\ref{lem:short-time-startup} gives
\[
E_{\pw}(t)\le C_\nu t,
\qquad
\norm{\wb_1(t)}\le C_\nu t,
\qquad 0<t\le t_\nu.
\]
Together with \(\mathcal K\le\frac83E_{\pw}\), this shows that every
term on the right-hand side of \eqref{eq:closure-core-energy} is
integrable near zero. In particular,
\[
\frac{\norm{\wb_1(t)}^2}{t^2}\le C_\nu,
\qquad
\sqrt\nu\,\frac{\norm{\wb_1(t)}}t\,\mathcal K(t)
\le C_\nu t.
\]
Integrating \eqref{eq:closure-core-energy} from \(\delta\) to \(t\)
and letting \(\delta\to0^+\), nonnegativity of the energy,
\(E_{\pw}(\delta)\to0\), and monotone convergence for the
dissipation give \(D(t)<\infty\). Consequently,
\[
E_{\pw}(t)\to0,
\qquad
D(t)\to0,
\qquad t\to0^+.
\]

We can now pass to the initial time in the wave estimate.
Cauchy--Schwarz gives
\[
\begin{aligned}
\int_0^t s^{3/2}\mathcal Q(s)^{1/2}\,ds
&=\int_0^t s^2
\left(\frac{\mathcal Q(s)}s\right)^{1/2}\,ds
\\
&\le
\left(\int_0^t s^4\,ds\right)^{1/2}D(t)^{1/2}
\\
&=\frac{t^{5/2}}{\sqrt5}D(t)^{1/2}.
\end{aligned}
\]
Thus, letting \(t_0\to0^+\) in \eqref{eq:w1-estimate} and using
\(\norm{\wb_1(t_0)}\to0\), we obtain
\begin{equation}\label{eq:w1-from-zero}
\begin{aligned}
\norm{\wb_1(t)}
&\le C_T\left(
\int_0^t s^{3/2}\mathcal Q(s)^{1/2}\,ds
+\sqrt\nu\,t
\right)
\\
&\le C_T
\left(t^{5/2}D(t)^{1/2}+\sqrt\nu\,t\right),
\\
\frac{\norm{\wb_1(t)}^2}{t^2}
&\le C_T\left(\nu+t^3D(t)\right).
\end{aligned}
\end{equation}

The last inequality controls the singular wave source by
\(\nu\) and the accumulated core dissipation. We therefore combine
the energy and the dissipation by setting
\begin{equation}\label{eq:combined-bootstrap-norm}
Y_*(t):=E_{\pw}(t)+\kappa_1D(t),
\qquad
Y_*(0):=0.
\end{equation}
This function is continuous on \([0,T]\), locally absolutely
continuous on \((0,T]\), and satisfies
\[
Y_*'(t)
=E_{\pw}'(t)+\kappa_1\frac{\mathcal Q(t)}t
\]
for almost every \(t>0\).
Young's inequality and
\eqref{eq:weighted-energy-comparable} give
\[
\sqrt\nu\,
\frac{\norm{\wb_1(t)}}t\,\mathcal K(t)
\le
\frac12\frac{\norm{\wb_1(t)}^2}{t^2}
+C\nu E_{\pw}(t)^2.
\]
Substituting this bound and \eqref{eq:w1-from-zero} into
\eqref{eq:closure-core-energy}, and using
\[
\nu\le1,\qquad t\le T,\qquad
E_{\pw}(t)\le Y_*(t),\qquad
D(t)\le\kappa_1^{-1}Y_*(t),
\]
we obtain
\begin{equation}\label{eq:closed-bootstrap-ode}
Y_*'(t)
\le C_T\left(\nu+Y_*(t)+Y_*(t)^2\right)
\end{equation}
for almost every \(t\in(0,T]\).

On any interval starting at zero on which \(Y_*\le1\),
\eqref{eq:closed-bootstrap-ode} gives
\[
Y_*'(t)\le C_T\nu+2C_TY_*(t).
\]
Applying Gronwall's inequality from \(\delta>0\) and letting
\(\delta\to0^+\), we find
\[
Y_*(t)
\le\frac{\nu}{2}\left(e^{2C_Tt}-1\right)
\le C_T'\nu t.
\]
Choose \(\nu_T\) so that \(C_T'\nu_TT<1/2\).
Since \(Y_*(0)=0\) and \(Y_*\) is continuous, it cannot reach
\(1\) on \([0,T]\). Hence
\[
E_{\pw}(t)+\kappa_1D(t)\le C_T\nu t
\]
throughout this interval. Returning to \eqref{eq:w1-from-zero},
we obtain
\[
\frac{\norm{\wb_1(t)}^2}{t^2}
\le C_T\left(\nu+t^3D(t)\right)
\le C_T\nu,
\]
which proves \eqref{eq:uniform-closure}.
The fixed-\(\nu\) estimates were used only to justify integration
from zero; the constants in \eqref{eq:uniform-closure} are
independent of \(\nu\).
\end{proof}

\subsection{Proof of Theorem~\ref{thm:main}}

\begin{proof}
Under the normalization
\eqref{eq:normalized-limiting-separation},
the decomposition \eqref{eq:remainder-ansatz},
Proposition~\ref{prop:regular-corrector}, and
\eqref{eq:uniform-closure} give
\[
\begin{aligned}
\norm[\lbb]{\w^{E,\nu}(t)-\w^E(t)}
&\le
\nu\norm[\lbb]{w_{1,a}(t)}
+\nu^{3/2}\norm{\wb_1(t)}
\\
&\le C_T(\nu+\nu^2t)
\le C_T\nu.
\end{aligned}
\]
Both regular components equal \(\w_0^E\) at \(t=0\), so this
estimate holds on \([0,T]\).

The self-similar change of variables preserves the \(L^1\) norm.
Moreover, the lower bound in \eqref{eq:p-global-bounds} gives
\[
\begin{aligned}
\norm[L^1_\xi]{\wb_2(t)}
&\le
\left(\int_{\Rt}\pw(t,\xi)^{-1}\,d\xi\right)^{1/2}
\left(\int_{\Rt}\pw(t,\xi)|\wb_2(\xi,t)|^2\,d\xi\right)^{1/2}
\\
&\le C_T E_{\pw}(t)^{1/2}.
\end{aligned}
\]
Therefore, by \eqref{eq:concentrated-approximation-error},
\eqref{eq:remainder-ansatz}, and \eqref{eq:uniform-closure},
\[
\begin{aligned}
\norm[L^1]{\w^{B,\nu}(t)-\Gamma_{\nu t,\Zn(t)}}
&=\norm[L^1_\xi]{w_2(t)-G}
\\
&\le C_T\nu t+\nu t\norm[L^1_\xi]{\wb_2(t)}
\\
&\le C_T\nu t\left(1+E_{\pw}(t)^{1/2}\right)
\\
&\le C_T\nu t.
\end{aligned}
\]

The center estimate \eqref{eq:main-center} follows from
\eqref{eq:regular-corrector-bounds}:
\[
|\Zn(t)-z(t)|\le C_T\nu t.
\]
For \(s>0\) and \(a,b\in\Rt\), integration of the directional
derivative of the Gaussian along the segment joining \(a\) and
\(b\) gives
\[
\norm[L^1]{\Gamma_{s,a}-\Gamma_{s,b}}
\le\frac{|a-b|}{\sqrt{\pi s}}.
\]
Taking \(s=\nu t\), \(a=\Zn(t)\), and \(b=z(t)\), we obtain
\[
\norm[L^1]{\Gamma_{\nu t,\Zn(t)}-\Gamma_{\nu t,z(t)}}
\le C_T\sqrt{\nu t}.
\]
Since \(\nu t\le1\), the triangle inequality proves
\eqref{eq:main-z-centered}. For each fixed \(t>0\), the asserted
inviscid limits follow from these estimates and the weak-* convergence
\[
\Gamma_{\nu t,z(t)}\to\delta_{z(t)}
\qquad\text{as }\nu\to0.
\]

Finally, undoing the scaling
\eqref{eq:parabolic-normalization} and absorbing the fixed powers
of \(\ell\) into the constants gives the estimates in the original
variables.
\end{proof}
\appendix
\section{Transport and Biot--Savart estimates for the corrector}
\label{app:corrector-estimates}
We collect the estimates used to construct the localized corrector in
Section~\ref{sec:approx}.

\begin{lemma}
\label{lem:corrector-estimates}
Let \(g=g(t)\) be a smooth family of functions supported in a fixed
compact set, with all spatial derivatives bounded uniformly in time.

For every
\(f\in W^{4,4}(\Rt)\cap W^{3,1}(\Rt)\),
\begin{equation}\label{eq:BS-high}
\norm[W^{3,\infty}]{K\star f}
+\norm[W^{4,4}]{K\star f}
\lesssim
\norm[W^{4,4}]{f}
+\norm[W^{3,1}]{f}
,
\end{equation}
and
\begin{align}
&\norm[W^{4,4}]{(K\star f)\cdot\nabla g}
+\norm[W^{3,1}]{(K\star f)\cdot\nabla g}
\notag\\
&\qquad\lsim{g}
\norm[W^{4,4}]{f}
+\norm[W^{3,1}]{f}
.
\label{eq:nonlocal-product-high}
\end{align}
For every
\(f\in W^{2,4}(\Rt)\cap W^{1,1}(\Rt)\),
\begin{equation}\label{eq:BS-low}
\norm[W^{2,\infty}]{K\star f}
+\norm[W^{3,4}]{K\star f}
\lesssim
\norm[W^{2,4}]{f}
+\norm[W^{1,1}]{f}
,
\end{equation}
and
\begin{align}
&\norm[W^{2,4}]{(K\star f)\cdot\nabla g}
+\norm[W^{1,1}]{(K\star f)\cdot\nabla g}
\notag\\
&\qquad\lsim{g}
\norm[W^{2,4}]{f}
+\norm[W^{1,1}]{f}
.
\label{eq:nonlocal-product-low}
\end{align}

Let \(V=V(t)\) be divergence free.  If \(\partial_tf+V\cdot\nabla f=F\),
then
\begin{align}
\frac d{dt}
\left(
\norm[W^{4,4}]{f}
+\norm[W^{3,1}]{f}
\right)
&\lesssim
\norm[W^{5,\infty}]{V}
\left(
\norm[W^{4,4}]{f}
+\norm[W^{3,1}]{f}
\right)
\notag\\
&\quad
+\norm[W^{4,4}]{F}
+\norm[W^{3,1}]{F},
\label{eq:transport-high}
\\
\frac d{dt}
\left(
\norm[W^{2,4}]{f}
+\norm[W^{1,1}]{f}
\right)
&\lesssim
\norm[W^{3,\infty}]{V}
\left(
\norm[W^{2,4}]{f}
+\norm[W^{1,1}]{f}
\right)
\notag\\
&\quad
+\norm[W^{2,4}]{F}
+\norm[W^{1,1}]{F}.
\label{eq:transport-low}
\end{align}
These differential inequalities hold for almost every \(t\in[0,T]\),
and their integrated forms hold on every subinterval of \([0,T]\).
\end{lemma}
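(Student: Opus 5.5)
We prove the three groups of estimates in Lemma~\ref{lem:corrector-estimates} separately. Each rests on standard harmonic-analysis facts. The fixed compact support of \(g\) and the uniform bounds on its derivatives are used only to convert \(L^\infty\) bounds on \(K\star f\) into \(L^4\) and \(L^1\) bounds on products.

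\emph{Biot--Savart bounds.} Split the kernel as \(K=K\mathbf 1_{\{\abs x<1\}}+K\mathbf 1_{\{\abs x\ge1\}}\). The near part lies in \(L^1\cap L^{4/3}\) and the far part in \(L^4\cap L^\infty\). For a multi-index \(\alpha\), move derivatives onto \(f\): \(\partial^\alpha(K\star f)=K\star\partial^\alpha f\). The \(L^\infty\) bound then follows from Hölder's inequality with \(\norm[L^{4/3}]{K\mathbf 1_{\abs x<1}}\norm[L^4]{\partial^\alpha f}+\norm[L^4]{K\mathbf 1_{\abs x\ge1}}\norm[L^{4/3}]{\partial^\alpha f}\). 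Interpolating \(L^{4/3}\) between \(L^1\) and \(L^4\) gives \(\lesssim\norm[W^{k,4}]{f}+\norm[W^{k,1}]{f}\) for \(\abs\alpha\le k\); this yields the \(W^{3,\infty}\) and \(W^{2,\infty}\) parts of \eqref{eq:BS-high} and \eqref{eq:BS-low}.

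For the \(L^4\) bounds, the Hardy--Littlewood--Sobolev inequality gives \(\norm[L^4]{K\star f}\lesssim\norm[L^{4/3}]{f}\), which handles the zeroth order. For one or more derivatives, \(\nabla(K\star f)\) is a Calderón--Zygmund operator applied to \(f\), so \(\norm[L^4]{\nabla^{j+1}(K\star f)}\lesssim\norm[L^4]{\nabla^jf}\). This gives \(W^{4,4}\) control from \(W^{3,4}\subset W^{4,4}\) in \eqref{eq:BS-high}, and \(W^{3,4}\) control from \(W^{2,4}\) in \eqref{eq:BS-low}.

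\emph{Nonlocal products.} By Leibniz' rule, \(\partial^\alpha[(K\star f)\cdot\nabla g]\) is a sum of terms \(\partial^\beta(K\star f)\,\partial^{\alpha-\beta}\nabla g\), each supported in the fixed compact set \(\mathcal S\supset\supp g\). For \(\abs\beta\le3\) (respectively \(\le2\)), bound \(\partial^\beta(K\star f)\) in \(L^\infty\) by the previous step; on the bounded set \(\mathcal S\) this gives both \(L^4\) and \(L^1\) control. In \eqref{eq:nonlocal-product-high} the one remaining term, with \(\abs\beta=4\) in the \(W^{4,4}\) norm, is bounded by \(\norm[L^4]{\nabla^4(K\star f)}\norm[L^\infty]{\nabla g}\), and the Calderón--Zygmund bound controls it. The low-order estimate \eqref{eq:nonlocal-product-low} is handled the same way: its \(W^{1,1}\) part needs only \(\norm[W^{1,\infty}]{K\star f}\) on \(\mathcal S\), and its \(W^{2,4}\) part uses \(\norm[W^{2,\infty}]{K\star f}\).

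\emph{Transport estimates.} Apply \(\partial^\alpha\) to the equation. This gives \(\partial_t\partial^\alpha f+V\cdot\nabla\partial^\alpha f=\partial^\alpha F-[\partial^\alpha,V\cdot\nabla]f\). The commutator is a sum of terms \(\partial^\beta V\cdot\nabla\partial^{\alpha-\beta}f\) with \(1\le\abs\beta\le\abs\alpha\), so it is bounded in \(L^p\) by \(\norm[W^{\abs\alpha,\infty}]{V}\norm[W^{\abs\alpha,p}]{f}\). Here \(W^{5,\infty}\) (respectively \(W^{3,\infty}\)) for \(V\) suffices with room to spare. Multiply by \(\abs{\partial^\alpha f}^{p-2}\partial^\alpha f\) and integrate; since \(\nabla\cdot V=0\), the transport term vanishes. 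This gives \(\frac d{dt}\norm[L^p]{\partial^\alpha f}\le\norm[L^p]{\partial^\alpha F}+\norm[L^p]{[\partial^\alpha,V\cdot\nabla]f}\) for \(p=4\) and, by regularized absolute values, for \(p=1\). Summing over \(\abs\alpha\le4\) with \(p=4\) and \(\abs\alpha\le3\) with \(p=1\) gives \eqref{eq:transport-high}; \eqref{eq:transport-low} follows identically.

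\emph{Main obstacle.} The main technical point is the \(p=1\) case, together with almost-everywhere differentiability of the norms. We handle it by working with \(\sqrt{\abs{\partial^\alpha f}^2+\epsilon^2}-\epsilon\), or equivalently with the integrated (Duhamel/characteristic) form along the volume-preserving flow of \(V\), and then letting \(\epsilon\to0\). This yields the integrated inequalities on every subinterval, and the differential forms hold almost everywhere.
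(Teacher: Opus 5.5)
Your strategy is the same as the paper's. You split \(K\) at unit radius for the zeroth-order bounds, use the degree-zero Calder\'on--Zygmund multiplier for \(L^4\) bounds on derivatives of \(K\star f\), apply Leibniz plus the fixed compact support of \(g\) for the products, and handle transport by the commutator, testing against \(\abs{\partial^\alpha f}^2\partial^\alpha f\) or a regularized absolute value.

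\textbf{The gap is the \(W^{2,\infty}\) part of \eqref{eq:BS-low}.} For \(\abs\alpha=2\), your H\"older splitting needs \(\norm[L^{4/3}]{\partial^\alpha f}\). Your interpolation bounds this by \(\norm[L^1]{\nabla^2 f}\), which is not part of the \(W^{2,4}\cap W^{1,1}\) norm. Your phrase ``for \(\abs\alpha\le k\)'' with \(k=2\) silently assumes \(W^{2,1}\).

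This is not cosmetic. The hypothesis controls \(\nabla^2 f\) only in \(L^4\), which gives no integrability at infinity for exponents below \(4\). For example, take widely separated bumps \(R^{-5}\phi(x/R)\sin(R^2x_1)\) with \(R=2^k\), summed over \(k\). This gives \(f\in W^{2,4}\cap W^{1,1}\) with \(\nabla^2 f\notin L^{4/3}\). So the far-field piece \(K\mathbf 1_{\{\abs x\ge1\}}\star\partial^\alpha f\) cannot be estimated as you propose.

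\textbf{The repair.} It uses only tools you already invoke, and it is how the paper obtains all its \(L^\infty\) bounds. Combine the embedding \(W^{1,4}(\Rt)\hookrightarrow L^\infty(\Rt)\) with the Calder\'on--Zygmund bounds:
\[
\norm[L^\infty]{\nabla^2(K\star f)}
\lesssim
\norm[L^4]{\nabla^2(K\star f)}+\norm[L^4]{\nabla^3(K\star f)}
\lesssim
\norm[L^4]{\nabla f}+\norm[L^4]{\nabla^2 f}.
\]
The same route also gives the \(W^{3,\infty}\) bound in \eqref{eq:BS-high}, using only \(W^{3,4}\cap L^1\).

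In \eqref{eq:nonlocal-product-low} you may also bypass \(W^{2,\infty}\): put the \(\abs\beta\le2\) factors in \(L^4\), as you already do for \(\abs\beta=4\) in the high case. With this change, the rest of your argument goes through.

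Minor slip: you wrote \(W^{3,4}\subset W^{4,4}\) where you mean \(W^{4,4}\subset W^{3,4}\).
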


\begin{proof}
\noindent\emph{Biot--Savart estimates.}
We first claim that
\begin{equation}\label{eq:BS-zero-order}
\norm[L^4]{K\star f}
+\norm[L^\infty]{K\star f}
\lesssim
\norm[L^4]{f}+\norm[L^1]{f},
\end{equation}
and, for every integer \(j\ge1\),
\begin{equation}\label{eq:BS-derivatives}
\norm[L^4]{\nabla^j(K\star f)}
\lsim{j}
\norm[L^4]{\nabla^{j-1}f}.
\end{equation}

Since \(\abs{K(x)}=(2\pi\abs x)^{-1}\),
\[
K\mathbf 1_{\{\abs x<1\}}
\in L^1(\Rt)\cap L^{\frac{4}{3}}(\Rt),
\qquad
K\mathbf 1_{\{\abs x\ge1\}}
\in L^4(\Rt)\cap L^\infty(\Rt).
\]
Young's inequality therefore gives
\begin{align*}
\norm[L^4]{K\star f}
&\le
\norm[L^1]{K\mathbf 1_{\{\abs x<1\}}}
\norm[L^4]{f}
+
\norm[L^4]{K\mathbf 1_{\{\abs x\ge1\}}}
\norm[L^1]{f},
\\
\norm[L^\infty]{K\star f}
&\le
\norm[L^{\frac{4}{3}}]{K\mathbf 1_{\{\abs x<1\}}}
\norm[L^4]{f}
+
\norm[L^\infty]{K\mathbf 1_{\{\abs x\ge1\}}}
\norm[L^1]{f}.
\end{align*}
This proves \eqref{eq:BS-zero-order}.

To prove \eqref{eq:BS-derivatives}, let \(\alpha\) be a multi-index with
\(\abs\alpha=j\ge1\), and choose \(k\) such that \(\alpha_k\ge1\).
In Fourier variables,
\[
\widehat{\partial^\alpha(K\star f)}(\xi)
=
m_k(\xi)
\widehat{\partial^{\alpha-e_k}f}(\xi),
\]
where \(m_k\) is a homogeneous multiplier of degree zero, smooth on
\(\Rt\setminus\{0\}\).  It is therefore a Calder\'on--Zygmund
multiplier, and
\[
\norm[L^4]{\partial^\alpha(K\star f)}
\lsim{\alpha}
\norm[L^4]{\partial^{\alpha-e_k}f}.
\]
Summing over \(\abs\alpha=j\) proves
\eqref{eq:BS-derivatives}.

\[
\norm[W^{4,4}]{K\star f}
\stackrel{\substack{\eqref{eq:BS-zero-order}\\
\eqref{eq:BS-derivatives}}}{\lesssim}
\norm[W^{3,4}]{f}+\norm[L^1]{f}.
\]
Moreover, \(W^{1,4}(\Rt)\hookrightarrow L^\infty(\Rt)\), so, for
\(0\le j\le3\),
\[
\begin{aligned}
\norm[L^\infty]{\nabla^j(K\star f)}
&\lesssim
\norm[L^4]{\nabla^j(K\star f)}
+
\norm[L^4]{\nabla^{j+1}(K\star f)}
\\
&\stackrel{\substack{\eqref{eq:BS-zero-order}\\
\eqref{eq:BS-derivatives}}}{\lesssim}
\norm[W^{3,4}]{f}+\norm[L^1]{f}.
\end{aligned}
\]
Together with the preceding \(W^{4,4}\)-estimate, this proves
\eqref{eq:BS-high}.  The same argument through order three in \(L^4\)
and order two in \(L^\infty\) proves \eqref{eq:BS-low}.

\smallskip
\noindent\emph{Product estimates.}
The product estimates \eqref{eq:nonlocal-product-high} and
\eqref{eq:nonlocal-product-low} follow from Leibniz' rule, the
Biot--Savart estimates, and the compact support of \(g\).

Indeed, for every multi-index \(\alpha\),
\[
\partial^\alpha\bigl((K\star f)\cdot\nabla g\bigr)
=
\sum_{\beta\le\alpha}
\binom{\alpha}{\beta}
\partial^\beta(K\star f)\cdot
\partial^{\alpha-\beta}\nabla g.
\]
All derivatives of \(g\) are uniformly bounded.  Hence, for
\(\abs\alpha\le4\),
\[
\norm[L^4]{
\partial^\alpha\bigl((K\star f)\cdot\nabla g\bigr)}
\lsim{g}
\norm[W^{4,4}]{K\star f}.
\]
If \(\abs\alpha\le3\), every term is supported in the fixed compact
support of \(g\).  H\"older's inequality on that compact set gives
\[
\norm[L^1]{
\partial^\alpha\bigl((K\star f)\cdot\nabla g\bigr)}
\lsim{g}
\norm[W^{3,4}]{K\star f}.
\]
Summing these estimates gives
\[
\begin{aligned}
&\norm[W^{4,4}]{(K\star f)\cdot\nabla g}
+\norm[W^{3,1}]{(K\star f)\cdot\nabla g}
\\
&\qquad\lsim{g}
\norm[W^{4,4}]{K\star f}
+\norm[W^{3,4}]{K\star f}
\stackrel{\eqref{eq:BS-high}}{\lsim{g}}
\norm[W^{4,4}]{f}+\norm[W^{3,1}]{f}
,
\end{aligned}
\]
which is \eqref{eq:nonlocal-product-high}.

Taking instead \(\abs\alpha\le2\) in \(L^4\) and
\(\abs\alpha\le1\) in \(L^1\) gives
\[
\begin{aligned}
&\norm[W^{2,4}]{(K\star f)\cdot\nabla g}
+\norm[W^{1,1}]{(K\star f)\cdot\nabla g}
\\
&\qquad\lsim{g}
\norm[W^{2,4}]{K\star f}
+\norm[W^{1,4}]{K\star f}
\stackrel{\eqref{eq:BS-low}}{\lsim{g}}
\norm[W^{2,4}]{f}+\norm[W^{1,1}]{f}
,
\end{aligned}
\]
which proves \eqref{eq:nonlocal-product-low}.

\smallskip
\noindent\emph{Transport estimates.}
To prove \eqref{eq:transport-high} and \eqref{eq:transport-low}, we
commute spatial derivatives with the transport equation.

For any multi-index \(\alpha\),
\begin{equation}\label{eq:differentiated-transport}
\partial_t\partial^\alpha f
+V\cdot\nabla\partial^\alpha f
=
\partial^\alpha F
-
\sum_{0<\beta\le\alpha}
\binom{\alpha}{\beta}
\partial^\beta V\cdot
\nabla\partial^{\alpha-\beta}f.
\end{equation}
For \(p=4\), test by
\(\abs{\partial^\alpha f}^{2}\partial^\alpha f\) and integrate.
Since \(\nabla\cdot V=0\),
\[
\int_{\Rt}
V\cdot\nabla\partial^\alpha f\,
\abs{\partial^\alpha f}^{2}\partial^\alpha f\,dx
=
\frac14\int_{\Rt}
V\cdot\nabla\abs{\partial^\alpha f}^{4}\,dx
=0.
\]
Consequently,
\begin{align}
\frac d{dt}\norm[L^4]{\partial^\alpha f}
&\stackrel{\eqref{eq:differentiated-transport}}{\lsim{\alpha}}
\norm[L^4]{\partial^\alpha F}
\notag\\
&\quad+
\sum_{0<\beta\le\alpha}
\norm[L^\infty]{\partial^\beta V}
\norm[L^4]{
\nabla\partial^{\alpha-\beta}f}.
\label{eq:transport-L4-derivative}
\end{align}
Since \(\abs\beta\ge1\), \(\abs{\alpha-\beta}+1\le\abs\alpha\),
so the right-hand side contains no derivative of \(f\) above the order
being estimated.

For the \(L^1\)-estimate, use
\(\Phi_\varepsilon(s)=\sqrt{s^2+\varepsilon^2}-\varepsilon\).
Then
\[
0\le\Phi_\varepsilon(s)\le\abs s,
\qquad
\abs{\Phi_\varepsilon'(s)}\le1,
\qquad
\Phi_\varepsilon(s)\to\abs s.
\]
Testing by \(\Phi_\varepsilon'(\partial^\alpha f)\) gives
\[
\int_{\Rt}
V\cdot\nabla
\Phi_\varepsilon(\partial^\alpha f)\,dx
=0
\]
because \(V\) is divergence free.  Hence
\begin{align*}
\frac d{dt}
\int_{\Rt}\Phi_\varepsilon(\partial^\alpha f)\,dx
&\stackrel{\eqref{eq:differentiated-transport}}{\lsim{\alpha}}
\norm[L^1]{\partial^\alpha F}
\\
&\quad+
\sum_{0<\beta\le\alpha}
\norm[L^\infty]{\partial^\beta V}
\norm[L^1]{
\nabla\partial^{\alpha-\beta}f}.
\end{align*}
Letting \(\varepsilon\to0\) gives the corresponding estimate for
\(\norm[L^1]{\partial^\alpha f}\).

Summing the \(L^4\)-estimates for \(\abs\alpha\le4\) and the
\(L^1\)-estimates for \(\abs\alpha\le3\) proves
\eqref{eq:transport-high}.  Summing them for
\(\abs\alpha\le2\) and \(\abs\alpha\le1\), respectively, proves
\eqref{eq:transport-low}.
\end{proof}

\section*{Acknowledgments}
The author is supported by the NSFC Excellent Young Scientists Fund, the
Shanghai BYL Talent Program, and the Startup Research Fund of Shanghai Jiao
Tong University.

\section*{Statements and declarations}
\paragraph{Data availability.}
No datasets were generated or analyzed in this theoretical study.

\bibliographystyle{abbrv}

\end{document}